\documentclass{amsart}
\usepackage{amssymb,amsmath}
\usepackage{stmaryrd}
\usepackage{yhmath}
\usepackage{latexsym}
\usepackage{amsthm}
\usepackage{amscd}
\usepackage{mathrsfs}
\usepackage[all]{xy}
\usepackage{tikz}
\usetikzlibrary{cd}
\usepackage{mathtools}
\usepackage{arydshln}
\usepackage{bbm}
\usepackage{bm}
\usepackage{url}
\usepackage{color}

\newcommand{\sym}{\mathrm{sym}}
\newcommand{\asym}{\mathrm{asym}}
\newcommand{\ur}{\mathrm{ur}}
\newcommand{\ram}{\mathrm{ram}}
\newcommand{\red}{\mathrm{red}}
\newcommand{\ad}{\mathrm{ad}}

\newcommand{\Kal}{\mathrm{Kal}}
\newcommand{\Tam}{\mathrm{Tam}}
\newcommand{\Yu}{\mathrm{Yu}}
\newcommand{\KY}{\mathrm{KY}}
\newcommand{\FKS}{\mathrm{FKS}}

\newcommand{\BH}{\mathrm{BH}}
\newcommand{\Art}{\mathrm{Art}}
\newcommand{\rec}{\mathrm{rec}}
\newcommand{\lan}{\langle}
\newcommand{\ran}{\rangle}
\newcommand{\coh}{\mathrm{coh}}
\newcommand{\sep}{\mathrm{sep}}
\newcommand{\arith}{\mathrm{arith}}
\newcommand{\geom}{\mathrm{geom}}
\newcommand{\pr}{\mathrm{pr}}
\newcommand{\ab}{\mathrm{ab}}
\newcommand{\et}{\mathrm{et}}
\newcommand{\Trd}{\mathrm{Trd}}
\newcommand{\Nrd}{\mathrm{Nrd}}
\newcommand{\op}{\mathrm{op}}
\newcommand{\w}{\mathrm{w}}
\newcommand{\cl}{\mathrm{cl}}

\newcommand{\Jac}[2]{\begin{pmatrix}\frac{#1}{#2}\end{pmatrix}}

\newcommand{\Gm}{\bbG_{\mathrm{m}}}
\newcommand{\eps}[2]{{}_{#1}\epsilon_{#2}}
\newcommand{\vmod}[2]{{}_{#1}\frakV_{#2}}
\newcommand{\bvmod}[2]{{}_{#1}\boldsymbol{\frakV}_{#2}}
\newcommand{\bmu}{\boldsymbol{\mu}}
\newcommand{\bGamma}{\boldsymbol{\Gamma}}
\newcommand{\bfrakU}{\boldsymbol{\frakU}}
\newcommand{\ordx}[2]{{}_{#1}\ord_{x}(#2)}

\newcommand{\bfB}{\mathbf{B}}
\newcommand{\bfe}{\mathbf{e}}
\newcommand{\bfS}{\mathbf{S}}
\newcommand{\bfG}{\mathbf{G}}
\newcommand{\bfJ}{\mathbf{J}}

\newcommand{\bbC}{\mathbb{C}}

\newcommand{\bbF}{\mathbb{F}}
\newcommand{\bbG}{\mathbb{G}}

\newcommand{\bbQ}{\mathbb{Q}}
\newcommand{\bbR}{\mathbb{R}}

\newcommand{\bbZ}{\mathbb{Z}}

\newcommand{\calA}{\mathcal{A}}
\newcommand{\calB}{\mathcal{B}}
\newcommand{\calC}{\mathcal{C}}
\newcommand{\calD}{\mathcal{D}}

\newcommand{\calJ}{\mathcal{J}}

\newcommand{\calO}{\mathcal{O}}

\newcommand{\calT}{\mathcal{T}}

\newcommand{\calX}{\mathcal{X}}

\newcommand{\frakA}{\mathfrak{A}}

\newcommand{\frakg}{\mathfrak{g}}
\newcommand{\frakH}{\mathfrak{H}}

\newcommand{\frakJ}{\mathfrak{J}}
\newcommand{\frakK}{\mathfrak{K}}

\newcommand{\frakP}{\mathfrak{P}}
\newcommand{\frakp}{\mathfrak{p}}

\newcommand{\frakU}{\mathfrak{U}}
\newcommand{\frakV}{\mathfrak{V}}

\DeclareMathOperator{\tr}{tr}
\DeclareMathOperator{\Nr}{Nr}
\DeclareMathOperator{\Tr}{Tr}
\DeclareMathOperator{\depth}{depth}
\DeclareMathOperator{\sgn}{sgn}
\DeclareMathOperator{\ord}{ord}
\DeclareMathOperator{\GL}{GL}

\DeclareMathOperator{\cInd}{c-Ind}
\DeclareMathOperator{\Ind}{Ind}
\DeclareMathOperator{\Hom}{Hom}
\DeclareMathOperator{\End}{End}
\DeclareMathOperator{\Aut}{Aut}
\DeclareMathOperator{\Ker}{Ker}
\DeclareMathOperator{\Coker}{Coker}
\DeclareMathOperator{\Res}{Res}
\DeclareMathOperator{\Gal}{Gal}
\DeclareMathOperator{\Ad}{Ad}

\DeclareMathOperator{\id}{id}
\DeclareMathOperator{\LLC}{LLC}
\DeclareMathOperator{\LJLC}{LJLC}
\DeclareMathOperator{\inv}{inv}
\DeclareMathOperator{\Br}{Br}
\DeclareMathOperator{\ch}{ch}

\pagestyle{plain}
 \setlength{\itemsep}{0pt}
 \setcounter{totalnumber}{3}
 \setcounter{topnumber}{1}
 \setcounter{bottomnumber}{3}
 \setcounter{secnumdepth}{3}

\theoremstyle{plain}
\newtheorem{thm}{Theorem}[section]
\newtheorem*{thm*}{Theorem}
\newtheorem{prop}[thm]{Proposition}

\newtheorem{cor}[thm]{Corollary}

\theoremstyle{definition}
\newtheorem{defn}[thm]{Definition}

\theoremstyle{remark}
\newtheorem{rem}[thm]{Remark}
\newtheorem*{claim*}{Claim}

\numberwithin{equation}{section}
%
%


\title{Local Jacquet--Langlands correspondence for regular supercuspidal representations}

\author{Kazuki Tokimoto}
\address{Institute of Mathematics, Academia Sinica, Astronomy-Mathematics Building, No.\ 1, Sec.\ 4, Roosevelt Road, Taipei 10617, Taiwan.}
\email{tokimoto@gate.sinica.edu.tw}

\begin{document}

\begin{abstract}
We prove that Kaletha's local Langlands correspondence for regular supercuspidal representations gives the classical local Jacquet--Langlands correspondence due to Deligne--Kazhdan--Vign\'{e}ras and Badulescu.
As in a former joint paper with Oi, where a similar result is proved for the local Langlands correspondence for the general linear group, the key ingredients in our proof are the work of Bushnell--Henniart explicitly describing the local Jacquet--Langlands correspondence for essentially tame supercuspidal representations and its reinterpretation due to Tam in terms of Langlands--Shelstad's \(\zeta\)-data.
While, under suitable assumptions, this result follows from more general theorems either in the recent work of Fintzen--Kaletha--Spice or that of Chan--Oi,
our proof does not require any assumptions.

We also complement a few points on the proof in the former paper with Oi.
\end{abstract}

\subjclass[2010]{}
\keywords{}

\maketitle

\section*{Introduction}\label{sec:intro}
For a non-archimedean local field \(F\) and a connected reductive group \(\bfG\) over \(F\), the conjectural local Langlands correspondence associates to each irreducible smooth representation of \(\bfG(F)\) an \(L\)-parameter of \(\bfG\).
Recently Kaletha \cite{MR4013740} defines the notion of regularity for supercuspidal representations for tamely ramified reductive groups \(\bfG\) and constructs the local Langlands correspondence for (most) regular supercuspidal representations.
Notably, his construction of the local Langlands correspondence is explicit and works uniformly for groups.

In a former joint paper \cite{MR4206603} with Masao Oi, we prove that
Kaletha's local Langlands correspondence coincides with Harris--Taylor's local Langlands correspondence for \(\GL_n\).
In this paper we prove a similar theorem for the local Jacquet--Langlands correspondence for inner forms of \(\GL_n\)
due to Deligne--Kazhdan--Vign\'{e}ras \cite{MR771672} and Badulescu \cite{MR1951441}.
Let \(\bfG\) be an inner form of \(\bfG^{\ast}:=\GL_n\) and let us fix a central simple algebra \(A\) over \(F\) such that \(\bfG(F)=A^{\times}\).
Then the local Jacquet--Langlands correspondence is a natural bijection \(\LJLC^{\cl}\) between the irreducible discrete series representations of \(\bfG^{\ast}(F)\) and of \(\bfG(F)\) (we add the superscript ``cl" to indicate it is the classical one).
Let us write \(\LJLC^{\Kal}\) for the local Jacquet--Langlands correspondence naturally induced by Kaletha's local Langlands correspondence.
It is a bijection between the regular supercuspidal representations of \(\bfG^{\ast}(F)\) and of \(\bfG(F)\).

The main theorem of this paper is the following (note that the assumption on the residue characteristic is necessary for Kaletha's theory):
\begin{thm}[Theorem \ref{thm:main}] \label{thm:intro}
Assume that the residue characteristic \(p\) of \(F\) is odd.
Let \(\pi\) be a regular supercuspidal representation of \(\bfG^{\ast}(F)=\GL_n(F)\).
Then we have
\[
\LJLC^{\cl}(\pi)=\LJLC^{\Kal}(\pi).
\]
\end{thm}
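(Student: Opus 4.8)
The plan is to reduce the assertion to an identity between explicit tamely ramified characters and then to verify it by comparing Bushnell--Henniart's formulas for the essentially tame correspondences with the output of Kaletha's construction, following the template of \cite{MR4206603}. Both sides are pinned down by the local Langlands correspondence: $\LJLC^{\cl}(\pi)$ is the discrete series representation of $\bfG(F)$ whose $L$-parameter, under the classical correspondence for the inner form $\bfG$ (which by construction is compatible with the Jacquet--Langlands transfer from $\GL_n$), equals $\rec_{\GL_n}(\pi)$, whereas $\LJLC^{\Kal}(\pi)$ is the regular supercuspidal representation of $\bfG(F)$ carrying the same Kaletha $L$-parameter as $\pi$. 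Since \cite{MR4206603} identifies Kaletha's correspondence for $\GL_n$ with the Harris--Taylor correspondence on regular supercuspidals, and since $\LJLC^{\Kal}$ is a bijection onto the regular supercuspidals of $\bfG(F)$, the theorem is equivalent to the statement that, \emph{for the inner form $\bfG$, Kaletha's correspondence agrees with the classical one on regular supercuspidal representations} --- the exact analogue for inner forms of the main result of \cite{MR4206603}.

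To prove this I would argue through the Bushnell--Henniart--Howe parametrization of essentially tame supercuspidal representations of $\GL_n(F)$ and of $A^{\times}$ by admissible pairs $(E/F,\xi)$ with $E/F$ tamely ramified. Bushnell--Henniart's explicit description realizes the essentially tame Jacquet--Langlands correspondence on admissible pairs as $(E/F,\xi)\mapsto(E/F,\xi\cdot{}_{A}\nu_{\xi})$ for an explicit tamely ramified ``Jacquet--Langlands rectifier'' ${}_{A}\nu_{\xi}$ of $E^{\times}$ depending on the Hasse invariant of $A$, and likewise describes the classical correspondences for $\GL_n$ and for $A^{\times}$ through tamely ramified ``Langlands rectifiers'' ${}_{F}\mu_{\xi}^{\ast}$ and ${}_{F}\mu_{\xi}^{A}$ with ${}_{F}\mu_{\xi}^{A}={}_{A}\nu_{\xi}^{-1}\cdot{}_{F}\mu_{\xi}^{\ast}$. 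Kaletha's construction, in turn, attaches to the regular supercuspidal with admissible pair $(E/F,\xi)$ an $L$-parameter of the shape $\Ind_{W_E}^{W_F}(\xi\cdot{}_{\bfG}\mu_{\xi})$, where the ``Kaletha rectifier'' ${}_{\bfG}\mu_{\xi}$ is a tamely ramified character of $E^{\times}$ assembled from Langlands--Shelstad $\chi$-data, from the toral invariants of $E/F$, and from the quadratic characters of the Fintzen--Kaletha--Spice normalization of Yu's construction; crucially this has the same shape for $\bfG=\bfG^{\ast}$ and for the inner form $\bfG$, the algebra $A$ entering only through those last auxiliary data. Hence $\LJLC^{\Kal}$ on admissible pairs is $(E/F,\xi)\mapsto(E/F,\xi\cdot{}_{\bfG^{\ast}}\mu_{\xi}\cdot{}_{\bfG}\mu_{\xi}^{-1})$, and the theorem reduces to the single identity of tamely ramified characters of $E^{\times}$
\[
{}_{\bfG^{\ast}}\mu_{\xi}\cdot{}_{\bfG}\mu_{\xi}^{-1}={}_{A}\nu_{\xi}.
\]

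For the first factor I would invoke --- and, as announced, complete at a few points the proof of --- the main theorem of \cite{MR4206603}, which gives ${}_{\bfG^{\ast}}\mu_{\xi}={}_{F}\mu_{\xi}^{\ast}$; it then remains to show ${}_{\bfG}\mu_{\xi}={}_{F}\mu_{\xi}^{A}$, i.e.\ that Kaletha's recipe for the inner form reproduces Bushnell--Henniart's Langlands rectifier for $A^{\times}$. Here I would use Tam's reinterpretation of the Bushnell--Henniart rectifiers in terms of Langlands--Shelstad $\zeta$-data, which expands ${}_{A}\nu_{\xi}$ and ${}_{F}\mu_{\xi}^{A}$ as products, over the $\Gamma$-orbits of embeddings $E\hookrightarrow\overline{F}$ partitioned into symmetric and asymmetric ones, of Langlands $\lambda$-constants and explicit quadratic characters: the asymmetric orbits contribute terms insensitive to $A$, while the symmetric orbits carry exactly the Hasse-invariant dependence and should match, orbit by orbit, the contribution of the ramified symmetric roots to the Fintzen--Kaletha--Spice factors in ${}_{\bfG}\mu_{\xi}$. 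Comparing the two expansions term by term while tracking the normalizations carefully then yields the identity.

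The step I expect to be the main obstacle is exactly this normalization bookkeeping across the inner form: Kaletha's construction for the non-quasi-split group $\bfG$ proceeds via rigid inner forms and $z$-extensions and has no Whittaker datum to normalize against, so one must check that the quadratic characters and sign factors it produces agree, symmetric orbit by symmetric orbit, with the division-algebra-dependent quantities in Bushnell--Henniart's and Tam's formulas --- in particular that no stray unramified quadratic character survives the comparison. A secondary difficulty, and the reason for the complements promised to the earlier paper, is that part of the orbit-by-orbit comparison for $\GL_n$ in \cite{MR4206603} has to be reworked so as to transfer verbatim to the inner form.
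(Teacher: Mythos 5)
Your proposal takes essentially the paper's route: reduce to an identity of tamely ramified characters of $E^{\times}$, express the Bushnell--Henniart Jacquet--Langlands rectifier via Tam's $\zeta$-data and Kaletha's $L$-parameter via the $\chi$-data $\chi_{\Kal}$ and the quadratic characters $\eps{\bfG^\ast}{}$, $\eps{\bfG}{}$, and then compare the two expansions orbit by orbit over $\Gamma_F\backslash\Phi(\bfS,\bfG^\ast)$. Framing the reduction as ``prove the inner-form analogue of \cite{MR4206603} and combine it with \cite{MR4206603}'' is a repackaging rather than a different proof: since $\chi_{\Kal}$ does not see the inner twist, your target identity ${}_{\bfG^\ast}\mu_\xi\cdot{}_{\bfG}\mu_\xi^{-1}={}_A\nu_\xi$ unwinds to exactly $\eps{\bfG^\ast}{}^{-1}\cdot\eps{\bfG}{}=\nu_{\zeta_{\Tam}}^{-1}$, which is the equality the paper verifies orbit by orbit.

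There is, however, one step of your preview that is wrong and would leave a genuine gap if you relied on it: the asymmetric orbits are \emph{not} insensitive to $A$, and the inner-form dependence is not concentrated in the symmetric ramified roots alone. For an asymmetric root $\alpha$ attached to a double coset $[g]$, Tam's contribution $\zeta_{\Tam, \alpha}|_{E^\times}\cdot\zeta_{\Tam, \alpha'}|_{E^\times}$ equals $\sgn_{\alpha(\cdot)}(\vmod{A}{[g]})\cdot\sgn_{\alpha(\cdot)}(\vmod{A^\ast}{[g]})$, and the finite modules $\vmod{A}{[g]}$, $\vmod{A^\ast}{[g]}$ are in general not isomorphic --- $\vmod{A^\ast}{[g]}$ vanishes exactly when the relevant level $a_k$ is odd, whereas the vanishing of $\vmod{A}{[g]}$ additionally involves $e(\frakA_{k+1}/\calO_E)$ and a congruence built from the Hasse invariant $h$ and $e(\frakA/\calO_E)$. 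Correspondingly, $\eps{\bfG}{\alpha}$ depends on $A$ through $\ordx{\bfG}{\alpha}$, which differs from $\ordx{\bfG^\ast}{\alpha}$. So there is no cancellation to exploit; what is needed is two separate matches, $\sgn_{\alpha(\cdot)}(\vmod{A}{[g]})=\eps{\bfG}{\alpha}$ and $\sgn_{\alpha(\cdot)}(\vmod{A^\ast}{[g]})=\eps{\bfG^\ast}{\alpha}$, each passing through the comparison between the finite modules and the Moy--Prasad filtration. (The symmetric unramified orbits likewise depend on the parity of $m$; the $\zeta$-data expansion of $\nu_{\rec}$ involves no Langlands $\lambda$-constants, those belong to the $\chi$-data side; and the Fintzen--Kaletha--Spice twist does not alter the $L$-parameter assigned to a given representation, only the parametrization by pairs.) None of this derails your overall plan, but it is precisely the work the term-by-term comparison must do, and believing the asymmetric terms drop out would cause you to skip it.
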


We must remark that Theorem \ref{thm:intro} follows from more general results in Fintzen--Kaletha--Spice \cite{2106.09120} or Chan--Oi \cite{2301.09812},
if we assume that the characteristic \(\ch F\) of \(F\) is zero and \(p\) is sufficiently large, or if we make a suitable assumption on a field extension \(E/F\) of degree \(n\) associated to \(\pi\).
Theorem \ref{thm:intro} holds without any such assumptions, essentially because none of the results we use, especially \cite{MR2848585}, makes any assumptions
(see Remark \ref{rem:main} for more details).

Let us also remark on the relation with the more recent work \cite{1912.03274} of Kaletha.
In \cite{1912.03274} the theory in \cite{MR4013740} is extended to a larger class of supercuspidal representations.
In particular, the notion of \(F\)-non-singularity, which is more general than that of regularity, is introduced and the \(L\)-parameters of \(F\)-non-singular supercuspidal representations are constructed.
One may wonder if the same theorem holds for \(F\)-non-singular supercuspidal representations.
However, that would yield nothing new because, for \(\GL_n(F)\), any \(F\)-non-singular supercuspidal representation is regular (see Remark \ref{rem:F-non-sing}).

The proof of Theorem \ref{thm:intro} proceeds similarly to the previous joint paper \cite{MR4206603} and relies on Bushnell--Henniart \cite{MR2848585} and Tam \cite{MR3505131}.
Let us briefly outline it.
Regular supercuspidal representations are parametrized by (the conjugacy classes of) \emph{tame elliptic regular pairs} \((\bfS, \xi)\), which are certain pairs of a tamely ramified elliptic maximal torus \(\bfS\) and a character \(\xi\) of \(\bfS(F)\).
Since tame elliptic regular pairs of \(\bfG^{\ast}\) and \(\bfG\) are naturally identified, for such a pair \((\bfS, \xi)\), we let \({}_{\bfG^{\ast}}\pi_{(\bfS, \xi)}\) and \({}_{\bfG}\pi_{(\bfS, \xi)}\) denote the corresponding regular supercuspidal representations of \(\bfG^{\ast}(F)=\GL_n(F)\) and \(\bfG(F)\).
When working out the definition of \(\LJLC^{\Kal}\), one finds that
\begin{equation} \label{eq:LJLC^Kal}
\LJLC^{\Kal}({}_{\bfG^{\ast}}\pi_{(\bfS, \xi)})={}_{\bfG}\pi_{(\bfS, \xi \cdot \eps{\bfG^{\ast}}{}^{-1} \cdot \eps{\bfG}{})},
\end{equation}
where \(\eps{\bfG^{\ast}}{}\) (resp.\ \(\eps{\bfG}{}\)) is a tamely ramified character of \(\bfS(F)\) associated to the tame elliptic regular pair \((\bfS, \xi)\) of \(\bfG^{\ast}\) (resp.\ of \(\bfG\)) in \cite{MR4013740}.
In fact, for the groups \(\bfG^{\ast}, \bfG\), the parametrization was previously known and the relation with \(\LJLC^{\cl}\) was studied by Bushnell--Henniart \cite{MR2848585} (though with different terminology).
In \cite{MR2848585}, a tamely ramified character \(\nu_{\rec}\) (``rectifier'') depending on \((\bfS, \xi)\) is given such that
\[
\LJLC^{\cl}({}_{\bfG^{\ast}}\pi_{(\bfS, \xi)})={}_{\bfG}\pi_{(\bfS, \xi \cdot \nu_{\rec}^{-1})}.
\]
Moreover, Tam \cite{MR3505131} gives a reinterpretation of \(\nu_{\rec}\) by constructing an explicit set \(\zeta_{\Tam}=\{\zeta_{\Tam, \alpha}\}_{\alpha}\) of \(\zeta\)-data related to \(\nu_{\rec}\).
A set of \(\zeta\)-data introduced by Langlands--Shelstad \cite{MR909227} is a set \(\zeta=\{\zeta_{\alpha}\}_{\alpha}\) of certain characters indexed by the roots \(\alpha\) of \(\bfS\) and Tam's \(\zeta\)-data \(\zeta_{\Tam}\) yields a factorization of \(\nu_{\rec}\) as a product of characters of \(\bfS(F)\) indexed by the Galois orbits of the roots of \(\bfS\).
In fact, the characters \(\eps{\bfG^{\ast}}{}\) and \(\eps{\bfG}{}\) are defined as similar products of characters \(\eps{\bfG^{\ast}}{\alpha}\) and \(\eps{\bfG}{\alpha}\) indexed by certain Galois orbits of the roots of \(\bfS\).
Therefore, in order to prove Theorem \ref{thm:intro}, it suffices to show suitable equalities involving \(\zeta_{\Tam, \alpha}, \eps{\bfG^{\ast}}{\alpha}, \eps{\bfG}{\alpha}\).
However, the conventions or the ``languages'' adopted are considerably different; in \cite{MR2848585}, \cite{MR3505131} certain orders in the central simple algebra \(A\) play an important role in constructing and studying the representations \({}_{\bfG^{\ast}}\pi_{(\bfS, \xi)}\) and eventually defining the characters \(\nu_{\rec}, \zeta_{\Tam, \alpha}\), whereas in \cite{MR4013740} the Bruhat--Tits theory and the root-theoretic techniques are extensively used throughout.
Thus, proving the necessary equalities is not as straightforward as it may seem at first sight.

In addition to proving Theorem \ref{thm:intro}, we complement the following two points in the proof of the main theorem in our previous joint paper \cite{MR4206603}
(see Remark \ref{rem:complement} for more details):
\begin{itemize}
\item As is well-known, there are two common normalizations for the Artin reciprocity map and the local Langlands correspondence.
Although in \cite{MR4206603} we do not pay much attention to the different normalizations used by Kaletha, Langlands--Shelstad, Bushnell--Henniart and Tam,
the proof can be justified as follows:
\begin{itemize}
\item fortunately, Bushnell--Henniart's theorem (Theorem \ref{thm:etLLC}) and Tam's proposition (Proposition \ref{prop:Tam}), which are the two key facts for the passage from the automorphic side to the Galois side, hold verbatim also in the other normalization;
\item the main computation in \cite{MR4206603} is entirely done on the automorphic side and thus the normalization is not relevant
\end{itemize}
 and hence the result is still correct.

\item While we assume the characteristic of \(F\) to be zero in \cite{MR4206603}, the proof works even when the characteristic is positive, essentially because all the results we cite are valid regardless of the characteristic.
\end{itemize}

This paper is organized as follows.
In Section \ref{sec:notation} we summarize some of the standard notation and conventions used in this paper.
In Section \ref{sec:review-of-BH-Tam} we review Bushnell--Henniart's description of \(\LJLC^{\cl}\) for regular supercuspidal representations (or \emph{essentially tame supercuspidal representations} in their terminology) of \(\bfG^{\ast}(F)\) in terms of the rectifier
and Tam's factorization result of the rectifier.
As we need to discuss the previous paper \cite{MR4206603} later, we also explain similar results for the local Langlands correspondence \(\LLC_{\GL_n}^{\cl}\) for \(\bfG^{\ast}=\GL_n\).

In Section \ref{sec:review-of-Kal} we review the relevant parts of Kaletha's theory from \cite{MR4013740}.
More specifically, we give an overview of the parametrization of regular supercuspidal representations and how it is applied to give a partial explicit construction of the local Langlands correspondence for general tamely ramified connected reductive groups, and specialize to the situation where the group is an inner form \(\bfG\) of \(\bfG^{\ast}=\GL_n\).
Our goal in this section is to explain the equality \eqref{eq:LJLC^Kal} and a similar expression for Kaletha's local Langlands correspondence \(\LLC_{\GL_n}^{\Kal}\) for \(\bfG^{\ast}=\GL_n\).

In Section \ref{sec:main-thm} we state Theorem \ref{thm:main} (=Theorem \ref{thm:intro} above). Also, having recalled the necessary concepts and terminology up to this point, we complement the above two points in the argument of the previous joint paper \cite{MR4206603}.
In Section \ref{sec:prelim1} we recall the definition of the characters \(\eps{\bfG^{\ast}}{\alpha}, \eps{\bfG}{\alpha}\) in \cite{MR4013740}
and also compute them in one case by essentially computing the toral invariant, a sign invariant introduced in \cite{MR3402796}, for our groups.
In Section \ref{sec:prelim2} we recall the ingredients necessary for the definition of the characters \(\zeta_{\Tam, \alpha}\).
In Section \ref{sec:pf-main-thm} we finally prove Theorem \ref{thm:main} by showing the equalities involving the characters \(\eps{\bfG^{\ast}}{\alpha}, \eps{\bfG}{\alpha}, \zeta_{\Tam, \alpha}\).

In Appendix \ref{sec:app} we take up a fact which has been implicit so far.
When explicitly describing \(\LJLC^{\Kal}\), Bushnell--Henniart and Tam use the construction of supercuspidal representations of \(\bfG(F)\) from tame elliptic regular pairs (or \emph{admissible pairs} in their terminology) given in \cite{MR2848585}.
It is probably well-known to experts but not entirely trivial that this construction coincides with the one given by Kaletha \cite{MR4013740} (which builds on Yu \cite{MR1824988}).
We give a sketch proof of this coincidence in Appendix \ref{sec:app}, which justifies writing \({}_{\bfG}\pi_{(\bfS, \xi)}\) unambiguously.

\medbreak
\noindent{\bfseries Acknowledgment.}\quad
The author spent an unusually long time on writing this paper and received much encouragement along the way from many people including Yao Cheng, Naoki Imai, Yoichi Mieda, Masao Oi, Nobuo Sato, Takeshi Tsuji and Chia-Fu Yu.
He wishes to thank them all heartily, especially Tsuji and Yu.
He would also like to thank Wen-Wei Li, Alexander Bertoloni Meli and Geo Kam-Fai Tam for their valuable comments on the previous paper \cite{MR4206603}.
He is grateful to Tasho Kaletha for providing helpful answers to questions.
He also thanks Institute of Mathematics, Academia Sinica for their support and excellent research environment and acknowledges the support by NSTC grant 111-2811-M-001-024.

\setcounter{tocdepth}{2}
\tableofcontents

\section{Notation and conventions} \label{sec:notation}
Let us summarize some of the notation and conventions used throughout the paper.

\begin{description}
\item[Local field] Let \(F\) be a non-archimedean local field.
We let \(\calO_F, \frakp_F, k_F:=\calO_F/\frakp_F\) denote the valuation ring of \(F\), its maximal ideal and the residue field.
We write \(p, q\) for the characteristic and the cardinality of \(k_F\).
We impose no assumption on the characteristic \(\ch F\) of \(F\); it is either \(0\) or \(p\).
Let \(\mu_F\) be the group of roots of unity of \(F\) of order prime to \(p\).
We write \(v_F\) for the normalized valuation of \(F\).

We also often use the following common notation for some subgroups of \(F^{\times}\):
\[
U_F:=U_F^0:=\calO_F^{\times}, 
\quad U_F^i:=1+\frakp_F^i \text{ for \(i \in \bbZ_{>0}\)}.
\]

For a finite finite separable extension \(E/F\) of \(F\), we write \(e(E/F), f(E/F)\) for the ramification index and the residue degree of the extension \(E/F\).

We use similar notation for any finite separable extension \(E\) of \(F\).
We fix a separable closure \(F^{\sep}\) and write \(\Gamma_F:=\Gal(F^{\sep}/F)\) and \(W_F\) for the absolute Galois group and the (absolute) Weil group of \(F\).
Whenever we work with a separable extension of \(F\) we tacitly take it inside \(F^{\sep}\).

\item[Artin reciprocity map]
We identify characters of \(F^{\times}\) and those of \(W_F\) via the Artin reciprocity map \(\Art_F\colon F^{\times} \xrightarrow{\sim} W_F^{\ab}\);
if \(\chi\) is a character of \(F^{\times}\), then we usually simply write \(\chi\) for the character \(\chi \circ \Art_F^{-1}\) of \(W_F\).

In this paper, we usually normalize \(\Art_F\)
so that a uniformizer is sent to an \emph{arithmetic} Frobenius element 
(i.e., an element which acts as the \(q\)-th power map on \(k_F\)),
rather than a geometric Frobenius element
(i.e., an element whose inverse is an arithmetic Frobenius element).
When we need to discuss the normalizations,
we refer to them as the arithmetic and geometric normalizations
and write \({}_{\arith}\Art_F\) and \({}_{\geom}\Art_F\) for the maps normalized accordingly.
Thus, we have \({}_{\geom}\Art_F(x)={}_{\arith}\Art_F(x)^{-1}\) for \(x\in F^{\times}\).

As clearly explained in \cite[\S 4]{1201.5658}, the choice of a normalization of the Artin reciprocity map determines a compatible  normalization of the local Langlands correspondence for connected reductive groups, in particular for \(\GL_n\) and tori.

\item[Inner forms and inner twists]
While this paper mainly deals with \(\GL_n\) and its inner forms, in some parts we discuss general inner forms.
Let us recall some standard notions.

Let \(\bfG, \bfG'\) be connected reductive groups over \(F\).
We say that \(\bfG'\) is an \emph{inner form of} \(\bfG\)
if there exists 
an isomorphism \(\psi \colon \bfG \to \bfG'\) over \(F^{\sep}\)
such that, for any \(\sigma \in \Gamma_F\), the automorphism \(\psi^{-1} \circ \sigma(\psi)\) of \(\bfG\) is inner.
We refer to such an isomorphism \(\psi\) or such a pair \((\bfG', \psi)\) as an \emph{inner twist of} \(\bfG\).
For an inner twist \((\bfG', \psi)\) of \(\bfG\), the map \(\sigma \mapsto \psi^{-1} \circ \sigma(\psi)\) is a 1-cocycle valued in the adjoint quotient \(\bfG_{\ad}\) of \(\bfG\).
Two inner twists of \(\bfG\) are said to be \emph{isomorphic} if they define the same cohomology class in \(H^1(F, \bfG_{\ad})\).
Note that this implies that the underlying inner forms are isomorphic, but the converse does not hold in general.

Let \(A\) be a central simple algebra over \(F\) such that \(\dim_F A=n^2\) and let \(\underline{A}^{\times}\) be the algebraic group such that
\[
\underline{A}^{\times}(R)=(A\otimes_{F} R)^{\times}
\]
for any commutative \(F\)-algebra \(R\).
Then \(\underline{A}^{\times}\) is an inner form of \(\GL_n\)
and every inner form of \(\GL_n\) arises this way.
(Note, however, that we always have \(\underline{A}^{\times} \simeq \underline{A}^{\op, \times}\) while \(A\) is not isomorphic to \(A^{\op}\) in general.)
In this situation, an isomorphism \(M_n(F^{\sep}) \xrightarrow{\sim} A\otimes_{F} F^{\sep}\) of \(F^{\sep}\)-algebras induces
an inner twist \(\GL_n \to \underline{A}^{\times}\) of \(\GL_n\), whose isomorphism class is independent of the choice of an \(F^{\sep}\)-algebra isomorphism by Skolem--Noether theorem.
In other words, an inner form of \(\GL_n\) is canonically endowed with an inner twist (up to isomorphism) once the expression in terms of a central simple algebra is given.
For most parts of the paper (namely, except for the parts where we discuss the relevant general theory), we fix \(A\) and put \(\bfG^{\ast}=\GL_n, \bfG=\underline{A}^{\times}\).
Whenever we need an inner twist we use the canonical one induced by \(A\), sometimes even implicitly.

\item[Brauer group]
We write \(\Br(F):=H^2(F, \Gm)\) for the Brauer group of \(F\).
There is a canonical isomorphism
\(\inv_F \colon \Br(F) \xrightarrow{\sim} \bbQ/\bbZ\).

Let \(A, \bfG^{\ast}=\GL_n, \bfG=\underline{A}^{\times}\) be as above and let \(t\in H^1(F, \bfG^{\ast}_{\ad})\) denote the cohomology class defined by the canonical inner twist \(\bfG^{\ast} \to \bfG\).
Then the class \([A]\in \Br(F)\) of \(A\) is equal to the image of \(t\) 
by the coboundary map \(H^1(F, \bfG^{\ast}_{\ad})\to H^2(F, \Gm)=\Br(F)\) associated to the short exact sequence
\(1\to \Gm \to \bfG^{\ast} \to \bfG^{\ast}_{\ad} \to 1\).

\item[Dual groups and \(L\)-groups]
For a connected reductive group \(\bfG\) over \(F\),
we write \(\widehat{\bfG}\) for the dual group of \(\bfG\) taken over \(\bbC\) and \({}^{L}\bfG:=\widehat{\bfG} \rtimes W_F\) for the \(L\)-group.
See \cite[\S 1]{MR757954} for the precise definition and convention.
\end{description}

\section{Description of the local Jacquet-Langlands correspondence due to Bushnell--Henniart and Tam} \label{sec:review-of-BH-Tam}
In this section we first review Bushnell--Henniart's description of the local Jacquet--Langlands correspondence for essentially tame supercuspidal representations of \(\GL_n(F)\).
Then, after introducing \(\chi\)-data and \(\zeta\)-data of Langlands--Shelstad, we recall Tam's reinterpretation of the above description using \(\zeta\)-data.
Along the way, we also review similar results for the local Langlands correspondence for \(\GL_n\).

\subsection{Bushnell--Henniart's essentially tame local Jacquet--Langlands correspondence} \label{sec:review-of-BH}
Let \(A^{\ast}:=M_n(F)\) be the matrix algebra
and
let \(A\) be a central simple algebra over \(F\) such that \(\dim_F A=n^2\).
Then there exist positive integers \(m, d\) and a central division algebra 
\(D\) over \(F\) of dimension \(d^2\)
such that \(n=md\)
and \(A \simeq M_m(D)\)
(we fix such an isomorphism).
We put \(\bfG^{\ast}:=\GL_n\) and \(\bfG:=\underline{A}^{\times}\).

The local Jacquet--Langlands correspondence yields a natural bijection, which we write as \(\LJLC^{\cl}\), between the sets of isomorphism classes of irreducible discrete series representations of \(\bfG^{\ast}(F)=\GL_n(F)\)
and of \(\bfG(F)=\GL_m(D)\):
\[
\LJLC^{\cl} \colon 
\{\text{discrete series rep's of \(\GL_n(F)\)}\}/{\sim} 
\stackrel{1:1}{\longrightarrow}
\{\text{discrete series rep's of \(\GL_m(D)\)}\}/{\sim} 
\]
characterized by a character relation.
It is 
due to Deligne--Kazhdan--Vign\'{e}ras \cite{MR771672} and Rogawski \cite{MR700135} in characteristic zero and Badulescu \cite{MR1951441} in positive characteristic.

In \cite{MR2848585}, Bushnell--Henniart made the bijection \(\LJLC^{\cl}\) explicit for certain sets of supercuspidal representations denoted by \(\calA _n^{\et}(F)\) and \(\calA _m^{\et}(D)\).
This class of representations is defined in terms of two invariants of discrete series representations \(\pi\) of \(\GL_m(D)\).

One is the torsion number \(t(\pi)\), the number of unramified characters \(\chi\) of \(F^{\times}\) such that the twisted representation \(\chi \pi\) is isomorphic to \(\pi\).
As is easily seen, \(t(\pi)\) is a divisor of \(n\).

The other is the parametric degree \(\delta (\pi)\), a positive integer defined in \cite{MR2848585}.
It follows from the definition of \(\delta(\pi)\) (which we do not recall here) that \(t(\pi)\) divides \(\delta(\pi)\) and \(\delta(\pi)\) divides \(n\).
A discrete series representation \(\pi\) is said to be \emph{essentially tame} if \(p\) does not divide \(\frac{\delta (\pi)}{t(\pi)}\).
Moreover, if \(\delta (\pi)=n\), then \(\pi\) is supercuspidal. The converse is also true for \(\GL_n(F)\), but not in general.

Let \(\calA _m^{\et}(D)\) (resp.\ \(\calA _n^{\et}(F)\)) denote the set of isomorphism classes of discrete series representations \(\pi\) of \(\GL_m(D)\) (resp.\ \(\GL_n(F)\)) for which \(\delta (\pi)=n\) and \(\pi\) is essentially tame.
By the condition \(\delta (\pi)=n\) any such \(\pi\) is supercuspidal.
An important property of these two invariants is that they are preserved under \(\LJLC^{\cl}\).
In particular, we see that a discrete series representation \(\pi\) of \(\GL_m(D)\) corresponds to a supercuspidal representation of \(\GL_n(F)\) by \(\LJLC^{\cl}\) if and only if \(\delta (\pi)=n\), 
and that \(\LJLC^{\cl}\) induces a bijection between \(\calA _m^{\et}(D)\) and \(\calA _n^{\et}(F)\).

\[
\begin{tikzcd}
\{\text{discrete series rep's of \(\GL_n(F)\)}\}/{\sim} 
\arrow[r, "\LJLC^{\cl}", "1:1"']
& \{\text{discrete series rep's of \(\GL_m(D)\)}\}/{\sim} \\
\{\text{supercuspidal rep's of \(\GL_n(F)\)}\}/{\sim} \arrow[u,hookrightarrow] \arrow[r]
& \{\text{supercuspidal rep's of \(\GL_m(D)\)}\}/{\sim} \arrow[u, hookrightarrow] \\
\calA_n^{\et}(F) \arrow[u, hookrightarrow] \arrow[r, "1:1"] & \calA_m^{\et}(D) \arrow[u, hookrightarrow] 
\end{tikzcd}
\]

Bushnell--Henniart's explicit description of \(\LJLC^{\cl}\) involves a parametrization of elements of \(\calA_n^{\et}(F)\) and \(\calA_m^{\et}(D)\) by \emph{admissible pairs}.
An admissible pair (of degree \(n\)) is a pair \((E/F, \xi)\), where
\begin{itemize}
\item \(E/F\) is a tamely ramified extension of degree \(n\) and
\item \(\xi\) is a smooth character of \(E^{\times}\) satisfying some regularity condition.
\end{itemize}
Two admissible pairs \((E/F, \xi)\), \((E'/F, \xi')\) are said to be \(F\)-isomorphic if there is an \(F\)-isomorphism \(i\colon E \xrightarrow{\sim} E'\) 
such that \(\xi=\xi' \circ i\).
Bushnell--Henniart established a bijection between the set of \(F\)-isomorphism classes of admissible pairs and \(\calA_m^{\et}(D)\).
It is an extension of a similar bijection for \(\calA_n^{\et}(F)\) in \cite{MR2138141}, which in turn is based on the pioneering work \cite{MR0492087} of Howe (where the term admissible character is used instead).
We write the supercuspidal representation of \(\GL_m(D)\) (resp.\ \(\GL_n(F)\)) parametrized by an admissible pair \((E/F, \xi)\) of degree \(n\) as \({}_{\bfG}\pi^{\BH}_{(E/F, \xi)}\) (resp.\ \({}_{\bfG^{\ast}}\pi^{\BH}_{(E/F, \xi)}\)).
\[
\{\text{admissible pairs of degree \(n\)}\}/\text{\(F\)-isom.} \xrightarrow{1:1} \calA_m^{\et}(D); \quad (E/F, \xi) \mapsto {}_{\bfG}\pi^{\BH}_{(E/F, \xi)}.
\]
We will briefly review the construction of \({}_{\bfG}\pi^{\BH}_{(E/F, \xi)}\) in the beginning of Appendix \ref{sec:app}.

One of the main results of \cite{MR2848585} describes \(\LJLC^{\cl}\) for \(\calA_n^{\et}(F)\) (i.e., essentially tame supercuspidal representations of \(\GL_n(F)\))
in terms of admissible pairs and a character twist:
\begin{thm}[\cite{MR2848585}] \label{thm:etLJLC}
Let \((E/F, \xi)\) be an admissible pair.
Then there exists an explicit tamely ramified character \(\nu_{\rec}\)
of \(E^{\times}\) (which depends on \(\xi\), but only through \(\xi|_{U_E^1}\)) such that 
\[
\LJLC^{\cl} ({}_{\bfG^{\ast}}\pi^{\BH}_{(E/F, \nu_{\rec} \cdot \xi)})={}_{\bfG}\pi^{\BH}_{(E/F, \xi)},
\]
or equivalently,
\[
\LJLC^{\cl} ({}_{\bfG^{\ast}}\pi^{\BH}_{(E/F, \xi)})={}_{\bfG}\pi^{\BH}_{(E/F, \nu_{\rec}^{-1} \cdot \xi)}.
\]
\end{thm}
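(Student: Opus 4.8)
The statement to be proved is Theorem~\ref{thm:etLJLC}, the explicit description of \(\LJLC^{\cl}\) on essentially tame supercuspidal representations in terms of admissible pairs and the rectifier \(\nu_{\rec}\). The plan is to reduce the assertion on \(\GL_m(D)\) to the already-known case of \(\GL_n(F)\) by exploiting the character identity that characterizes \(\LJLC^{\cl}\), together with an explicit matching of Bushnell--Kutzko/Sécherre--Stevens type data on the two sides. Concretely, I would first recall from Howe's construction (extended in \cite{MR2138141} to \(\GL_n(F)\) and in \cite{MR2848585} to \(\GL_m(D)\)) the simple strata and simple characters attached to an admissible pair \((E/F,\xi)\): on both the split side \(A^{\ast}=M_n(F)\) and the non-split side \(A\simeq M_m(D)\), the field \(E\) embeds, a hereditary order \(\mathfrak{A}\) normalized by \(E^{\times}\) is determined, and the character \(\xi|_{U_E^1}\) determines (through the transfer of simple characters, which is canonical and normalization-independent) a simple character \(\theta\) on a compact subgroup. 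The representation \({}_{\bfG}\pi^{\BH}_{(E/F,\xi)}\) is then \(\cInd\) of an extension of the Heisenberg representation of \(\theta\) twisted by a ``base'' character built from \(\xi\); the point of the rectifier is precisely to correct the discrepancy between the two extension data when one passes from \(F\) to \(D\).

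The key steps, in order, are as follows. \textbf{(1)} Fix an admissible pair \((E/F,\xi)\) and realize \(E\) compatibly inside both \(A^{\ast}\) and \(A\); this uses that a tamely ramified degree-\(n\) field embeds in \(M_m(D)\) exactly when the relevant ramification/residue constraints hold, which they do here since \([E:F]=n=md\) forces the Hasse invariants to be compatible. \textbf{(2)} Match the simple characters: the set of simple characters on the \(F\)-side and on the \(D\)-side are in canonical bijection (the transfer), and this bijection is compatible with the admissible-pair construction. \textbf{(3)} Compare the ``\(\beta\)-extensions'' (the representations of \(J/J^1\cong\) a product of \(\GL\)'s over residue fields) on both sides: here the cuspidal types on the two sides differ by a twist coming from the local constant / Gauss-sum comparison, and this is exactly where \(\nu_{\rec}\) enters; one defines \(\nu_{\rec}\) as the tamely ramified character of \(E^{\times}\) whose restriction encodes this discrepancy. \textbf{(4)} Verify the character identity: plug the explicitly constructed \({}_{\bfG^{\ast}}\pi^{\BH}_{(E/F,\nu_{\rec}\cdot\xi)}\) and \({}_{\bfG}\pi^{\BH}_{(E/F,\xi)}\) into the Deligne--Kazhdan--Vignéras/Badulescu character relation on elliptic regular elements, using the Mackey-type formula for characters of \(\cInd\)-induced representations and the known behaviour of simple characters on such elements, to see that the two sides agree up to the sign \((-1)^{n-m}\) built into \(\LJLC^{\cl}\); by the uniqueness in the characterization of \(\LJLC^{\cl}\) this forces the displayed equality. \textbf{(5)} Finally, check that \(\nu_{\rec}\) is tamely ramified and depends on \(\xi\) only through \(\xi|_{U_E^1}\): this is immediate from its construction in step~(3), since the Gauss-sum/extension discrepancy only sees the simple character \(\theta\), which in turn depends on \(\xi\) only through \(\xi|_{U_E^1}\).

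The main obstacle is step~(3)--(4): precisely pinning down the twist relating the two \(\beta\)-extensions and showing it is realized by restricting a single character \(\nu_{\rec}\) of \(E^{\times}\). This requires a careful analysis of the finite-group representation theory (Green's parametrization of cuspidals of \(\GL\) over finite fields and how the relevant Gauss sums transform), and of how the character identity behaves under \(\cInd\); the subtlety is that one must control not just the restriction of \(\nu_{\rec}\) to the units but also its value on a uniformizer of \(E\), which is determined by matching the representations on non-compact elliptic elements rather than on the compact part. Once the explicit formula for \(\nu_{\rec}\) (in terms of the classical Gauss sums attached to the symplectic modules \({}_{?}\mathfrak{V}_{?}\) and the ramification data of \(E/F\)) is in hand, the verification of the character identity is a bookkeeping computation; I would not grind through it here, but I expect it to follow the template of \cite[\S\S5--7]{MR2848585} with the inner form replacing the split form throughout. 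I note that no assumption on \(\ch F\) or on the size of \(p\) beyond tameness of \(E/F\) is needed, since every input — the transfer of simple characters, the Green parametrization, and the character relation of \cite{MR771672}, \cite{MR1951441} — is available in full generality.
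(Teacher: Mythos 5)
The paper does not prove this statement: it is imported verbatim from Bushnell--Henniart \cite{MR2848585} (that is why the theorem is labelled with the citation, and Remark \ref{rem:char-etLJLC} merely records that the cited result holds in all characteristics). So there is no ``paper's own proof'' to compare against. What you have written is instead a condensed reconstruction of Bushnell--Henniart's multi-paper argument, and while the broad outline is faithful to their strategy (simple strata and simple characters attached to an admissible pair, transfer of simple characters across the inner twist, comparison of the two $\beta$-extensions, and a character-identity computation on elliptic elements), it is far from a proof.

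The load-bearing steps (3) and (4) are exactly the ones you defer. Pinning down the discrepancy between the $\beta$-extensions and showing that it is realized by a \emph{single} tamely ramified character of $E^{\times}$ --- including the value on a uniformizer --- is the technical heart of \cite{MR2848585} and its predecessors \cite{MR2138141}, \cite{MR2148193}, \cite{MR2679700}; calling the character-identity verification ``a bookkeeping computation'' mischaracterizes it, since it requires intricate descent-to-parahoric arguments and the full machinery of symplectic sign invariants (the $t$-factors recalled in Section \ref{sec:prelim2} of this paper are precisely the residue of that analysis). A few smaller inaccuracies: in step (1), the embeddability of a degree-$n$ separable extension $E/F$ into any central simple $F$-algebra of degree $n$ holds unconditionally and has nothing to do with matching Hasse invariants; and the parametrization of $\calA_m^{\et}(D)$ by admissible pairs (which your step (2) implicitly assumes) is itself a substantial theorem of \cite{MR2848585} that must be established before one can even formulate the comparison. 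In short: your proposal correctly identifies the shape of the Bushnell--Henniart argument, but it does not constitute a proof, and the present paper treats the result as a black box rather than re-deriving it.
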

Here, the superscript ``\(\cl\)" stands for ``classical" 
and the subscript ``\(\rec\)" stands for the term ``rectifier" in \cite{MR2679700} (and also \cite{MR3505131}).

\begin{rem} \label{rem:char-etLJLC}
As emphasized in the second paragraph of page 470 of \cite{MR2848585}, this result is valid regardless of the characteristic \(\ch F\).
\end{rem}

Let us also consider the local Langlands correspondence for \(\GL_n\).
It is a bijection \(\LLC_{\GL_n}^{\cl}\) from the set of isomorphism classes of irreducible smooth representations of \(\GL_n(F)\) to the set of \(\GL_n(\bbC)\)-conjugacy classes of \(L\)-parameters of \(\GL_n\) (or equivalently, isomorphism classes of \(n\)-dimensional Frobenius-semisimple Weil--Deligne representations),
which is due to Laumon--Rapoport--Stuhler \cite{MR1228127} in positive characteristic
and Harris--Taylor \cite{MR1876802} in characteristic zero.
In their earlier papers \cite{MR2138141}, \cite{MR2148193}, \cite{MR2679700}, 
Bushnell--Henniart also made \(\LLC_{\GL_n}^{\cl}\) explicit for \(\calA_n^{\et}(F)\):
\begin{thm}[\cite{MR2138141}, \cite{MR2148193}, \cite{MR2679700}] \label{thm:etLLC}
Let \((E/F, \xi)\) be an admissible pair.
Then there exists an explicit tamely ramified character \(\mu_{\rec}\)
of \(E^{\times}\) (which depends on \(\xi\), but only through \(\xi|_{U_E^1}\)) such that 
\[
\LLC_{\GL_n}^{\cl}({}_{\bfG^{\ast}}\pi^{\BH}_{(E/F, \mu_{\rec} \cdot \xi)})=\Ind_{W_E}^{W_F} \xi,
\]
or equivalently,
\[
\LLC_{\GL_n}^{\cl}({}_{\bfG^{\ast}}\pi^{\BH}_{(E/F, \xi)})=\Ind_{W_E}^{W_F} (\mu_{\rec}^{-1} \cdot \xi).
\]
\end{thm}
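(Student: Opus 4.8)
The plan is to compare two parametrizations by admissible pairs --- on the automorphic side, Howe's parametrization of \(\calA_n^{\et}(F)\) recalled above, and on the Galois side the assignment \((E/F,\xi)\mapsto\Ind_{W_E}^{W_F}\xi\) --- and to show that \(\LLC_{\GL_n}^{\cl}\) intertwines them up to a tame twist. First I would check that \(\LLC_{\GL_n}^{\cl}\) carries \(\calA_n^{\et}(F)\) bijectively onto the set of isomorphism classes of \(n\)-dimensional irreducible Weil representations of the form \(\Ind_{W_E}^{W_F}\xi\) with \(E/F\) tamely ramified: on the Galois side, irreducibility of \(\Ind_{W_E}^{W_F}\xi\) is exactly the admissibility of \((E/F,\xi)\), while the matching of essential tameness with tameness of \(E/F\) follows from the fact that the relevant invariants (ramification index, residue degree, depth, torsion number) are preserved by \(\LLC_{\GL_n}^{\cl}\). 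Thus \(\LLC_{\GL_n}^{\cl}\) induces a bijection \((E/F,\xi)\mapsto(E'/F,\xi')\) on \(F\)-isomorphism classes of admissible pairs with \(\LLC_{\GL_n}^{\cl}({}_{\bfG^{\ast}}\pi^{\BH}_{(E/F,\xi)})=\Ind_{W_{E'}}^{W_F}\xi'\).

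Next I would show \(E'=E\) and \(\xi'|_{U_E^1}=\xi|_{U_E^1}\), whence \(\xi'=\mu_{\rec}^{-1}\cdot\xi\) for a tamely ramified character \(\mu_{\rec}\) of \(E^{\times}\) depending on \(\xi\) only through \(\xi|_{U_E^1}\). Both the field \(E\) and the restriction \(\xi|_{U_E^1}\) are common invariants of the two parametrizations --- on the automorphic side they are read off from the simple type defining \(\pi^{\BH}\) (the underlying simple stratum and its attached tame field extension), on the Galois side from the structure of \(\Ind_{W_E}^{W_F}\xi\) upon restriction to wild inertia --- and they are matched by \(\LLC_{\GL_n}^{\cl}\) because that correspondence respects the theory of types and, in particular, the \emph{endo-class} of the wild part of the datum. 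This yields the qualitative form of the statement; it remains to make \(\mu_{\rec}\) explicit.

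The technical heart, carried out in \cite{MR2138141,MR2148193,MR2679700}, is the explicit computation of \(\mu_{\rec}\). For this I would invoke the characterization of \(\LLC_{\GL_n}^{\cl}\) for supercuspidal representations by \(\varepsilon\)- and \(L\)-factors of pairs, together with compatibility with twisting by characters of \(F^{\times}\) and with central characters: \(\mu_{\rec}\) is then forced by the requirement that, for every character \(\chi\) of \(F^{\times}\) and every supercuspidal \(\tau\) of \(\GL_m(F)\) with \(m<n\), the local constants of \({}_{\bfG^{\ast}}\pi^{\BH}_{(E/F,\xi)}\otimes\chi\) agree with those of \((\Ind_{W_E}^{W_F}\mu_{\rec}^{-1}\xi)\otimes\chi\). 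By inductivity of \(\varepsilon\)-factors (Langlands--Deligne), the Galois side contributes the Langlands constant \(\lambda(E/F,\psi_F)\) and the \(\varepsilon\)-factor of a tamely ramified character of \(E^{\times}\), both expressible through quadratic and tame Gauss sums over the residue fields, and comparison with the known Gauss-sum formula for \(\varepsilon\)-factors of tame simple types yields a closed expression for \(\mu_{\rec}\). I would organise the computation by d\'{e}vissage, factoring \(E/F\) through its maximal unramified subextension and a chain of prime-degree tame subextensions, and using compatibility of \(\LLC_{\GL_n}^{\cl}\) with automorphic induction and base change to reduce the multiplicativity of \(\mu_{\rec}\) to that of the Langlands \(\lambda\)-factor, finishing with the prime-degree cyclic case by direct computation. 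The main obstacle is precisely this bookkeeping: tracking Langlands constants and tame Gauss sums through the inductive steps, separating the unramified and totally ramified contributions, and verifying at the end that the resulting formula for \(\mu_{\rec}\) is genuinely independent of the auxiliary additive character \(\psi_F\).
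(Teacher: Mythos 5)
The paper does not prove this theorem: it is stated as a citation of the Bushnell--Henniart trilogy \cite{MR2138141}, \cite{MR2148193}, \cite{MR2679700}, and the only surrounding discussion (Remark~\ref{rem:normalization-of-BH}) addresses the translation between the arithmetic and geometric normalizations, not the proof. So there is no argument in the paper to compare yours against.

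Measured against the actual argument in \cite{MR2138141}, \cite{MR2148193}, \cite{MR2679700}, your outline captures the structural steps correctly: the correspondence is pinned down by $\varepsilon$-factors of pairs, compatibility with automorphic induction and base change allows a d\'evissage through a tame tower $F\subset\cdots\subset E$, and the rectifier $\mu_{\rec}$ is extracted from Langlands $\lambda$-constants and tame Gauss sums. Two cautions are worth recording. First, irreducibility of $\Ind_{W_E}^{W_F}\xi$ is equivalent to only one of the two admissibility conditions on $(E/F,\xi)$; the second condition (if $\xi|_{U_E^1}$ factors through $\Nr_{E/K}$ then $E/K$ is unramified) is a minimality normalization that makes $E$ well-defined, and it is needed before the bijection on $F$-isomorphism classes you assert can even be stated. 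Second, your middle paragraph treats the matching $E'=E$ and $\xi'|_{U_E^1}=\xi|_{U_E^1}$ as near-automatic bookkeeping ``because the correspondence respects types and endo-classes''; in fact this is a substantial ingredient (roughly, that the classical correspondence matches the endo-class of the underlying simple character with the restriction of the parameter to wild inertia), and it is one of the theorems the Bushnell--Henniart program establishes, not a formal consequence one gets for free en route.
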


\begin{rem} \label{rem:normalization-of-BH} 
Similarly to the Artin reciprocity map (Section \ref{sec:notation}),
we write \({}_{\arith}\LLC_{\GL_n}^{\cl}\) (resp.\ \({}_{\geom}\LLC_{\GL_n}^{\cl}\)) for the local Langlands correspondence for \(\GL_n\) according to the arithmetic (resp.\ geometric) normalization.
Then, as explained in \cite[\S 4.4]{1201.5658}, for any irreducible smooth representation \(\pi\) of \(\GL_n(F)\), 
the \(n\)-dimensional Weil--Deligne representation \({}_{\arith}\LLC_{\GL_n}^{\cl}(\pi)\)
is isomorphic to the contragredient \(({}_{\geom}\LLC_{\GL_n}^{\cl}(\pi))^{\vee}\) of its geometrically normalized counterpart.

As we usually use the arithmetic normalization in this paper, if we wish to be explicit about the normalization, the (second) statement of Theorem \ref{thm:etLLC} reads:
\[
{}_{\arith}\LLC_{\GL_n}^{\cl}({}_{\bfG^{\ast}}\pi^{\BH}_{(E/F, \xi)})
=\Ind_{W_E}^{W_F} \big((\mu_{\rec}^{-1} \cdot \xi) \circ {}_{\arith}\Art_E^{-1}\big).
\]
Since Bushnell--Henniart adopt the \emph{geometric} normalization,
the statement immediately found in \cite{MR2138141}, \cite{MR2148193}, \cite{MR2679700}
is the above equality with the two symbols ``\(\arith\)'' replaced by ``\(\geom\)''.
However, it is not difficult to deduce Theorem \ref{thm:etLLC} from the latter equality as follows:
\begin{align*}
{}_{\arith}\LLC_{\GL_n}^{\cl}({}_{\bfG^{\ast}}\pi^{\BH}_{(E/F, \xi)})
&=\big({}_{\geom}\LLC_{\GL_n}^{\cl}({}_{\bfG^{\ast}}\pi^{\BH}_{(E/F, \xi)})\big)^{\vee} \\
&=\big( \Ind_{W_E}^{W_F} \big((\mu_{\rec}^{-1} \cdot \xi) \circ {}_{\geom}\Art_E^{-1}\big)\big)^{\vee} \\
&=\Ind_{W_E}^{W_F} \big((\mu_{\rec}^{-1} \cdot \xi) \circ {}_{\geom}\Art_E^{-1}\big)^{-1} \\
&=\Ind_{W_E}^{W_F} \big((\mu_{\rec}^{-1} \cdot \xi) \circ {}_{\arith}\Art_E^{-1}\big).
\end{align*}
In other words, the character \(\mu_{\rec}\) exactly the same as the one in \cite{MR2138141}, \cite{MR2148193}, \cite{MR2679700} works for the arithmetically normalized statement.
\end{rem}
\begin{rem} \label{rem:char-etLLC}
Again Theorem \ref{thm:etLLC} is valid regardless of the characteristic \(\ch F\).
Indeed, according to \cite[page 8, Note on characteristic]{MR3236840},
although \(\ch F\) was originally assumed to be zero in \cite{MR2138141}, \cite{MR2148193}, \cite{MR2679700}, now the result applies in positive characteristic as well thanks to \cite{MR2673422}.
\end{rem}

\subsection{Langlands--Shelstad's \(\chi\)-data and the associated \(L\)-embedding} \label{sec:chi-data-L-emb}
We recall the notion of \(\chi\)-data and the associated \(L\)-embedding introduced by Langlands--Shelstad \cite{MR909227}.
In this section let \(\bfG\) be a general connected reductive group over \(F\)
and \(\bfS\) be a maximal torus of \(\bfG\) defined over \(F\).

The Galois group \(\Gamma_F\) acts on the set \(\Phi(\bfS, \bfG)\) of (absolute) roots of \(\bfG\).
Given a root \(\alpha \in \Phi(\bfS, \bfG)\), we write \(\Gamma_{\alpha}\) and \(\Gamma_{\pm \alpha}\) for the stabilizers of \(\{\alpha\}\) and \(\{\pm \alpha\}\) respectively.
Note that we have \(\Gamma_{\alpha} \subseteq \Gamma_{\pm \alpha}\) and the index is either \(1\) or \(2\).
We also write \(F_{\alpha}\) and \(F_{\pm \alpha}\) for the fixed field of \(\Gamma_{\alpha}\) and \(\Gamma_{\pm \alpha}\) in \(F^{\sep}\) respectively.
Following Langlands--Shelstad \cite{MR909227} and Adler--Spice \cite{MR2543925}, we say that
\begin{itemize}
	\item a root \(\alpha \in \Phi(\bfS, \bfG)\) is \emph{asymmetric} (resp.\ \emph{symmetric}) if \(\Gamma_{\alpha}=\Gamma_{\pm \alpha}\) (resp. \(\Gamma_{\alpha} \subsetneq \Gamma_{\pm \alpha}\)) and
	\item a symmetric root \(\alpha \in \Phi(\bfS, \bfG)\) is \emph{unramified} (resp.\ \emph{ramified}) if the quadratic extension \(F_{\alpha}/F_{\pm \alpha}\) is unramified (resp.\ ramified).
\end{itemize}
We write \(\Phi(\bfS, \bfG)^{\sym}\), \(\Phi(\bfS, \bfG)_{\sym}\), \(\Phi(\bfS, \bfG)_{\sym, \ur}\) and \(\Phi(\bfS, \bfG)_{\sym, \ram}\) for the set of asymmetric roots, symmetric roots, symmetric unramified roots and symmetric ramified roots, respectively.
\begin{defn}[\cite{MR909227}]
	\emph{A set} \(\chi=\{\chi_{\alpha}\}_{\alpha \in \Phi(\bfS, \bfG)}\) \emph{of \(\chi\)-data} (for \(\Phi(\bfS, \bfG)\)) consists of characters \(\chi_{\alpha} \colon F_{\alpha}^{\times} \to \bbC ^{\times}\)
	satisfying the following properties:
	\begin{enumerate}
		\item for any \(\alpha \in \Phi(\bfS, \bfG)\) and \(\sigma \in \Gamma_F\), we have
		\[
		\chi_{-\alpha}=\chi_{\alpha}^{-1}, \quad \chi_{\sigma \cdot \alpha}=\chi_{\alpha}\circ \sigma^{-1}
		\]
		and
		\item for a symmetric root \(\alpha \in \Phi(\bfS, \bfG)\), the restriction \(\chi_{\alpha}|_{F_{\pm \alpha}^{\times}}\) agrees with the quadratic character associated to the quadratic extension \(F_{\alpha}/F_{\pm \alpha}\).
	\end{enumerate}
\end{defn}
Given a set \(\chi\) of \(\chi\)-data, we can construct an embedding of \({}^{L}\bfS\) into \({}^{L}\bfG\) (up to \(\widehat{\bfG}\)-conjugacy) in the following way.
Let \(j \colon \bfS \hookrightarrow \bfG\) denote the inclusion map.
We fix a Borel subgroup \(\bfB \subset \bfG\) containing \(\bfS\) (possibly not defined over \(F\))
and a \(\Gamma_F\)-fixed splitting \((\calB, \calT, \{\calX\})\) of \(\widehat{\bfG}\) for now.
We then have an isomorphism \(\calT \xrightarrow{\sim} \widehat{\bfS}\) associated to the pairs \((\bfB, \bfS)\) and \((\calB, \calT)\), and hence also an embedding \(\widehat{j} \colon \widehat{\bfS} \hookrightarrow \widehat{\bfG}\) by composing the inverse of the former isomorphism with the inclusion \(\calT \hookrightarrow \widehat{\bfG}\).
By further using \(\{\calX\}\) and \(\chi\), we can construct an embedding \({}^{L}j_{\chi} \colon {}^{L}\bfS \hookrightarrow {}^{L}\bfG\) extending \(\widehat{j}\) and preserving the projections to \(W_F\)
(see	\cite[(2.6)]{MR909227}).
The \(\widehat{\bfG}\)-conjugacy class of \({}^{L}j_{\chi}\) depends on the choice of \(\chi\)-data, but not on the choice of \(\bfB\) and \((\calB, \calT, \{\calX\})\) (see \cite[(2.6.1) and (2.6.2)]{MR909227}).
\[
\begin{tikzcd}
\widehat{\bfS} \arrow[rr,"\widehat{j}", hookrightarrow] \arrow[d, hookrightarrow]& & \widehat{\bfG} \arrow[d, hookrightarrow] \\
{}^{L}\bfS \arrow[rr, "{}^{L}j_{\chi}", hookrightarrow] \arrow[dr, twoheadrightarrow] & & {}^{L}\bfG \arrow[dl, twoheadrightarrow] \\
& W_F &
\end{tikzcd}
\]
\begin{rem} \label{rem:normalization-for-emb}
We do not recall the construction of \({}^{L}j_{\chi}\), 
but only remark that in the construction each \(\chi_{\alpha}\) in the \(\chi\)-data is identified with a character of \(W_{F_{\alpha}}\) via the Artin reciprocity map \({}_{\arith}\Art_{F_{\alpha}}\) (see Section \ref{sec:notation} for \({}_{\arith}\Art_{F_{\alpha}}\)).
To stress the arithmetic normalization used here, we sometimes write \({}_{\arith}^{L}j_{\chi}\) for \({}^{L}j_{\chi}\) in this paper.
As is easily checked, if \(\chi=\{\chi_{\alpha}\}\) is a set of \(\chi\)-data, 
then so is \(\chi^{-1}=\{\chi_{\alpha}^{-1}\}\).
Therefore, we can naturally construct an \(L\)-embedding \({}_{\geom}^{L}j_{\chi}\) with the geometric normalization, namely,
\[
{}_{\geom}^{L}j_{\chi}:={}_{\arith}^{L}j_{\chi^{-1}}.
\]
\end{rem}
\begin{rem} \label{rem:char-LS}
Throughout \cite{MR909227}, \(\ch F\) is assumed to be zero.
However, as explained in \cite[\S 6.2.2]{2008.04472}, the construction of an \(L\)-embedding from a set of \(\chi\)-data works without any problems even in positive characteristic.
\end{rem}

Later we also need a similar notion called \(\zeta\)-data:
\begin{defn}[\cite{MR909227}]
\emph{A set} \(\zeta=\{\zeta_{\alpha}\}_{\alpha \in \Phi(\bfS, \bfG)}\) \emph{of \(\zeta\)-data} (for \(\Phi(\bfS, \bfG)\)) consists of characters \(\zeta_{\alpha} \colon F_{\alpha}^{\times} \to \bbC ^{\times}\)
satisfying the following properties:
\begin{enumerate}
\item for any \(\alpha \in \Phi(\bfS, \bfG)\) and \(\sigma \in \Gamma_F\), we have
\[
\zeta_{-\alpha}=\zeta_{\alpha}^{-1}, \quad \zeta_{\sigma \cdot \alpha}=\zeta_{\alpha}\circ \sigma^{-1}
\]
and
\item[(\(2'\))] for a symmetric root \(\alpha \in \Phi(\bfS, \bfG)\), the restriction \(\zeta_{\alpha}|_{F_{\pm \alpha}^{\times}}\) is trivial.
\end{enumerate}
\end{defn}

Before proceeding, let us summarize some standard facts.
\begin{rem} \label{rem:GL_n-and-ell-tori}
\begin{enumerate}
\item \label{item:roots-for-inner-form-of-GL_n}
Let \(\bfG\) and \(\bfS\) be as above.
Moreover, let \(\psi \colon \bfG \to \bfG'\) be an inner twist of \(\bfG\)
and suppose that the restriction \(\psi|_{\bfS} \colon \bfS \to \bfG'\) is defined over \(F\).
Then the image of \(\bfS\) under \(\psi\) is also a maximal torus in \(\bfG'\), which we continue to write simply as \(\bfS\).
In this situation, we have an equality
\[
\Phi(\bfS, \bfG) =\Phi(\bfS, \bfG')
\]
of subsets of the (absolute) character group \(X^{\ast}(\bfS)\) of \(\bfS\).

Now let \(\bfG^{\ast}=\GL_n\) and \(\bfG=\underline{A}^{\times}\) be as in Section \ref{sec:review-of-BH}
and suppose that a maximal torus \(\bfS \subset \bfG^{\ast}\) is elliptic.
Recall that we have an inner twist \(\psi \colon \bfG^{\ast} \to \bfG\), canonically up to isomorphism (see Section \ref{sec:notation}).
By \cite[\S 10]{MR858284} and \cite[Lemma 3.2.1]{MR4013740}, there exists \(g \in \bfG^{\ast}(F^{\sep})\) such that \(\psi \circ \Ad (g)|_{\bfS} \colon \bfS \to \bfG\)  is defined over \(F\).
Thus we are in the above situation and have
\(\Phi(\bfS, \bfG^{\ast})=\Phi(\bfS, \bfG)\).

\item \label{item:field-extension}
Let \(\bfG^{\ast}=\GL_n=\underline{A}^{\ast, \times}\) and \(\bfG=\underline{A}^{\times}\) be as in the previous paragraph.

It is well-known and easily seen that there are natural bijections between the following sets:
\begin{enumerate}
\item the set of \(F\)-isomorphism classes of separable extension \(E/F\) of degree \(n\),
\item[(b1)] the set of \(\bfG^{\ast}(F)\)-conjugacy classes of elliptic maximal tori \(\bfS \subset \bfG^{\ast}\),
\item[(c1)] the set of equivalence classes of \(F\)-embeddings \(j\colon \bfS \hookrightarrow \bfG^{\ast}\) of tori \(\bfS\) such that \(j(\bfS) \subset \bfG^{\ast}\) is an elliptic maximal torus,
where \(j_1 \colon \bfS_1 \hookrightarrow \bfG^{\ast}\) and 
\(j_2 \colon \bfS_2 \hookrightarrow \bfG^{\ast}\) are equivalent if
there exists an \(F\)-isomorphism \(\iota \colon \bfS_1 \stackrel{\sim}{\to} \bfS_2\) such that \(j_1\) and \(j_2 \circ \iota\) are \(\bfG^{\ast}(F)\)-conjugate,
\item[(b2)] the set of \(\bfG(F)\)-conjugacy classes of elliptic maximal tori \(\bfS \subset \bfG\), and
\item[(c2)] the set of equivalence classes of \(F\)-embeddings \(j\colon \bfS \hookrightarrow \bfG\) of tori \(\bfS\) such that \(j(\bfS) \subset \bfG\) is an elliptic maximal torus, where the equivalence relation is similar to the one in (c1).
\end{enumerate}
We only sketch the bijections.
The bijection from (c1) to (b1) is induced by taking the image of \(j\) and 
that from (b1) to (a) is induced by taking an extension \(E/F\) such that \(\bfS\) is isomorphic to \(\Res_{E/F}\Gm\),
while that from (a) to (c1) is induced by taking an \(F\)-algebra embedding \(E\hookrightarrow A^{\ast}\) to obtain \(\bfS:=\Res_{E/F} \Gm \hookrightarrow \underline{A}^{\ast, \times}=\bfG^{\ast}\).
The bijection between (b1) and (b2) (and that between (c1) and (c2)) can be induced in a similar way to the previous remark.

We frequently use these bijections throughout this paper.
\end{enumerate}
\end{rem}

\subsection{\(L\)-embedding and induction} \label{sec:L-emb-and-ind}
Let \(\bfG^{\ast}=\GL_n\) and \(\bfG=\underline{A}^{\times}\) be as in Section \ref{sec:review-of-BH}.
Moreover, let \(\bfS \subset \bfG^{\ast}\) be an elliptic maximal torus.
Then \(\bfS \simeq \Res_{E/F}\Gm\) for some separable extension \(E/F\) of degree \(n\).

First we recall an explicit description of the set \(\Phi(\bfS, \bfG^{\ast})=\Phi(\bfS, \bfG)\)  (cf.\ Remark \ref{rem:GL_n-and-ell-tori} \eqref{item:roots-for-inner-form-of-GL_n}) following \cite[\S 3.1]{MR3509939} (also reviewed in \cite[\S 2.1]{MR3505131}).
The canonical isomorphism
\[
E\otimes_{F} F^{\sep} \stackrel{\sim}{\to} (F^{\sep})^{\Gamma_F/\Gamma_E};
\quad 
x\otimes a\mapsto (\sigma(x)a)_{\sigma \Gamma_E}
\]
induces 
an isomorphism
\(\bfS_{F^{\sep}}\simeq \Gm^{\Gamma_F/\Gamma_E}\).
For any \(g\in \Gamma_F\) we write \(\delta_{g}\) for the composite
\[
\delta_{g} \colon \bfS_{F^{\sep}}\simeq \Gm^{\Gamma_F/\Gamma_E}\to \Gm
\]
with the projection to the \(g\Gamma_E\)-component.
Then for any set \(\{g_1, \dots, g_n\}\) of representatives of \(\Gamma_F/\Gamma_E\)
the characters \(\delta_{g_1}, \dots, \delta_{g_n}\)
form a \(\bbZ\)-basis of the (absolute) character group \(X^{\ast}(\bfS)\).

Put \(\begin{bmatrix}
g_1 \\ g_2
\end{bmatrix}
=\delta_{g_1}-\delta_{g_2} \in X^{\ast}(\bfS)\)
for \(g_1, g_2 \in \Gamma_F\).
Then the set \(\Phi(\bfS, \bfG^{\ast})\) 
is described as
\[
\Phi(\bfS, \bfG^{\ast})=\Big\{
\begin{bmatrix}
g_1 \\ g_2
\end{bmatrix}
\mid 
g_1\Gamma_E \neq g_2\Gamma_E
\Big\}.
\]
Note that \(\Phi(\bfS, \bfG^{\ast})\) is stable under the action of \(\Gamma_F\)
and the action is given by
\[
g\cdot \begin{bmatrix}
g_1 \\ g_2
\end{bmatrix}
=\begin{bmatrix}
gg_1 \\ gg_2
\end{bmatrix}
\]
for \(g\in \Gamma_F\) and 
\(\begin{bmatrix}
g_1 \\
g_2
\end{bmatrix} 
\in \Phi(\bfS, \bfG^{\ast})\).
Moreover, we have the following bijection:
\begin{equation} \label{eq:Galois-orb-as-coset}
(\Gamma_E\backslash \Gamma_F/\Gamma_E)' \xrightarrow{1:1}\Gamma_F\backslash \Phi(\bfS, \bfG^{\ast});
\quad 
\Gamma_Eg\Gamma_E \mapsto 
\Gamma_F
\begin{bmatrix}
1 \\
g
\end{bmatrix},
\end{equation}
where \((\Gamma_E\backslash \Gamma_F/\Gamma_E)':=\Gamma_E\backslash \Gamma_F/\Gamma_E-\{\Gamma_E\}\) denotes the set of non-trivial double cosets.

Now let \(\xi\) be a character of \(\bfS(F)\)
and \(\chi=\{\chi_{\alpha}\}_{\alpha \in \Phi(\bfS, \bfG^{\ast})}\) be a set of \(\chi\)-data.
The local Langlands correspondence for the torus \(\bfS\) associates to \(\xi\) an \(L\)-parameter \(\phi_{\xi} \colon W_F \to {}^{L}\bfS\) of \(\bfS\).
On the other hand, the \(\chi\)-data \(\chi\) yields an \(L\)-embedding 
\({}^{L}j_{\chi}\colon {}^{L}\bfS \hookrightarrow {}^{L}\bfG^{\ast}\) as in Section \ref{sec:chi-data-L-emb}.
Thus, by composition we have an \(L\)-parameter of \(\bfG^{\ast}\): 
\[
{}^{L}j_{\chi} \circ \phi_{\xi} \colon W_F \to {}^{L}\bfG^{\ast}.
\]
Note that by post-composing the projection \(\pr_{\widehat{\bfG^{\ast}}} \colon {}^{L}\bfG^{\ast} \to \widehat{\bfG^{\ast}}\) we may equally regard this \(L\)-parameter
as an \(n\)-dimensional representation of \(W_F\).
The following proposition of Tam gives a useful expression of this representation.

\begin{prop}[{\cite[Proposition 6.5]{MR3509939}}] \label{prop:Tam}
Let \(\xi\) and \(\chi\) be as above.
Put 
\[
\mu_{\chi}:=\prod_{[\alpha]\in \Gamma_F\backslash \Phi(\bfS, \bfG^{\ast})} \chi_{\alpha}|_{E^{\times}},
\]
where, for each orbit \([\alpha]\in \Gamma_F\backslash \Phi(\bfS, \bfG^{\ast})\),
we take a representative \(\alpha \in \Phi(\bfS, \bfG^{\ast})\) of the form \(\alpha=
\begin{bmatrix}
1 \\
g
\end{bmatrix}\)
for some \(g\in \Gamma_F\).

Then the \(L\)-parameter \({}^{L}j_{\chi} \circ \phi_{\xi}\), regarded as an \(n\)-dimensional representation of \(W_F\), is isomorphic to \(\Ind_{W_E}^{W_F} (\xi \cdot \mu_{\chi})\).
\end{prop}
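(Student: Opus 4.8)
The plan is to compute both sides in a common coordinate system and to recognise the left-hand side as a monomial (induced) representation. Fix a set of representatives \(g_1=1,g_2,\dots,g_n\) of \(\Gamma_F/\Gamma_E\) and use the \(\bbZ\)-basis \(\delta_{g_1},\dots,\delta_{g_n}\) of \(X^{\ast}(\bfS)\) recalled above to identify \(\widehat{\bfS}\) with the diagonal torus \(\calT\subset\GL_n(\bbC)=\widehat{\bfG^{\ast}}\); then \(\widehat{j}\) is the inclusion \(\calT\hookrightarrow\GL_n(\bbC)\), the standard pinning of \(\GL_n(\bbC)\) serves as the \(\Gamma_F\)-fixed splitting (recall \(\bfG^{\ast}\) is split, so \({}^{L}\bfG^{\ast}=\GL_n(\bbC)\times W_F\) and \(\pr_{\widehat{\bfG^{\ast}}}\) is a homomorphism), and \(W_F\) acts on \(\widehat{\bfS}\) by the permutation action on \(\Gamma_F/\Gamma_E\). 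Write \(\phi_{\xi}=(c_{\xi},\mathrm{id})\colon W_F\to\widehat{\bfS}\rtimes W_F\), where \(c_{\xi}\) is the Shapiro \(1\)-cocycle computing the local Langlands correspondence for the torus \(\bfS=\Res_{E/F}\Gm\); in the arithmetic normalisation, for \(w\in W_E\) the first coordinate of \(c_{\xi}(w)\) is \(\xi(w)\) (via \({}_{\arith}\Art_E\)). Finally recall from \cite[(2.6)]{MR909227} that \({}^{L}j_{\chi}(s\rtimes w)=\widehat{j}(s)\,r_{\chi}(w)\rtimes w\) for a \(1\)-cochain \(r_{\chi}\colon W_F\to N_{\GL_n(\bbC)}(\calT)\) whose ``permutation part'' is the image of \(w\) in the Weyl group \(W(\widehat{\bfG^{\ast}},\calT)=S_n\) (which, \(\bfG^{\ast}\) being split, is exactly the permutation of \(\Gamma_F/\Gamma_E\) attached to \(w\)) and whose ``diagonal part'' is assembled orbit by orbit over \(\Gamma_F\backslash\Phi(\bfS,\bfG^{\ast})\) from the \(\chi\)-data, for a suitable choice of Borel \(\bfB\supset\bfS\).

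With these models, \(M:=\pr_{\widehat{\bfG^{\ast}}}\circ{}^{L}j_{\chi}\circ\phi_{\xi}\) is the honest \(n\)-dimensional representation \(w\mapsto\widehat{j}(c_{\xi}(w))\,r_{\chi}(w)\), and each \(M(w)\) is a monomial matrix with respect to the standard basis \(v_1,\dots,v_n\) of \(\bbC^n\) (indexed by \(\Gamma_F/\Gamma_E\), equivalently by \(W_E\backslash W_F\)), permuting the lines \(\bbC v_i\) exactly as \(W_F\) permutes \(\Gamma_F/\Gamma_E\). Since \(W_E\) is the stabiliser of \(g_1\Gamma_E=\Gamma_E\) and \([W_F:W_E]=n\), the monomial/Mackey principle gives \(M\cong\Ind_{W_E}^{W_F}\theta\), where \(\theta\colon W_E\to\bbC^{\times}\) is the character \(w\mapsto M(w)_{11}\) (well defined and multiplicative since \(M(w)\) fixes \(\bbC v_1\) for \(w\in W_E\)). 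Thus it remains only to identify \(\theta=\xi\cdot\mu_{\chi}\) as characters of \(E^{\times}=\bfS(F)\).

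For \(w\in W_E\) we have \(M(w)_{11}=c_{\xi}(w)_1\cdot r_{\chi}(w)_{11}=\xi(w)\cdot r_{\chi}(w)_{11}\), so everything reduces to the single identity \(r_{\chi}(w)_{11}=\mu_{\chi}(w)=\prod_{[\alpha]}\chi_{\alpha}|_{W_E}(w)\) for \(w\in W_E\), the product being over \([\alpha]\in\Gamma_F\backslash\Phi(\bfS,\bfG^{\ast})\) with representative \(\alpha=\begin{bmatrix}1\\g\end{bmatrix}\). I would prove this by unwinding the Langlands--Shelstad recipe for the diagonal part of \(r_{\chi}\) one Galois orbit at a time: the roots of a given orbit that ``touch'' the first coordinate are governed, via \eqref{eq:Galois-orb-as-coset}, by the double coset \(\Gamma_E g\Gamma_E\) (equivalently by \(\Gamma_{\alpha}=\Gamma_E\cap g\Gamma_E g^{-1}\), so that \(E\subseteq F_{\alpha}\) and \(\chi_{\alpha}|_{E^{\times}}\) makes sense), and the Mackey-type sum over that double coset collapses the orbit's contribution to exactly \(\chi_{\alpha}|_{W_E}\); the product over orbits is then \(\mu_{\chi}\). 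If convenient, one may first invoke the cocycle comparison \cite[(2.6.1)]{MR909227}, \cite[(2.6.2)]{MR909227} to reduce to the most accessible set of \(\chi\)-data: two sets \(\chi,\chi'\) differ by a set of \(\zeta\)-data, the corresponding \(L\)-embeddings differ by a \(1\)-cocycle \(W_F\to\widehat{\bfS}\) whose class, under the local Langlands correspondence for \(\bfS\), is the character \(\prod_{[\alpha]}(\chi_{\alpha}/\chi'_{\alpha})|_{E^{\times}}\) of \(E^{\times}\), so the general case follows from any one case by a twist.

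The \emph{main obstacle} is precisely this last identity \(r_{\chi}(w)_{11}=\prod_{[\alpha]}\chi_{\alpha}|_{W_E}(w)\): extracting the diagonal part of the Langlands--Shelstad cochain demands careful bookkeeping through \cite[(2.6)]{MR909227}, in particular keeping the three classes of roots apart (asymmetric roots, where the contribution is built by induction from \(W_{F_{\alpha}}\); symmetric unramified and symmetric ramified roots, where it is built from \(W_{F_{\pm\alpha}}\) and the condition that \(\chi_{\alpha}|_{F_{\pm\alpha}^{\times}}\) be the quadratic character of \(F_{\alpha}/F_{\pm\alpha}\) intervenes), checking independence of the orbit representative \(\alpha=\begin{bmatrix}1\\g\end{bmatrix}\) and of all auxiliary choices (coset representatives, gauge, Borel, splitting), and keeping the arithmetic normalisation of \({}_{\arith}\Art\) consistent throughout — the same normalisation being used both in \({}^{L}j_{\chi}\) and in regarding \(\xi\cdot\mu_{\chi}\) as a character of \(W_E\). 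By comparison, the monomiality of \(M\), the reduction to the ``diagonal return'' \(\theta\), and the change-of-\(\chi\)-data step are routine.
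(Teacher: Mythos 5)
The paper states this proposition as a citation to Tam \cite[Proposition 6.5]{MR3509939} and does not supply an argument of its own, so your outline is effectively a reconstruction of Tam's proof. The overall shape is correct: in the basis \(\delta_{g_1},\dots,\delta_{g_n}\) of \(X^{\ast}(\bfS)\), both the Shapiro cocycle \(c_{\xi}\) for the torus and the Langlands--Shelstad cochain \(r_{\chi}\) take values in \(N_{\GL_n(\bbC)}(\calT)\) with underlying permutation action on \(\Gamma_F/\Gamma_E\), so \(M=\pr_{\widehat{\bfG^{\ast}}}\circ{}^{L}j_{\chi}\circ\phi_{\xi}\) is a monomial representation, and Mackey recognition gives \(M\simeq\Ind_{W_E}^{W_F}\theta\) with \(\theta(w)=M(w)_{11}=\xi(w)\,r_{\chi}(w)_{11}\) for \(w\in W_E\).

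What you have not done is prove the crux, \(r_{\chi}(w)_{11}=\prod_{[\alpha]}\chi_{\alpha}|_{W_E}(w)\), which you yourself label the main obstacle; this is precisely the substantive content of Tam's proposition. It requires the per-orbit bookkeeping you describe but do not carry out: using \eqref{eq:Galois-orb-as-coset} to isolate the contribution of each \(\Gamma_F\)-orbit (equivalently, each non-trivial double coset \(\Gamma_E g\Gamma_E\)) to the diagonal part of the Langlands--Shelstad cochain, treating asymmetric, symmetric unramified, and symmetric ramified roots separately with attention to the gauge and coset choices in \cite[(2.6)]{MR909227}, and verifying independence of all auxiliary data. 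The suggested shortcut of ``twisting by \(\zeta\)-data'' does not remove this burden: the assertion that two \(L\)-embeddings built from \(\chi\)-data differing by \(\zeta\)-data are related, after the LLC for \(\bfS\), by exactly the character \(\prod_{[\alpha]}\zeta_{\alpha}|_{E^{\times}}\) is the \(\zeta\)-data analogue of the identity you seek, and establishing it requires the same orbit-by-orbit computation (Langlands--Shelstad's (2.6.1)/(2.6.2) together with \cite[Definition 4.6.5]{MR4013740} give the character abstractly, but identifying it with the explicit product is again Tam's calculation). So the plan is a sound road map but, as written, the proof is incomplete at its central step.
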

Note that 
the fixed field \(F_{\alpha}\) 
for a root \(\alpha=\begin{bmatrix}
g_1 \\
g_2
\end{bmatrix}
\in \Phi(\bfS, \bfG^{\ast})\) is nothing but the composite field \(g_1(E)\cdot g_2(E)\),
and thus \(F_{\alpha}^{\times}\) contains \(E^{\times}\) for 
\(\alpha=\begin{bmatrix}
1 \\
g
\end{bmatrix}\)
as in the proposition 
and the restriction \(\chi_{\alpha}|_{E^{\times}}\) makes sense.
Also, the restriction \(\chi_{\alpha}|_{E^{\times}}\) only depends on the orbit \([\alpha]\) and not on the choice of \(\alpha=\begin{bmatrix}
1 \\
g
\end{bmatrix}\).
Indeed, in view of \eqref{eq:Galois-orb-as-coset}, another choice \(\beta\) is
of the form \(\beta=\begin{bmatrix}
1 \\
h_1gh_2
\end{bmatrix}\)
for some \(h_1, h_2 \in \Gamma_E\).
Then we have 
\[
\beta=\begin{bmatrix}
1 \\
h_1g
\end{bmatrix}
=h_1 \cdot
\begin{bmatrix}
h_1^{-1} \\
g
\end{bmatrix}
=h_1 \cdot
\begin{bmatrix}
1 \\
g
\end{bmatrix}
=h_1 \cdot \alpha,
\]
which implies 
\[
\chi_{\beta}(x)
=\chi_{h_1 \cdot \alpha}(x)
=\chi_{\alpha}(h_1^{-1}(x))
=\chi_{\alpha}(x)
\]
for any \(x\in E^{\times}\).

\begin{rem} \label{rem:normalization-of-Tam-prop}
Let us discuss the arithmetic and geometric normalizations.
The normalization is relevant to Proposition \ref{prop:Tam} through three objects:
\begin{itemize}
\item the \(L\)-embedding \({}^{L}j_{\chi}\) (Remark \ref{rem:normalization-for-emb}), 
\item the \(L\)-parameter \(\phi_{\xi}\) of \(\bfS\), and
\item the character \(\xi \cdot \mu_{\chi}\) of \(W_E\) (Section \ref{sec:notation}).
\end{itemize}
We write \({}_{\arith}\phi_{\xi}\) (resp.\ \({}_{\geom}\phi_{\xi}\)) for the \(L\)-parameter of \(\bfS\) associated to the character \(\xi\) of \(\bfS(F)\) by the local Langlands correspondence for \(\bfS\) according to the arithmetic (resp.\ geometric) normalization.
We have \({}_{\arith}\phi_{\xi}={}_{\geom}\phi_{\xi ^{-1}}\).
Thus, in precise but cumbersome notation, the statement of Proposition \ref{prop:Tam} is expressed as follows:
\[
\pr_{\widehat{\bfG^{\ast}}} \circ {}_{\arith}^{L}j_{\chi} \circ {}_{\arith}\phi_{\xi} \simeq \Ind_{W_E}^{W_F} \big((\xi \cdot \mu_{\chi}) \circ {}_{\arith}\Art_E^{-1}\big).
\]
In fact, \cite[Proposition 6.5]{MR3509939} adopts the \emph{geometric} normalization. In other words, it asserts
\[
\pr_{\widehat{\bfG^{\ast}}} \circ {}_{\geom}^{L}j_{\chi} \circ {}_{\geom}\phi_{\xi} \simeq \Ind_{W_E}^{W_F} \big((\xi \cdot \mu_{\chi}) \circ {}_{\geom}\Art_E^{-1}\big),
\]
which is not exactly the same as Proposition \ref{prop:Tam}.
However, similarly to Remark \ref{rem:normalization-of-BH}, it is fairly easy to deduce one from the other as follows:
\begin{align*}
\pr_{\widehat{\bfG^{\ast}}} \circ {}_{\arith}^{L}j_{\chi} \circ {}_{\arith}\phi_{\xi} 
& \simeq \pr_{\widehat{\bfG^{\ast}}} \circ {}_{\geom}^{L}j_{\chi^{-1}} \circ {}_{\geom}\phi_{\xi^{-1}} \\
& \simeq \Ind_{W_E}^{W_F} \big((\xi^{-1} \cdot \mu_{\chi^{-1}}) \circ {}_{\geom}\Art_E^{-1}\big) \\
& \simeq \Ind_{W_E}^{W_F} \big((\xi^{-1} \cdot \mu_{\chi}^{-1}) \circ {}_{\geom}\Art_E^{-1}\big) \\
& \simeq \Ind_{W_E}^{W_F} \big((\xi \cdot \mu_{\chi}) \circ {}_{\arith}\Art_E^{-1}\big).
\end{align*}
Here we used the equality \(\mu_{\chi^{-1}}=\mu_{\chi}^{-1}\), which is immediate from the definition of \(\mu_{\chi}\).
\end{rem}
Motivated by Proposition \ref{prop:Tam}, Tam defined a variant of \(\mu_{\chi}\) for \(\zeta\)-data:
\begin{defn}[{\cite[page 379]{MR3505131}}] \label{def:nu_zeta}
Let \(\zeta=\{\zeta_{\alpha}\}_{\alpha \in \Phi(\bfS, \bfG^{\ast})}\)
be a set of \(\zeta\)-data.
Put 
\[
\nu_{\zeta}:=\prod_{[\alpha]\in \Gamma_F\backslash \Phi(\bfS, \bfG^{\ast})} \zeta_{\alpha}|_{E^{\times}},
\]
where, for each orbit \([\alpha]\in \Gamma_F\backslash \Phi(\bfS, \bfG^{\ast})\),
we take a representative \(\alpha \in \Phi(\bfS, \bfG^{\ast})\) as in Proposition \ref{prop:Tam}.
\end{defn}

\subsection{Tam's reinterpretation of rectifiers} \label{sec:Tam-thm}
Let \(\bfG^{\ast}, \bfG, \bfS \simeq \Res_{E/F}\Gm\) be as in the previous section and let \((E/F, \xi)\) be an admissible pair.
Then by Theorems \ref{thm:etLLC} and \ref{thm:etLJLC} we have characters \(\mu_{\rec}, \nu_{\rec}	\) of \(E^{\times}\).

In \cite{MR3509939} (resp.\ \cite{MR3505131}), Tam constructed an explicit 
set of \(\chi\)-data (resp.\ \(\zeta\)-data), which we write as
\[
\chi_{\Tam}=\{\chi_{\Tam, \alpha}\}_{\alpha \in \Phi(\bfS, \bfG^{\ast})}, \quad \zeta_{\Tam}=\{\zeta_{\Tam, \alpha}\}_{\alpha \in \Phi(\bfS, \bfG^{\ast})}
\]
respectively.
\begin{thm}[{\cite[Theorem 7.1 (ii)]{MR3509939}, \cite[Theorem 5.5 (ii)]{MR3505131}}] \label{thm:Tam}
Let the notation be as above.
Then we have
\[
\mu_{\rec}=\mu_{\chi_{\Tam}},
\quad
\nu_{\rec}=\nu_{\zeta_{\Tam}}.
\]
\end{thm}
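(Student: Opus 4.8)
The plan is to prove the two identities by the same scheme, doing $\mu_{\rec}=\mu_{\chi_{\Tam}}$ first and then $\nu_{\rec}=\nu_{\zeta_{\Tam}}$. In each case we must compare a single tamely ramified character of $E^{\times}$ --- the Bushnell--Henniart rectifier \cite{MR2848585, MR2679700}, built through a tower of intermediate fields and Gauss sums attached to hereditary orders in a central simple algebra --- with a character defined as the product $\prod_{[\alpha]}\chi_{\Tam,\alpha}|_{E^{\times}}$ (resp.\ $\prod_{[\alpha]}\zeta_{\Tam,\alpha}|_{E^{\times}}$) over the $\Gamma_F$-orbits of roots of $\bfS\simeq\Res_{E/F}\Gm$ in $\bfG^{\ast}=\GL_n$, using Tam's explicitly constructed $\chi$-data (resp.\ $\zeta$-data) \cite{MR3509939, MR3505131}. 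Since every character in sight is tamely ramified and depends on $\xi$ only through $\xi|_{U_E^1}$, it suffices to compare the two sides on the Teichm\"uller lift of $k_E^{\times}$ and at a single uniformizer of $E$.

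First I would match the two index sets. By the bijection recalled in Remark~\ref{rem:GL_n-and-ell-tori}, namely $(\Gamma_E\backslash\Gamma_F/\Gamma_E)'\xrightarrow{\sim}\Gamma_F\backslash\Phi(\bfS,\bfG^{\ast})$, $\;\Gamma_Eg\Gamma_E\mapsto\Gamma_F\begin{bmatrix} 1 \\ g \end{bmatrix}$, the $\Gamma_F$-orbits of roots are parametrized by the nontrivial $\Gamma_E$-double cosets in $\Gamma_F$; for the orbit of $\alpha=\begin{bmatrix} 1 \\ g \end{bmatrix}$ one has $F_\alpha=E\cdot g(E)$, so the asymmetric/symmetric-unramified/symmetric-ramified trichotomy of $\alpha$ translates into a trichotomy of double cosets, hence of embeddings $E\hookrightarrow F^{\sep}$ up to the relevant equivalence. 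One then checks that the Bushnell--Henniart rectifier likewise decomposes as a product of characters attached to the intermediate subfields of $E/F$, which group according to this same trichotomy: an elementary level-zero (``tame'') contribution, a contribution built from the quadratic characters of the unramified quadratic subextensions, and a contribution built from quadratic Gauss sums over the residue fields of the ramified quadratic subextensions. The proof is then a factor-by-factor comparison: the partial product of the $\chi_{\Tam,\alpha}|_{E^{\times}}$ over the orbits of each type should equal the corresponding Bushnell--Henniart factor.

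For asymmetric orbits the comparison is essentially bookkeeping: the roots occur in pairs $\pm\alpha$ with $\chi_{-\alpha}=\chi_\alpha^{-1}$, the restriction $\chi_{\Tam,\alpha}|_{E^{\times}}$ depends only on the orbit, and the product is matched with the tame factor of $\mu_{\rec}$ using functoriality of $\Art$. For symmetric unramified orbits one compares the transfer to $E^{\times}$ of the quadratic character of $F_\alpha/F_{\pm\alpha}$ with the corresponding rectifier factor, which again reduces to a statement about unramified quadratic characters. The technical heart is the symmetric ramified orbits: there $\chi_{\Tam,\alpha}$ is pinned down by requiring that $\chi_{\Tam,\alpha}|_{F_{\pm\alpha}^{\times}}$ be the ramified quadratic character of $F_\alpha/F_{\pm\alpha}$ together with a Gauss-sum normalization of its restriction to $E^{\times}$, and identifying this with the ramified factor of $\mu_{\rec}$ requires (i) an exact comparison of quadratic Gauss sums, signs included --- hence a careful use of the normalization of $\Art$, cf.\ Remark~\ref{rem:normalization-of-Tam-prop} --- and (ii) a dictionary between the hereditary-order computations of Bushnell--Henniart and the root-datum description of the $\chi$-data. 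I expect (ii), together with the sign in (i), to be where almost all the effort goes.

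Finally, the Jacquet--Langlands identity $\nu_{\rec}=\nu_{\zeta_{\Tam}}$ follows along the same lines, the only structural difference being that a set of $\zeta$-data restricts trivially --- rather than to the quadratic character --- to $F_{\pm\alpha}^{\times}$ for symmetric $\alpha$, which alters only the two symmetric factors. Concretely one can either rerun the three-case comparison with the symmetric contributions adjusted accordingly, or deduce it from $\mu_{\rec}=\mu_{\chi_{\Tam}}$ together with Bushnell--Henniart's formula relating $\nu_{\rec}$ to $\mu_{\rec}$ and the relation between $\zeta_{\Tam}$ and $\chi_{\Tam}$ on symmetric roots (on $F_{\pm\alpha}^{\times}$ their ratio is precisely the quadratic character of $F_\alpha/F_{\pm\alpha}$).
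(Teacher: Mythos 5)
The paper does not prove Theorem~\ref{thm:Tam}; it \emph{cites} it verbatim from Tam \cite[Theorem 7.1 (ii)]{MR3509939} and \cite[Theorem 5.5 (ii)]{MR3505131}, and the only thing the paper adds is Remark~\ref{rem:char-Tam}, observing that the statement (and Proposition~\ref{prop:Tam}) remains valid in positive residue characteristic because the relevant inputs from Langlands--Shelstad go through there as well. So there is no in-paper proof to compare your proposal against; what you have written is a plausible reconstruction of how Tam's original argument is organized, not of anything done in this paper.

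As a sketch of Tam's proof your outline is roughly in the right spirit, but a couple of points deserve flagging. First, the decomposition of $\mu_{\rec}$ (resp.\ $\nu_{\rec}$) into pieces matching the orbit types is \emph{not} a straightforward consequence of a decomposition ``by intermediate subfields''; the Bushnell--Henniart rectifier is built through a Howe factorization $F=E_t\subsetneq\cdots\subsetneq E_0\subset E$ and the finite symplectic modules $\vmod{A}{k}$, and a substantial part of Tam's work is to translate that hereditary-order/symplectic-module bookkeeping into the root-orbit language via the modules $\frakU_{[g]}$ and their $t$-factors (the objects recalled in Sections~\ref{sec:hered-ord}--\ref{sec:zeta-data} of the present paper). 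Your phrase ``the rectifier likewise decomposes as a product $\ldots$ which group according to this same trichotomy'' is therefore the theorem, not a preliminary observation. Second, the correct reference for the bijection between double cosets and root orbits is \eqref{eq:Galois-orb-as-coset} in Section~\ref{sec:L-emb-and-ind}, not Remark~\ref{rem:GL_n-and-ell-tori}, and the comparison on Teichm\"uller lifts and a uniformizer is indeed the right reduction given tameness. Finally, the shortcut for $\nu_{\rec}=\nu_{\zeta_{\Tam}}$ that you suggest --- deducing it from the $\mu$-identity via the ratio $\zeta_{\Tam,\alpha}/\chi_{\Tam,\alpha}$ on symmetric roots --- is appealing but not what Tam does in \cite{MR3505131}; that paper reruns the $t$-factor computation directly, now with the two central simple algebras $A$ and $A^{\ast}$ in play, because the $\zeta$-data genuinely compare $\vmod{A}{[g]}$ with $\vmod{A^{\ast}}{[g]}$ rather than simply twisting $\chi_{\Tam}$. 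None of this is a ``gap'' in your write-up so much as a reminder that the statement is external input here, and that the genuine content of Tam's proof lies precisely in the dictionary you compressed into two sentences.
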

\begin{rem} \label{rem:char-Tam}
This theorem as well as Proposition \ref{prop:Tam} holds without any restriction on \(\ch F\).
Indeed, according to \cite[Remark 1.3]{MR3509939} and \cite[Remark 1.3]{MR3505131}, the assumption \(\ch F=0\) is made only because the papers \cite{MR909227}, \cite{MR1687096} are referred,
but the relevant part of the theory is available equally in positive characteristic (see Remark \ref{rem:char-LS}).
\end{rem}

Combining Theorem \ref{thm:Tam} with Theorems \ref{thm:etLLC} and \ref{thm:etLJLC}, we gain the following:
\begin{cor} \label{cor:Tam}
Let the notation be as above.
Then we have 
\[
\LLC_{\bfG^{\ast}}^{\cl}({}_{\bfG^{\ast}}\pi_{(E/F, \xi)}^{\BH})
=\Ind_{W_E}^{W_F} (\mu_{\chi_{\Tam}}^{-1} \cdot \xi),
\quad
\LJLC^{\cl}({}_{\bfG^{\ast}}\pi_{(E/F, \xi)}^{\BH})={}_{\bfG}\pi_{(E/F, \nu_{\zeta_{\Tam}}^{-1} \cdot \xi)}^{\BH}.
\]
\end{cor}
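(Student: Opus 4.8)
The plan is to simply chain together the three results that have just been recalled, being careful only about the order of substitutions and the direction of the character twists. Concretely, the second (equivalent) form of Theorem \ref{thm:etLLC} reads $\LLC_{\GL_n}^{\cl}({}_{\bfG^{\ast}}\pi^{\BH}_{(E/F,\xi)})=\Ind_{W_E}^{W_F}(\mu_{\rec}^{-1}\cdot\xi)$, and similarly the second form of Theorem \ref{thm:etLJLC} reads $\LJLC^{\cl}({}_{\bfG^{\ast}}\pi^{\BH}_{(E/F,\xi)})={}_{\bfG}\pi^{\BH}_{(E/F,\nu_{\rec}^{-1}\cdot\xi)}$. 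First I would recall Theorem \ref{thm:Tam}, which identifies the rectifiers with Tam's $\zeta$-/$\chi$-data-theoretic characters: $\mu_{\rec}=\mu_{\chi_{\Tam}}$ and $\nu_{\rec}=\nu_{\zeta_{\Tam}}$. Substituting these two equalities into the displayed formulas above yields exactly the two asserted identities, with $\LLC_{\bfG^{\ast}}^{\cl}=\LLC_{\GL_n}^{\cl}$ under the identification $\bfG^{\ast}=\GL_n$.

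The only genuine point to check is that the notation is consistent: the admissible pair $(E/F,\xi)$ appearing in Theorems \ref{thm:etLLC}, \ref{thm:etLJLC} and \ref{thm:Tam} is the same, the torus $\bfS\simeq\Res_{E/F}\Gm$ used to define $\chi_{\Tam}$, $\zeta_{\Tam}$, $\mu_{\chi_{\Tam}}$, $\nu_{\zeta_{\Tam}}$ is the elliptic maximal torus of $\bfG^{\ast}$ attached to $E/F$ via the bijections of Remark \ref{rem:GL_n-and-ell-tori}\,\eqref{item:field-extension}, and the rectifiers $\mu_{\rec},\nu_{\rec}$ depend on $\xi$ only through $\xi|_{U_E^1}$ so that the character twist $\mu_{\chi_{\Tam}}^{-1}\cdot\xi$ (resp.\ $\nu_{\zeta_{\Tam}}^{-1}\cdot\xi$) is again a legitimate admissible character of $E^{\times}$, making ${}_{\bfG}\pi^{\BH}_{(E/F,\nu_{\zeta_{\Tam}}^{-1}\cdot\xi)}$ well defined. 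Since $\chi_{\Tam}$ is a set of $\chi$-data and $\zeta_{\Tam}$ a set of $\zeta$-data for $\Phi(\bfS,\bfG^{\ast})$, Definition \ref{def:nu_zeta} (and the analogous definition of $\mu_{\chi}$ in Proposition \ref{prop:Tam}) apply verbatim, so $\mu_{\chi_{\Tam}}$ and $\nu_{\zeta_{\Tam}}$ are the characters of $E^{\times}$ intended in the statement.

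There is no real obstacle here: the corollary is a formal consequence of Theorems \ref{thm:etLLC}, \ref{thm:etLJLC} and \ref{thm:Tam}. The one thing worth flagging explicitly — though it has already been addressed in Remarks \ref{rem:normalization-of-BH}, \ref{rem:normalization-of-Tam-prop} and \ref{rem:char-Tam} — is that all of Theorems \ref{thm:etLLC}, \ref{thm:etLJLC}, \ref{thm:Tam} hold in the arithmetic normalization and with no restriction on $\ch F$, so the combined statement inherits the same generality; I would simply cite those remarks rather than repeat the normalization bookkeeping. Thus the proof amounts to one line: combine Theorem \ref{thm:Tam} with the equivalent forms of Theorems \ref{thm:etLLC} and \ref{thm:etLJLC}.
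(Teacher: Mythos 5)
Your proposal is correct and is exactly the paper's argument: the corollary is obtained verbatim by substituting Theorem \ref{thm:Tam}'s identifications $\mu_{\rec}=\mu_{\chi_{\Tam}}$, $\nu_{\rec}=\nu_{\zeta_{\Tam}}$ into the second forms of Theorems \ref{thm:etLLC} and \ref{thm:etLJLC}. Your side remarks on normalization and characteristic match what the paper records in Remark \ref{rem:exp-for-cl}, so nothing is missing.
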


\begin{rem} \label{rem:exp-for-cl}
By Remarks \ref{rem:char-Tam}, \ref{rem:normalization-of-BH}, \ref{rem:char-etLLC}, the expression for \(\LLC_{\bfG^{\ast}}^{\cl}({}_{\bfG^{\ast}}\pi_{(E/F, \xi)}^{\BH})\) in Theorem \ref{cor:Tam} is valid with the arithmetic normalization and regardless of \(\ch F\).
Moreover, by Remarks \ref{rem:char-Tam}, \ref{rem:char-etLJLC}, the expression for \(\LJLC^{\cl}({}_{\bfG^{\ast}}\pi_{(E/F, \xi)}^{\BH})\) in Theorem \ref{cor:Tam} is valid regardless of \(\ch F\).
\end{rem}

\section{Kaletha's local Langlands correspondence} \label{sec:review-of-Kal}
Here we review Kaletha's local Langlands correspondence for regular supercuspidal representations defined in \cite{MR4013740},
which works for quite general tamely ramified connected reductive groups over \(F\),
and give a minor rephrasing that is suitable for our purpose.
We also introduce some notation when the theory is specialized to the groups \(\bfG^{\ast}=\GL_n, \bfG=\underline{A}^{\times}\).

Strictly speaking, a few assumptions on \(p\) are imposed in \cite{MR4013740} (cf.\ \cite[\S 2.1]{MR4013740}).
We will not recall them, but remark that for the groups \(\bfG^{\ast}=\GL_n, \bfG=\underline{A}^{\times}\) one only needs to assume that \(p\neq 2\). 

\subsection{Regular supercuspidal representations and their parametrization} \label{sec:rsc-rep}
Let \(\bfG\) be a tamely ramified connected reductive group over \(F\).
In \cite{MR1824988}, Yu introduced a construction producing an irreducible supercuspidal representation \(\pi_{\Psi}^{\Yu}\) of \(\bfG(F)\) from a concrete datum \(\Psi=((\bfG^0\subsetneq \bfG^1 \subsetneq \cdots \subsetneq \bfG^t), \pi_{-1}, (\phi_0, \phi_1, \dots, \phi_t))\), where
\begin{itemize}
	\item each \(\bfG^i\) is a tame twisted Levi subgroup of \(\bfG\) (i.e., a connected reductive subgroup defined over \(F\) that becomes a Levi subgroup over some tamely ramified extension),
	\item \(\pi_{-1}\) is a depth-zero supercuspidal representation of \(\bfG^0(F)\), and
	\item each \(\phi_i\) is a character of \(\bfG^i(F)\)
\end{itemize}
subject to various conditions.
In a later paper \cite{MR2431732} such a datum \(\Psi\) was called a reduced generic cuspidal \(\bfG\)-datum.
Following \cite{MR4013740}, we simply call \(\Psi\) a Yu-datum in this paper.

In their study \cite{MR2431732} of Yu's construction, Hakim--Murnaghan defined an equivalence relation, called \(\bfG\)-equivalence, on the set of Yu-data and proved that
two Yu-data \(\Psi\) and \(\Psi'\) give rise to isomorphic supercuspidal representations \(\pi_{\Psi}^{\Yu}\) and \(\pi_{\Psi'}^{\Yu}\) if and only if \(\Psi\) and \(\Psi'\) are \(\bfG\)-equivalent.
This result was proved in \cite[Theorem 6.6]{MR2431732} under a hypothesis \(C(\vec{\bfG})\), but the theorem was shown to hold without this hypothesis in \cite[\S 3.5]{MR4013740}.

Building on these results, Kaletha \cite[Definition 3.7.3]{MR4013740} defined the notion of regularity for (the \(\bfG\)-equivalence class of) a Yu-datum by imposing a certain condition on \(\pi_{-1}\).
An irreducible supercuspidal representation of \(\bfG(F)\) obtained from a regular Yu-datum by Yu's construction is called a \emph{regular supercuspidal representation}.
He then showed that the \(\bfG\)-equivalence classes of regular Yu-data are parametrized by simpler objects called \emph{tame elliptic regular pairs} (or tame elliptic regular pairs of \(\bfG\), when we want to stress the group \(\bfG\)).
More specifically, a tame elliptic regular pair \((\bfS, \xi)\) consists of 
\begin{itemize}
	\item a tamely ramified elliptic maximal torus \(\bfS\) of \(\bfG\) and
	\item a character \(\xi\) of \(\bfS(F)\) satisfying some regularity condition
\end{itemize}
(\cite[Definition 3.7.5]{MR4013740}),
and there exists an explicit bijection between the set of \(\bfG(F)\)-conjugacy classes of tame elliptic regular pairs and the set of \(\bfG\)-equivalence classes of regular Yu-data (\cite[Proposition 3.7.8]{MR4013740}).
We write \(\pi_{(\bfS, \xi)}^{\KY}\) for \(\pi_{\Psi}^{\Yu}\) if the \(\bfG\)-equivalence class of a regular Yu-datum \(\Psi\) corresponds to the \(\bfG(F)\)-conjugacy class of a tame elliptic regular pair \((\bfS, \xi)\) by this bijection.
Thus, the association \((\bfS, \xi) \mapsto \pi_{(\bfS, \xi)}^{\KY}\) induces a bijection between the set of \(\bfG(F)\)-conjugacy classes of tame elliptic regular pairs of \(\bfG\) and the set of isomorphism classes of regular supercuspidal representations of \(\bfG(F)\).
\[
\begin{tikzcd}
& \{\text{supercuspidal rep's of \(\bfG(F)\)}\}/{\sim} \\
\{\text{Yu-data}\}/\text{\(\bfG\)-eq.} \arrow[r,"1:1"] & \{\text{Yu's sc.\ rep's}\}/{\sim} \arrow[u, hookrightarrow] \\
\{\text{regular Yu-data}\}/\text{\(\bfG\)-eq.} \arrow[u, hookrightarrow] \arrow[r, "1:1"] & \{\text{regular sc.\ rep's}\}/{\sim} \arrow[u, hookrightarrow] \\
\{\text{tame elliptic regular pairs}\}/\text{\(\bfG(F)\)-conj.} \arrow[u, "1:1"] \arrow[ur, "1:1", "{(S, \xi) \mapsto\pi_{(\bfS, \xi)}^{\KY}}"']
\end{tikzcd}
\]

\begin{rem} \label{rem:Yu-error}
After the paper \cite{MR4013740} was published,
it was pointed out that some statements in Yu's paper (\cite[Proposition 14.1, Theorem 14.2]{MR1824988}) are false and consequently the proof of the irreducibility and supercuspidality of the representation \(\pi_{\Psi}^{\Yu}\) in \cite{MR1824988} is not correct as it stands.
	
Fortunately, there are already two ways to remedy this situation.
One is to resort to a result of Fintzen \cite{MR4357723}, 
which establishes the above irreducibility and supercuspidality 
without using \cite[Proposition 14.1, Theorem 14.2]{MR1824988}.
	
The other way due to Fintzen--Kaletha--Spice \cite{2106.09120} is to ``twist" each step in Yu's construction by subtle quadratic characters so that the original proof in \cite{MR1824988} works for this modified construction (see \cite[\S 4.1]{2106.09120}).
It is useful when one needs to apply \cite[Proposition 14.1, Theorem 14.2]{MR1824988} to auxiliary representations in the intermediate steps for other purposes.
What is important for this paper is the following (see \cite[\S 4.3, \S 4.4]{2106.09120}):
\begin{itemize}
\item If we write \(\pi_{\Psi}^{\Yu, \FKS}\) for the irreducible supercuspidal representation obtained from \(\Psi\) by the modified construction in \cite{2106.09120}, 
then \(\pi_{\Psi}^{\Yu, \FKS} \simeq \pi_{\Psi'}^{\Yu}\) for some \(\Psi'\).
The procedure \(\Psi \mapsto \pi_{\Psi}^{\Yu, \FKS}\) induces a modified parametrization of Yu's supercuspidal representations by Yu-data which preserves the regularity,
and hence also a modified parametrization of regular supercuspidal representations by tame elliptic regular pairs.
\item As reviewed in Section \ref{sec:Kal-LLC} below, 
Kaletha's local Langlands correspondence \(\LLC_{(\bfG, \psi)}^{\Kal}\) for regular supercuspidal representations is constructed in terms of the parametrization of these representations by tame elliptic regular pairs.
Even though the modified parametrization is used in later papers such as \cite{2106.09120}, \cite{1912.03274} to study (an extension of) the correspondence, the map \(\LLC_{(\bfG, \psi)}^{\Kal}\) (from certain regular supercuspidal representations to \(L\)-parameters) remains the same.
In other words, Kaletha's local Langlands correspondence in this paper is the same as the one discussed in \cite{2106.09120}, \cite{1912.03274} (restricted to those regular supercuspidal  representations).
\end{itemize}
\end{rem}

\subsubsection{Tame elliptic regular pairs and admissible pairs} \label{sec:ter-and-adm-pairs}
Now let \(\bfG=\underline{A}^{\times}\) be an inner form of \(\bfG^{\ast}=\GL_n\) as in Section \ref{sec:review-of-BH}.
(As remarked in the beginning of Section \ref{sec:review-of-Kal},
we assume \(p \neq 2\).)
Then the admissible pairs of degree \(n\) are essentially the same as the tame elliptic regular pairs of \(\bfG\) and \(\bfG^{\ast}\) in the following sense.
The bijections in Remark \ref{rem:GL_n-and-ell-tori} \eqref{item:field-extension} yields an identification of
\begin{itemize}
\item \(F\)-isomorphism classes of pairs \((E/F, \xi)\) of tamely ramified extension \(E/F\) of degree \(n\) and a character \(\xi\) of \(E^{\times}\) and 
\item \(\bfG^{\ast}(F)\)-conjugacy classes (resp.\ \(\bfG(F)\)-conjugacy classes) of pairs \((\bfS, \xi)\) of a tamely ramified elliptic maximal torus \(\bfS\) of \(\bfG^{\ast}\) (resp.\ of \(\bfG\)) and a character \(\xi\) of \(\bfS(F)\).
\end{itemize}
This in turn induces the following bijections: 
\begin{align*}
\{\text{adm.\ pairs of degree \(n\)}\}/{\text{\(F\)-isom.}} &\xrightarrow{1:1} 
\{\text{tame ell.\ reg.\ pairs of \(\bfG^{\ast}\)}\}/{\text{\(\bfG^	{\ast}(F)\)-conj.}} \\
&\xrightarrow{1:1}
\{\text{tame ell.\ reg.\ pairs of \(\bfG\)}\}/{\text{\(\bfG(F)\)-conj.}}
\end{align*}
The first bijection is shown in \cite[Lemma 3.7.7]{MR4013740} (note that the proof works even if \(p\) divides \(n\)).
The second bijection follows easily from the definition of the tame elliptic regular pair (which we omitted).

We identify tame elliptic regular pairs \((\bfS, \xi)\) of \(\bfG^{\ast}\) and of \(\bfG\) by the second bijection above and 
write \({}_{\bfG^{\ast}}\pi_{(\bfS, \xi)}^{\KY}\) (resp.\ \({}_{\bfG}\pi_{(\bfS, \xi)}^{\KY}\)) for the associated regular supercuspidal representation of \(\bfG^{\ast}(F)\) (resp.\ of \(\bfG(F)\)).
\begin{prop} \label{prop:BH-KY}
Let \((\bfS, \xi)\) be a tame elliptic regular pair of \(\bfG\) (or \(\bfG^{\ast}\)) and \((E/F, \xi)\) be an admissible pair of degree \(n\) such that the \(\bfG(F)\)-conjugacy class (or the \(\bfG^{\ast}(F)\)-conjugacy class) of \((\bfS, \xi)\) and the \(F\)-isomorphism class of \((E/F, \xi)\) correspond under the above bijections.
Then we have the following isomorphisms:
\[
{}_{\bfG}\pi_{(E/F, \xi)}^{\BH} \simeq {}_{\bfG}\pi_{(\bfS, \xi)}^{\KY},
\quad 
{}_{\bfG^{\ast}}\pi_{(E/F, \xi)}^{\BH} \simeq {}_{\bfG^{\ast}}\pi_{(\bfS, \xi)}^{\KY}.
\]
\end{prop}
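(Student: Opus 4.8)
The plan is to prove both isomorphisms by exhibiting, for one and the same Howe factorization of $\xi$, the Bushnell--Henniart representation and the Kaletha--Yu representation as compact inductions from the \emph{same} datum, and to run this comparison in parallel for $\bfG^{\ast}=\GL_n$ and for $\bfG=\underline{A}^{\times}$. Both constructions are governed by the tower of subfields $F \subseteq E_{t} \subseteq \cdots \subseteq E_{0}=E$ that records the jumps of $\xi$: for Bushnell--Henniart this tower is encoded in a simple stratum in $A^{\ast}=M_n(F)$ (resp.\ in $A$) together with a Howe-type factorization $\xi=\prod_{i}\phi_i$, while for Kaletha the same tower underlies the Howe factorization of the associated regular Yu-datum (\cite[\S3.6]{MR4013740}) and its chain of tame twisted Levi subgroups $\bfG^{0}\subsetneq\cdots\subsetneq\bfG^{t}=\bfG$. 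So the first step is to match these two factorizations; this is essentially an unwinding of definitions, already visible in the proof of \cite[Lemma 3.7.7]{MR4013740}, once one notes that for $\GL_n$ (and likewise for $\underline{A}^{\times}$) the twisted Levi $\bfG^{i}$ containing $\bfS=\Res_{E/F}\Gm$ is the unit group of the centralizer algebra $\Cent_{A^{\ast}}(E_i)$ (resp.\ $\Cent_{A}(E_i)$), a central simple algebra over $E_i$, with $\bfG^{0}(F)$ the unit group of $\Cent_{A^{\ast}}(E_0)$ (resp.\ $\Cent_A(E_0)$) and $\bfS(F)=E^{\times}$ an elliptic maximal torus inside it.

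For $\bfG^{\ast}=\GL_n$ the comparison then proceeds term by term. On the Bushnell--Henniart side one has the hereditary $\calO_F$-order $\frakA\subset M_n(F)$ normalized by $E^{\times}$, the groups $H^{1}\subseteq J^{1}\subseteq J$ attached to the simple stratum, a simple character $\theta_{\xi}$ depending only on $\xi|_{U_E^{1}}$, its Heisenberg extension $\eta$, a $\beta$-extension $\kappa$, and $\Lambda=\kappa\otimes\tau$ with $\tau$ inflated from a cuspidal representation of $J/J^{1}$ built from the depth-zero part of the pair; then ${}_{\bfG^{\ast}}\pi^{\BH}_{(E/F,\xi)}=\cInd_{E^{\times}J}^{\GL_n(F)}\Lambda$. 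On the Yu side one has Moy--Prasad subgroups of the groups $\bfG^{i}(F)$ at the point $y$ of the building of $\bfS$, the depth-zero supercuspidal $\pi_{-1}$ of $\bfG^{0}(F)$, and the representation obtained from the $\phi_i$ by the Heisenberg--Weil method. The steps are: (i) identify the unit filtrations of $\frakA$ and of its intersections with the $\bfG^{i}(F)$ with the corresponding Moy--Prasad filtration subgroups at $y$, so that Yu's analogues of $J^{1}$, $J$ agree with the Bushnell--Kutzko groups $J^{1}(\beta,\frakA)$, $J(\beta,\frakA)$; (ii) identify $\prod_i(\phi_i|_{J^{1}})$ with the simple character $\theta_{\xi}$, which amounts to matching the Bushnell--Kutzko description of $H^{1}$, $J^{1}$ through adic filtrations of the centralizer orders with the twisted-Levi Moy--Prasad description; (iii) identify $\pi_{-1}$ with the depth-zero supercuspidal of $\bfG^{0}(F)=\GL_{[E:E_0]}(E_0)$ attached to the residual part of $\xi|_{\calO_E^{\times}}$, using Kaletha's depth-zero construction and its standard identification with the classical one; and (iv) match Yu's $\kappa$ with the $\beta$-extension. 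For (iv), both extend $\eta$ and are intertwined by all of $E^{\times}J$, so they differ by a character of $J/J^{1}$; this character is pinned down by comparing central characters and the depth-zero data. Here one must keep track of the quadratic twists of \cite{2106.09120}: since $\LLC^{\Kal}$ is defined via the original Yu representation $\pi^{\Yu}_{\Psi}$ and not via $\pi^{\Yu,\FKS}_{\Psi}$ (Remark \ref{rem:Yu-error}), we compare $\pi^{\BH}$ with $\pi^{\Yu}_{\Psi}$ directly, and any residual discrepancy is absorbed into the choice of the $\phi_i$ within the Howe factorization. Assembling (i)--(iv) yields ${}_{\bfG^{\ast}}\pi^{\BH}_{(E/F,\xi)}\simeq{}_{\bfG^{\ast}}\pi^{\KY}_{(\bfS,\xi)}$.

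For $\bfG=\underline{A}^{\times}=\GL_m(D)$ the same scheme applies, with the simple-type theory of Bushnell--Kutzko replaced by its extension to inner forms due to S\'{e}cherre and S\'{e}cherre--Stevens --- which is exactly the apparatus used in \cite{MR2848585} to define ${}_{\bfG}\pi^{\BH}_{(E/F,\xi)}$ --- and with the embedding $E\hookrightarrow A$ and the attendant functorial embedding of the building of $\bfS$ into that of $\bfG$ (Broussous--Stevens) playing the role that $E\hookrightarrow M_n(F)$ plays in the split case. The twisted Levi subgroups of $\bfG$ containing $\bfS$ are the unit groups of the centralizer algebras $\Cent_{A}(E_i)$, central simple over $E_i$; the hereditary $\calO_D$-order in $M_m(D)$ normalized by $E^{\times}$ takes the place of $\frakA$; and $\pi_{-1}$ is now a depth-zero supercuspidal of $\Cent_A(E_0)^{\times}$. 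With these substitutions, steps (i)--(iv) go through unchanged, giving ${}_{\bfG}\pi^{\BH}_{(E/F,\xi)}\simeq{}_{\bfG}\pi^{\KY}_{(\bfS,\xi)}$.

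The hard part will be step (iv) and the normalization bookkeeping it forces: ruling out a spurious (at worst unramified quadratic) twist between Yu's Heisenberg--Weil extension and the Bushnell--Kutzko/S\'{e}cherre--Stevens $\beta$-extension requires matching the conventions for the Weil representation (the auxiliary additive character, the Gauss-sum factors) against those implicit in the $\beta$-extension, and this is precisely where the sign subtleties treated in \cite{2106.09120} live. A secondary difficulty, more acute for $\bfG$ than for $\bfG^{\ast}$, is that there is no ready-made ``Yu $=$ simple types'' dictionary for $\GL_m(D)$ of the kind available for $\GL_n(F)$, so part of the filtration matching in (i)--(ii) has to be carried out directly; this is why we content ourselves with a sketch.
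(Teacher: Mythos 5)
Your overall scheme --- identify the filtration subgroups with the Moy--Prasad filtrations, match the simple character with the product of Yu's characters, compare the depth-zero parts via the Green/Deligne--Lusztig dictionary, then deal with the Heisenberg--Weil extension --- is exactly the scheme of the paper's Appendix A, and your steps (i)--(iii) land on the same facts the paper uses (the order-filtration/Moy--Prasad identification via Broussous--Lemaire and \cite{2112.12367}, the equality $\prod_{k\ge 0}\underline{\hat\phi}_k=\theta$ on $K^t_+=H^1$, and the comparison $\kappa_{-1}\simeq\cInd_{\bfJ}^{K}\Lambda_{\rmm}$ with the cuspidal part). You also correctly note that one must compare against $\pi^{\Yu}_\Psi$, not the FKS-twisted version.

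The gap is in step (iv). You propose to compare Yu's Heisenberg--Weil extension with a Bushnell--Kutzko/S\'echerre--Stevens $\beta$-extension, observe that both are intertwined by $E^\times J$, and then chase down a possible character twist ``by comparing central characters and the depth-zero data,'' which you correctly flag as the hard part. But this is not how \cite{MR2848585} is set up and is not what the paper does: Bushnell--Henniart do not invoke $\beta$-extensions there. They characterize an irreducible extension $\Lambda_{\w}$ of $\eta$ to $\bfJ$ by the conditions (Ext1) $\Lambda_{\w}|_{J^1}\simeq\eta$, (Ext2) $\Lambda_{\w}|_{J^0}$ is intertwined by $A_0^\times$, (Ext3) $\varpi_F\in\Ker\Lambda_{\w}$, and (Ext4) $\det\Lambda_{\w}$ has $p$-power order; the paper further remarks (Remark \ref{rem:wide-extension}) that (Ext2) is automatic, so $\Lambda_{\w}$ is pinned down by (Ext1), (Ext3), (Ext4) alone. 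To verify these for $\bigotimes_{k\ge 0}\kappa_k|_{\bfJ}$ one must first replace the given Howe factorization by a $\bfG$-equivalent Yu-datum (Proposition \ref{prop:refactorization}) whose wild-part characters $\underline{\phi}_k$ themselves have finite $p$-power order, kill $\varpi_F$, and agree with the originals on the $0+$-filtration; your phrase ``any residual discrepancy is absorbed into the choice of the $\phi_i$'' gestures at this maneuver but leaves it undone, and without stating which normalizations one must impose on the $\underline{\phi}_k$ the argument doesn't close. Once the refactored datum is in hand, (Ext1) is your step (ii) plus a dimension count, (Ext3) is a central-character check from $\prod_k\underline{\phi}_k|_{Z(\bfG)(F)}=\xi_{\w}|_{Z(\bfG)(F)}$, and (Ext4) follows by tracking that each $\det\kappa_k$ has $p$-power order. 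This sidesteps the Weil-representation sign bookkeeping entirely --- it simply does not arise, because no $\beta$-extension is ever compared against.
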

This is probably well-known and in any case a proof for \(\bfG^{\ast}\) is outlined in \cite[Appendix A]{MR4206603}, but we include a sketch of a proof for \(\bfG\) in Appendix \ref{sec:app} of the present paper.
Henceforth, we simply write these representations as \({}_{\bfG}\pi_{(\bfS, \xi)}\) and \({}_{\bfG^{\ast}}\pi_{(\bfS, \xi)}\) respectively.

\begin{rem} \label{rem:F-non-sing}
In \cite{1912.03274} Kaletha extended his local Langlands correspondence in \cite{MR4013740} to a larger class of supercuspidal representations,
\emph{\(F\)-non-singular} supercuspidal representations.
They are again based on Yu's construction, but more general than regular supercuspidal representations,
and are similarly parametrized by 
tame elliptic \emph{\(F\)-non-singular} pairs.
However, for inner forms \(\bfG\) of \(\bfG^{\ast}=\GL_n\), these notions are the same.
To see this, we only need to check that any tame elliptic \(k_F\)-non-singular pair of \(\bfG\) comes from an admissible pair.
This can easily be done by noting that it is shown for \(\bfG^{\ast}=\GL_n\) in \cite[Remark 3.4.4]{1912.03274} and the \(F\)-non-singularity is preserved under the stable conjugacy by \cite[Remark 3.1.4]{1912.03274}.
\end{rem}

\subsection{Kaletha's LLC for regular supercuspidal representations} \label{sec:Kal-LLC}
Let \(\bfG^{\ast}\) be a tamely ramified connected reductive group over \(F\) that is quasi-split.

In this section we first review the construction of Kaletha's local Langlands correspondence for regular supercuspidal representations (\cite[\S 5]{MR4013740}).
In doing so, we also recall some necessary ingredients, in particular characters \(\epsilon\) and an explicit set \(\chi_{\Kal}\) of \(\chi\)-data, in some detail.
Then we give a (rather obvious) reformulation of it as maps \(\LLC_{(\bfG, \psi)}^{\Kal}\) from certain regular supercuspidal representations of inner twists \((\bfG, \psi)\) of \(\bfG^{\ast}\) to \(L\)-parameters of \(\bfG^{\ast}\) (cf.\ the diagram \eqref{eq:LLC^Kal}).
Finally, we specialize to the case where \(\bfG^{\ast}=\GL_n\).

While, strictly speaking, the domain of the map \(\LLC_{(\bfG, \psi)}^{\Kal}\) may not be the set of all regular supercuspidal representations of \(\bfG(F)\) in general,
we often call it Kaletha's local Langlands correspondence for regular supercuspidal representations in this paper (as we often did so far)
because that is the case for \(\bfG^{\ast}=\GL_n\) (cf.\ Remark \ref{rem:FKS-extrareg} below).

As noted in the beginning of Section \ref{sec:review-of-Kal},
a few assumptions on \(p\) have to be made,
but we omit the details in the general case and only remark that the condition \(p\neq 2\) suffices in our case.
In \cite[\S 5]{MR4013740}, the characteristic \(\ch F\) of \(F\) is also assumed to be zero.
This assumption can now be removed, thanks to the work of Dillery \cite{2008.04472} extending the formalism of rigid inner twists in \cite{MR3548533} to general characteristic
(see also Section 2.1 and the beginning of Section 5.2 in \cite{MR4013740}).

Kaletha's local Langlands correspondence for regular supercuspidal representations is defined in terms of two kinds of data called \emph{regular supercuspidal \(L\)-packet data} and \emph{regular supercuspidal data}.
The former are related to certain \(L\)-parameters of \(\bfG^{\ast}\) and the latter to certain regular supercuspidal representations of inner forms of \(\bfG^{\ast}\).

\subsubsection{Regular supercuspidal \(L\)-packet data and \(L\)-parameters} \label{sec:rsc-L-data}
A \emph{regular supercuspidal \(L\)-packet datum} \((\bfS, \widehat{j}, \chi, \xi)\) consists of
\begin{itemize}
\item a tamely ramified torus \(\bfS\) over \(F\) whose dimension is equal to the absolute rank of \(\bfG^{\ast}\),
\item an embedding \(\widehat{j} \colon \widehat{\bfS} \hookrightarrow \widehat{\bfG^{\ast}}\) of the dual torus \(\widehat{\bfS}\) into the dual group \(\widehat{\bfG^{\ast}}\) whose \(\widehat{\bfG^{\ast}}\)-conjugacy class is \(\Gamma_F\)-stable,
\item a set \(\chi\) of \(\chi\)-data, and
\item a character \(\xi\) of \(\bfS(F)\)
\end{itemize}
satisfying various conditions.
We do not recall the precise conditions imposed, but only note a few points.

As discussed in \cite[\S 5.1]{MR4013740},
there is a natural bijection between
the set of \(\Gamma_F\)-stable \(\bfG^{\ast}(F^{\sep})\)-conjugacy classes of embeddings \(\bfS \hookrightarrow \bfG^{\ast}\) over \(F^{\sep}\)
and 
the set of \(\Gamma_F\)-stable \(\widehat{\bfG^{\ast}}\)-conjugacy classes of embeddings \(\widehat{\bfS} \hookrightarrow \widehat{\bfG^{\ast}}\).
Thus, given an embedding \(\widehat{j}\) as above, its \(\widehat{\bfG^{\ast}}\)-conjugacy class
corresponds to a \(\Gamma_F\)-stable \(\bfG^{\ast}(F^{\sep})\)-conjugacy class \(J\) of embeddings \(\bfS \hookrightarrow \bfG^{\ast}\) over \(F^{\sep}\).
Since \(\bfG^{\ast}\) is quasi-split, \(J\) has a \(\Gamma_F\)-fixed element \(j\colon \bfS \hookrightarrow \bfG^{\ast}\) by \cite[Corollary 2.2]{MR683003} and \cite[Lemma 7.6]{2008.04472}.
Although the torus \(\bfS\) above is given abstractly,
many conditions imposed are formulated via this embedding \(j\).
For instance, the subtorus \(j\bfS \subset \bfG^{\ast}\) is assumed to be elliptic;
this condition is independent of the choice of \(j\in J\).
Also, pulling back \(\Phi(j\bfS, \bfG^{\ast})\) via \(j\) gives a \(\Gamma_F\)-invariant subset \(\Phi(\bfS, \bfG^{\ast}) \subset X^{\ast}(\bfS)\) and \(\chi\) is assumed to be a set ot \(\chi\)-data for \(\Phi(\bfS, \bfG^{\ast})\) implicitly; the subset \(\Phi(\bfS, \bfG^{\ast}) \subset X^{\ast}(\bfS)\) is independent of the choice of \(j\in J\).

Similarly to the construction before Proposition \ref{prop:Tam}, 
to a regular supercuspidal \(L\)-packet datum \((\bfS, \widehat{j}, \chi, \xi)\), we can associate an \(L\)-parameter of \(\bfG^{\ast}\) in the following way.
First by the local Langlands correspondence for the torus \(\bfS\),
one has an \(L\)-parameter \(\phi_{\xi} \colon W_F\to {}^{L}\bfS\) of \(\bfS\) associated to \(\xi\).
From \(\widehat{j}\) and \(\chi\), Langlands--Shelstad's construction yields an \(L\)-embedding \({}^{L}j_{\chi} \colon \widehat{\bfS} \to \widehat{\bfG^{\ast}}\).
Then by composing them we get an \(L\)-parameter
\[
{}^{L}j_{\chi} \circ \phi_{\xi} \colon W_F\to {}^{L}\bfG^{\ast}
\]
of \(\bfG^{\ast}\).
In fact, in \cite[Definition 5.2.3]{MR4013740} a class of \(L\)-parameters called \emph{regular supercuspidal \(L\)-parameters} is introduced and it is proved in \cite[Proposition 5.2.7]{MR4013740}
that the above procedure induces a bijection 
between the set of isomorphism classes of regular supercuspidal \(L\)-packet data 
and the set of \(\widehat{\bfG^{\ast}}\)-conjugacy classes of regular supercuspidal \(L\)-parameters.
We do not recall the definition of a regular supercuspidal \(L\)-parameter
nor of the isomorphism of regular supercuspidal \(L\)-packet data,
but will only use this bijection in an exposition in this section.

\subsubsection{Regular supercuspidal data and representations} 
\label{sec:rsc-data-and-rep}
A \emph{regular supercuspidal datum} is a tuple \((\bfS, \widehat{j}, \chi, \xi, (\bfG, \psi, z), j)\), where
\begin{itemize}
	\item \((\bfS, \widehat{j}, \chi, \xi)\) is a regular supercuspidal \(L\)-packet datum,
	\item \((\bfG, \psi, z)\) is a rigid inner twist of \(\bfG^{\ast}\) in the sense of \cite[\S 5.1]{MR3548533} (if \(\ch F=0\)) and \cite[Definition 7.1]{2008.04472} (if \(\ch F=p\)), and 
	\item \(j \colon \bfS \hookrightarrow \bfG\) is an embedding over \(F\)
\end{itemize}
satisfying various conditions.
In particular, it is assumed that the \(\bfG^{\ast}(F^{\sep})\)-conjugacy class of \(\psi^{-1} \circ j\) corresponds to the \(\widehat{\bfG^{\ast}}\)-conjugacy class of \(\widehat{j}\) under the natural bijection in \cite[\S 5.1]{MR4013740} recalled in the review of regular supercuspidal \(L\)-packet data above.
Rigid inner forms are crucial when one considers finer properties of the local Langlands correspondence,
but for the purpose of this paper
they merely play a minor role.
Thus we only recall the following basic facts:
\begin{itemize}
\item a rigid inner twist \((\bfG, \psi, z)\) of \(\bfG^{\ast}\)
consists of an inner twist \((\bfG, \psi)\) of \(\bfG^{\ast}\)
and an additional structure \(z\) (whose subtle definition 
we omit here),
\item the automorphism group \(\Aut(\bfG, \psi, z)\) is isomorphic to \(\bfG(F)\) acting on \(\bfG\) as the usual inner automorphism, and
\item any inner twist \((\bfG, \psi)\) admits a rigid inner twist \((\bfG, \psi, z)\) for some \(z\).
\end{itemize}
For more on rigid inner twists and their applications to the local Langlands correspondence, see \cite{MR3675168} as well as \cite{MR3548533} and \cite{2008.04472}.

A regular supercuspidal datum \((\bfS, \widehat{j}, \chi, \xi, (\bfG, \psi, z), j)\) gives rise to a regular supercuspidal representation of \(\bfG(F)\) as follows.
First we replace the datum with another datum 
\((\bfS, \widehat{j}, \chi_{\Kal}, \xi \cdot \zeta_{\bfS}^{-1}, (\bfG, \psi, z), j)\), where
\begin{itemize}
\item \(\chi_{\Kal}\) is an explicit set of \(\chi\)-data for \(\Phi(\bfS, \bfG^{\ast})\) in Step 2 in \cite[\S 5.3]{MR4013740} (denoted by \(\chi'\) in loc.\ cit.) and
\item \(\zeta_{\bfS}\) is the character of \(\bfS(F)\) defined by the \(\zeta\)-data \(\zeta=\{\chi_{\Kal, \alpha} \cdot \chi_{\alpha}^{-1}\}_{\alpha \in \Phi(\bfS, \bfG^{\ast})}\) as in \cite[Definition 4.6.5]{MR4013740}.
\end{itemize}
Then we consider the subtorus \(j\bfS \subset \bfG\) and 
define a character \(j\xi'\) of \(j\bfS(F)\) by
\[
\big( (\xi \cdot \zeta_{\bfS}^{-1}) \circ j^{-1}\big) \cdot
\epsilon_{f, \ram} \cdot \epsilon^{\ram},
\]
where \(\epsilon_{f, \ram}\) and \(\epsilon^{\ram}\) are quadratic characters of \(j\bfS(F)\) in Step 3 in \cite[\S 5.3]{MR4013740}.
The pair \((j\bfS, j\xi')\) forms a tame elliptic regular pair of \(\bfG\).
Finally, \(\pi_{(j\bfS, j\xi')}\) is the regular supercuspidal representation we associate to the given regular supercuspidal datum.
We also simply write \(\pi_{j}=\pi_{(j\bfS, j\xi')}\) when the given datum is clear from the context.

Let us elaborate a little on the objects \(\zeta_\bfS, \chi_{\Kal}, \epsilon_{f, \ram}, \epsilon^{\ram}\)
used to define the character \(j\xi'\) above.
The character \(\zeta_{\bfS}\) is trivial if \(\chi_{\Kal}\) and \(\chi\) are equal.
The set \(\chi_{\Kal}\) of \(\chi\)-data is associated to
\(\xi\) (and \(\Phi(\bfS, \bfG^{\ast})\), which only depends on \(\bfS\) and \(\widehat{j}\)) and in particular is independent of the rational embedding \(j\colon \bfS \hookrightarrow \bfG\) and the rigid inner twist \((\bfG, \psi, z)\).
By contrast, the characters \(\epsilon_{f, \ram}\) and \(\epsilon^{\ram}\) are associated to the tame elliptic regular pair \((j\bfS, j\xi)\) of \(\bfG\)
and are recalled later in Section \ref{sec:epsilon-character}.
To ease the notation, we put
\begin{equation} \label{eq:def-of-epsilon}
\epsilon:=\epsilon_{f, \ram} \cdot \epsilon^{\ram}.
\end{equation}

Again there is a natural notion of isomorphism of regular supercuspidal data (see \cite[Definition 5.3.3]{MR4013740}) and 
the regular supercuspidal representation \(\pi_{j}\) depends only on the isomorphism class of the regular supercuspidal datum \((\bfS, \widehat{j}, \chi, \xi, (\bfG, \psi, z), j)\).
In fact,  once a rigid inner twist \((\bfG, \psi, z)\) is fixed,
the above procedure induces an injection
(cf.\ the map (\(\star\)) in the commutative diagram \eqref{eq:rsc-data} below)
from the set of isomorphism classes of regular supercuspidal data whose underlying rigid inner twist is \((\bfG, \psi, z)\) to
the set of isomorphism classes of regular supercuspidal representations of \(\bfG(F)\).
Provisionally in this paper, we refer to those regular supercuspidal representations belonging to the image of this map as \emph{FKS-extraregular supercuspidal representations}.
\begin{rem} \label{rem:FKS-extrareg}
We use the term FKS-extraregular supercuspidal representations only for the exposition in this section.
What is important for us is the following:
\begin{itemize}
\item A regular supercuspidal representation \(\pi=\pi_{(\bfS, \xi)}\) is FKS-extraregular if and only if the tame elliptic regular pair \((\bfS, \xi \cdot \epsilon^{-1})\) (cf.\ \eqref{eq:explicit-rsc-dat} below) satisfies the extraregularity (\cite[Definition 3.7.5]{MR4013740}), which is a condition stronger than the regularity.
Or equivalently, this amounts to \(\pi\) corresponding to a tame elliptic extraregular pair under the modified parametrization of Fintzen--Kaletha--Spice \cite{2106.09120} (cf.\ Remark \ref{rem:Yu-error}).
\item If \(\bfG\) is an inner form of \(\bfG^{\ast}=\GL_n\), then any regular supercuspidal representation \(\pi\) of \(\bfG(F)\) is FKS-extraregular.
This can be seen by noting that any tame elliptic regular pair of \(\bfG\) is extraregular (cf.\ \cite[Lemma 3.7.7]{MR4013740}).
\end{itemize}
\end{rem}
For later purpose, we describe the inverse map from the set of FKS-extraregular supercuspidal representations.
Let \(\pi\) be an FKS-extraregular supercuspidal representation of \(\bfG(F)\).
Then there exists a tame elliptic regular pair \((\bfS, \xi)\) of \(\bfG\) such that \(\pi=\pi_{(\bfS, \xi)}\).
Note that here \(\bfS\) is given as a subtorus of \(\bfG\).
We write \(j\) for the inclusion \(\bfS \hookrightarrow \bfG\)
and \(\widehat{\psi^{-1} \circ j} \colon \widehat{\bfS} \hookrightarrow \widehat{\bfG^{\ast}}\) for the embedding induced by \(\psi^{-1} \circ j\).
Then the inverse map sends (the isomorphism class of)
\(\pi\) to
(the isomorphism class of) a regular supercuspidal datum
\begin{equation} \label{eq:explicit-rsc-dat}
(\bfS, \widehat{\psi^{-1} \circ j}, \chi_{\Kal}, (\xi \cdot \epsilon^{-1}) \circ j, (\bfG, \psi, z), j),
\end{equation}
where 
\begin{itemize}
\item \(\chi_{\Kal}\) is the set of \(\chi\)-data associated to \(\xi\)
and 
\item \(\epsilon=\epsilon_{f, \ram} \cdot \epsilon^{\ram}\) is the character of \(\bfS(F)\) associated to the tame elliptic regular pair \((\bfS, \xi)\).
\end{itemize}

Moreover, if we treat \(\pi_{j}\) together with the rigid inner twist \((\bfG, \psi, z)\),
namely regard the tuple \((\bfG, \psi, z, \pi_j)\) as a representation of a rigid inner twist of \(\bfG^{\ast}\) (in the sense of \cite[page 594]{MR3548533} and \cite[Definition 7.9]{2008.04472}),
essentially the same procedure even induces a bijection between 
the set of isomorphism classes of regular supercuspidal data
and the set of isomorphism classes of FKS-extraregular supercuspidal representations of rigid inner twists of \(\bfG^{\ast}\).
As the subset of the latter set consisting of those with the underlying rigid inner twist \((\bfG, \psi, z)\) 
is naturally identified with the set of isomorphism classes of FKS-extraregular supercuspidal representations of \(\bfG(F)\), the bijection in the last paragraph is induced by this bijection essentially via the restriction.

The situation is summarized by the following commutative diagram,
where the two sets with the subscript \((\bfG, \psi, z)\) indicate the subset defined by requiring the underlying rigid inner twist to be \((\bfG, \psi, z)\): 
\begin{equation} \label{eq:rsc-data}
\begin{tikzcd}
	\{\text{reg.\ sc.\ data}\}/{\sim} \arrow[r, "1:1"] &[2.8cm] \{\substack{\text{FKS-extrareg.\ sc.\ rep's of}\\ \text{rigid inner twists of \(\bfG^{\ast}\)}}\}/{\sim} \\
	\{\text{reg.\ sc.\ data}\}_{(\bfG, \psi, z)}/{\sim} \arrow[r, "1:1"', "{(\bfS, \widehat{j}, \chi, \xi, (\bfG, \psi, z), j)\mapsto (\bfG, \psi, z, \pi_j)}"] \arrow[u, hookrightarrow]  \arrow[dr, "1:1", "(\star) \colon {(\bfS, \widehat{j}, \chi, \xi, (\bfG, \psi, z), j)\mapsto \pi_j}"'] & \{\substack{\text{FKS-extrareg.\ sc.\ rep's of}\\ \text{rigid inner twists of \(\bfG^{\ast}\)}}\}_{(\bfG, \psi, z)}/{\sim} \arrow[u, hookrightarrow] \\
	&  
	\{\text{FKS-extrareg.\ sc.\ rep's of \(\bfG(F)\)}\}/{\sim} \arrow[u, "{\pi \mapsto (\bfG, \psi, z, \pi)}"', "1:1"]
\end{tikzcd}
\end{equation}

\subsubsection{Construction of \(L\)-packets}
Let \(\phi\) be a regular supercuspidal \(L\)-parameter of \(\bfG^{\ast}\) obtained from a regular supercuspidal \(L\)-packet datum
\((\bfS, \widehat{j}, \chi, \xi)\).
In \cite[page 1154]{MR4013740} Kaletha defined 
\[
\Pi_{\phi}=\{(\bfG, \psi, z, \pi_j) \mid (\bfS, \widehat{j}, \chi, \xi, (\bfG, \psi, z), j) \mapsto (\bfS, \widehat{j}, \chi, \xi)\},
\]
where \((\bfG, \psi, z, \pi_j)\) are considered up to isomorphism.
In other words, one collects \((\bfG, \psi, z, \pi_j)\) for each regular supercuspidal datum whose underlying regular supercuspidal \(L\)-packet datum is \((\bfS, \widehat{j}, \chi, \xi)\)
(see also the commutative diagram \eqref{eq:LLC^Kal} below and its explanation).
He calls \(\Pi_{\phi}\) the compound \(L\)-packet;
it should be understood as the disjoint union of \(L\)-packets of rigid inner twists of \(\bfG^{\ast}\):
\[
\Pi_{\phi}=\coprod_{(\bfG, \psi, z)} \Pi_{\phi}(\bfG, \psi, z), 
\]
where \(\Pi_{\phi}(\bfG, \psi, z)\) denotes the subset consisting of representations of the rigid inner twist \((\bfG, \psi, z)\).

\subsubsection{\(L\)-parameter associated to an FKS-extraregular supercuspidal representation} \label{sec:LLC^Kal}
Now let \((\bfG, \psi)\) be an inner twist of \(\bfG^{\ast}\).
Let us interpret Kaletha's local Langlands correspondence 
as a map \(\LLC_{(\bfG, \psi)}^{\Kal}\) from the set of isomorphism classes of FKS-extraregular supercuspidal representations of \(\bfG(F)\) 
to the set of \(\widehat{\bfG^{\ast}}\)-conjugacy classes of \(L\)-parameters of \(\bfG^{\ast}\);
this is more useful for the purpose of this paper.

Take some \(z\) so that \((\bfG, \psi, z)\) is a rigid inner twist and
let \(\pi\) be an FKS-extraregular supercuspidal representation of \(\bfG(F)\).
By the bijection discussed before,
we have the regular supercuspidal data
\((\bfS, \widehat{j}, \chi, \xi, (\bfG, \psi, z), j)\)
corresponding to \(\pi\):
if \(\pi=\pi_{(\bfS, \xi)}\), then \((\bfS, \widehat{j}, \chi, \xi, (\bfG, \psi, z), j)\) is expressed as \eqref{eq:explicit-rsc-dat}.
Then we define \(\LLC_{(\bfG, \psi)}^{\Kal}(\pi)\) to be the \(L\)-parameter of \(\bfG^{\ast}\) obtained from the regular supercuspidal \(L\)-packet datum \((\bfS, \widehat{j}, \chi, \xi)\).

The following commutative diagram describes the map \(\LLC_{(\bfG, \psi)}^{\Kal}\) as well as some other related maps:
\begin{equation} \label{eq:LLC^Kal}
\begin{tikzcd}
\{\text{reg.\ sc.\ \(L\)-packet data}\}/{\sim} \arrow[r,"1:1"] & \{\text{reg.\ sc.\ \(L\)-par's of \(\bfG^{\ast}\)}\}/{\text{\(\widehat{\bfG^{\ast}}\)-conj.}} \\
\{\text{reg.\ sc.\ data}\}/{\sim} \arrow[u, "{(\ast)}"] \arrow[r, "1:1"] & \{\substack{\text{FKS-extrareg.\ sc.\ rep's of}\\ \text{rigid inner twists of \(\bfG^{\ast}\)}}\}/{\sim} \arrow[u, dashed, "{(\ast \ast)}"] \\
\{\text{reg.\ sc.\ data}\}_{(\bfG, \psi, z)}/{\sim} \arrow[r, "1:1"] \arrow[u, hookrightarrow]  & \{\text{FKS-extrareg.\ sc.\ rep's of \(\bfG(F)\)}\}/{\sim} \arrow[u, hookrightarrow] \arrow[uu, bend right=80, "\LLC_{(\bfG, \psi)}^{\Kal}"']
\end{tikzcd}
\end{equation}
Here, the arrow \((\ast)\) sends (the isomorphism class of) a regular supercuspidal datum to (the isomorphism class of) its underlying regular supercuspidal \(L\)-packet datum and the dashed arrow \((\ast \ast)\) is the induced one making the diagram commutative.
The compound \(L\)-packet \(\Pi_{\phi}\) is nothing but the fiber of the map \((\ast \ast)\) at \(\phi\).

It is clear on unwinding the definition that \(\LLC_{(\bfG, \psi)}^{\Kal}(\pi)\) does not depend on the choice of \(z\) (as should be the case).
On the other hand, \(\LLC_{(\bfG, \psi)}^{\Kal}(\pi)\) does depend on the choice of the inner twist \(\psi\).
This is reasonable because we implicitly use the identification of \(L\)-parameters of \(\bfG\) with those of \(\bfG^{\ast}\) via the isomorphism \({}^{L}\psi \colon {}^{L}\bfG \xrightarrow{\sim} {}^{L}\bfG^{\ast}\) induced by \(\psi\).
The \(L\)-parameter \({}^{L}\psi^{-1} \circ \LLC_{(\bfG, \psi)}^{\Kal}(\pi)\) of \(\bfG\) is independent of the choice of \(\psi\).

Let us briefly discuss the fibers of the map \(\LLC_{(\bfG, \psi)}^{\Kal}\), i.e., the sets \(\Pi_{\phi}(\bfG, \psi, z)\).
Let \(\phi\) be a regular supercuspidal \(L\)-parameter of \(\bfG^{\ast}\) and \((\bfS, \widehat{j}, \chi, \xi)\) the corresponding regular supercuspidal \(L\)-packet datum.
As explained in Section \ref{sec:rsc-L-data}, \(\widehat{j}\) defines a \(\Gamma_F\)-stable \(\bfG^{\ast}(F^{\sep})\)-conjugacy class \(J\) of embeddings \(\bfS \hookrightarrow \bfG^{\ast}\) over \(F^{\sep}\).
Then, in view of the diagram \eqref{eq:LLC^Kal}, the fiber at \(\phi\) of the map \(\LLC_{(\bfG, \psi)}^{\Kal}\) is parametrized by the set \(\calJ\) of embeddings \(j \colon \bfS \hookrightarrow \bfG\) over \(F\) such that \(\psi^{-1} \circ j\) lies in \(J\).
It follows from \cite[\S 10]{MR858284} and \cite[Lemma 3.2.1]{MR4013740} that \(\calJ\) is non-empty because of the ellipticity assumption on \(\bfS\).
As \(\phi\) is an arbitrary regular supercuspidal \(L\)-parameter, this shows that \(\LLC_{(\bfG, \psi)}^{\Kal}\) is surjective.
(Here the target of the map \(\LLC_{(\bfG, \psi)}^{\Kal}\) is the set of \(\widehat{\bfG^{\ast}}\)-conjugacy classes of regular supercuspidal \(L\)-parameters, and not arbitrary \(L\)-parameters.)
Let \(\phi\) and \(\calJ\) be as above and fix some \(j \in \calJ\).
Then \(\calJ\) is naturally parametrized by the set
\begin{equation} \label{eq:calJ-as-Ker}
\Ker(H^1(F, \bfS) \xrightarrow{j} H^1(F, \bfG)),
\end{equation}
where the map between the two cohomology sets is the one induced by \(j\).
This can be proved either directly without difficulty or by comparing with the parametrization of the compound \(L\)-packet \(\Pi_{\phi}\) found in \cite[(5.1.3)]{MR4013740} and \cite[page 594]{MR3548533}.

\subsubsection{Case of inner forms of \(\GL_n\)} \label{sec:LLC^Kal-for-GL_n}
Now let us specialize and return to the setting of Section \ref{sec:review-of-BH}:
\(\bfG=\underline{A}^{\times}\) is an inner form of \(\bfG^{\ast}=\GL_n\).
We also have a canonical inner twist \(\psi \colon \bfG^{\ast} \to \bfG\) (see Section \ref{sec:notation}).
Thus we assume \(p \neq 2\), as explained in the beginning of Section \ref{sec:review-of-Kal}.
As any regular supercuspidal representation of \(\bfG(F)\) is FKS-extraregular by Remark \ref{rem:FKS-extrareg}, we no longer distinguish the two notions.

In this case, the maps \(\LLC_{(\bfG, \psi)}^{\Kal}\) and \(\LLC_{(\bfG^{\ast}, \id)}^{\Kal}\) are both bijective.
Indeed, the surjectivity generally holds as discussed above.
The injectivity follows from 
the triviality of the set \eqref{eq:calJ-as-Ker};
if \(\phi\) is a regular supercuspidal \(L\)-parameter of \(\bfG^{\ast}\) and \((\bfS, \widehat{j}, \chi, \xi)\) is the corresponding regular supercuspidal \(L\)-packet datum, then 
\(\bfS\) is an induced torus in this case
and thus \(H^1(F, \bfS)\) is trivial by Shapiro's lemma and Hilbert's theorem 90.

We put
\[
\LLC_{\bfG^{\ast}}^{\Kal}:=\LLC_{(\bfG^{\ast}, \id)}^{\Kal},
\quad
\LJLC^{\Kal}:=
(\LLC_{(\bfG, \psi)}^{\Kal})^{-1} \circ \LLC_{\bfG^{\ast}}^{\Kal}.
\]
To make these correspondences explicit,
let \((\bfS, \xi)\) be a tame elliptic regular pair of \(\bfG^{\ast}\).
As reviewed in Section \ref{sec:rsc-data-and-rep}, a set \(\chi_{\Kal}\) of \(\chi\)-data is associated to \(\xi\).
According to Section \ref{sec:ter-and-adm-pairs}, \((\bfS, \xi)\) can also be regarded as a tame elliptic regular pair of \(\bfG\).
We write \(\eps{\bfG^{\ast}}{}\)
(resp.\ \(\eps{\bfG}{}\)) for the characters 
\(\epsilon\) of \(\bfS(F)\) in Section \ref{sec:rsc-data-and-rep} associated to the tame elliptic regular pair \((\bfS, \xi)\) of \(\bfG^{\ast}\) (resp.\ of \(\bfG\)).
\begin{prop} \label{prop:LLC^Kal-explicit}
Let \((\bfS, \xi), \chi_{\Kal}, \eps{\bfG^{\ast}}{}, \eps{\bfG}{}\) be
as above.
Let \({}_{\bfG^{\ast}}\pi_{(\bfS, \xi)}\) be the regular supercuspidal representation of \(\bfG^{\ast}(F)\) corresponding to \((\bfS, \xi)\).
We put
\[
\xi':=\xi \cdot \eps{\bfG^{\ast}}{}^{-1} \cdot \eps{\bfG}{}.
\]
Then we have 
\[
\LLC_{\bfG^{\ast}}^{\Kal}({}_{\bfG^{\ast}}\pi_{(\bfS, \xi)})
=\Ind_{W_E}^{W_F} (\mu_{\chi_{\Kal}} \cdot \xi \cdot \eps{\bfG^{\ast}}{}^{-1}),
\quad
\LJLC^{\Kal}({}_{\bfG^{\ast}}\pi_{(\bfS, \xi)})={}_{\bfG}\pi_{(\bfS, \xi')}.
\]
\end{prop}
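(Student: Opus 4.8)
The plan is to read the first equality off the construction of \(\LLC_{\bfG^{\ast}}^{\Kal}=\LLC_{(\bfG^{\ast},\id)}^{\Kal}\) recalled in Section \ref{sec:LLC^Kal}, combine it with Tam's Proposition \ref{prop:Tam}, and then deduce the second from the first by exploiting the injectivity of \(\LLC_{(\bfG,\psi)}^{\Kal}\) established in Section \ref{sec:LLC^Kal-for-GL_n}. Throughout, write \(\bfS\simeq\Res_{E/F}\Gm\), so that \(\bfS(F)=E^{\times}\) and \(\xi,\eps{\bfG^{\ast}}{},\eps{\bfG}{}\) are characters of \(E^{\times}\).

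For the first equality: taking \((\bfG,\psi)=(\bfG^{\ast},\id)\) and letting \(j\colon\bfS\hookrightarrow\bfG^{\ast}\) be the inclusion, the description \eqref{eq:explicit-rsc-dat} shows that \({}_{\bfG^{\ast}}\pi_{(\bfS,\xi)}\) corresponds to a regular supercuspidal datum whose underlying regular supercuspidal \(L\)-packet datum is \((\bfS,\widehat{j},\chi_{\Kal},\xi\cdot\eps{\bfG^{\ast}}{}^{-1})\); by the definition of \(\LLC_{\bfG^{\ast}}^{\Kal}\), the parameter \(\LLC_{\bfG^{\ast}}^{\Kal}({}_{\bfG^{\ast}}\pi_{(\bfS,\xi)})\) is the one attached to this \(L\)-packet datum, i.e.\ \(\pr_{\widehat{\bfG^{\ast}}}\circ{}^{L}j_{\chi_{\Kal}}\circ\phi_{\xi\cdot\eps{\bfG^{\ast}}{}^{-1}}\). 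Proposition \ref{prop:Tam}, applied with the \(\chi\)-data \(\chi_{\Kal}\) and with \(\xi\cdot\eps{\bfG^{\ast}}{}^{-1}\) playing the role of \(\xi\), rewrites this as \(\Ind_{W_E}^{W_F}\bigl((\xi\cdot\eps{\bfG^{\ast}}{}^{-1})\cdot\mu_{\chi_{\Kal}}\bigr)\), which is the first formula. (All of this is in the arithmetic normalization, so no normalization discrepancy enters; cf.\ Remarks \ref{rem:normalization-for-emb} and \ref{rem:normalization-of-Tam-prop}.)

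For the second equality, since \(\LLC_{(\bfG,\psi)}^{\Kal}\) is a bijection it suffices to show \(\LLC_{(\bfG,\psi)}^{\Kal}({}_{\bfG}\pi_{(\bfS,\xi')})=\Ind_{W_E}^{W_F}(\mu_{\chi_{\Kal}}\cdot\xi\cdot\eps{\bfG^{\ast}}{}^{-1})\). The key input is a twist-invariance: because \(\eps{\bfG^{\ast}}{}\) and \(\eps{\bfG}{}\) are tamely ramified, \(\xi\) and \(\xi'\) agree on \(U_E^{1}\), and the set of \(\chi\)-data \(\chi_{\Kal}\) and the characters \(\epsilon_{f,\ram},\epsilon^{\ram}\) attached to a tame elliptic regular pair depend on the character only through its restriction to \(U_E^{1}\); hence \((\bfS,\xi')\) — which is itself again a tame elliptic regular pair of \(\bfG\) — gives rise to the \emph{same} \(\chi_{\Kal}\) and the same \(\epsilon\) as \((\bfS,\xi)\). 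Granting this and applying \eqref{eq:explicit-rsc-dat} to \({}_{\bfG}\pi_{(\bfS,\xi')}\) with the given inner twist \((\bfG,\psi)\) and \(j\colon\bfS\hookrightarrow\bfG\) the inclusion, \({}_{\bfG}\pi_{(\bfS,\xi')}\) corresponds to a regular supercuspidal datum with \(L\)-packet datum \((\bfS,\widehat{\psi^{-1}\circ j},\chi_{\Kal},\xi'\cdot\eps{\bfG}{}^{-1})\). Since, by the very way the identification of tame elliptic regular pairs of \(\bfG\) and of \(\bfG^{\ast}\) is set up in Section \ref{sec:ter-and-adm-pairs} (building on Remark \ref{rem:GL_n-and-ell-tori}), the \(\widehat{\bfG^{\ast}}\)-conjugacy class of \(\widehat{\psi^{-1}\circ j}\) is the one induced by the inclusion of \(\bfS\) as an elliptic maximal torus of \(\bfG^{\ast}\) — so that \(\Phi(\bfS,\bfG)=\Phi(\bfS,\bfG^{\ast})\) and \(E\) is unchanged — Proposition \ref{prop:Tam} applies verbatim and gives \(\LLC_{(\bfG,\psi)}^{\Kal}({}_{\bfG}\pi_{(\bfS,\xi')})\simeq\Ind_{W_E}^{W_F}\bigl((\xi'\cdot\eps{\bfG}{}^{-1})\cdot\mu_{\chi_{\Kal}}\bigr)\). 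Substituting \(\xi'=\xi\cdot\eps{\bfG^{\ast}}{}^{-1}\cdot\eps{\bfG}{}\) collapses this to \(\Ind_{W_E}^{W_F}(\mu_{\chi_{\Kal}}\cdot\xi\cdot\eps{\bfG^{\ast}}{}^{-1})\), and injectivity of \(\LLC_{(\bfG,\psi)}^{\Kal}\) then yields \(\LJLC^{\Kal}({}_{\bfG^{\ast}}\pi_{(\bfS,\xi)})={}_{\bfG}\pi_{(\bfS,\xi')}\).

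The main obstacle is the twist-invariance just invoked, i.e.\ that \(\chi_{\Kal}\) and \(\epsilon_{f,\ram},\epsilon^{\ram}\) depend on the character of the pair only through its restriction to \(U_E^{1}\) — the analogue for the rectifier \(\nu_{\rec}\) being recorded in Theorem \ref{thm:etLJLC}. I expect this to come out of the constructions in \cite{MR4013740}: \(\chi_{\Kal}\) is manufactured from a Howe factorization whose positive-depth constituents are untouched by a tame twist; \(\epsilon^{\ram}\) is built from leading terms along ramified symmetric roots, again of positive depth; and \(\epsilon_{f,\ram}\) is governed by the toral invariant of \((\bfS,\bfG)\), which does not involve the character at all. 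The remaining bookkeeping — keeping the abstract torus \(\bfS\), the inclusions \(j\), and the embedding \(\psi^{-1}\circ j\) aligned so that Proposition \ref{prop:Tam} can be quoted on both sides with literally the same \(E\) — is routine but should be carried out with care.
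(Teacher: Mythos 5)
Your argument is correct and follows essentially the same route as the paper: you derive the first equality from the definition of \(\LLC_{\bfG^{\ast}}^{\Kal}\) together with Proposition \ref{prop:Tam}, and the second by computing \(\LLC_{(\bfG,\psi)}^{\Kal}({}_{\bfG}\pi_{(\bfS,\xi')})\), substituting \(\xi'\), and invoking bijectivity, with the same key observation that \(\chi_{\Kal}\) and \(\eps{\bfG}{}\) for \((\bfS,\xi')\) agree with those for \((\bfS,\xi)\) because both depend on the character only through its restriction to \(1+\frakp_E\) and the \(\epsilon\)-characters are tamely ramified. The paper likewise flags this twist-invariance as an unverified but routine check; your sketch of why it holds (positive-depth Howe constituents, toral invariant independent of the character) is a reasonable fleshing-out of the point the paper leaves to the reader.
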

\begin{proof}
The description of \(\LLC_{\bfG^{\ast}}^{\Kal}({}_{\bfG^{\ast}}\pi_{(\bfS, \xi)})\) is given in \cite[Proposition 4.6]{MR4206603} 
and it follows from the definition of \(\LLC_{(\bfG^{\ast}, \id)}^{\Kal}=\LLC_{\bfG^{\ast}}^{\Kal}\) in Section \ref{sec:LLC^Kal} and Proposition \ref{prop:Tam};
the \(L\)-parameter \(\LLC_{\bfG^{\ast}}^{\Kal}({}_{\bfG^{\ast}}\pi_{(\bfS, \xi)})\) is \({}^{L}j_{\chi_{\Kal}} \circ \phi_{\xi \cdot \eps{\bfG^{\ast}}{}^{-1}}\),
which is indeed \(\Ind_{W_E}^{W_F} (\mu_{\chi_{\Kal}} \cdot \xi \cdot \eps{\bfG^{\ast}}{}^{-1})\),
when regarded as an \(n\)-dimensional representation of \(W_F\).

Similarly, we compute
\begin{align*}
\LLC_{(\bfG, \psi)}^{\Kal}({}_{\bfG}\pi_{(\bfS, \xi')})
&=\Ind_{W_E}^{W_F} (\mu_{\chi_{\Kal}} \cdot \xi' \cdot \eps{\bfG}{}^{-1}) \\
&=\Ind_{W_E}^{W_F} (\mu_{\chi_{\Kal}} \cdot \xi \cdot \eps{\bfG^{\ast}}{}^{-1})
=\LLC_{\bfG^{\ast}}^{\Kal}({}_{\bfG^{\ast}}\pi_{(\bfS, \xi)}),
\end{align*}
which shows the desired description of \(\LJLC^{\Kal}({}_{\bfG^{\ast}}\pi_{(\bfS, \xi)})\).
Note that here we implicitly used the fact that \(\chi_{\Kal}\) and \(\eps{\bfG}{}\) associated to \((\bfS, \xi')\) and those associated to \((\bfS, \xi)\) are the same.
Although this will follow easily if we recall the definitions, we omit the details.
(It suffices to check that \(\eps{\bfG^{\ast}}{}, \eps{\bfG}{}\) are both tamely ramified and that the dependence of \(\chi_{\Kal}, \eps{\bfG}{}\) on \(\xi\) is only through its restriction to \(\bfS(F)_{0+}=1+\frakp_E\).)
\end{proof}
\begin{rem} \label{rem:exp-for-Kal}
By Remark \ref{rem:normalization-of-Tam-prop}, the expression for \(\LLC_{\bfG^{\ast}}^{\Kal}({}_{\bfG^{\ast}}\pi_{(\bfS, \xi)})\) in Proposition \ref{prop:LLC^Kal-explicit} is valid with the arithmetic normalization.
\end{rem}

\section{Main theorem} \label{sec:main-thm}
Let \(\bfG^{\ast}=\GL_n\) and \(\bfG=\underline{A}^{\times}\) be as in Section \ref{sec:review-of-BH}.
\subsection{Main theorem}
\begin{thm} \label{thm:main}
Assume that \(p \neq 2\).
Then we have
\[
\LJLC^{\cl}(\pi)=\LJLC^{\Kal}(\pi)
\]
for any regular supercuspidal representation \(\pi\) of \(\bfG^{\ast}(F)=\GL_n(F)	\).
\end{thm}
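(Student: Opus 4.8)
The plan is to reduce Theorem~\ref{thm:main} to a single identity of tamely ramified characters of $E^{\times}$ and then to check that identity one Galois orbit of roots at a time. Fix a tame elliptic regular pair $(\bfS,\xi)$ of $\bfG^{\ast}$ and write $\bfS\simeq\Res_{E/F}\Gm$, so that $\bfS(F)=E^{\times}$; by Proposition~\ref{prop:BH-KY} the representations ${}_{\bfG^{\ast}}\pi_{(\bfS,\xi)}$ and ${}_{\bfG}\pi_{(\bfS,\xi)}$ are unambiguously defined, the Bushnell--Henniart and Kaletha--Yu parametrizations agreeing. Combining Corollary~\ref{cor:Tam} (through Proposition~\ref{prop:BH-KY}) with Proposition~\ref{prop:LLC^Kal-explicit} gives
\[
\LJLC^{\cl}({}_{\bfG^{\ast}}\pi_{(\bfS,\xi)})={}_{\bfG}\pi_{(\bfS,\ \nu_{\zeta_{\Tam}}^{-1}\cdot\xi)},
\qquad
\LJLC^{\Kal}({}_{\bfG^{\ast}}\pi_{(\bfS,\xi)})={}_{\bfG}\pi_{(\bfS,\ \xi\cdot\eps{\bfG^{\ast}}{}^{-1}\cdot\eps{\bfG}{})}.
\]
Since $\nu_{\zeta_{\Tam}}$, $\eps{\bfG^{\ast}}{}$ and $\eps{\bfG}{}$ are tamely ramified, both characters appearing on the right differ from $\xi$ only by a tamely ramified character; as a regular supercuspidal representation determines its tame elliptic regular pair up to $\bfG(F)$-conjugacy and regularity of $\xi$ forbids the only Weyl-group conjugations that could otherwise intervene, the two right-hand sides agree for every $\xi$ if and only if
\[
\nu_{\zeta_{\Tam}}=\eps{\bfG^{\ast}}{}\cdot\eps{\bfG}{}^{-1}
\qquad\text{as characters of }E^{\times}.
\]

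Next I would factor both sides over Galois orbits of roots. By Theorem~\ref{thm:Tam} and Definition~\ref{def:nu_zeta}, $\nu_{\zeta_{\Tam}}=\nu_{\rec}=\prod_{[\alpha]}\zeta_{\Tam,\alpha}|_{E^{\times}}$, the product running over $\Gamma_F\backslash\Phi(\bfS,\bfG^{\ast})$; and by construction in \cite{MR4013740} the characters $\eps{\bfG^{\ast}}{}$ and $\eps{\bfG}{}$ are the corresponding products of local factors $\eps{\bfG^{\ast}}{\alpha}$, resp.\ $\eps{\bfG}{\alpha}$, over the same index set. It therefore suffices to match, for each $[\alpha]\in\Gamma_F\backslash\Phi(\bfS,\bfG^{\ast})$, the three local characters $\zeta_{\Tam,\alpha}|_{E^{\times}}$, $\eps{\bfG^{\ast}}{\alpha}$, $\eps{\bfG}{\alpha}$. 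To organize this I would use the double-coset description of $\Phi(\bfS,\bfG^{\ast})$ from Section~\ref{sec:L-emb-and-ind} to read off, in terms of the arithmetic of the subextensions of $E/F$, whether a given $[\alpha]$ is asymmetric, symmetric unramified, or symmetric ramified, and then treat the three cases separately, having first recalled the definition of the $\eps{}{\alpha}$ (following \cite{MR4013740}) in Section~\ref{sec:prelim1} and of the $\zeta_{\Tam,\alpha}$ (following \cite{MR3505131}) in Section~\ref{sec:prelim2}.

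The asymmetric and symmetric unramified cases should be comparatively direct: there $\eps{\bfG^{\ast}}{\alpha}$ and $\eps{\bfG}{\alpha}$ either coincide — so that one only needs $\zeta_{\Tam,\alpha}|_{E^{\times}}$ to be seen trivial — or are unramified characters given by an explicit sign, which one compares with Tam's formula directly. The genuinely delicate case is the symmetric ramified one, and it is there that essentially all the work lies (carried out in Section~\ref{sec:pf-main-thm}). Here $\eps{\bfG^{\ast}}{\alpha}$ and $\eps{\bfG}{\alpha}$ are nontrivial and are controlled by the toral invariant of \cite{MR3402796}, which I would compute explicitly both for $\bfG^{\ast}=\GL_n$ and for $\bfG=\underline{A}^{\times}$ (this being the one case worked out in Section~\ref{sec:prelim1}); the ratio $\eps{\bfG^{\ast}}{\alpha}\cdot\eps{\bfG}{\alpha}^{-1}$ should then be precisely the quadratic twist that occurs in Tam's $\zeta_{\Tam,\alpha}$, the discrepancy between the two groups being accounted for by the division-algebra structure of $A$.

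The main obstacle is exactly this last matching. Kaletha's $\eps{}{\alpha}$ is phrased through Bruhat--Tits theory and root-theoretic sign invariants, whereas Tam's $\zeta_{\Tam,\alpha}$ is built from hereditary orders in $A$ together with explicit cocycle computations, and neither source is written with the other in view; so the bulk of the proof is the translation between the two frameworks and the explicit evaluation of the toral invariant in each case. I would finally remark that $p\neq2$ is used only because Kaletha's constructions require it and that, by the remarks assembled in Sections~\ref{sec:review-of-BH-Tam} and~\ref{sec:review-of-Kal}, none of the cited ingredients imposes any restriction on $\ch F$, so that the argument is valid with no hypothesis beyond $p\neq2$.
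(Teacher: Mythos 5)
Your proposal follows the same broad strategy as the paper: reduce to the character identity $\nu_{\zeta_{\Tam}}=\eps{\bfG^{\ast}}{}\cdot\eps{\bfG}{}^{-1}$ on $E^{\times}$ via Corollary~\ref{cor:Tam} and Proposition~\ref{prop:LLC^Kal-explicit}, factor both sides over Galois orbits of $\Phi(\bfS,\bfG^{\ast})$, and split the verification into the asymmetric, symmetric unramified, and symmetric ramified cases, using the toral invariant of \cite{MR3402796} in the last. The reduction step and the trichotomy are exactly right (and, as you note, the ``only if'' direction of the reduction is unnecessary for the theorem — only the trivial ``if'' direction is used).

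Where your proposal diverges from the paper is the assessment of where the work lies. You treat the symmetric ramified case as ``the genuinely delicate one'' where ``essentially all the work'' is. In fact it is the shortest of the three cases in Section~\ref{sec:pf-main-thm}: once Proposition~\ref{prop:toral-invariant} gives $(\eps{\bfG^{\ast}}{\alpha}\cdot\eps{\bfG}{\alpha})(\gamma)=(-1)^{m v_E(\gamma)}$ and Proposition~\ref{prop:symp-mods} shows $\vmod{A}{[g]}\simeq\vmod{A^{\ast}}{[g]}$ for the unique ramified orbit, the comparison with $\zeta_{\Tam,\alpha}$ (whose $\iota_g=(-1)^m$ by \eqref{eq:iota_g-symram}) is a one-line check. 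By contrast, your claim that in the asymmetric and symmetric unramified cases the two sides ``either coincide... or are unramified characters given by an explicit sign, which one compares with Tam's formula directly'' glosses over the real content: these cases are carried out via the finite $k_F[\Psi_{E/F}]$-module calculus of Section~\ref{sec:prelim2} — Proposition~\ref{prop:symp-mods} deciding when $\vmod{A}{[g]}\simeq\vmod{A^{\ast}}{[g]}$, Proposition~\ref{prop:vmod-ord} linking $\vmod{A}{[g]}$ to the filtration jump $\ordx{\bfG}{\alpha}$, and a nontrivial matching of Tam's t-factors $t^{0}_{\bGamma},t^{1}_{\bGamma},t_{\bGamma}$ against the Jacobi-symbol formulas \eqref{eq:epsilon-asym}, \eqref{eq:epsilon-symur} — and lean heavily on the explicit computations of \cite[\S 6.2]{MR4206603}. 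The symmetric unramified case, with the Heisenberg/Weil sign $\iota_g$ and the further subdivision into $\vmod{A}{[g]}\simeq\vmod{A^{\ast}}{[g]}$ or not, is in fact the longest of the three. So the skeleton is right but your difficulty map is inverted: the genuinely toral-invariant-driven case is easy once the invariant is in hand, and the module/t-factor bookkeeping in the other two cases is the bulk of the argument.
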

\begin{rem} \label{rem:main}
In fact, as explained below, this theorem follows from general results in \cite{2106.09120}, \cite{2301.09812} if we make suitable assumptions on \(n, p, q\) and so on.
Our proof works without any assumptions (except for \(p\neq 2\), which is necessary for the right-hand side to make sense),
especially thanks to the work \cite{MR2848585}.
\begin{itemize}
\item If \(\ch F=0\) and \(p \geq (e_F+2)n\) (where \(e_F\) is the ramification index of \(F/\bbQ_p\)),
then Theorem \ref{thm:main} follows from \cite[Theorem 4.4.4.1]{2106.09120}, which establishes the endoscopic character relation between inner forms for Kaletha's \(L\)-packets in \cite{MR4013740} and \cite{1912.03274}.
Indeed, when applied to \(\bfG^{\ast}\) and \(\bfG\), the statement of the theorem specializes to the character relation of the local Jacquet--Langlands correspondence.
Note that the assumptions on \(\ch F\) and \(p\) ensure that the character formula \cite[Theorem 4.3.5]{2106.09120} is applicable and appear in the paragraphs right after Proposition 4.3.2.

\item Let \(\pi\) be as in Theorem \ref{thm:main} and express it as \(\pi={}_{\bfG^{\ast}}\pi_{(\bfS, \xi)}\) with \(\bfS \simeq \Res_{E/F}\Gm\) for some tamely ramified extension \(E/F\) of degree \(n\).
Under the assumption on \(\bfS\) below,
\cite[Theorem 9.10]{2301.09812} describes \(\LJLC^{\cl} (\pi)\) in terms of a tame elliptic regular pair of \(\bfG\)
and thus one can check that it agrees with \(\LJLC^{\Kal}(\pi)\)
to deduce Theorem \ref{thm:main}
(note that the modified parametrization of \cite{2106.09120}, which we mention in Remark \ref{rem:Yu-error}, is adopted in \cite{2301.09812}).
The basic assumption in \cite{2301.09812} reads in our setting that \(p\) does not divide \(n!\).
In addition to this, the assumption of \cite[Theorem 9.10]{2301.09812} requires that either of the following should be satisfied
(here we write \(e=e(E/F), f=f(E/F)\) and \(\phi\) is Euler's totient function):
\begin{itemize}
\item \(E/F\) is unramified, \(n\) is a prime such that \((n, q) \neq (2, 2), (2, 3)\);
\item \(E\) is totally ramified;
\item \(f\) is a prime satisfying \(\frac{e}{e-\phi(e)} >2f\) and \(q\) is sufficiently large.
\end{itemize}
Let us briefly comment on this assumption and also on the approach.
In \cite{2301.09812}, 
it is proved among other things
that a regular supercuspidal representation can be characterized by the restriction of the character to ``elliptic very regular elements", where the character values are easier to compute, provided that there exist sufficiently many such elements (\cite[Theorem 9.7]{2301.09812}).
As a sample application of the theorem,
\(\LJLC^{\cl}(\pi)\) is determined in \cite[Theorem 9.10]{2301.09812} by studying the restriction of the characters.
The above assumption ensures that ``Henniart inequality" holds, which quantifies the sufficiency of elliptic very regular elements.
\end{itemize}
Our proof of Theorem \ref{thm:main} may be close in spirit to that of \cite{2301.09812}, as the former relies on Bushnell--Henniart \cite{MR2848585}, where \(\LJLC^{\cl} (\pi)\) is investigated mainly through the evaluation of the character at certain special elements.
\end{rem}

We will prove Theorem \ref{thm:main} in Section \ref{sec:pf-main-thm}.

\subsection{Some complements for \(\GL_n\)}
The following theorem is basically proved in a joint paper \cite{MR4206603} with Masao Oi.
\begin{thm}[{\cite[Theorem 6.1]{MR4206603}}] \label{thm:GL_n}
Assume that \(p \neq 2\).
Then we have
\[
\LLC_{\bfG^{\ast}}^{\cl}(\pi)=\LLC_{\bfG^{\ast}}^{\Kal}(\pi)
\]
for any regular supercuspidal representation \(\pi\) of \(\bfG^{\ast}(F)=\GL_n(F)	\).
\end{thm}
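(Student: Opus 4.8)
The plan is to deduce Theorem \ref{thm:GL_n} from \cite[Theorem 6.1]{MR4206603} by fitting its argument into the framework built up above, and then to check that the two subtleties flagged in the introduction --- the normalization of the Artin reciprocity map and the characteristic of $F$ --- cause no trouble.

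First I would write $\pi={}_{\bfG^{\ast}}\pi_{(\bfS,\xi)}$ for a tame elliptic regular pair $(\bfS,\xi)$ of $\bfG^{\ast}$, with $\bfS\simeq\Res_{E/F}\Gm$ for a tamely ramified extension $E/F$ of degree $n$, and let $(E/F,\xi)$ be the corresponding admissible pair, so that $\pi\simeq{}_{\bfG^{\ast}}\pi^{\BH}_{(E/F,\xi)}$ by Proposition \ref{prop:BH-KY}. Corollary \ref{cor:Tam} then gives
\[
\LLC_{\bfG^{\ast}}^{\cl}(\pi)=\Ind_{W_E}^{W_F}(\mu_{\chi_{\Tam}}^{-1}\cdot\xi),
\]
while Proposition \ref{prop:LLC^Kal-explicit} gives
\[
\LLC_{\bfG^{\ast}}^{\Kal}(\pi)=\Ind_{W_E}^{W_F}(\mu_{\chi_{\Kal}}\cdot\xi\cdot\eps{\bfG^{\ast}}{}^{-1}),
\]
and by Remarks \ref{rem:exp-for-cl} and \ref{rem:exp-for-Kal} both identities hold in the arithmetic normalization and for arbitrary $\ch F$. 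So the theorem (in the arithmetic normalization) reduces to the single equality
\[
\mu_{\chi_{\Tam}}^{-1}=\mu_{\chi_{\Kal}}\cdot\eps{\bfG^{\ast}}{}^{-1}
\]
of tamely ramified characters of $E^{\times}$, since then the inducing characters already coincide; the geometrically normalized statement would then follow by passing to contragredients exactly as in Remark \ref{rem:normalization-of-BH}.

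To prove this equality I would compare the two sides over the Galois orbits of roots. One has $\mu_{\chi_{\Tam}}=\prod_{[\alpha]}\chi_{\Tam,\alpha}|_{E^{\times}}$ and $\mu_{\chi_{\Kal}}=\prod_{[\alpha]}\chi_{\Kal,\alpha}|_{E^{\times}}$, the products taken over $[\alpha]\in\Gamma_F\backslash\Phi(\bfS,\bfG^{\ast})$ with the representatives of Proposition \ref{prop:Tam}, while $\eps{\bfG^{\ast}}{}$ is by construction a product of characters $\eps{\bfG^{\ast}}{\alpha}$ attached to certain (symmetric) orbits; so it suffices to match, orbit by orbit, the restriction to $E^{\times}$ of Tam's $\chi$-datum $\chi_{\Tam,\alpha}$ against that of Kaletha's $\chi$-datum $\chi_{\Kal,\alpha}$ together with the relevant $\epsilon$-factor. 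This orbit-by-orbit matching is precisely the computation carried out in \cite{MR4206603}, and it is the hard part: Tam's $\chi$-data are manufactured from hereditary orders in $M_n(F)$ whereas Kaletha's come from Bruhat--Tits theory and root data, and the genuinely delicate point is the sign attached to each symmetric ramified root --- encoded on Kaletha's side by the toral invariant of \cite{MR3402796} and on Tam's side by a Gauss sum. Granting that computation, the observations to record here are that it is performed entirely among characters of $E^{\times}=\bfS(F)$ and never passes through $W_F$, so that it makes no reference to the Artin reciprocity map and is in particular insensitive to the normalization; and that each of its ingredients --- the construction of $\chi_{\Tam}$ in \cite{MR3509939}, the constructions of $\chi_{\Kal}$ and of the $\eps{\bfG^{\ast}}{\alpha}$ in \cite{MR4013740} (which are available in arbitrary characteristic via Dillery \cite{2008.04472}), Theorem \ref{thm:Tam}, and Proposition \ref{prop:BH-KY} --- holds regardless of $\ch F$ (cf.\ Remarks \ref{rem:char-etLLC} and \ref{rem:char-Tam}). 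Therefore the argument of \cite{MR4206603} establishes the displayed equality of characters, hence Theorem \ref{thm:GL_n}, with no restriction on $\ch F$ and in either normalization.
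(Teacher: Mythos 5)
Your proposal matches the paper's own treatment, which is given in Remark \ref{rem:complement}: reduce via Corollary \ref{cor:Tam}, Proposition \ref{prop:LLC^Kal-explicit} (and Proposition \ref{prop:BH-KY}) to the single character equality $\mu_{\chi_{\Tam}}^{-1}=\mu_{\chi_{\Kal}}\cdot\eps{\bfG^{\ast}}{}^{-1}$ on $E^{\times}$, invoke the orbit-by-orbit computation of \cite{MR4206603} for that equality, and then observe --- exactly as the paper does via Remarks \ref{rem:exp-for-cl} and \ref{rem:exp-for-Kal} --- that the reduction is normalization-correct and that no ingredient uses $\ch F=0$. Your proof is correct and takes essentially the same route as the paper.
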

\begin{rem} \label{rem:complement}
Let us complement some points in the proof in \cite{MR4206603}.

First, we recall the outline of the proof.
There exists a tame elliptic regular pair \((\bfS, \xi)\) of \(\bfG^{\ast}\) such that \(\pi \simeq {}_{\bfG^{\ast}}\pi_{(\bfS, \xi)}\).
Let \(E/F\) be a tamely ramified extension of degree \(n\) such that \(\bfS \simeq \Res_{E/F} \Gm\), so that \((\bfS, \xi)\) is identified with an admissible pair \((E/F, \xi)\).
As in Section \ref{sec:Tam-thm}, we get \(\chi_{\Tam}\) associated to \((E/F, \xi)\).
Similarly, Kaletha's theory reviewed in Section \ref{sec:Kal-LLC} (and Section \ref{sec:LLC^Kal-for-GL_n}) yields \(\chi_{\Kal}\) and \(\eps{\bfG^{\ast}}{}\) associated to \((\bfS, \xi)\).
By Corollary \ref{cor:Tam} and Proposition \ref{prop:LLC^Kal-explicit},
we have
\[
\LLC_{\bfG^{\ast}}^{\cl}(\pi)
=\Ind_{W_E}^{W_F} (\mu_{\chi_{\Tam}}^{-1} \cdot \xi),
\quad
\LLC_{\bfG^{\ast}}^{\Kal}(\pi)
=\Ind_{W_E}^{W_F} (\mu_{\chi_{\Kal}} \cdot \xi \cdot \eps{\bfG^{\ast}}{}^{-1}).
\]
Thus, it suffices to show an equality of characters of \(E^{\times}\):
\begin{equation} \label{eq:key-OT}
\mu_{\chi_{\Tam}}^{-1}=\mu_{\chi_{\Kal}} \cdot \eps{\bfG^{\ast}}{}^{-1}
\end{equation}
to prove the theorem.
Much of \cite{MR4206603} is devoted to establishing \eqref{eq:key-OT}.

We remark that although the paper \cite{MR4206603} does not discuss the difference of the normalizations of the local Langlands correspondence used in \cite{MR2138141}, \cite{MR2148193}, \cite{MR2679700}, \cite{MR3509939}, \cite{MR909227} and \cite{MR4013740}, that does not cause any serious problem.
Indeed, by Remarks \ref{rem:exp-for-cl}, \ref{rem:exp-for-Kal},
the reduction to the equality \eqref{eq:key-OT} is valid.
Now the equality \eqref{eq:key-OT} of characters of \(E^{\times}\) does not involve the normalization and is thus legitimate.

Moreover, the assumption \(\ch F=0\) in \cite{MR4206603} is not necessary.
Indeed, by Remark \ref{rem:exp-for-cl}, 
the reduction to the equality \eqref{eq:key-OT} works in positive characteristic as well
and it is easy to check that none of the computations in \cite{MR4206603} makes use of the assumption.
\end{rem}

\begin{rem}
As we saw in Section \ref{sec:review-of-BH}, neither Bushnell--Henniart \cite{MR2138141}, \cite{MR2148193}, \cite{MR2679700} nor Tam \cite{MR3509939} assume that \(p \neq 2\).
Thus, it may be worthwhile to try to extend some parts of Kaletha's theory reviewed in Section \ref{sec:review-of-Kal} to this case (at least in type \(A\), for which \(p=2\) is not a bad prime).
\end{rem}

To prove Theorem \ref{thm:main} along the lines of the proof of Theorem \ref{thm:GL_n}, we need to take a closer look at the characters \(\eps{\bfG^{\ast}}{}, \eps{\bfG}{}\) in Proposition \ref{prop:LLC^Kal-explicit} and the set \(\zeta_{\Tam}\) of \(\zeta\)-data in Corollary \ref{cor:Tam}.
We will do this in the next two sections.

\section{Preliminaries I} \label{sec:prelim1}
We first review (Kaletha's version of) the Howe factorization and the definition of the characters \(\epsilon^{\ram}, \epsilon_{f, \ram}\) (and thus of \(\eps{\bfG^{\ast}}{}, \eps{\bfG}{}\)).
Then, after recalling some facts about symmetric roots in \(\GL_n\)
from \cite{MR3509939} and \cite{MR4206603},
we prove an equality related to the character \(\epsilon_{f, \ram}\).

\subsection{Howe factorization} \label{sec:Howe-factorization}
Let \(\bfG\) be a connected reductive group over \(F\).
We review the notion of a Howe factorization and related objects in \cite[\S 3.6]{MR4013740}.
Again, we do not recall the precise assumptions on \(p\) in the general case and
only remark that they simply amount to \(p \neq 2\) for 
\(\bfG^{\ast}=\GL_n\) and \(\bfG=\underline{A}^{\times}\),
but are slightly stricter for more general groups. 

The material in this section will be used immediately in Section \ref{sec:epsilon-character} for the definition of the characters \(\epsilon^{\ram}, \epsilon_{f, \ram}\) and also in Section \ref{sec:vmod} and Appendix \ref{sec:app}.

Let \(\bfS \subset \bfG\) be a tamely ramified maximal torus and \(\xi \colon \bfS(F) \to \bbC^{\times}\) a character.
Let \(L\) be the splitting field of \(\bfS\).
For a real number \(r >0\), we put
\[
\Phi_r:=\{ \alpha \in \Phi(\bfS, \bfG) \mid \xi(\Nr_{L/F} (\alpha^{\vee}(L^{\times}_r)))=1\}.
\]
Here, \(\alpha^{\vee}\) is the coroot associated to \(\alpha\) and \(L^{\times}_r\) denotes the \(r\)-th Moy--Prasad subgroup of \(L^{\times}\) (more precisely, of the torus \(\Res_{L/F} \Gm\)),
or explicitly, 
\[
L^{\times}_r=1+\frakp_L^{\frac{r}{e(L/F)}}.
\]
Then \(\{\Phi_r\}_{r\in \bbR_{>0}}\) gives a \(\Gamma_F\)-invariant increasing filtration of \(\Phi(\bfS, \bfG)\) and we have \(\Phi_r=\Phi(\bfS, \bfG)\) for sufficiently large \(r\).
We also put
\[
\Phi_{r+}:=\bigcap_{s>r} \Phi_s
\]
for \(r \geq 0\)
and let \(r_{t-1} > \cdots > r_0 > 0\) be the set of breaks
(i.e., real numbers \(r>0\) for which \(\Phi_r \neq \Phi_{r+}\)).
If \(\Phi_{0+}=\Phi(\bfS, \bfG)\), then we set \(t:=0\).
Moreover we put \(r_{-1}:=0\) and \(r_t:=\depth(\xi)\).
According to \cite[Lemma 3.6.1]{MR4013740}, each subset \(\Phi_r \subset \Phi\) is a Levi subsystem.
For \(0\leq i\leq t\), we define \(\bfG^i\) to be the Levi subgroup of \(\bfG\) with maximal torus \(\bfS\) and root system \(\Phi_{r_{i-1}+}\).
We also define \(\bfG^{-1}:=\bfS\).
\begin{defn}[{\cite[Definition 3.6.2]{MR4013740}}] \label{def:Howe-facto}
A Howe factorization of \((\bfS, \xi)\) is a set of characters
\[
\phi_i \colon \bfG^i(F) \to \bbC^{\times} \quad (i=-1, \dots, t)
\]
such that the following holds:
\begin{enumerate}
\item \label{item:Howe1}
we have
\[
\xi=\prod_{i=-1}^t \phi_i|_{\bfS(F)},
\]
\item \label{item:Howe2}
for \(0 \leq i\leq t\), the character \(\phi_i\) is trivial on \(\bfG^i_{\mathrm{sc}}(F)\)
(the image in \(\bfG^i(F)\) of the set of \(F\)-valued points of the simply connected cover of the derived subgroup of \(\bfG^i\)), 
\item \label{item:Howe3}
for \(0 \leq i \leq t-1\), the character \(\phi_i\) has depth \(r_i\) and is \(\bfG^{i+1}\)-generic, and
\item \label{item:Howe4}
for \(i=t, -1\), the character \(\phi_i\) satisfies the following:
\[
\phi_t
\begin{cases}
\text{is trivial} & \text{if \(r_t=r_{t-1}\)} \\
\text{has depth \(r_t\)} & \text{otherwise,}
\end{cases}
\quad 
\phi_{-1}
\begin{cases}
\text{is trivial} &\text{if \(\bfG^0=\bfS\)} \\
\text{is trivial on \(\bfS(F)_{0+}\)} &\text{otherwise.}
\end{cases}
\]
\end{enumerate}
\end{defn}
We refer the reader to \cite[Definition 3.9]{MR2431732} for the definition of \(\bfG^{i+1}\)-genericity.

By \cite[Proposition 3.6.7]{MR4013740}, a Howe factorization of \((\bfS, \xi)\) always exists.

\subsubsection{Relation with the classical Howe factorization} \label{sec:classical-Howe-factorization}
The notion of Howe factorization of \((\bfS, \xi)\) as above, which works for general tamely ramified reductive groups (with some mild conditions on \(p\)),
is modeled on the classical factorization of an admissible character due to Howe \cite{MR0492087}.
As we will also use the latter in this paper, let us briefly recall what it is like and its relation with the above factorization of \((\bfS, \xi)\)
(see \cite[\S 3.5]{MR2431732} for more details).

A Howe factorization of an admissible pair \((E/F, \xi)\) consists of
\begin{itemize}
\item a sequence of subfields \(F=E_t \subsetneq E_{t-1} \subsetneq \cdots \subsetneq E_1 \subsetneq E_0 \subset E_{-1}=E\), and
\item characters \(\xi_i \colon E_i^{\times} \to \bbC^{\times}\) for \(-1 \leq i \leq t\)
satisfying
\[
\xi=\prod_{i=-1}^{t} (\xi_i \circ \Nr_{E/E_i})
\]
\end{itemize}
and some other conditions (see, e.g., \cite[Definition 3.33]{MR2431732} for the precise definition).

Let \(\bfG=\underline{A}^{\times}\) be as in Section \ref{sec:review-of-BH}
and \((\bfS, \xi)\) be a tame elliptic regular pair of \(\bfG\).
In this case, \((\bfS, \xi)\) corresponds to an admissible pair \((E/F, \xi)\) of degree \(n\) by the bijections in Section \ref{sec:ter-and-adm-pairs}.
A Howe factorization of \((\bfS, \xi)\) gives rise to a Howe factorization of an admissible pair \((E/F, \xi)\) as follows.
The inclusion \(\bfS \subset \bfG\), together with the choice of the isomorphism \(\bfS \simeq \Res_{E/F} \Gm\), yields an embedding \(E \hookrightarrow A\) of \(F\)-algebras, with which we regard \(E\) as an \(F\)-subalgebra of \(A\).
Then one can show that there exists a sequence of subfields 
\(F=E_t \subsetneq E_{t-1} \subsetneq \cdots \subsetneq E_1 \subsetneq E_0 \subset E_{-1}=E\) such that,
if we write \(A_i \subset A\) for the central simple \(E_i\)-algebra given as the centralizer of \(E_i\) in \(A\),
then we have
\[
\bfG^i=\Res_{E_i/F} \underline{A_i}^{\times}
\]
in \(\bfG\) for any \(-1\leq i\leq t\).
The condition \eqref{item:Howe2} implies that \(\phi_i\) factors through the reduced norm map \(\Nrd_{A_i/E_i} \colon A_i^{\times}=\bfG^i(F) \to E_i^{\times}\) for \(0\leq i\leq t\); we take a character \(\xi_i\) of \(E_i^{\times}\) such that 
\[
\phi_i=\xi_i \circ \Nrd_{A_i/E_i}.
\]
We set \(\xi_{-1}:=\phi_{-1}\).
Since \(\Nrd_{A_i/E_i}\) restricts on \(E^{\times}\) to the norm map \(\Nr_{E/E_i}\), the condition \eqref{item:Howe1} translates into
\[
\xi=\prod_{i=-1}^{t} (\xi_i \circ \Nr_{E/E_i}).
\]
Moreover, if we put
\[
a_i:=r_ie(E/F)
\]
for \(-1\leq i\leq t\),
then the conditions \eqref{item:Howe3} and \eqref{item:Howe4} in Definition \ref{def:Howe-facto} show (among other things) that
\begin{itemize}
\item for \(0\leq i\leq t-1\), the character \(\xi_i \circ \Nr_{E/E_i}\) has the \(E\)-level \(a_i\) in the sense that \(\min \{a \in \bbZ_{\geq 0} \mid U_E^{a+1} \subset \Ker (\xi_i \circ \Nr_{E/E_i})\}=a_i\), and
\item for \(i=-1, t\), the character \(\xi_i \circ \Nr_{E/E_i}\) is trivial on \(U_E^{a_i+1}\).
\end{itemize}
(These assertions follow from, e.g., ``Level and depth of a multiplicative character" in \cite[page 2013]{MR4206603} and \cite[Lemma 2.52]{MR2431732}, which are applicable because \(E/F\) is tamely ramified.)
These data \((E_i)_{i=-1}^t\), \((\xi_i)_{i=-1}^t\) form a Howe factorization of \((E/F, \xi)\). Later we will also use the sequence of integers \((a_i)_{i=-1}^t\).

\subsection{The characters \(\epsilon^{\ram}\) and \(\epsilon_{f, \ram}\)} \label{sec:epsilon-character}
We recall the definition of the quadratic characters \(\epsilon^{\ram}\) and \(\epsilon_{f, \ram}\) of \(\bfS(F)\) following \cite[\S 4.3, Lemma 4.7.4]{MR4013740}.

Let \((\bfS, \xi)\) be a tame elliptic regular pair of \(\bfG\).
By the procedure recalled in Section \ref{sec:Howe-factorization}, we have
\begin{itemize}
\item an integer \(t\geq 0\) and real numbers \(r_t \geq r_{t-1}> \cdots > r_0>r_{-1}=0\), and
\item Levi subgroups \(\bfS=\bfG^{-1}\subset \bfG^0 \subsetneq \cdots \subsetneq \bfG^{t-1} \subsetneq \bfG^t=\bfG\).
\end{itemize}

Let us first recall the definition of the quadratic character \(\epsilon^{\ram}\).
It is defined as the product of quadratic characters describing ``the contribution from the roots in \(\Phi(\bfS, \bfG^{i+1})-\Phi(\bfS, \bfG^i)\)" for \(0\leq i\leq t-1\).
We write \(\epsilon^{\ram, \bfG^i \subset \bfG^{i+1}}\) for each quadratic character.
Thus, we will eventually define
\begin{equation} \label{eq:epsilon^ram-as-prod}
\epsilon^{\ram}:=\prod_{i=0}^{t-1} \epsilon^{\ram, \bfG^i \subset \bfG^{i+1}}.
\end{equation}

Now let us fix \(0\leq i\leq t-1\) and recall the definition of \(\epsilon^{\ram, \bfG^i \subset \bfG^{i+1}}\).
For this, we need some notation.

Since the Bruhat--Tits building behaves well with respect to tamely ramified extensions (\cite{MR1871292}),
we can regard the Bruhat--Tits building \(\calB(\bfS, F)\) of \(\bfS\) over \(F\) as a subset of that \(\calB(\bfG, F)\) of \(\bfG\).
As \(\bfS\) is elliptic, the image of \(\calB(\bfS, F)\) in the reduced building \(\calB_{\red}(\bfG, F)\) consists of only one point, which we write as \(x\).

For \(\alpha \in \Phi(\bfS, \bfG)\) and \(r\in \bbR\), we write
\[
\frakg_{\alpha}(F_{\alpha})_{x, r}:=\frakg_{\alpha}(F_{\alpha}) \cap \frakg(F_{\alpha})_{x, r},
\]
where \(\frakg_{\alpha}\) is the root subspace for \(\alpha\) and \(\frakg(F_{\alpha})_{x, r}\) is the \(r\)-th Moy--Prasad filtration of \(\frakg(F_{\alpha})\).
We define
\[
\ord_x(\alpha):=\{r \in \bbR \mid \frakg_{\alpha}(F_{\alpha})_{x, r}\supsetneq \frakg_{\alpha}(F_{\alpha})_{x, r+}\}.
\]

For \(\gamma \in \bfS(F)\), we put
\begin{align*}
\Phi_{\gamma}&:=\{ \alpha \in \Phi(\bfS, \bfG^{i+1})-\Phi(\bfS, \bfG^i) \mid \alpha (\gamma) \neq 1\}, \\
\Phi_{\frac{r_i}{2}}&:=\Big\{ \alpha \in \Phi_{\gamma} \mid \frac{r_i}{2} \in \ord_x(\alpha)\Big\}.
\end{align*}
Note that strictly speaking the symbol \(\Phi_{\frac{r_i}{2}}\) here is in conflict with \(\Phi_r\) in Section \ref{sec:Howe-factorization},
but we will not use the latter symbol any more and thus this should cause no confusion.

The character \(\epsilon^{\ram, \bfG^i \subset \bfG^{i+1}}\) is defined by the following formula:
\begin{equation} \label{eq:epsilon^ram,i}
\epsilon^{\ram, \bfG^i \subset \bfG^{i+1}}(\gamma):=
\prod_{\alpha \in \Gamma_F\times \{\pm 1\}\backslash (\Phi_{\frac{r_i}{2}})^{\sym}}
\Jac{\overline{\alpha (\gamma)}}{k_{F_{\alpha}}^{\times}}
\cdot
\prod_{\alpha \in \Gamma_F\backslash (\Phi_{\frac{r_i}{2}})_{\sym, \ur}}
\Jac{\overline{\alpha (\gamma)}}{k_{F_{\alpha}}^{1}}
\end{equation}
for \(\gamma \in \bfS(F)\),
where we write
\[
(\Phi_{\frac{r_i}{2}})^{\sym}:=\Phi_{\frac{r_i}{2}} \cap \Phi(\bfS, \bfG)^{\sym}, 
\quad
(\Phi_{\frac{r_i}{2}})_{\sym, \ur}:=\Phi_{\frac{r_i}{2}} \cap \Phi(\bfS, \bfG)_{\sym, \ur},
\]
and \(\Jac{\cdot}{k_{F_{\alpha}}^{\times}}\) (resp.\ \(\Jac{\cdot}{k_{F_{\alpha}}^1}\)) is the unique character of order two of the cyclic group 
\(k_{F_{\alpha}}^{\times}\) (resp.\ \(k_{F_{\alpha}}^1:=\Ker (\Nr_{k_{F_{\alpha}}/k_{F_{\pm \alpha}}} \colon k_{F_{\alpha}}^{\times} \to k_{F_{\pm \alpha}}^{\times})\)) of even order
(recall that \(p \neq 2\) by our assumption).
As explained after (4.3.3) of \cite{MR4013740} (see also \cite[page 2024]{MR4206603}), in the first product, \(\alpha (\gamma)\) lies in \(\calO_{F_{\alpha}}^{\times}\) and we take its image \(\overline{\alpha (\gamma)}\) in \(k_{F_{\alpha}}^{\times}\).
Similarly in the second product, the \(F_{\alpha}/F_{\pm \alpha}\)-norm of \(\alpha (\gamma) \in \calO_{F_{\alpha}}^{\times}\) is trivial
and we take its image \(\overline{\alpha (\gamma)}\) in \(k_{F_{\alpha}}^1\).

With \(\epsilon^{\ram, \bfG^i \subset \bfG^{i+1}}\) at hand,
we define \(\epsilon^{\ram}\) by \eqref{eq:epsilon^ram-as-prod}.

Let us turn our attention to \(\epsilon_{f, \ram}\).
The definition involves an explicit set of zeta-data \(\zeta_{\ram}=\{\zeta_{\ram, \alpha}\}_{\alpha \in \Phi(\bfS, \bfG)}\) on \cite[page 1132]{MR4013740} (denoted by \(\{\epsilon_{\alpha}\}_{\alpha \in \Phi(\bfS, \bfG)}\) in loc.\ cit.), which we now recall.
For an asymmetric or symmetric unramified root \(\alpha\), we set \(\zeta_{\ram, \alpha}\) to be the trivial character of \(F_{\alpha}^{\times}\).
For a symmetric ramified root \(\alpha\),
we set
\[
\zeta_{\ram, \alpha}(x):=f_{(\bfG, \bfS)}(\alpha)^{v_{F_{\alpha}}(x)}
\]
for \(x\in F_{\alpha}^{\times}\).
Here \(f_{(\bfG, \bfS)}(\alpha) \in \{\pm 1\}\) is the \textit{toral invariant} defined in \cite[\S 4.1]{MR3402796}.
It is a sign associated to a symmetric root \(\alpha \in \Phi(\bfS, \bfG)_{\sym}\) and a pair \((\bfS, \bfG)\), and the dependence on \(\alpha\) is only through its \(\Gamma_F\)-orbit.
We do not recall the definition of the toral invariant, but will compute an equality involving it using a formula in \cite[Proposition 4.3.1]{MR3402796} later in Proposition \ref{prop:toral-invariant}.

The character \(\epsilon_{f, \ram}\) is obtained by applying to \(\zeta_{\ram}\) a general procedure producing a character of \(\bfS(F)\) from a set of zeta-data for \(\Phi(\bfS, \bfG)\).
It is essentially contained in \cite[Lemma 3.3.A, Lemma 3.3.D]{MR909227} and summarized in \cite[Definition 4.6.5]{MR4013740}.
Here we only recall the resulting character \(\epsilon_{f, \ram}\).
For a symmetric ramified root \(\alpha \in \Phi(\bfS, \bfG)_{\sym, \ram}\), we write \(\zeta_{\Gamma_F \cdot \alpha}\) for the composite
\begin{equation} \label{eq:zeta-to-character}
\bfS(F) \xrightarrow{\alpha} F_{\alpha}^1 \xleftarrow{\sim} F_{\alpha}^{\times}/F_{\pm \alpha}^{\times} \xrightarrow{\zeta_{\ram, \alpha}} \bbC^{\times},
\end{equation} 
where \(\alpha\) maps \(\bfS(F)\) into \(F_{\alpha}^1:=\Ker(\Nr_{F_{\alpha}/F_{\pm \alpha}} \colon F_{\alpha}^{\times} \to F_{\pm \alpha}^{\times})\) since \(\alpha\) is symmetric
and the middle isomorphism sends \(\gamma \in F_{\alpha}^{\times}\) to \(\frac{\gamma}{\tau(\gamma)} \in F_{\alpha}^1\) with \(\tau \in \Gal(F_{\alpha}/F_{\pm \alpha})\) being the generator.
As the notation suggests, \(\zeta_{\Gamma_F \cdot \alpha}\) depends only on the \(\Gamma_F\)-orbit.
Then we define
\begin{equation} \label{eq:epsilon_ram}
\epsilon_{f, \ram}:=\prod_{\alpha \in \Gamma_F\backslash \Phi(\bfS, \bfG)_{\sym, \ram}} \zeta_{\Gamma_F \cdot \alpha}.
\end{equation} 

In view of \eqref{eq:epsilon^ram-as-prod}, \eqref{eq:epsilon^ram,i}, \eqref{eq:epsilon_ram}, we define the quadratic characters \(\epsilon_{\alpha}\) for each \(\alpha \in \Phi(\bfS, \bfG)\) as follows.
First suppose that \(\alpha\) is asymmetric or symmetric unramified, and let \(-1\leq i\leq t-1\) be the unique integer such that \(\alpha \in \Phi(\bfS, \bfG^{i+1})-\Phi(\bfS, \bfG^i)\).
We define
\begin{equation} \label{eq:epsilon-asym}
\epsilon_{\alpha}(\gamma):=
\begin{cases}
\Jac{\overline{\alpha(\gamma)}}{k_{F_{\alpha}}^{\times}} &\text{if \(i\neq -1\) and \(\frac{r_i}{2} \in \ord_x(\alpha)\)} \\
1 & \text{otherwise}
\end{cases}
\end{equation}
for \(\alpha \in \Phi(\bfS, \bfG)^{\sym}\) and
\begin{equation} \label{eq:epsilon-symur}
\epsilon_{\alpha}(\gamma):=
\begin{cases}
\Jac{\overline{\alpha(\gamma)}}{k_{F_{\alpha}}^1} &\text{if \(i\neq -1\) and \(\frac{r_i}{2} \in \ord_x(\alpha)\)} \\
1 & \text{otherwise}
\end{cases}
\end{equation}
for \(\alpha \in \Phi(\bfS, \bfG)_{\sym, \ur}\).
Next suppose that \(\alpha\) is symmetric ramified. Then we define
\begin{equation} \label{eq:epsilon-symram}
\epsilon_{\alpha}:=\zeta_{\Gamma_F \cdot \alpha}.
\end{equation}
By definition, we have
\begin{align}
\label{eq:factorization-epsilon^ram} \epsilon^{\ram}&=\prod_{\alpha \in \Gamma_F\times \{ \pm 1\}\backslash \Phi(\bfS, \bfG)^{\sym}} \epsilon_{\alpha} \cdot
\prod_{\alpha \in \Gamma_F\backslash \Phi(\bfS, \bfG)_{\sym, \ur}} \epsilon_{\alpha}, \\
\label{eq:factorization-epsilon_fram} \epsilon_{f, \ram}&=
\prod_{\alpha \in \Gamma_F\backslash \Phi(\bfS, \bfG)_{\sym, \ram}} \epsilon_{\alpha}.
\end{align}

Now we specialize to the case where \(\bfG^{\ast}=\GL_n\) and \(\bfG=\underline{A}^{\times}\) are as in Section \ref{sec:review-of-BH} and introduce some notation adapted to the situation.
In this case, 
tame elliptic regular pairs of \(\bfG^{\ast}\) and \(\bfG\) are naturally identified (see Section \ref{sec:ter-and-adm-pairs}) and thus 
we wrote the associated characters \(\epsilon=\epsilon_{f, \ram} \cdot \epsilon^{\ram}\) as \(\eps{\bfG^{\ast}}{}\) and \(\eps{\bfG}{}\) in Section \ref{sec:LLC^Kal-for-GL_n}.
Similarly, for the characters \(\epsilon_{\alpha}, \epsilon^{\ram}, \epsilon_{f, \ram}\), we add \(\bfG\) or \(\bfG^{\ast}\) as a subscript on the left to indicate with respect to which group the characters are formed.
While we have \(\Phi(\bfS, \bfG^{\ast})=\Phi(\bfS, \bfG)\) (see Remark \ref{rem:GL_n-and-ell-tori} \eqref{item:roots-for-inner-form-of-GL_n}),
the set \(\ord_{x}(\alpha) \subset \bbR\) and the toral invariant depend on the group, and hence so do the various characters.
Thus, we write \(\ordx{\bfG}{\alpha}\) (resp.\ \(\ordx{\bfG^{\ast}}{\alpha}\)) for the set \(\ord_{x}(\alpha)\) defined with respect to \(\bfG\) (resp.\ \(\bfG^{\ast}\)).
For the toral invariant, we continue to use the same symbols \(f_{(\bfG^{\ast}, \bfS)}(\alpha), f_{(\bfG, \bfS)}(\alpha)\) as they already indicate the dependence on the group.

\subsection{Roots of an elliptic maximal torus of \(\GL_n\)}\label{sec:sigma-phi}
We continue to let \(\bfG^{\ast}=\GL_n\) and \(\bfG=\underline{A}^{\times}\).
Let \(\bfS \subset \bfG^{\ast}\) be a tamely ramified elliptic maximal torus.
Then \(\bfS \simeq \Res_{E/F}\Gm\) for a tamely ramified extension \(E/F\) of degree \(n\).
Let \(e=e(E/F)\) and \(f=f(E/F)\) be the ramification index and the residue degree of \(E/F\).
Recall the bijection \eqref{eq:Galois-orb-as-coset}
\[
(\Gamma_E\backslash \Gamma_F/\Gamma_E)' \xrightarrow{1:1}\Gamma_F\backslash \Phi(\bfS, \bfG^{\ast});
\quad 
\Gamma_Eg\Gamma_E \mapsto 
\Gamma_F
\begin{bmatrix}
1 \\
g
\end{bmatrix}.
\]
In this section, we review some of the contents in \cite[\S 3.2]{MR3509939} and \cite[\S 5.1]{MR4206603} (also reviewed in \cite[\S 2.2]{MR3505131}),
where one introduces convenient representatives of \(\Gamma_F/\Gamma_E\) 
to be able to discuss roots and their orbits more explicitly.

It is well-known that one can take uniformizers \(\varpi_E \in E\)
and \(\varpi_F \in F\) such that 
\[
\varpi_E^e=z_{E/F} \varpi_F
\]
for a root \(z_{E/F} \in \mu_E\) of unity.
We also fix an \(e\)-th root \(z_{E/F, e}\) of \(z_{E/F}\) 
and a primitive \(e\)-th root \(z_e\) of unity,
and put \(z_{\phi}=z_{E/F, e}^{q-1}\).
Then we set
\[
L:=E[z_e, z_{E/F, e}].
\]
As is easily seen, \(L\) is a Galois extension of \(F\) containing \(E\)
and is unramified over \(E\).
\[
\begin{tikzcd}
& L=E[z_e, z_{E/F, e}] & \\
E \arrow[dash]{ur} & & F[\mu_L] \arrow[dash]{ul} \\
& F \arrow[dash]{ul} \arrow[dash]{ur} &
\end{tikzcd}
\]
The Galois group \(\Gamma_{L/F}\) of \(L/F\) is a semi-direct product
\(\Gamma_{L/F}=\lan \sigma \ran \rtimes \lan \phi \ran\), where
\begin{align*}
&\sigma \colon \varpi_E \mapsto z_e \varpi_E, \quad z \mapsto z \quad \text{(for \(z \in \mu_L\)),} \\
&\phi \colon \varpi_E \mapsto z_{\phi} \varpi_E, \quad z\mapsto z^q \quad \text{(for \(z \in \mu_L\))}.
\end{align*}
As discussed in \cite[Proposition 3.3 (i)]{MR3509939},
the set
\[
\{ \sigma^i \phi^j \mid 0\leq i\leq e-1, 0\leq j\leq f-1\}
\]
forms a set of representatives of \(\Gamma_F/\Gamma_E\)
under the natural identification \(\Gamma_F/\Gamma_E \xrightarrow{1:1} \Gamma_{L/F}/\Gamma_{L/E}\),
Thus, when we deal with roots \(\alpha \in \Phi(\bfS, \bfG^{\ast})\) or its orbit using the bijection \eqref{eq:Galois-orb-as-coset}, 
we often take a representative \(g\) in the above set
(here we similarly identify \(\Gamma_E\backslash \Gamma_F/\Gamma_E\) with  \(\Gamma_{L/E}\backslash \Gamma_{L/F}/\Gamma_{L/E}\)).

For \(g\in \Gamma_F\), we write \([g]\) for the double coset \(\Gamma_E g \Gamma_E\).
We say that a double coset \([g]\in (\Gamma_E\backslash \Gamma_F/\Gamma_E)'\) is \emph{asymmetric} or \emph{symmetric}
if the corresponding orbit of roots has the same property.
In other words, \([g]\) is asymmetric (resp.\ symmetric)
if and only if \([g] \neq [g^{-1}]\) (resp.\ \([g]=[g^{-1}]\)).
Similarly, we also say that a symmetric double coset is \emph{unramified} or \emph{ramified} according to the property of the corresponding orbit of roots.

Let \((\Gamma_E \backslash \Gamma_F/\Gamma_E)_{\asym} \subset	(\Gamma_E \backslash \Gamma_F/\Gamma_E)'\) 
(resp.\ \((\Gamma_E \backslash \Gamma_F/\Gamma_E)_{\sym} \subset (\Gamma_E \backslash \Gamma_F/\Gamma_E)'\))
be the set of non-trivial asymmetric (resp.\ symmetric) double cosets.
Also, let \((\Gamma_E \backslash \Gamma_F/\Gamma_E)_{\asym /{\pm}}\) be the quotient set of \((\Gamma_E \backslash \Gamma_F/\Gamma_E)_{\asym}\) defined by identifying \([g]\) with \([g^{-1}]\) for every \([g]\in (\Gamma_E \backslash \Gamma_F/\Gamma_E)_{\asym}\).

\begin{prop} \label{prop:cond-by-Galois-elt}
	Let the notation be as above.
	\begin{enumerate}
		\item \label{item:cond-for-sym} If \(\alpha=\begin{bmatrix}1 \\ \sigma^i \phi^j
		\end{bmatrix} \in \Phi(\bfS, \bfG^{\ast})\) 
		\((0\leq i\leq e-1, 0\leq j\leq f-1)\) 
		is symmetric, 
		then \(j=0\) or \(j=\frac{f}{2}\) (in which case \(f\) is even).
		\item \label{item:cond-for-symram} The set \(\Phi(\bfS, \bfG^{\ast})\) has a symmetric ramified root if and only if \(e\) is even.
		In this case, there exists a unique such orbit and it is represented by  \(\begin{bmatrix}1 \\ \sigma^{\frac{e}{2}}\end{bmatrix}\).
	\end{enumerate}
\end{prop}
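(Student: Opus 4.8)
The plan is to translate everything into the combinatorics of the double cosets $\Gamma_{L/E}\backslash\Gamma_{L/F}/\Gamma_{L/E}$ via the bijection \eqref{eq:Galois-orb-as-coset}, working inside the explicitly presented group $\Gamma_{L/F}=\lan\sigma\ran\rtimes\lan\phi\ran$, and to read off both assertions from two quotient homomorphisms. Let $\pi_1\colon\Gamma_{L/F}\twoheadrightarrow\Gal(k_L/k_F)$ be the reduction on residue fields; its kernel is the inertia subgroup, which here is exactly $\lan\sigma\ran$ (since $\sigma$ fixes $\mu_L$ and $L/E$ is unramified), so $\pi_1$ identifies the target with $\lan\phi\ran$. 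Because $\{\sigma^i\phi^j\mid 0\le i<e,\ 0\le j<f\}$ represents $\Gamma_{L/F}/\Gamma_{L/E}$ and $\lan\sigma\ran\cap\Gamma_{L/E}=1$ (an element $\sigma^i$ fixing $\varpi_E$ has $e\mid i$), one gets $\pi_1(\Gamma_{L/E})=\Gal(k_L/k_E)=\lan\phi^f\ran$. Composing further with $\lan\phi\ran\twoheadrightarrow\lan\phi\ran/\lan\phi^f\ran\cong\bbZ/f\bbZ$ gives a homomorphism $\pi_0$ that kills $\Gamma_{L/E}$, hence is constant on double cosets, and that sends the class of $\sigma^i\phi^j$ to $j\bmod f$.

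For \eqref{item:cond-for-sym}: symmetry of $\alpha=\begin{bmatrix}1\\\sigma^i\phi^j\end{bmatrix}$ says $\Gamma_{L/E}\,(\sigma^i\phi^j)\,\Gamma_{L/E}=\Gamma_{L/E}\,(\sigma^i\phi^j)^{-1}\,\Gamma_{L/E}$. Applying $\pi_0$ and using $(\sigma^i\phi^j)^{-1}=\phi^{-j}\sigma^{-i}$ yields $j\equiv-j\pmod f$, i.e.\ $2j\equiv 0\pmod f$; since $0\le j<f$ this forces $j=0$ or $j=f/2$ (the latter making $f$ even).

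For \eqref{item:cond-for-symram}: one first checks directly that $\begin{bmatrix}1\\\sigma^{e/2}\end{bmatrix}$ is a nontrivial symmetric root when $e$ is even, using $\sigma^{e/2}\cdot\begin{bmatrix}1\\\sigma^{e/2}\end{bmatrix}=\begin{bmatrix}\sigma^{e/2}\\1\end{bmatrix}=-\begin{bmatrix}1\\\sigma^{e/2}\end{bmatrix}$; and it is ramified because the witnessing element $\tau=\sigma^{e/2}$ lies in the inertia subgroup, so $\pi_1(\tau)=1\in\pi_1(\Gamma_\alpha)$, while the basic criterion — proved from $k_{F_\alpha}=k_L^{\pi_1(\Gamma_\alpha)}$ and the index-two normal inclusion $\Gamma_\alpha\subseteq\Gamma_{\pm\alpha}$ — states that $F_\alpha/F_{\pm\alpha}$ is unramified iff $\pi_1(\tau)\notin\pi_1(\Gamma_\alpha)$ for $\tau$ with $\tau\alpha=-\alpha$. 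Conversely, let $\alpha=\begin{bmatrix}1\\\sigma^i\phi^j\end{bmatrix}$ be symmetric and ramified; by \eqref{item:cond-for-sym} we have $j\in\{0,f/2\}$. Writing $\Gamma_{\pm\alpha}=\Gamma_\alpha\sqcup\Gamma_\alpha\tau$, the identity $\tau\alpha=-\alpha$ forces $\tau=(\sigma^i\phi^j)h$ with $h\in\Gamma_{L/E}$, so $\pi_1(\tau)\in\phi^j\lan\phi^f\ran$; since $\pi_1(\Gamma_\alpha)\subseteq\pi_1(\Gamma_{L/E})=\lan\phi^f\ran$, ramification rules out $j=f/2$ and leaves $j=0$. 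For $j=0$ one then uses $\phi\sigma\phi^{-1}=\sigma^q$ to write $\Gamma_{L/E}=\lan\sigma^{a_0}\phi^f\ran$ and $\sigma^i\Gamma_{L/E}\sigma^{-i}=\lan\sigma^{a_0+i(1-q^f)}\phi^f\ran$, so that $(\sigma^{a_0}\phi^f)^l\in\Gamma_\alpha=\Gamma_{L/E}\cap\sigma^i\Gamma_{L/E}\sigma^{-i}$ iff $i(1-q^{fl})\equiv 0\pmod e$, whereas the existence of $\tau$ amounts to $i(1+q^{fk})\equiv 0\pmod e$ for some $k$; adding these congruences (legitimate since $q$ is a unit mod $e$ by tameness) gives $2i\equiv 0\pmod e$, and $i\not\equiv 0$ then forces $i=e/2$ with $e$ even. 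This establishes existence and uniqueness of the symmetric ramified orbit and that it is represented by $\begin{bmatrix}1\\\sigma^{e/2}\end{bmatrix}$.

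The main obstacle is this last computation in \eqref{item:cond-for-symram}: pinning down the generator $\sigma^{a_0}\phi^f$ of $\Gamma_{L/E}$ and the conjugated subgroup, and then matching the ``symmetry'' and ``ramification'' congruences for a common exponent, which also uses the periodicity $q^{f\,f(L/E)}\equiv 1\pmod e$. All of this bookkeeping is, however, essentially already done in \cite[\S3.2--3.3]{MR3509939} and \cite[\S5.1]{MR4206603} (see also \cite[\S2.2]{MR3505131}), so the proof may alternatively be given by citing those accounts together with the short verifications above.
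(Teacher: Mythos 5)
Your strategy — translating everything into double cosets in $\Gamma_{L/F}=\lan\sigma\ran\rtimes\lan\phi\ran$ and reading symmetry and ramification off the residue-field quotient $\pi_1$ — is sound, and part \eqref{item:cond-for-sym} is argued cleanly and correctly. Note that the paper itself does not prove Proposition \ref{prop:cond-by-Galois-elt}; it cites \cite[Proposition 3.3(iii)]{MR3509939} for \eqref{item:cond-for-sym} and \cite[Proposition 5.3]{MR4206603} for \eqref{item:cond-for-symram}, so a self-contained derivation like yours is a genuine alternative, and the bookkeeping is indeed essentially what appears in those references.

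However, the final step of \eqref{item:cond-for-symram} has a real gap. After fixing $j=0$, you characterize $\Gamma_\alpha$ by ``$(\sigma^{a_0}\phi^f)^l\in\Gamma_\alpha$ iff $i(1-q^{fl})\equiv 0\pmod e$'' and observe that symmetry yields $i(1+q^{fk})\equiv 0\pmod e$ for some $k$, then ``add these congruences'' to conclude $2i\equiv 0\pmod e$. But $l$ and $k$ are a priori unrelated: adding gives only $i(2-q^{fl}+q^{fk})\equiv 0$, which does not yield $2i\equiv 0$ unless $q^{fl}\equiv q^{fk}\pmod e$. And symmetry alone cannot force $i=e/2$: the cosets $[\sigma^i]$ with $i\neq e/2$ are symmetric but unramified in Adler--Spice's sense (this is exactly the discrepancy tabulated in Remark \ref{rem:Tam-sym-ur/ram}), so the ramification hypothesis must enter this step. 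What is missing is the explicit invocation of your own ramification criterion: ramification means $\pi_1(\tau)\in\pi_1(\Gamma_\alpha)$, and for $\tau=\sigma^i(\sigma^{a_0}\phi^f)^k$ this gives $\pi_1(\tau)=(\phi^f)^k$, so there exists $l$ with $(\sigma^{a_0}\phi^f)^l\in\Gamma_\alpha$ and $l\equiv k\pmod{f(L/E)}$; by the periodicity $q^{f\cdot f(L/E)}\equiv 1\pmod e$ (which holds because $\phi^{f(L/F)}$ lies in $\lan\sigma\ran$ and fixes $z_e$) one gets $q^{fl}\equiv q^{fk}\pmod e$, hence $i(1-q^{fk})\equiv 0\pmod e$, and now the two congruences carry the same exponent $k$ and add to $2i\equiv 0\pmod e$. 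The parenthetical ``legitimate since $q$ is a unit mod $e$ by tameness'' does not justify the addition — tameness is relevant only for the periodicity, not for aligning $l$ with $k$. Once this is spelled out the rest of \eqref{item:cond-for-symram}, including the direct verification that $\begin{bmatrix}1\\\sigma^{e/2}\end{bmatrix}$ is symmetric ramified, is correct.
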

\begin{proof}
The assertion \eqref{item:cond-for-sym} is \cite[Proposition 3.3 (iii)]{MR3509939} (or \cite[Proposition 2.2]{MR3505131}).
The assertion \eqref{item:cond-for-symram} is \cite[Proposition 5.3]{MR4206603}, proven by slightly refining the arguments in \cite[\S 3.2]{MR3509939}.
\end{proof}
\begin{rem} \label{rem:Tam-sym-ur/ram}
The above definition of symmetric unramified and ramified cosets comes from the same property of the roots defined by Adler--Spice
\cite{MR2543925} and recalled in the beginning of Section \ref{sec:chi-data-L-emb}.
In fact, in \cite{MR3509939}, \cite{MR3505131}
Tam uses the same terms, but in a different way.
The below table summarizes the difference (here we take \(0\leq i\leq e-1\)). 
In this paper we usually use the terms in the sense of Adler--Spice, except for some parts where we explicitly state so.
\begin{table}[h!]
\centering
\begin{tabular}{|c|c|c|}
\hline 
& Adler--Spice's sense & Tam's sense \\ 
\hline 
symmetric coset \([\sigma^i \phi^{\frac{f}{2}}]\) & \text{unramified} &  \text{unramified} \\ 
\hline 
symmetric coset \([\sigma^i]\) (\(i\neq \frac{e}{2}\)) & \text{unramified} & \text{ramified} \\ 
\hline 
symmetric coset \([\sigma^\frac{e}{2}]\) & \text{ramified} & \text{ramified} \\ 
\hline 
\end{tabular}
\end{table}
\end{rem}

\subsection{Toral invariants} \label{sec:toral-invariant}
Let \(\bfG^{\ast}=\GL_n, \bfG=\underline{A}^{\times}, \bfS =\Res_{E/F}\Gm \subset \bfG^{\ast}\) be as in the previous section.
Let us prove an equality involving the toral invariant \(f_{(\bfG, \bfS)}(\alpha)\).

In the following, we work with the Brauer group \(\Br(F)\) (see Section \ref{sec:notation}).
Recall that \(A=M_m(D)\).
We write the Hasse invariant of \(A\) as \(\frac{h}{d}\) with \(h, d=\frac{n}{m} \in \bbZ\) prime to each other, namely
\[
\inv_F([A])=\frac{h}{d}
\]
in \(\bbQ/\bbZ\).
We also use the isomorphism
\[
\bfe \colon \bbQ/\bbZ \xrightarrow{\sim} \mu (\bbC):=\{\text{roots of unity in \(\bbC\)}\}; \quad x \mapsto \exp (2\pi \sqrt{-1} x)
\]
to pass from the additive notation to the multiplicative notation.
\begin{prop} \label{prop:toral-invariant}
With the above notation, we have the following.
\begin{enumerate}
\item \label{item:sign-inv} 
Let \(\alpha \in \Phi(\bfS, \bfG^{\ast})\) be a symmetric root
and put \(n_{\alpha}:=[F_{\pm \alpha}:F]\).
Let \([A]\in \Br(F)\) denote the class of \(A\).
Then \(n_{\alpha} \cdot [A]\) lies in the 2-torsion subgroup of \(\Br(F)\) and hence can be regarded as a sign: \(\bfe (\inv_F (n_{\alpha} \cdot [A]))\in \{\pm 1\}\).
We have
\[
f_{(\bfG^{\ast}, \bfS)}(\alpha) \cdot f_{(\bfG, \bfS)}(\alpha)=\bfe (\inv_F (n_{\alpha} \cdot [A]))
\]
in \(\{ \pm 1\}\).
\item \label{item:sign-symram} 
For a symmetric ramified root \(\alpha \in \Phi(\bfS, \bfG^{\ast})_{\sym, \ram}\), we have
\[
(\eps{\bfG^{\ast}}{\alpha} \cdot \eps{\bfG}{\alpha})(\gamma)=(-1)^{mv_E(\gamma)}
\]
for \(\gamma \in \bfS(F)=E^{\times}\).
\end{enumerate}
\end{prop}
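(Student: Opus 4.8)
The plan is to treat the two parts in order, deriving \eqref{item:sign-symram} from \eqref{item:sign-inv} together with the unramified analogue and a computation with norm maps. For \eqref{item:sign-inv}, I would invoke the explicit formula for the toral invariant \cite[Proposition 4.3.1]{MR3402796}, which expresses $f_{(\bfG,\bfS)}(\alpha)$ for a symmetric root $\alpha$ in terms of a Hasse-invariant-type quantity attached to the subgroup generated by $\bfS$ and the root $SL_2$ inside $\bfG$. The key point is that when one passes from $\bfG^{\ast}=\GL_n$ to $\bfG=\underline{A}^{\times}$, the two toral invariants differ precisely by the class of $A$ restricted to (equivalently, corestricted from) the field $F_{\pm\alpha}$: concretely, the root $\alpha$ corresponds to a quadratic extension $F_{\alpha}/F_{\pm\alpha}$ sitting inside $A\otimes_F F_{\pm\alpha}$, and the relevant local invariant that distinguishes $\bfG$ from $\bfG^{\ast}$ is $\inv_{F_{\pm\alpha}}([A]\otimes_F F_{\pm\alpha}) = n_\alpha\cdot\inv_F([A]) = \inv_F(n_\alpha\cdot[A])$ (using that restriction to a degree-$n_\alpha$ extension is multiplication by $n_\alpha$ on $\Br(F)\hookrightarrow\bbQ/\bbZ$). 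Since $f_{(\bfG^{\ast},\bfS)}(\alpha)\in\{\pm1\}$ and the product $f_{(\bfG^{\ast},\bfS)}(\alpha)\cdot f_{(\bfG,\bfS)}(\alpha)$ must also be $\pm1$, the quantity $n_\alpha\cdot[A]$ is forced into the $2$-torsion of $\Br(F)$, which is exactly the first assertion; the equality of signs then follows by matching the formula of \cite{MR3402796} on the two sides. I would set this up using the description of $F_\alpha$ as a composite $g_1(E)\cdot g_2(E)$ from Section \ref{sec:L-emb-and-ind} and the identification $\Phi(\bfS,\bfG^\ast)=\Phi(\bfS,\bfG)$ from Remark \ref{rem:GL_n-and-ell-tori}.

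For \eqref{item:sign-symram}, by Proposition \ref{prop:cond-by-Galois-elt} \eqref{item:cond-for-symram} there is a unique symmetric ramified orbit (when $e$ is even), represented by $\alpha=\begin{bmatrix}1\\\sigma^{e/2}\end{bmatrix}$, and for it $F_{\pm\alpha}$ has degree $n_\alpha = n/2$ over $F$ — this is because the double coset $[\sigma^{e/2}]$ has stabilizer of index $n_\alpha$, so $n_\alpha = n/2 = md/2$. By definition \eqref{eq:epsilon-symram}, $\eps{\bfG^\ast}{\alpha}=\zeta^{\bfG^\ast}_{\Gamma_F\cdot\alpha}$ and $\eps{\bfG}{\alpha}=\zeta^{\bfG}_{\Gamma_F\cdot\alpha}$ are built from the toral invariants $f_{(\bfG^\ast,\bfS)}(\alpha)$, $f_{(\bfG,\bfS)}(\alpha)$ via \eqref{eq:zeta-to-character}: explicitly $\zeta_{\Gamma_F\cdot\alpha}(\gamma) = f_{(\bfG,\bfS)}(\alpha)^{v_{F_\alpha}(\gamma/\tau(\gamma))}$ after identifying $\bfS(F)\to F_\alpha^1\xleftarrow{\sim}F_\alpha^\times/F_{\pm\alpha}^\times$. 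So the product $(\eps{\bfG^\ast}{\alpha}\cdot\eps{\bfG}{\alpha})(\gamma)$ equals $\big(f_{(\bfG^\ast,\bfS)}(\alpha)f_{(\bfG,\bfS)}(\alpha)\big)^{v_{F_\alpha}(\gamma/\tau(\gamma))}$, and by part \eqref{item:sign-inv} the base is $\bfe(\inv_F(n_\alpha\cdot[A]))$. Now $n_\alpha\cdot[A] = \tfrac{n}{2}\cdot\tfrac{h}{d}$ in $\bbQ/\bbZ$; since $n=md$ and $\gcd(h,d)=1$, one computes $\tfrac{md}{2}\cdot\tfrac{h}{d} = \tfrac{mh}{2}$, whose image under $\bfe$ is $(-1)^{mh}=(-1)^m$ (as $h$ is odd, being coprime to the even number $d$ — or rather, I need to check the parity carefully: if $d$ is odd then $n_\alpha[A]$ need not be $2$-torsion unless $e$ even forces something, so I'd pin down that $e$ even and the structure of $A\simeq M_m(D)$ give $d\mid e$, hence $d$ even, hence $h$ odd). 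Finally I would identify the exponent: for $\gamma\in E^\times$, $v_{F_\alpha}(\gamma/\tau(\gamma))$ reduces modulo $2$ to $v_E(\gamma)$ up to the ramification data, yielding $(\eps{\bfG^\ast}{\alpha}\cdot\eps{\bfG}{\alpha})(\gamma) = (-1)^{mv_E(\gamma)}$.

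The main obstacle I anticipate is the bookkeeping in part \eqref{item:sign-inv}: correctly extracting from \cite[Proposition 4.3.1]{MR3402796} the precise form of the toral-invariant formula and verifying that the difference between the $\GL_n$-computation and the $\underline{A}^\times$-computation is exactly corestriction/restriction of $[A]$ along $F_{\pm\alpha}/F$ — this requires carefully tracking how the root $SL_2$ (or the relevant rank-one subgroup) embeds in $A$ and how its centralizer structure encodes the Hasse invariant. The secondary obstacle is the parity argument for $h$ and the precise relation $v_{F_\alpha}(\gamma/\tau(\gamma))\equiv mv_E(\gamma)\pmod 2$ in \eqref{item:sign-symram}, which is a finite computation with ramification indices ($e(F_\alpha/F)$, $e(E/F)=e$, and the factor $d = e(\text{something})$) but must be done with care since an off-by-a-factor-of-two error changes the sign. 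Both of these are routine once the right normalizations are fixed, so the bulk of the write-up will be setting up notation so that \cite{MR3402796} applies cleanly.
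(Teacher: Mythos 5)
Your part \eqref{item:sign-inv} follows the same route as the paper: invoke \cite[Proposition 4.3.1]{MR3402796} to reduce the product $f_{(\bfG^{\ast},\bfS)}(\alpha)\cdot f_{(\bfG,\bfS)}(\alpha)$ to a cohomological invariant of the inner-twist class, then identify that invariant with $\bfe(\inv_F(n_\alpha\cdot[A]))$ by chasing restriction through the Brauer group. Your argument that $n_\alpha\cdot[A]$ is $2$-torsion (because the left-hand side is $\pm 1$) is a perfectly valid shortcut that differs from the paper's more elementary observation that $2n_\alpha=[F_\alpha:F]$ is a multiple of $n$.

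However, there is a concrete error in your treatment of part \eqref{item:sign-symram} that would derail the computation. You write $\zeta_{\Gamma_F\cdot\alpha}(\gamma)=f_{(\bfG,\bfS)}(\alpha)^{v_{F_\alpha}(\gamma/\tau(\gamma))}$ and then set out to show $v_{F_\alpha}(\gamma/\tau(\gamma))\equiv mv_E(\gamma)\pmod 2$. But $\gamma/\tau(\gamma)$ lies in the norm-one subgroup $F_\alpha^1$, on which the valuation $v_{F_\alpha}$ is identically zero (as $\tau$ preserves $v_{F_\alpha}$), so your exponent is $0$ and the proposed character would be trivial. The point you are missing is the direction of the middle arrow in \eqref{eq:zeta-to-character}: $\zeta_{\ram,\alpha}$ is evaluated not on $\alpha(\gamma)\in F_\alpha^1$ but on a preimage in $F_\alpha^\times/F_{\pm\alpha}^\times$ under $\delta\mapsto\delta/\tau(\delta)$. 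When $\alpha=\begin{bmatrix}1\\\sigma^{e/2}\end{bmatrix}$ one has $F_\alpha=E$, $\tau=\sigma^{e/2}$, and $\alpha(\gamma)=\gamma/\tau(\gamma)$, so that preimage is just $\gamma$ itself and the composite is simply the quotient map $E^\times=F_\alpha^\times\to F_\alpha^\times/F_{\pm\alpha}^\times$. Hence $\zeta_{\Gamma_F\cdot\alpha}(\gamma)=f_{(\bfG,\bfS)}(\alpha)^{v_{F_\alpha}(\gamma)}=f_{(\bfG,\bfS)}(\alpha)^{v_E(\gamma)}$, and the desired equality follows immediately from \eqref{item:sign-inv}.

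Finally, your parity step asserts that $e$ even ``and the structure of $A\simeq M_m(D)$ give $d\mid e$''; this is not true in general (e.g.\ $m=1$, $d=4$, $E/F$ with $e=2$, $f=2$). What you actually need is only that $m$ odd forces $d$ even, and this follows directly from $md=n=ef$ with $e$ even: if $m$ is odd then $d$ is even, hence $h$ is odd. With $m$ even the parity of $h$ is irrelevant since $(-1)^{mh}=1=(-1)^m$. Fix these two points and your plan goes through.
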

\begin{rem}
In fact, we have \(f_{(\bfG^{\ast}, \bfS)}(\alpha)=1\) by \cite[Proposition 4.4]{MR4206603} and thus \(\eps{\bfG^{\ast}}{\alpha}=\mathbbm{1}\) .
However, we do not need this fact in what follows.
\end{rem}
\begin{proof}
Let us prove \eqref{item:sign-inv}.
As \(F_{\alpha}\) contains a conjugate of \(E\) and thus \(2n_{\alpha}=[F_{\alpha}:F]\) is divisible by \(n\), we see that 
\(2n_{\alpha}\cdot [A]\) is trivial as required.
	
To show the equality of signs we use \cite[Proposition 4.3.1]{MR3402796}, which we now recall.
Let \(\bfS_{\ad}\) be the image of \(\bfS \subset \bfG^{\ast}\) in the adjoint quotient \(\bfG^{\ast}_{\ad}\).
As in Remark \ref{rem:GL_n-and-ell-tori} \eqref{item:roots-for-inner-form-of-GL_n}, 
we may assume, upon replacing by an isomorphic twist, that the restriction of the canonical inner twist \(\psi \colon \bfG^{\ast}\to \bfG\) to \(\bfS\) is defined over \(F\).
Then the map \((\Gamma_F \ni \tau \mapsto \psi^{-1} \circ \tau (\psi))\) defines a 1-cocycle valued in \(\bfS_{\ad}\) and hence a cohomology class \(t\in H^1(F, \bfS_{\ad})\).
Let \(\bfS_{\alpha}\) be the one-dimensional anisotropic torus over \(F_{\pm \alpha}\) that is split over \(F_{\alpha}\),
	namely
	\[
	\bfS_{\alpha}=\Ker (\Nr_{F_{\alpha}/F_{\pm \alpha}}\colon \Res_{F_{\alpha}/F_{\pm \alpha}}\Gm \to \Gm)
	\simeq \Coker (\Gm \to \Res_{F_{\alpha}/F_{\pm \alpha}}\Gm).
	\]
	Then since it is symmetric,
	the root \(\alpha \colon \bfS_{\ad}\to \Gm\) defined over \(F_{\alpha}\) descends to 
	a homomorphism \(\bfS_{\ad}\to \bfS_{\alpha}\) over \(F_{\pm \alpha}\), which we continue to write as \(\alpha\).
Also, we have a canonical isomorphism
\(\kappa_{\alpha}^{\coh}\colon H^1(F_{\pm \alpha}, \bfS_{\alpha})\xrightarrow{\sim} \{ \pm 1\}\).
Let \(\eta_{t, \alpha}\) be the image of \(t \in H^1(F, \bfS_{\ad})\) under
\[
H^1(F, \bfS_{\ad}) \xrightarrow{\Res} H^1(F_{\pm \alpha}, \bfS_{\ad}) \xrightarrow{\alpha} H^1(F_{\pm \alpha}, \bfS_{\alpha}).
\]
Now \cite[Proposition 4.3.1]{MR3402796}
asserts that
\[
f_{(\bfG^{\ast}, \bfS)}(\alpha) \cdot f_{(\bfG, \bfS)}(\alpha)=	\kappa_{\alpha}^{\coh}(\eta_{t, \alpha}).
\]

Thus we compute \(\eta_{t, \alpha}\) in terms of the Brauer group.
For this, we use the exact sequence 
	\[
	1 \to \Gm \to \Res_{E/F} \Gm \to \bfS_{\ad} \to 1
	\]
	over \(F\) expressing \(\bfS_{\ad}\) as the quotient
	and observe that the base change of the above sequence to \(F_{\pm \alpha}\) fits into the following commutative diagram of the exact sequences over \(F_{\pm \alpha}\):
	\[
	\xymatrix{
		1 \ar[r] & \Gm \ar[r] \ar[d]^{\id} & (\Res_{E/F} \Gm)_{F_{\pm \alpha}} \ar[r] \ar[d]^{\tilde{\alpha}} & \bfS_{\ad} \ar[r] \ar[d]^{\alpha} & 1 \\
		1 \ar[r] & \Gm \ar[r] & \Res_{F_{\alpha}/F_{\pm \alpha}}\Gm \ar[r] & \bfS_{\alpha} \ar[r] & 1. 
	}
	\]
	Here,
	\begin{itemize}
		\item the term \((\Res_{E/F} \Gm)_{F_{\pm \alpha}}\) is the base change of 
		\(\Res_{E/F} \Gm\) to \(F_{\pm \alpha}\),
		\item the two horizontal homomorphisms from \(\Gm\) are both the natural inclusions to the targets,
		\item if \(\alpha=\begin{bmatrix}
		g_1 \\
		g_2
		\end{bmatrix}\) in the notation of Section \ref{sec:L-emb-and-ind},
		then \(\tilde{\alpha}\) is induced by
		\begin{align*}
		(\Res_{E/F} \Gm)_{F_{\pm \alpha}}(F_{\pm \alpha})=(E\otimes_{F}F_{\pm \alpha})^{\times} &\to F_{\alpha}^{\times}=\Res_{F_{\alpha}/F_{\pm \alpha}} \Gm(F_{\pm \alpha}) \\
		x\otimes a&\mapsto g_1(x)a,
		\end{align*}
		and
		\item the homomorphism \(\Res_{F_{\alpha}/F_{\pm \alpha}}\Gm \to \bfS_{\alpha}\) is induced by
		\begin{align*}
		\Res_{F_{\alpha}/F_{\pm \alpha}}\Gm(F_{\pm \alpha})=F_{\alpha}^{\times} &\to \Ker (\Nr_{F_{\alpha}/F_{\pm \alpha}}\colon F_{\alpha}^{\times} \to F_{\pm \alpha}^{\times})=\bfS_{\alpha}(F_{\pm \alpha}) \\
		x&\mapsto \frac{x}{\tau(x)},
		\end{align*}
		where \(\tau\) is the generator of \(\Gal(F_{\alpha}/F_{\pm \alpha})\).
	\end{itemize}
	These exact sequences induce the following commutative diagram with exact rows:
	\[
	\xymatrix{
		H^1(F, \Res_{E/F}\Gm) \ar[r] \ar[d] & H^1(F, \bfS_{\ad}) \ar[r] \ar[d]^{\Res} & H^2(F, \Gm) \ar[d]^{\Res} \\
		H^1(F_{\pm \alpha}, (\Res_{E/F}\Gm)_{F_{\pm \alpha}}) \ar[r] \ar[d] & H^1(F_{\pm \alpha}, \bfS_{\ad}) \ar[r] \ar[d]^{\alpha} & H^2(F_{\pm \alpha}, \Gm) \ar[d]^{\id} \\
		H^1(F_{\pm \alpha}, \Res_{F_{\alpha}/F_{\pm \alpha}}\Gm) \ar[r] & H^1(F_{\pm \alpha}, \bfS_{\alpha}) \ar[r] & H^2(F_{\pm \alpha}, \Gm).
	}
	\]
	Recall that \(\eta_{t, \alpha}\) is the image of \(t \in H^1(F, \bfS_{\ad})\) by the middle vertical arrows.
	As the lower left term \(H^1(F_{\pm \alpha}, \Res_{F_{\alpha}/F_{\pm \alpha}}\Gm)\) is trivial by Shapiro's lemma and Hilbert's theorem 90, we may consider the image \([\eta_{t, \alpha}]\) of \(\eta_{t, \alpha}\) in the lower right term \(H^2(F_{\pm \alpha}, \Gm)\).
Now the desired equality 
\[
[\eta_{t, \alpha}]=\bfe (\inv_F (n_{\alpha} \cdot [A]))
\]
is clear because the image of \(t\) in \(H^2(F, \Gm)\) is \([A]\) 
(this is easily seen by considering the coboundary maps associated to the short exact sequences
\[
\xymatrix{
1 \ar[r] & \Gm \ar[r] \ar[d]^{\id} & \Res_{E/F} \Gm \ar[r] \ar[d] & \bfS_{\ad} \ar[r] \ar[d] & 1 \\
1 \ar[r] & \Gm \ar[r] & \bfG^{\ast} \ar[r] & \bfG^{\ast}_{\ad} \ar[r] & 1
}
\]
with obvious maps)
and the restriction map \(\Res \colon H^2(F, \Gm) \to H^2(F_{\pm \alpha}, \Gm)\) induces the multiplication by \(n_{\alpha}\) under the canonical identifications 
	\[
	\inv_F \colon H^2(F, \Gm)\simeq \bbQ/\bbZ, \quad \inv_{F_{\pm \alpha}} \colon H^2(F_{\pm \alpha}, \Gm)\simeq \bbQ/\bbZ
	\]
	(see, e.g., \cite[Chapter XIII, \S 3, Proposition 7]{MR554237}).
	
Let \(\alpha\) be a symmetric ramified root
and we now prove \eqref{item:sign-symram}.
Let us first show \(\bfe (\inv_F (n_{\alpha} \cdot [A]))=(-1)^m\)
and apply \eqref{item:sign-inv} to obtain
the following equality
\begin{equation} \label{eq:prod-of-toral-inv}
f_{(\bfG^{\ast}, \bfS)}(\alpha) \cdot f_{(\bfG, \bfS)}(\alpha)=(-1)^{m}.
\end{equation}
By Proposition \ref{prop:cond-by-Galois-elt} \eqref{item:cond-for-symram} we may assume 
that \(e\) is even and \(\alpha=\begin{bmatrix}1 \\\sigma^{\frac{e}{2}}\end{bmatrix}\).
In particular, \(n\) is even and we have \(F_{\alpha}=E\) and \(n_{\alpha}=[F_{\pm \alpha}: F]=\frac{n}{2}\).
Now we compute
\[
\bfe (\inv_F (n_{\alpha} \cdot [A]))=\exp\Big( 2\pi \sqrt{-1}\cdot \frac{n}{2} \cdot \frac{h}{d}\Big)
=\exp\Big( 2\pi \sqrt{-1}\cdot \frac{mh}{2} \Big)=(-1)^{mh}=(-1)^m
\]
in \(\{\pm 1\}\),
where we have \(m(h-1)\in 2\bbZ\) because if \(m\) is odd then \(d=\frac{n}{m}\) is even and so \(h\) has to be odd.
Thus, we have shown \eqref{eq:prod-of-toral-inv}.

Using the concrete expression for \(\alpha\) and \(F_{\alpha}\),
we see that the composite of the first two maps in \eqref{eq:zeta-to-character} amounts to the reduction map \(\bfS(F)=E^{\times}=F_{\alpha}^{\times} \to F_{\alpha}^{\times}/F_{\pm \alpha}^{\times}\).
Now by \eqref{eq:prod-of-toral-inv} we compute
\[
(\eps{\bfG^{\ast}}{\alpha} \cdot \eps{\bfG}{\alpha})(\gamma)
=(f_{(\bfG^{\ast}, \bfS)}(\alpha) \cdot f_{(\bfG, \bfS)}(\alpha))^{v_E(\gamma)}
=(-1)^{mv_E(\gamma)}
\]
as desired.
\end{proof}

\section{Preliminaries II} \label{sec:prelim2}
Throughout this section, we let \(\bfG^{\ast}=\GL_n=\underline{A}^{\ast, \times}\) and \(\bfG=\underline{A}^{\times}\) as in Section \ref{sec:review-of-BH}.
Recall that \(A^{\ast}=M_n(F)\) and \(A=M_m(D)\). 
The main goal of this section is to recall some ingredients necessary for Tam's \(\zeta\)-data \(\zeta_{\Tam}\).
We also record a result (Proposition \ref{prop:vmod-ord}) comparing the conventions of Kaletha and Tam.

\subsection{Order and finite modules associated to a maximal subfield} \label{sec:hered-ord}
In this section we summarize parts of \cite[\S 2.1-\S 2.3]{MR2848585}, \cite[\S 2.4, \S 2.5, \S 4.1]{MR3505131} and review some objects associated to a maximal subfield \(E\subset A\).
In particular, we introduce finite modules \(\frakU_{[g]}\) and recall \cite[Proposition 4.2]{MR3505131} (Proposition \ref{prop:decomp-of-grpiece}).

\subsubsection{Principal orders} 
Similarly to the notation for \(F\),
we write \(\calO_D, \frakp_D, k_D\) for the maximal order \(\calO_D \subset D\), the maximal ideal \(\frakp_D \subset \calO_D\) and the residue field \(k_D=\calO_D/\frakp_D\).
An \(\calO_F\)-order \(\frakA \subset A\) (that is, a subring of \(A\) that is also an \(\calO_F\)-lattice) is said to be \textit{principal} if its Jacobson radical \(\frakP_{\frakA}\) is generated by one element \(a\in \frakA\): \(\frakP_{\frakA}=\frakA a=a\frakA\) (\cite[\S 1.3]{MR772712}).
Concretely, principal \(\calO_F\)-orders in \(A\) are characterized as \(\calO_F\)-orders \(\frakA\) that are \(A^{\times}\)-conjugate to the order of block matrices:
\[
\begin{pmatrix}
(\calO_D) & (\calO_D) &\ldots & (\calO_D) \\
(\frakp_D) & (\calO_D) &\ldots & (\calO_D) \\
\vdots & \ddots     & \ddots  & \vdots \\
(\frakp_D) & \ldots & (\frakp_D) & (\calO_D)
\end{pmatrix},
\]
where, for some divisor \(s\) of \(m\), each \((\calO_D)\) (resp.\ \((\frakp_D)\)) indicates \(s \times s\)-matrices with entries in \(\calO_D \) (resp.\ \(\frakp_D\))
(see \cite[(1.2.15), (1.3.2) Theorem]{MR772712} or \cite[\S 0.2, \S 0.3]{MR2347425} for a statement close to this form).
In other words, the latter order is the inverse image by the natural map \(M_m(\calO_D)\to M_m(k_D)\)
of the subset in \(M_m(k_D)\) of block upper-triangular matrices with respect to the partition \((s, \cdots, s)\) of \(m\).
In the same notation, the Jacobson radical of the latter order is 
\[
\begin{pmatrix}
(\frakp_D) & (\calO_D) &\ldots & (\calO_D) \\
(\frakp_D) & (\frakp_D) &\ldots & (\calO_D) \\
\vdots & \ddots     & \ddots  & \vdots \\
(\frakp_D) & \ldots & (\frakp_D) & (\frakp_D)
\end{pmatrix}.
\]
In the literature, hereditary \(\calO_F\)-orders (e.g., \cite[\S 1.2]{MR772712}), which are more general than principal \(\calO_F\)-orders, are also often considered. However, as all the orders that we deal with in this paper are principal, we do not discuss them here.

Let \(\frakA\) be a principal \(\calO_F\)-order as above and put \(r=\frac{m}{s}\). Then we have
\[
\frakA/\frakP_{\frakA} \simeq M_s(k_D)^r.
\]
We write these invariants of \(\frakA\) as \(s(\frakA)=s, r(\frakA)=r\).
We define the following:
\begin{align*}
&\frakP_{\frakA}^{x}:=\frakP_{\frakA}^{\lceil x\rceil}, 
&\frakP_{\frakA}^{x+}:=\bigcup_{y>x} \frakP_{\frakA}^{y} 
&\quad \text{ for \(x \in \bbR\)} \\
&U_{\frakA}:=U_{\frakA}^0:=\frakA^{\times},
&U_{\frakA}^i:=1+\frakP_{\frakA}^i 
&\quad \text{ for \(i \in \bbZ_{>0}\)}\\
&U_{\frakA}^x:=U_{\frakA}^{\lceil x\rceil},
&U_{\frakA}^{x+}:=\bigcup_{y>x} U_{\frakA}^{y} 
&\quad \text{ for \(x \in \bbR_{\geq 0}\)}. 
\end{align*}
Clearly, \(U_{\frakA}, U_{\frakA}^i, U_{\frakA}^x, U_{\frakA}^{x+}\) are compact open subgroups of \(A^{\times}\).
We also write
\[
\frakK_{\frakA}:=\{ g\in A \mid g\frakA g^{-1}=\frakA\}
\]
for the normalizer of \(\frakA\) in \(A^{\times}\).
Then \(\frakK_{\frakA}\) fits into an exact sequence
\[
1 \to U_{\frakA} \to \frakK_{\frakA} \xrightarrow{v_{\frakA}} \bbZ \to 0,
\]
where \(v_{\frakA}\) is defined by the condition \(g\frakA=\frakA g=\frakP_{\frakA}^{v_{\frakA}(g)}\) for \(g\in \frakK_{\frakA}\) (\cite[page 484]{MR2848585}),
and 
is in fact the semi-direct product of \(U_{\frakA}\) and the infinite cyclic group \(\langle a\rangle\):
\[
\frakK_{\frakA}=U_{\frakA} \rtimes \langle a\rangle,
\] 
where \(a\in \frakA\) is a generator of \(\frakP_{\frakA}\) as before (see, e.g., \cite[page 52]{MR1695699}).

We define \(e(\frakA/\calO_F) \in \bbZ_{>0}\) by \(v_{\frakA}|_{F^{\times}}=e(\frakA/\calO_F) \cdot v_F\), or equivalently by \(\frakp_F \frakA=\frakP_{\frakA}^{e(\frakA/\calO_F)}\).
Then we have 
\[
e(\frakA/\calO_F)=dr(\frakA).
\]

\subsubsection{Principal order associated to a maximal subfield}
Now let \(E/F\) be a separable extension of degree \(n\).
We put \(e=e(E/F)\) and \(f=f(E/F)\).
Take an \(F\)-embedding \(E\hookrightarrow A\), which is unique up to \(A^{\times}\)-conjugacy.
Then 
there exists a unique principal \(\calO_F\)-order \(\frakA\) in \(A\)
such that \(E^{\times} \subset \frakK_{\frakA}\)
(see, e.g., \cite [0.\ Theorem]{MR1695699} or \cite[(2.3.1) (2), (2.3.2)]{MR2848585}).

Before recalling the expression of \(r(\frakA)\) and \(s(\frakA)\) in terms of \(e\) and \(f\),
let us define some auxiliary integers \(m_0, f_0, d_0, e_0\) by the following equalities:
\begin{align*}
 m&=m_0 \cdot \gcd(m, f), & f&=f_0 \cdot \gcd(m, f), \\
 d&=d_0 \cdot \gcd(d, e), & e&=e_0 \cdot \gcd(d, e).
\end{align*}
As \(md=n=ef\), one can readily check that \(m_0=e_0\) and \(f_0=d_0\).
According to the explanation after (2.6) of \cite{MR3505131} (or the first sentence of Section 4.1 of loc.\ cit.),
we have
\begin{equation} \label{eq:r(frakA)}
r(\frakA)=\frac{e}{\gcd(d, e)}=e_0=m_0, 
\quad 
s(\frakA)=\gcd(f, m)=\frac{m}{m_0}=\frac{m}{r(\frakA)}.
\end{equation}
As before, we define \(e(\frakA/\calO_E) \in \bbZ_{>0}\) by \(v_{\frakA}|_{E^{\times}}=e(\frakA/\calO_E) \cdot v_E\).
Then we see that
\begin{equation} \label{eq:e(frakA/calO_E)}
e(\frakA/\calO_E)=\frac{e(\frakA/\calO_F)}{e(E/F)}=\frac{dr(\frakA)}{e}
=\frac{de_0}{e}=d_0=f_0.
\end{equation}

We also take an \(F\)-embedding \(E \hookrightarrow A^{\ast}\)
and write \(\frakA^{\ast} \subset A^{\ast}\) for the induced principal \(\calO_F\)-order.

\subsubsection{Finite modules associated to a maximal subfield}
Assume moreover that \(E/F\) is a tamely ramified extension.
As the conjugation action of \(E^{\times}\) on \(A\) preserves 
\(\frakA\) and more generally \(\frakP^{j}\),
it induces an action on the \(k_E\)-vector space \(\frakP^{j}/\frakP^{j+1}\) for any \(j \in \bbZ\).
This action factors through a finite group \(\Psi_{E/F}:=E^{\times}/F^{\times}U_E^1\) of order prime to \(p\).
We review results describing the \(k_F[\Psi_{E/F}]\)-module structures of \(\frakP^{j}/\frakP^{j+1}\).

We first treat the special case of \(\frakU_{\frakA^{\ast}}:=\frakA^{\ast}/\frakP_{\frakA^{\ast}}\)
to introduce the modules \(\frakU_{[g]}\).

Consider the following two isomorphisms
\begin{align*}
E\otimes_F E \xrightarrow{\sim} \End_{F}(E); \quad & x\otimes y \mapsto (v\mapsto \tr_{E/F}(yv)\cdot x), \\
E\otimes _F E \xrightarrow{\sim} \bigoplus_{[g]\in \Gamma_E \backslash \Gamma_F/\Gamma_E} E\cdot g(E); \quad & x\otimes y \mapsto (x\cdot g(y))_{[g]}
\end{align*}
of \(F\)-vector spaces.
Here, \(E\cdot g(E)\) denotes the composite field.
By composition, they induce a standard isomorphism \(\End_{F}(E) \simeq \bigoplus_{[g]\in \Gamma_E \backslash \Gamma_F/\Gamma_E} E\cdot g(E)\).
Moreover, once we fix an \(F\)-isomorphism \(E \simeq F^{n}\),
we obtain an \(F\)-isomorphism \(A^{\ast}=M_n(F) \simeq \End_{F}(E)\) and thus
\begin{equation} \label{eq:rel-root-decomp}
A^{\ast} \simeq \bigoplus_{[g]\in \Gamma_E \backslash \Gamma_F/\Gamma_E} E\cdot g(E).
\end{equation}
We arrange the isomorphism \(E \simeq F^{n}\) so that the composite \(E\hookrightarrow \End_{F}(E) \simeq A^{\ast}\) agrees with the already fixed embedding.
Through this isomorphism \eqref{eq:rel-root-decomp}, the conjugation action of \(E^{\times}\) on \(A^{\ast}\) induces an action on \(\bigoplus_{[g]\in \Gamma_E \backslash \Gamma_F/\Gamma_E} E\cdot g(E)\).
Explicitly, \(t \in E^{\times}\) acts as
\[
\bigoplus_{[g]\in \Gamma_E \backslash \Gamma_F/\Gamma_E} E\cdot g(E) \to \bigoplus_{[g]\in \Gamma_E \backslash \Gamma_F/\Gamma_E} E\cdot g(E); \quad (x)_{[g]} \mapsto \Big( \frac{t}{g(t)}\cdot x\Big)_{[g]}.
\]
As proved in \cite[\S 4.1]{MR3509939} (especially in \S 4.1.1) by using the assumption that \(E/F\) is tamely ramified,
the isomorphism \eqref{eq:rel-root-decomp} restricts to
\[
\frakA^{\ast} \simeq \bigoplus_{[g]\in \Gamma_E\backslash \Gamma_F/\Gamma_E} \calO_{E\cdot g(E)}, \quad 
\frakP_{\frakA^{\ast}} \simeq \bigoplus_{[g]\in \Gamma_E\backslash \Gamma_F/\Gamma_E} \frakp_{E\cdot g(E)}.
\]
Hence we have an isomorphism (see \cite[Proposition 4.4]{MR3509939})
\begin{equation} \label{eq:decomp-by-std-mods}
\frakU_{\frakA^{\ast}} =\frakA^{\ast}/\frakP_{\frakA^{\ast}} \simeq \bigoplus_{[g]\in \Gamma_E\backslash \Gamma_F/\Gamma_E} \frakU_{[g]}
\end{equation}
of \(k_F[\Psi_{E/F}]\)-modules, where we define
\[
\frakU_{[g]}:=k_{E\cdot g(E)}=\calO_{E\cdot g(E)}/\frakp_{E\cdot g(E)}
\]
with the induced \(k_F[\Psi_{E/F}]\)-module structure.
Explicitly, if \(g=\sigma^i \phi^j\) in the notation of Section \ref{sec:sigma-phi}, then
\(z \in \mu_E\) and \(\varpi_E\) act on \(\frakU_{[g]}=k_{E\cdot g(E)}\) as the multiplication by 
\[
\begin{bmatrix}
1 \\ \sigma^i \phi^j
\end{bmatrix}
(z)=z^{1-q^j}
\text{ and }
\begin{bmatrix}
1 \\ \sigma^i \phi^j
\end{bmatrix}
(\varpi_E)=(z_e^{i}z_{\phi^j})^{-1}
\]
respectively.

Writing the dual module of \(\frakU_{[g]}\) as \(\frakU_{[g]}^{\vee}:=\Hom_{k_F[\Psi_{E/F}]}(\frakU_{[g]}, k_F)\),
we have a natural identification
\(\frakU_{[g^{-1}]} \simeq \frakU_{[g]}^{\vee}\)
of \(k_F[\Psi_{E/F}]\)-modules induced by a \(\Psi_{E/F}\)-invariant non-degenerate \(k_F\)-bilinear pairing
\begin{equation} \label{eq:std-pairing}
k_{E\cdot g(E)} \times k_{E\cdot g^{-1}(E)} \to k_F; \quad (a, b)\mapsto \lan a, b\ran:=\tr_{k_{E\cdot g(E)}/k_F}(a \cdot g(b)).
\end{equation}

\begin{rem} \label{rem:frakU_{[g]}}
\begin{enumerate}
\item \label{item:not-a-simple-module} It is tempting to think that \eqref{eq:decomp-by-std-mods} is the decomposition into simple modules.
However, \(\frakU_{[g]}\) is not a simple \(k_F[\Psi_{E/F}]\)-module in general.
Indeed, if we view \(\begin{bmatrix}1 \\ g\end{bmatrix}\) as an element in \(\Hom(\Psi_{E/F}, \bar{k}_F^{\times})\) by abuse of notation, then \(\frakU_{[g]}\) is simple if and only if the length of orbit of \(\begin{bmatrix}1 \\ g\end{bmatrix}\) under the action of \(\Gamma_{k_F}\) is equal to \(\dim_{k_F}\frakU_{[g]}\).
If \(g=\sigma\), for example, then we can easily compute that the length is equal to \([k_F[z_e]: k_F]\) while we have \(\dim_{k_F}\frakU_{[g]}=[k_E[z_e]: k_F]\).

Moreover, even if \([g_1] \neq [g_2]\), the modules \(\frakU_{[g_1]}\) and \(\frakU_{[g_2]}\) can be isomorphic.
For example, take \(g_1=\sigma\) and \(g_2=\sigma^q\).
Then \(\frakU_{[g_1]}=k_{E\cdot g_1(E)}=k_E[z_e]\) and \(\frakU_{[g_2]}=k_{E\cdot g_2(E)}=k_E[z_e^q]\) are isomorphic \(k_F[\Psi_{E/F}]\)-modules because \(z_e\) and \(z_e^q\) are conjugate over \(k_F\).
Assume in addition that \(q \not\equiv 1 \pmod{e}\) (or equivalently \(\sigma \neq \sigma^q\)) and \(k_E=k_F[z_e]\) (which implies \(\sigma^{q^f}=\sigma\)).
Then we claim that \([g_1] \neq [g_2]\).
Indeed, since \(\Gamma_{L/E}\) is generated by a lift \(\phi_E\) of \(q^f\)-th power Frobenius element on \(k_L\),
which satisfies \(\phi_E \sigma \phi_E^{-1}=\sigma^{q^f}\),
we have 
\[
[\sigma] \cap \{\sigma^{t} \mid t\in \bbZ\}=\{\sigma^{q^{ft}}\mid t\in \bbZ\}=\{\sigma\},
\]
which shows that \(\sigma^q \notin [\sigma]\), as desired.

\item Neither of the above complications arise if we further consider the \(k_E\)-structure and view \(\frakU_{[g]}\) as a \(k_E[\Psi_{E/F}]\)-module instead; 
one can show that \(\frakU_{[g]}\) is a simple \(k_E[\Psi_{E/F}]\)-module for any \([g]\in \Gamma_E \backslash \Gamma_F /\Gamma_E\) 
and moreover \(\Hom_{k_E[\Psi_{E/F}]}(\frakU_{[g_1]}, \frakU_{[g_2]})\neq 0\) implies \([g_1]=[g_2]\).

However, the \(k_E[\Psi_{E/F}]\)-module structure is problematic in other respects; 
for instance, the pairing \eqref{eq:std-pairing} is not \(k_E\)-bilinear in general, leading to the failure of the analogous description of \(\frakU_{[g^{-1}]}\) in terms of the dual module.
Thus, we only make use of the \(k_F[\Psi_{E/F}]\)-module structure of \(\frakU_{[g]}\) in what follows.
\end{enumerate}
\end{rem}

\begin{rem}
Recall the bijection \eqref{eq:Galois-orb-as-coset} between \((\Gamma_E\backslash \Gamma_F/\Gamma_E)'=\Gamma_E\backslash \Gamma_F/\Gamma_E-\{\Gamma_E\}\) and \(\Gamma_F \backslash \Phi(\bfS, \bfG^{\ast})\).
If we take \(g\in \Gamma_F-\Gamma_E\) and put \(\alpha=\begin{bmatrix}1 \\ g\end{bmatrix}\), so that \([g]\) corresponds to the orbit of \(\alpha\) via the above bijection,
then \(E\cdot g(E)\) is nothing but the fixed field \(F_{\alpha}\) appearing in the definition of \(\chi\)-data and \(\zeta\)-data.
Moreover, the action of \(t\in E^{\times}=\bfS(F)\) on \(E\cdot g(E)\) (resp.\ on \(\frakU_{[g]}\)) is identified with the multiplication by \(\alpha(t)\) on \(F_{\alpha}\) (resp.\ \(\overline{\alpha(t)}\) on \(k_{F_{\alpha}}\)).
\end{rem}

Recall from Section \ref{sec:toral-invariant}
that \(\frac{h}{d}\) is the Hasse invariant of \(A\) with \(d, h \in \bbZ\) prime to each other.
The following proposition describes the \(k_F[\Psi_{E/F}]\)-module structure of \(\frakP^{j}/\frakP^{j+1}\) for general \(A\).
\begin{prop}\label{prop:decomp-of-grpiece}
For each \(j'\in \bbZ\), we have an isomorphism
\[
\frakP^{j'}/\frakP^{j'+1} \simeq \bigoplus_{\substack{[g]=[\sigma^i \phi^j] \in \Gamma_E\backslash \Gamma_F/\Gamma_E \\j \equiv hj' \mod{f_0}}}
\frakU_{[g]}
\]
of \(k_F[\Psi_{E/F}]\)-modules.
\end{prop}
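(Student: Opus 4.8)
This is \cite[Proposition 4.2]{MR3505131}, and I would prove it by following Tam's argument, organizing it around the two directions in which $\frakP^{j'}/\frakP^{j'+1}$ varies with $j'$: an ``$\varpi_E$-direction'' controlled by \eqref{eq:e(frakA/calO_E)}, and a ``residual'' direction controlled by the Hasse invariant of $D$.

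First the reduction along $\varpi_E$. By \eqref{eq:e(frakA/calO_E)} we have $v_{\frakA}|_{E^{\times}}=e(\frakA/\calO_E)\cdot v_E=f_0\cdot v_E$, hence $\varpi_E\frakA=\frakP^{f_0}$; since $E$ is commutative, left multiplication by $\varpi_E\in E^{\times}$ commutes with the conjugation action of $E^{\times}$ and therefore induces $k_F[\Psi_{E/F}]$-isomorphisms $\frakP^{j'}/\frakP^{j'+1}\xrightarrow{\sim}\frakP^{j'+f_0}/\frakP^{j'+f_0+1}$. So $\frakP^{j'}/\frakP^{j'+1}$ depends only on $j'\bmod f_0$ and $\bigoplus_{j'=0}^{f_0-1}\frakP^{j'}/\frakP^{j'+1}=\frakA/\varpi_E\frakA$. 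On the right-hand side the condition $j\equiv hj'\pmod{f_0}$ also depends only on $j'\bmod f_0$, and since $h$ is prime to $d$ while $f_0\mid d$ we have $\gcd(h,f_0)=1$, so as $j'$ ranges over $0,\dots,f_0-1$ the residue $hj'$ ranges over all of $\bbZ/f_0\bbZ$; summing the right-hand side over these $j'$ therefore recovers the full sum $\bigoplus_{[g]\in\Gamma_E\backslash\Gamma_F/\Gamma_E}\frakU_{[g]}$, which by \eqref{eq:decomp-by-std-mods} is $\frakA^{\ast}/\frakP_{\frakA^{\ast}}$. So the proposition reduces to showing that the $k_F[\Psi_{E/F}]$-module $\frakP^{j'}/\frakP^{j'+1}$ contains the summand $\frakU_{[\sigma^i\phi^j]}$ exactly when $j\equiv hj'\pmod{f_0}$; this in particular forces $\frakA/\varpi_E\frakA\cong\bigoplus_{[g]}\frakU_{[g]}$, compatibly with the count $\dim_{k_F}(\frakA/\varpi_E\frakA)=f_0\,r(\frakA)s(\frakA)^2 d=ef^2=\sum_{[g]}\dim_{k_F}\frakU_{[g]}$ that follows from \eqref{eq:r(frakA)}.

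For the residual direction I would work with an explicit model in the style of \cite[\S 2.1--\S 2.3]{MR2848585}, \cite[\S 2.4--\S 2.5, \S 4.1]{MR3505131}: realize $E=E_{\mathrm{ur}}(\varpi_E)$ with $E_{\mathrm{ur}}/F$ the maximal unramified subextension (of degree $f$) and $\varpi_E^e=z_{E/F}\varpi_F$; present $D$ as a cyclic algebra with maximal unramified subfield $F_d/F$, uniformizer $\Pi_D$ satisfying $\Pi_D^{d}=\varpi_F$, and $\Pi_D$-conjugation inducing on $F_d$ the generator of $\Gal(F_d/F)$ determined by $\inv_F([A])=h/d$; and take for $\frakA$ the standard principal order of type $(s(\frakA);r(\frakA))$ in $A=M_m(D)$ with its distinguished generator $\Pi$ of $\frakP$ (the block cyclic shift carrying one $\Pi_D$, so that $v_{\frakA}(\Pi)=1$ and $\Pi^{r(\frakA)}=\Pi_D$ as a scalar), where $r(\frakA)=e_0$, $s(\frakA)=\gcd(f,m)$ and $e(\frakA/\calO_E)=f_0$ by \eqref{eq:r(frakA)}, \eqref{eq:e(frakA/calO_E)}. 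The embedding $E\hookrightarrow A$ is arranged so that $E^{\times}\subset\frakK_{\frakA}$, with $\mu_E$ coming from the unramified data (the compositum of $F_f$ and $F_d$ inside $\frakA/\frakP\cong M_{s(\frakA)}(k_D)^{r(\frakA)}$) and $\varpi_E$ of $\frakP$-valuation $f_0$. One then computes the conjugation action of $E^{\times}$ directly in this model: the action of $\mu_E=k_E^{\times}$ and of $\varpi_E$ on $\frakA/\frakP\cong M_{s(\frakA)}(k_D)^{r(\frakA)}$ is explicit, and the dependence on the degree $j'$ enters because passing from degree $j'$ to $j'+1$ by left multiplication by $\Pi$ intertwines the $E^{\times}$-action only up to the twist by $\Pi_D$-conjugation --- i.e.\ up to the power of Frobenius attached to $\inv_F([A])=h/d$ --- so each step in the $\frakP$-adic degree shears the character labelling a summand by $h$ in the ``$\phi$-coordinate''. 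Matching the summands against the explicit description of $\frakU_{[g]}=k_{E\cdot g(E)}$ recorded after \eqref{eq:decomp-by-std-mods} (with $z\in\mu_E$, $\varpi_E$ acting on $\frakU_{[\sigma^i\phi^j]}$ by $z^{1-q^j}$ and $(z_e^i z_{\phi^j})^{-1}$) then yields the congruence $j\equiv hj'\pmod{f_0}$. As a sanity check, the trivial double coset has $j=0$, and indeed $E\cap\frakP^{j'}=\frakp_E^{\lceil j'/f_0\rceil}$ contributes $k_E=\frakU_{[1]}$ to $\frakP^{j'}/\frakP^{j'+1}$ exactly when $f_0\mid j'$.

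I expect the main obstacle to be this residual step: choosing compatible uniformizers and Teichm\"uller liftings in $D$, $F_d$, $E_{\mathrm{ur}}$ and $E$ so that the shearing of the $\Psi_{E/F}$-isotypic pieces of the associated graded $\bigoplus_j\frakP^j/\frakP^{j+1}$ under $\Pi$-multiplication is manifestly the $h$-th power of Frobenius in the $\phi$-coordinate, and reconciling it with the normalization of the $\frakU_{[g]}$ built into \eqref{eq:decomp-by-std-mods} and the pairing \eqref{eq:std-pairing} identifying $\frakU_{[g^{-1}]}$ with $\frakU_{[g]}^{\vee}$. The remaining ingredients --- the $\varpi_E$-reduction, the dimension bookkeeping, the fact that $j\bmod f_0$ is well defined on double cosets, and the $\Gamma_F$-stability of $\{[\sigma^i\phi^j]:j\equiv hj'\pmod{f_0}\}$ --- are routine.
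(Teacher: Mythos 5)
The paper's own ``proof'' of this proposition is a single citation to \cite[Proposition 4.2]{MR3505131} with no argument supplied, so there is nothing in the paper itself to compare your sketch against; you are reconstructing the cited proof. The first half of your reconstruction is correct and complete: $\varpi_E\frakA=\frakP^{f_0}$ follows from $e(\frakA/\calO_E)=f_0$, left multiplication by $\varpi_E$ is $\Psi_{E/F}$-equivariant because $E$ is commutative, $\gcd(h,f_0)=1$ holds because $h$ is prime to $d$ and $f_0=d_0\mid d$, and the dimension count $f_0\,r(\frakA)\,s(\frakA)^2 d=f\cdot m\cdot d=ef^2$ checks out. The second half — pinning down which $\frakU_{[g]}$ occur in a fixed graded piece via an explicit cyclic-algebra model, and tracking how the Hasse invariant $h/d$ shears the $\phi$-coordinate as one steps through the $\frakP$-adic degrees — is the genuine content, and you explicitly leave it as a sketch and flag it as the main obstacle. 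That is a fair self-assessment: the step is nontrivial, and one particular caution is that left multiplication by $\Pi$ is \emph{not} $\Psi_{E/F}$-equivariant (since $\Pi\notin Z(A)$), so the ``intertwines up to a twist by $\Pi_D$-conjugation'' heuristic has to be made precise --- pulling back the conjugation action from $\frakP^{j'+1}/\frakP^{j'+2}$ to $\frakP^{j'}/\frakP^{j'+1}$ along $\Pi\cdot(-)$ gives a twisted-conjugation $x\mapsto(\mathrm{Ad}(\Pi^{-1})t)\,x\,t^{-1}$ rather than a simple Frobenius twist of the original action, and reconciling this with the normalization of the $\frakU_{[g]}$ in \eqref{eq:decomp-by-std-mods} is exactly the bookkeeping that Tam carries out in \cite[\S 4.1]{MR3505131}. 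So: the plan is sound and matches the reference in outline, but the crucial residual computation is not done here.
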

Note that for any double coset \([\sigma^i \phi^j] \in \Gamma_E\backslash \Gamma_F/\Gamma_E\), the image of \(j\) in \(\bbZ/{f\bbZ}\) (and in particular in \(\bbZ/{f_0\bbZ}\)) is independent of the choice of a representative \(\sigma^i \phi^j\)
(this can be checked by looking at the action on \(\mu_E \subset L\)).
Also note that \(h\) is prime to \(d\) and hence to \(f_0=d_0\).
\begin{proof}
This is \cite[Proposition 4.2]{MR3505131}.
\end{proof}
\begin{rem}
When \(A=A^{\ast}\) and so \(f_0=1\), the condition \(j \equiv hj' \mod{f_0}\) is empty.
In particular, we have \(\frakA/\frakP \simeq \frakP^{j}/\frakP^{j+1}\) for all \(j \in \bbZ\).
This is also seen directly; any uniformizer \(\varpi_E \in E\) satisfies \(\frakP=\varpi_E \frakA =\frakA \varpi_E\) in this case and the multiplication by \(\varpi_E^j\) induces a \(\Psi_{E/F}\)-equivariant isomorphism.
\end{rem}

\subsection{Finite modules associated to an admissible pair} \label{sec:vmod}
In this section we associate to an admissible pair \((E/F, \xi)\) various \(k_F[\Psi_{E/F}]\)-modules \(\vmod{A}{}, \vmod{A}{k}, \vmod{A}{[g]}\)
and summarize some of their properties.
This is based on parts of \cite[\S 3.3, \S 3.6, \S 4.2]{MR3505131}.
Moreover, we will state a fact (Proposition \ref{prop:vmod-ord}) relating \(\vmod{A}{[g]}\) and \(\ordx{\bfG}{\alpha}\) from Section \ref{sec:epsilon-character}.

Let \((E/F, \xi)\) be an admissible pair
and take an \(F\)-embedding \(E \hookrightarrow A\) so that an embedding \(\bfS=\Res_{E/F} \Gm \hookrightarrow \underline{A}^{\times}=\bfG\) is induced.
Recall from Section \ref{sec:classical-Howe-factorization} that then we have
\begin{itemize}
\item subfields \(F=E_t \subsetneq E_{t-1} \subsetneq \cdots \subsetneq E_1 \subsetneq E_0 \subset E_{-1}=E\) and
\item integers \(a_t\geq a_{t-1} > \cdots >a_0 >a_{-1}=0\)
\end{itemize}
associated to \((E/F, \xi)\).
(Note that the indexing convention here is slightly different from that of \cite{MR3505131};
we set \(F=E_t\) whereas \(F=E_{t+1}\) in \cite{MR3505131}.)
Moreover, for each \(-1\leq k\leq t\), the centralizer \(A_k\subset A\) of \(E_k\) in \(A\) is a central simple algebra over \(E_k\) and the fixed embedding \(E\hookrightarrow A\) factors through \(E\hookrightarrow A_k\).
Thus the setting reviewed in Section \ref{sec:hered-ord} applies to \(E_k\) and \(A_k\) in place of \(F\) and \(A\);
the principal \(\calO_E\)-order associated to the embedding \(E\hookrightarrow A_k\) is \(\frakA_{k}:=A_k \cap \frakA\) and we also have
\[
\frakP_{\frakA_{k}}, \quad U_{\frakA_{k}}, \quad U_{\frakA_{k}}^x, \quad U_{\frakA_{k}}^{x+}
\]
for \(x\in \bbR_{\geq 0}\) and so on.
The central simple \(E_k\)-algebra \(A_k\) is isomorphic to \(M_{m(A_k)}(D_k)\) for a central division \(E_k\)-algebra \(D_k\) of dimension \(d(A_k)^2\), where
\begin{equation} \label{eq:m_k}
m(A_k)=\gcd(m, [E:E_k]), \quad d(A_k)=\frac{d}{\gcd(d, [E_k:F])}
\end{equation}
(see, e.g., \cite[page 357]{MR3505131} or \cite[(2.1.1)]{MR2848585}).
By \eqref{eq:e(frakA/calO_E)}, \eqref{eq:m_k}, we get
\begin{align} \label{eq:e(frakA_k/calO_E)}
\nonumber
e(\frakA_{k}/\calO_E)&=\frac{f(E/E_k)}{\gcd(m(A_k), f(E/E_k))}
=\frac{f(E/E_k)}{\gcd(m, [E:E_k], f(E/E_k))} \\ 
&=\frac{f(E/E_k)}{\gcd(m, f(E/E_k))}.
\end{align}
In particular, we see that \(e(\frakA_k/\calO_E)\) divides \(e(\frakA_{k+1}/\calO_E)\) for \(-1\leq k\leq t-1\).

We define compact open subgroups \(J^1, H^1\) of \(G\) and lattices \(\frakJ^1, \frakH^1\) of \(A\) by
\begin{align*}
J^1&:=U_{\frakA_0}^1U_{\frakA_1}^{\frac{1}{2}a_0e(\frakA_1/\calO_E)} \cdots U_{\frakA_{t-1}}^{\frac{1}{2}a_{t-2}e(\frakA_{t-1}/\calO_E)}U_{\frakA}^{\frac{1}{2}a_{t-1}e(\frakA/\calO_E)}, \\
H^1&:=U_{\frakA_0}^1U_{\frakA_1}^{\frac{1}{2}a_0e(\frakA_1/\calO_E)+} \cdots U_{\frakA_{t-1}}^{\frac{1}{2}a_{t-2}e(\frakA_{t-1}/\calO_E)+}U_{\frakA}^{\frac{1}{2}a_{t-1}e(\frakA/\calO_E)+}, \\
\frakJ^1&:=\frakP_{\frakA_0}+
\frakP_{\frakA_1}^{\frac{1}{2}a_0e(\frakA_1/\calO_E)}+
\cdots +
\frakP_{\frakA_{t-1}}^{\frac{1}{2}a_{t-2}e(\frakA_{t-1}/\calO_E)}+
\frakP_{\frakA}^{\frac{1}{2}a_{t-1}e(\frakA/\calO_E)}, \\
\frakH^1&:=\frakP_{\frakA_0}+
\frakP_{\frakA_1}^{\frac{1}{2}a_0e(\frakA_1/\calO_E)+}+
\cdots +
\frakP_{\frakA_{t-1}}^{\frac{1}{2}a_{t-2}e(\frakA_{t-1}/\calO_E)+}+
\frakP_{\frakA}^{\frac{1}{2}a_{t-1}e(\frakA/\calO_E)+}.
\end{align*}
The subgroups \(J^1, H^1\) and lattices \(\frakJ^1, \frakH^1\) depend on \(\xi\), but we suppress this dependence from the notation.
The same applies to \(\vmod{A}{}\) and related objects defined below.

The quotient \(\vmod{A}{}:=J^1/H^1=\frakJ^1/\frakH^1\) is a \(k_F\)-vector space.
As before, the conjugation action of \(E^{\times}\) induces an action of \(\Psi_{E/F}=E^{\times}/F^{\times}U_E^1\) on \(\vmod{A}{}\).
For \(0\leq k\leq t-1\), put
\begin{align*}
\vmod{A}{k}&:=\frac{U_{\frakA_{k+1}}^{\frac{1}{2}a_{k}e(\frakA_{k+1}/\calO_E)}}{U_{\frakA_k}^{\frac{1}{2}a_ke(\frakA_{k}/\calO_E)}U_{\frakA_{k+1}}^{\frac{1}{2}a_ke(\frakA_{k+1}/\calO_E)+}} \\
&\simeq \frac{\frakP_{\frakA_{k+1}}^{\frac{1}{2}a_{k}e(\frakA_{k+1}/\calO_E)}}{\frakP_{\frakA_k}^{\frac{1}{2}a_ke(\frakA_{k}/\calO_E)}+\frakP_{\frakA_{k+1}}^{\frac{1}{2}a_ke(\frakA_{k+1}/\calO_E)+}}.
\end{align*}
Note that the modules \(\vmod{A}{k}\) are identified with the successive quotients of the filtration of \(\vmod{A}{}\) given by the images of the subgroups
\[
U_{\frakA_{k+1}}^{\frac{1}{2}a_{k}e(\frakA_1/\calO_E)} \cdots U_{\frakA_{t-1}}^{\frac{1}{2}a_{t-2}e(\frakA_{t-1}/\calO_E)}U_{\frakA}^{\frac{1}{2}a_{t-1}e(\frakA/\calO_E)}
\]
of \(J^1\) for \(0\leq k\leq t-1\).
Thus, they are subquotients of \(\vmod{A}{}\) and we have an isomorphism
\begin{equation} \label{eq:vmod{A}{}into-vmod{A}{k}}
\vmod{A}{}\simeq \vmod{A}{0} \oplus \vmod{A}{1} \oplus \cdots \oplus \vmod{A}{t-1}
\end{equation}
of \(k_F[\Psi_{E/F}]\)-modules since the order of \(\Psi_{E/F}\) is prime to \(p\).

\begin{rem}
Following \cite{MR3505131}, we defined the \(k_F[\Psi_{E/F}]\)-module \(\vmod{A}{k}\) as a subquotient of \(\vmod{A}{}\).
In fact, as the following proposition shows, we can alternatively define \(\vmod{A}{k}\) as a \(k_F[\Psi_{E/F}]\)-submodule of \(\vmod{A}{}\),
which makes the isomorphism \eqref{eq:vmod{A}{}into-vmod{A}{k}} canonical
(this may be implicit in some parts of \cite{MR3505131}).
We will not use this definition in this paper.
\begin{prop} \label{prop:vmod{A}{}into-vmod{A}{k}}
Consider the non-degenerate symmetric bilinear pairing 
\[
B\colon A \times A \to F; \quad (x, y) \mapsto \Trd_{A/F} (xy)
\]
that is invariant under the conjugation action of \(A^{\times}\).
For \(0\leq k\leq t-1\) and \(r\in \bbR\) we define
\[
\frakP_{\frakA_{k+1}, \frakA_k}^r:=\frakP_{\frakA_{k+1}}^r \cap A_k^{\perp},
\]
where \(A_k^{\perp}\) is the orthogonal complement of \(A_k\) with respect to \(B\).
We similarly define \(\frakP_{\frakA_{k+1}, \frakA_k}^{r+}\).
By the \(A^{\times}\)-invariance of \(B\), both the modules \(\frakP_{\frakA_{k+1}, \frakA_k}^r, \frakP_{\frakA_{k+1}, \frakA_k}^{r+}\) are stable under the conjugation action of \(E^{\times}\).
Then we have
\begin{align*}
\frakJ^1&=\frakP_{\frakA_0}\oplus
\frakP_{\frakA_1, \frakA_0}^{\frac{1}{2}a_0e(\frakA_1/\calO_E)}\oplus
\cdots \oplus
\frakP_{\frakA_{t-1}, \frakA_{t-2}}^{\frac{1}{2}a_{t-2}e(\frakA_{t-1}/\calO_E)}\oplus
\frakP_{\frakA_t, \frakA_{t-1}}^{\frac{1}{2}a_{t-1}e(\frakA/\calO_E)}, \\
\frakH^1&=\frakP_{\frakA_0}\oplus
\frakP_{\frakA_1, \frakA_0}^{\frac{1}{2}a_0e(\frakA_1/\calO_E)+}\oplus
\cdots \oplus
\frakP_{\frakA_{t-1}, \frakA_{t-2}}^{\frac{1}{2}a_{t-2}e(\frakA_{t-1}/\calO_E)+}\oplus
\frakP_{\frakA_t, \frakA_{t-1}}^{\frac{1}{2}a_{t-1}e(\frakA/\calO_E)+}.
\end{align*}
In particular, we have canonical identifications
\begin{align*}
\vmod{A}{} &\simeq \bigoplus_{0\leq k\leq t-1} \frakP_{\frakA_{k+1}, \frakA_k}^{\frac{1}{2}a_ke(\frakA_{k+1}/\calO_E)}/\frakP_{\frakA_{k+1}, \frakA_k}^{\frac{1}{2}a_ke(\frakA_{k+1}/\calO_E)+}, \\
\vmod{A}{k} &\simeq \frakP_{\frakA_{k+1}, \frakA_k}^{\frac{1}{2}a_ke(\frakA_{k+1}/\calO_E)}/\frakP_{\frakA_{k+1}, \frakA_k}^{\frac{1}{2}a_ke(\frakA_{k+1}/\calO_E)+}.
\end{align*}
\end{prop}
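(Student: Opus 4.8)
The plan is to reduce both identities to one structural input about the reduced-trace form $B$ --- its compatibility with the radical filtrations of the principal orders $\frakA_k$ --- which is itself a consequence of the tame ramification of $E/F$. First I would record the orthogonal decompositions at the level of algebras: for $0\le k\le t-1$ the subalgebra $A_k=C_A(E_k)$ is central simple over $E_k$ and is contained in $A_{k+1}=C_A(E_{k+1})$ (as $E_{k+1}\subseteq E_k$), and the restriction of $B$ to $A_{k+1}\times A_{k+1}$ is a nonzero scalar multiple of $(x,y)\mapsto\Tr_{E_{k+1}/F}(\Trd_{A_{k+1}/E_{k+1}}(xy))$ --- and similarly on $A_k\times A_k$ --- which is non-degenerate since $E_{k+1}/F$ is separable. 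Hence $A_{k+1}=A_k\oplus(A_k^{\perp}\cap A_{k+1})$, and since $E^{\times}\subseteq A_k^{\times}$ normalizes both $A_k$ and $\frakA_{k+1}$ while $B$ is invariant under conjugation by $A^{\times}$, the lattices $\frakP_{\frakA_{k+1}}^{r}\cap A_k^{\perp}$ and $\frakP_{\frakA_{k+1}}^{r+}\cap A_k^{\perp}$, as well as their counterparts intersected with $A_k$, are stable under conjugation by $E^{\times}$; this gives the stability claim in the statement.

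The crux --- the step I expect to be the main obstacle --- is the lattice-level compatibility: for every $r\in\bbR$ one has $\frakP_{\frakA_{k+1}}^{r}=(\frakP_{\frakA_{k+1}}^{r}\cap A_k)\oplus(\frakP_{\frakA_{k+1}}^{r}\cap A_k^{\perp})$ (and likewise with $r+$); moreover $\frakP_{\frakA_{k+1}}^{r}\cap A_k$ is a power of $\frakP_{\frakA_k}$ (a two-sided fractional ideal of the hereditary order $\frakA_k$), and in the normalization relative to $\calO_E$ the filtration induced on $A_k$ coincides with that of $\frakA_k$ --- so that $\frakP_{\frakA_{k+1}}^{\frac12 a_k e(\frakA_{k+1}/\calO_E)}\cap A_k$ and $\frakP_{\frakA_{k+1}}^{\frac12 a_k e(\frakA_{k+1}/\calO_E)+}\cap A_k$ are precisely the $\calO_E$-level $\frac12 a_k$ and $\frac12 a_k{+}$ pieces of $\frakA_k$. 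Equivalently, the orthogonal projector $A_{k+1}\to A_k$ along $A_k^{\perp}$ preserves $\frakA_{k+1}$ and all of its radical powers, which is exactly where tameness of $E/E_k$ is essential. I would quote this from the theory of simple strata (Bushnell--Kutzko; compare \cite[\S 2]{MR2848585} and \cite{MR772712}); the divisibility $e(\frakA_k/\calO_E)\mid e(\frakA_{k+1}/\calO_E)$ recalled around \eqref{eq:e(frakA_k/calO_E)}, together with the standard compatibility of the associated lattice chains, takes care of the normalization bookkeeping.

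Granting this, the rest is a short downward induction along $\frakA_0\subseteq\frakA_1\subseteq\cdots\subseteq\frakA_t:=\frakA$. Set $r_0:=1$ and $r_k:=\frac12 a_{k-1}e(\frakA_k/\calO_E)$ for $1\le k\le t$, so that $\frakJ^1=\sum_{k=0}^{t}\frakP_{\frakA_k}^{r_k}$. Splitting the top term orthogonally relative to $A_{t-1}$, the inner piece $\frakP_{\frakA_t}^{r_t}\cap A_{t-1}$ is a power of $\frakP_{\frakA_{t-1}}$ of $\calO_E$-level $\ge\frac12 a_{t-1}$, hence --- since the $a_k$ strictly increase, $\frac12 a_{t-1}>\frac12 a_{t-2}$ --- contained in $\frakP_{\frakA_{t-1}}^{r_{t-1}}\subseteq\sum_{k\le t-1}\frakP_{\frakA_k}^{r_k}$, while the outer piece lies in $A_{t-1}^{\perp}$ and meets $\sum_{k\le t-1}\frakP_{\frakA_k}^{r_k}\subseteq A_{t-1}$ only in $0$. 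Therefore $\frakJ^1=\big(\sum_{k\le t-1}\frakP_{\frakA_k}^{r_k}\big)\oplus(\frakP_{\frakA_t}^{r_t}\cap A_{t-1}^{\perp})$, and iterating yields the asserted direct-sum formula for $\frakJ^1$; the argument for $\frakH^1$ is identical with $r$ replaced by $r+$ throughout.

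Finally, on passing to $\vmod{A}{}=\frakJ^1/\frakH^1$ the two $\frakP_{\frakA_0}$-summands cancel, and for $0\le k\le t-1$ the $k$-th graded piece is $\frakP_{\frakA_{k+1},\frakA_k}^{\frac12 a_k e(\frakA_{k+1}/\calO_E)}/\frakP_{\frakA_{k+1},\frakA_k}^{\frac12 a_k e(\frakA_{k+1}/\calO_E)+}$. Matching these with the subquotient description of $\vmod{A}{k}$ recalled before the proposition --- using the $\calO_E$-level identities of the second paragraph to identify the relevant numerators and denominators --- gives the stated canonical identifications of $\vmod{A}{}$ and of each $\vmod{A}{k}$, and in particular makes \eqref{eq:vmod{A}{}into-vmod{A}{k}} canonical. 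Apart from the tame-descent property of $B$, everything above is bookkeeping with the filtrations.
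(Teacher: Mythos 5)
Your proposal is correct and follows essentially the same route as the paper's (very terse) proof. The paper simply translates the lattices into Moy--Prasad language and then cites Adler's Proposition 1.9.3 (based on 1.4.1), which is precisely the statement that the orthogonal decomposition $A_{k+1}=A_k\oplus(A_k^{\perp}\cap A_{k+1})$ via the trace form is compatible with the filtrations when $E/E_k$ is tame; that is exactly the ``crux'' you isolate, and your downward induction is what ``repeatedly applying'' that proposition amounts to. The only cosmetic difference is provenance: you attribute the key filtration-compatibility to the Bushnell--Kutzko simple-stratum theory, whereas the paper invokes Adler's result in Bruhat--Tits/Moy--Prasad form (which is the cleaner reference once one has already identified the radical filtrations of the $\frakA_k$ with the Moy--Prasad filtrations, as the paper does in the appendix).
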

\begin{proof}
Once the relevant lattices are described in terms of the Moy--Prasad filtration (cf.\ Appendix \ref{sec:app}),
the desired direct sum decompositions are obtained by repeatedly applying \cite[Proposition 1.9.3]{MR1653184}, which is based on \cite[Proposition 1.4.1]{MR1653184} and is applicable by the assumption that \(E/F\) is tame.
We omit the details.
\end{proof}
\end{rem}

We put 
\[
j_k=\frac{1}{2}a_ke(\frakA/\calO_E) \in \frac{1}{2}\bbZ,
\]
so that we have
\[
\frakP_{\frakA}^{j_k}\cap \frakA_{k+1}=\frakP_{\frakA_{k+1}}^{\frac{1}{2}a_ke(\frakA_{k+1}/\calO_E)}.
\]

\begin{prop} \label{prop:comp-decomp}
For \(0\leq k\leq t-1\), the \(k_F[\Psi_{E/F}]\)-module \(\vmod{A}{k}\) is described as follows.
\begin{enumerate}
\item\label{item:comp-decomp-odd} If \(a_ke(\frakA_{k+1}/\calO_E)\) is odd, then \(\vmod{A}{k}\) is trivial.
\item\label{item:comp-decomp-even} If \(a_ke(\frakA_{k+1}/\calO_E)\) is even, then
\[
\vmod{A}{k} \simeq \bigoplus_{\substack{[g]=[\sigma^i\phi^j]\in \Gamma_E\backslash (\Gamma_{E_{k+1}}-\Gamma_{E_{k}})/\Gamma_E \\ j\equiv hj_k \mod{e(\frakA/\calO_E)}}} \frakU_{[g]}.
\]
\end{enumerate}
\end{prop}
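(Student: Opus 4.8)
The plan is to treat the two assertions separately and, for \eqref{item:comp-decomp-even}, to reduce everything to Proposition~\ref{prop:decomp-of-grpiece} applied to the \emph{original} algebra $A$ over $F$. Assertion~\eqref{item:comp-decomp-odd} is essentially forced by the ceiling conventions: if $a_ke(\frakA_{k+1}/\calO_E)$ is odd, then $x:=\tfrac12 a_ke(\frakA_{k+1}/\calO_E)$ is not an integer, so $U_{\frakA_{k+1}}^{x}=U_{\frakA_{k+1}}^{\lceil x\rceil}=U_{\frakA_{k+1}}^{x+}$; hence the numerator of $\vmod{A}{k}$ is already contained in its denominator and $\vmod{A}{k}$ is trivial (the same remark applies to the lattice model, where the numerator $\frakP_{\frakA_{k+1}}^{x}$ coincides with $\frakP_{\frakA_{k+1}}^{x+}$).

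So assume $a_ke(\frakA_{k+1}/\calO_E)$ is even; since $e(\frakA_{k+1}/\calO_E)$ divides $e(\frakA/\calO_E)$, the number $j_k=\tfrac12 a_ke(\frakA/\calO_E)$ is then an integer. The first step is to realize $\vmod{A}{k}$ inside the single graded piece $M:=\frakP_{\frakA}^{j_k}/\frakP_{\frakA}^{j_k+1}$ of the order $\frakA$ itself. Using the identities $\frakP_{\frakA}^{j_k}\cap A_{\ell}=\frakP_{\frakA_{\ell}}^{\frac12 a_ke(\frakA_{\ell}/\calO_E)}$ for $\ell=k,k+1$ (the case $\ell=k+1$ is recorded in Section~\ref{sec:vmod}; the case $\ell=k$ follows by the same rescaling of the filtrations, as $e(\frakA_{k}/\calO_E)$ divides $e(\frakA/\calO_E)$), together with $\frakP_{\frakA}^{j_k+1}\cap A_{k+1}=\frakP_{\frakA_{k+1}}^{\frac12 a_ke(\frakA_{k+1}/\calO_E)+}$, one checks readily that, writing $M_{\ell}\subseteq M$ for the image of $\frakP_{\frakA}^{j_k}\cap A_{\ell}$, one has $M_{k}\subseteq M_{k+1}$ and $M_{k+1}$ is canonically identified with $(\frakP_{\frakA}^{j_k}\cap A_{k+1})/(\frakP_{\frakA}^{j_k+1}\cap A_{k+1})$; dividing further by $M_k$ recovers exactly the defining quotient of $\vmod{A}{k}$, so there is a canonical isomorphism $\vmod{A}{k}\simeq M_{k+1}/M_{k}$ of $k_F[\Psi_{E/F}]$-modules.

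Next I would apply Proposition~\ref{prop:decomp-of-grpiece} to the pair $(A,E)$ over $F$ with $j'=j_k$; since $e(\frakA/\calO_E)=f_0$ by \eqref{eq:e(frakA/calO_E)}, this gives
\[
M=\frakP_{\frakA}^{j_k}/\frakP_{\frakA}^{j_k+1}\;\simeq\;\bigoplus_{\substack{[g]=[\sigma^i\phi^j]\in\Gamma_E\backslash\Gamma_F/\Gamma_E\\ j\equiv hj_k\ \mathrm{mod}\ e(\frakA/\calO_E)}}\frakU_{[g]}
\]
of $k_F[\Psi_{E/F}]$-modules, with precisely the congruence condition appearing in the statement. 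The proposition is thereby reduced to the compatibility claim that, for $\ell=k$ and $\ell=k+1$, the submodule $M_{\ell}\subseteq M$ is the partial direct sum $\bigoplus\frakU_{[g]}$ taken over those $[g]$ in the index set above for which $g$ may be chosen in $\Gamma_{E_{\ell}}$. Granting this, assertion~\eqref{item:comp-decomp-even} follows at once: since $E_{k+1}\subseteq E_{k}$ we have $\Gamma_{E_{k}}\subseteq\Gamma_{E_{k+1}}$, so the complement of the index set of $M_{k}$ in that of $M_{k+1}$ is $\{[g]:g\in\Gamma_{E_{k+1}}-\Gamma_{E_{k}},\ j\equiv hj_k\ \mathrm{mod}\ e(\frakA/\calO_E)\}$, whose image in $\Gamma_E\backslash\Gamma_F/\Gamma_E$ is $\Gamma_E\backslash(\Gamma_{E_{k+1}}-\Gamma_{E_{k}})/\Gamma_E$.

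The main obstacle is this compatibility claim, namely that the decomposition of $M$ into the standard modules $\frakU_{[g]}$ is compatible with the subalgebra $A_{\ell}=\Cent_A(E_{\ell})$ in the stated way. In the split case it is transparent: under the standard identification \eqref{eq:rel-root-decomp} one has $\End_{E_{\ell}}(E)\simeq\bigoplus_{[g]\in\Gamma_E\backslash\Gamma_{E_{\ell}}/\Gamma_E}E\cdot g(E)$, and the descriptions of $\frakA^{\ast}$ and $\frakP_{\frakA^{\ast}}$ recalled before \eqref{eq:decomp-by-std-mods} restrict compatibly to the principal order attached to $E\subseteq\End_{E_{\ell}}(E)$. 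For general $A$ one uses that $A_{\ell}$ is itself a central simple $E_{\ell}$-algebra containing $E$ as a maximal subfield, with Hasse invariant $[E_{\ell}:F]\cdot\inv_F[A]$, so it falls within the same framework, and that the comparison of $A$ with $M_n(F)$ used to prove Proposition~\ref{prop:decomp-of-grpiece} (see \cite[\S4.1--\S4.2]{MR3505131} and \cite[\S4.1]{MR3509939}) is implemented by an unramified-type twist that commutes with forming centralizers of $E_{\ell}$; this forces $M_{\ell}$ to be the asserted partial sum. Carrying out this verification with care — tracking how the relative root decomposition, the nested orders $\frakA_{k}\subseteq\frakA_{k+1}\subseteq\frakA$, and the grading by the residue of $j$ modulo $e(\frakA/\calO_E)$ interact — is the one genuinely technical point; the rest is formal. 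An alternative route would be to apply Proposition~\ref{prop:decomp-of-grpiece} directly to the central simple algebras $A_{k}/E_{k}$ and $A_{k+1}/E_{k+1}$ and then reconcile the three resulting congruence conditions using \eqref{eq:m_k}, \eqref{eq:e(frakA_k/calO_E)} and $\inv_{E_{\ell}}[A_{\ell}]=[E_{\ell}:F]\inv_F[A]$, but I expect the amount of bookkeeping to be comparable.
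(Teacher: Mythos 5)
Your argument rests on a step that you yourself flag as ``the one genuinely technical point'' and then do not prove: the compatibility claim that the image \(M_\ell\subseteq M=\frakP_{\frakA}^{j_k}/\frakP_{\frakA}^{j_k+1}\) of \(\frakP_{\frakA}^{j_k}\cap A_\ell\) is precisely the partial direct sum of those \(\frakU_{[g]}\) in Proposition \ref{prop:decomp-of-grpiece} with \(g\in\Gamma_{E_\ell}\). This is a genuine gap. The justification offered (``an unramified-type twist that commutes with forming centralizers of \(E_\ell\)'') is not a proof, and the claim is not formal: by Remark \ref{rem:frakU_{[g]}} \eqref{item:not-a-simple-module} the modules \(\frakU_{[g]}\) are neither simple nor pairwise non-isomorphic as \(k_F[\Psi_{E/F}]\)-modules, so the decomposition of \(M\) produced by Proposition \ref{prop:decomp-of-grpiece} is not canonical and a \(\Psi_{E/F}\)-stable submodule of \(M\) need not a priori be a partial sum of the chosen summands. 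Making this precise would require tracking the unramified twist underlying Proposition \ref{prop:decomp-of-grpiece} through the nested centralizers, which is work of the same order as re-deriving Tam's Proposition 4.6 from scratch. (Your ceiling argument for assertion \eqref{item:comp-decomp-odd} is fine, as is the identification \(\vmod{A}{k}\simeq M_{k+1}/M_k\), granting the filtration identities you invoke.)

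For comparison, the paper short-circuits all of this: it cites Tam's Proposition 4.6, which already gives the result as a trichotomy (T1)/(T2)/(T3) with the larger index set \(\Gamma_E\backslash\Gamma_{E_{k+1}}/\Gamma_E\) in case (T2), and then merges (T2) and (T3) into the single case \eqref{item:comp-decomp-even} by a short arithmetic argument: when \(a_ke(\frakA_k/\calO_E)\) is odd and \(e(\frakA_{k+1}/\calO_E)\) is even, one observes \(h\) is odd and then shows \(f(E_k/F)\nmid j\), so the congruence \(j\equiv hj_k\) automatically excludes \(g\in\Gamma_{E_k}\). Tam's own proof of Proposition 4.6 applies his version of Proposition \ref{prop:decomp-of-grpiece} to \(\frakA_{k+1}\) and \(\frakA_k\) viewed as orders over \(E_{k+1}\) and \(E_k\) and compares --- essentially your ``alternative route'' --- whereas your main route (one application to \(\frakA\) over \(F\), then restricting to the centralizers) is a third variant whose entire cost is the unproved compatibility claim. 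The efficient path here is simply to invoke Tam's Proposition 4.6 and do the small arithmetic, as the paper does.
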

As in Proposition \ref{prop:decomp-of-grpiece}, two remarks apply.
First, the image of \(j\) in \(\bbZ/{e(\frakA/\calO_E)\bbZ}\) is independent of the choice of a coset representative (recall that \(e(\frakA/\calO_E)=f_0\)) and thus the direct sum in \eqref{item:comp-decomp-even} is well-defined.
Also \(h\) is prime to \(d\) and hence to \(e(\frakA/\calO_E)=d_0\).

Moreover, if \(a_ke(\frakA_{k+1}/\calO_E)\) is even, then so is \(a_ke(\frakA/\calO_E)\) and hence \(j_k\) lies in \(\bbZ\), so that the summation in \eqref{item:comp-decomp-even} makes sense.

\begin{proof}
This is a minor rephrasing of \cite[Proposition 4.6]{MR3505131}, which, under the same assumptions, asserts the following:
\begin{enumerate}
\item[(T1)] If \(a_k\) is odd and \(e(\frakA_{k+1}/\calO_E)\) is odd, then \(\vmod{A}{k}\) is trivial.
\item[(T2)] If \(a_k\) is odd, \(e(\frakA_{k}/\calO_E)\) is odd and \(e(\frakA_{k+1}/\calO_E)\) is even, then
\[
\vmod{A}{k} \simeq \bigoplus_{\substack{[g]=[\sigma^i\phi^j]\in \Gamma_E\backslash \Gamma_{E_{k+1}}/\Gamma_E \\ j\equiv hj_k \mod{e(\frakA/\calO_E)}}} \frakU_{[g]}.
\]
\item[(T3)] If \(a_k\) is even or \(e(\frakA_{k}/\calO_E)\) is even, then
\[
\vmod{A}{k} \simeq \bigoplus_{\substack{[g]=[\sigma^i\phi^j]\in \Gamma_E\backslash (\Gamma_{E_{k+1}}-\Gamma_{E_{k}})/\Gamma_E \\ j\equiv hj_k \mod{e(\frakA/\calO_E)}}} \frakU_{[g]}.
\]
\end{enumerate}
We note that it is proved basically by dividing the cases according to whether 
each of
\[
\frac{1}{2}a_ke(\frakA_{k+1}/\calO_E), \frac{1}{2}a_ke(\frakA_k/\calO_E) \in \frac{1}{2} \bbZ
\]
lies in \(\bbZ\)
and applying \cite[Proposition 4.2]{MR3505131} (reviewed as Proposition \ref{prop:decomp-of-grpiece}) to \(\frakA_{k+1}\) and \(\frakA_{k}\).

Let us deduce Proposition \ref{prop:comp-decomp} from \cite[Proposition 4.6]{MR3505131}.
As \eqref{item:comp-decomp-odd} and (T1) are clearly equivalent,
we have to show that (T2) and (T3) imply \eqref{item:comp-decomp-even}.
For this, it suffices to check that if \(a_ke(\frakA_{k}/\calO_E)\) is odd, \(e(\frakA_{k+1}/\calO_E)\) is even and \(g=\sigma^i\phi^j \in \Gamma_{E_{k+1}}\) satisfies \(j\equiv hj_k \pmod{e(\frakA/\calO_E)}\)
then \(g\) does not lie in \(\Gamma_{E_{k}}\).

Since \(e(\frakA_{k+1}/\calO_E)\) is even, so is \(e(\frakA/\calO_E)\) and hence \(h\) is odd because it is prime to \(e(\frakA/\calO_E)\) as remarked right after the statement of the proposition.
Together with the assumption that \(a_k\) is odd, this shows
\[
j \equiv hj_k \equiv \frac{1}{2} e(\frakA/\calO_E) \mod e(\frakA/\calO_E).
\]
Thus, we may write \(j=(l+\frac{1}{2})\cdot e(\frakA/\calO_E)\) with some \(l \in \bbZ\).
Now we claim that \(f(E_k/F)\) does not divide \(j\).
Indeed, by \eqref{eq:e(frakA/calO_E)}, we have
\[
\frac{j}{f(E_k/F)}
=(l+\frac{1}{2}) \cdot \frac{e(\frakA/\calO_E)}{f(E_k/F)}
=(l+\frac{1}{2}) \cdot \frac{f(E/E_k)}{\gcd(m, f(E/F))}.
\]
On the other hand, since by \eqref{eq:e(frakA_k/calO_E)} we have
\[
e(\frakA_{k}/\calO_E)=\frac{f(E/E_k)}{\gcd(m, f(E/E_k))},
\]
which is odd by assumption, it follows that \(f(E_k/F)\) does not divide \(j\), as claimed.
By looking at the action on the residue fields, we now see that \(g \notin \Gamma_{E_k}\).
\end{proof}

\begin{rem}\label{rem:split-comp-decomp}
When \(A=A^{\ast}\) and so \(e(\frakA_{k}/\calO_E)=1\) for any \(k\), the above statement reduces to the following simpler one.
\begin{enumerate}
	\item[(1)] If \(a_k\) is odd, then \(\vmod{A^{\ast}}{k}\) is trivial.
	\item[(2)] If \(a_k\) is even, then
	\[
	\vmod{A^{\ast}}{k} \simeq \bigoplus_{[g]=[\sigma^i\phi^j]\in \Gamma_E\backslash (\Gamma_{E_{k+1}}-\Gamma_{E_{k}})/\Gamma_E} \frakU_{[g]}.
	\]
\end{enumerate}
\end{rem}
In view of Proposition \ref{prop:comp-decomp}, for \([g] \in (\Gamma_E \backslash \Gamma_F/\Gamma_E)'\) we define a \(k_F[\Psi_{E/F}]\)-module \(\vmod{A}{[g]}\) in the following way.
If \(g\in \Gamma_{E_0}\), then we define \(\vmod{A}{[g]}=0\) to be the trivial module.
If \(g\notin \Gamma_{E_0}\), then let \(0\leq k\leq t-1\) be the unique integer such that \(g\in \Gamma_{E_{k+1}} - \Gamma_{E_{k}}\).
Also we express \([g]=[\sigma^i\phi^j]\) for some \(0\leq i\leq e-1\) and \(0\leq j\leq f-1\).
Then we define
\begin{equation} \label{eq:vmodA[g]}
\vmod{A}{[g]}:=
\begin{cases}
\frakU_{[g]} & \text{if \(a_ke(\frakA_{k+1}/\calO_E)\) is even and \(j\equiv hj_k \pmod{e(\frakA/\calO_E)}\)} \\
0 & \text{otherwise.}
\end{cases}
\end{equation}
We also define 
\[
\bfrakU_{[g]}:=
\begin{cases*}
\frakU_{[g]} \oplus \frakU_{[g^{-1}]} &\text{if \([g]\) is asymmetric} \\
\frakU_{[g]} &\text{if \([g]\) is symmetric,}
\end{cases*}
\]
so that \(\bfrakU_{[g]}\) is a self-dual \(k_F[\Psi_{E/F}]\)-module.
Similarly, we put
\[
\bvmod{A}{[g]}:=
\begin{cases*}
\vmod{A}{[g]} \oplus \vmod{A}{[g^{-1}]} &\text{if \([g]\) is asymmetric} \\
\vmod{A}{[g]} &\text{if \([g]\) is symmetric.}
\end{cases*}
\]
Note that \(\bvmod{A}{[g]}\) is either trivial or isomorphic to \(\bfrakU_{[g]}\).
Indeed, we need to check that if \(\vmod{A}{[g]}=\frakU_{[g]}\) then the same holds for \(g^{-1}\) in place of \(g\).
This is clear because if \(a_ke(\frakA_{k+1}/\calO_E)\) is even, \(0\leq j\leq f-1\) and \(j\equiv hj_k \pmod{e(\frakA/\calO_E)}\), then
\(-j \equiv -hj_k =-\frac{h}{2}a_ke(\frakA/\calO_E)\equiv \frac{h}{2}a_ke(\frakA/\calO_E)=hj_k \pmod{e(\frakA/\calO_E)}.\)

By \eqref{eq:vmod{A}{}into-vmod{A}{k}} and Proposition \ref{prop:comp-decomp}, we have isomorphisms of \(k_F[\Psi_{E/F}]\)-modules
(see Section \ref{sec:sigma-phi} for the definition of \((\Gamma_E\backslash \Gamma_F/\Gamma_E)_{\sym}\),
\((\Gamma_E\backslash \Gamma_F/\Gamma_E)_{\asym}/{\pm}\)):
\begin{equation} \label{eq:vmod{A}{}into-vmod{A}{[g]}}
\vmod{A}{}
\simeq \bigoplus_{[g]\in (\Gamma_E\backslash \Gamma_F/\Gamma_E)'} \vmod{A}{[g]}
\simeq \bigoplus_{[g]\in (\Gamma_E\backslash \Gamma_F/\Gamma_E)_{\sym}\amalg (\Gamma_E\backslash \Gamma_F/\Gamma_E)_{\asym}/{\pm}} \bvmod{A}{[g]}.
\end{equation}

\begin{rem} \label{rem:vmod{A}{}into-vmod{A}{[g]}}
By Remark \ref{rem:frakU_{[g]}} \eqref{item:not-a-simple-module}, it is probably not possible to define \(\vmod{A}{[g]}\) simply as ``the \(\frakU_{[g]}\)-isotypic part" of \(\vmod{A}{}\).

However, similarly to Proposition \ref{prop:vmod{A}{}into-vmod{A}{k}}, one can again use \cite[Proposition 1.4.1]{MR1653184} and show the existence of direct sum decompositions 
of the form:
\[
\frakJ^1=\bigoplus_{[g]\in \Gamma_E \backslash \Gamma_F/\Gamma_E} \frakJ^1_{[g]}, \quad
\frakH^1=\bigoplus_{[g]\in \Gamma_E \backslash \Gamma_F/\Gamma_E} \frakH^1_{[g]},
\]
induced by the root space decomposition.
A more natural definition of \(\frakU_{[g]}\) may then be \(\vmod{A}{[g]}:=\frakJ^1_{[g]}/\frakH^1_{[g]}\).
This would make the isomorphisms \eqref{eq:vmod{A}{}into-vmod{A}{[g]}} canonical.

In this paper we will not use this definition and adopt the above ad hoc definition.
\end{rem}

\begin{prop}\label{prop:symp-mods}
Let \([g]\in (\Gamma_E\backslash \Gamma_F/\Gamma_E)_{\sym}\) be a non-trivial symmetric double coset.
Then we have \(\vmod{A}{[g]} \simeq \vmod{A^{\ast}}{[g]}\) 
if and only if 
\([g]=[\sigma^i]\) for some \(i\) or \(m\) is even.	
\end{prop}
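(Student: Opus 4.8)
The plan is to reduce the statement to a purely combinatorial comparison of vanishing. By the defining formula \eqref{eq:vmodA[g]}, each of $\vmod{A}{[g]}$ and $\vmod{A^{\ast}}{[g]}$ is equal either to the nonzero module $\frakU_{[g]}$ or to $0$, so $\vmod{A}{[g]}\simeq\vmod{A^{\ast}}{[g]}$ holds exactly when the two are simultaneously zero or simultaneously nonzero, and it suffices to decide, for a nontrivial symmetric $[g]$, which case occurs for each. If $g\in\Gamma_{E_0}$ both modules vanish; otherwise let $0\le k\le t-1$ be the unique index with $g\in\Gamma_{E_{k+1}}-\Gamma_{E_k}$ and write $[g]=[\sigma^i\phi^j]$ with $0\le j\le f-1$. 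Since $e(\frakA^{\ast}/\calO_E)=1$, Remark \ref{rem:split-comp-decomp} tells us $\vmod{A^{\ast}}{[g]}\ne 0$ exactly when $a_k$ is even. For $\vmod{A}{[g]}$ I would first rewrite the two conditions of \eqref{eq:vmodA[g]}: if $a_k$ is even then $j_k=\tfrac12 a_k e(\frakA/\calO_E)\equiv 0\pmod{e(\frakA/\calO_E)}$, so the congruence becomes $j\equiv 0\pmod{e(\frakA/\calO_E)}$; if $a_k$ is odd then the evenness of $a_k e(\frakA_{k+1}/\calO_E)$ forces $e(\frakA_{k+1}/\calO_E)$, hence also $e(\frakA/\calO_E)$ (which it divides), to be even, and then, using that $h$ is prime to $e(\frakA/\calO_E)$ and so odd, $h j_k\equiv\tfrac12 e(\frakA/\calO_E)\pmod{e(\frakA/\calO_E)}$, so the congruence becomes $j\equiv\tfrac12 e(\frakA/\calO_E)\pmod{e(\frakA/\calO_E)}$.

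By Proposition \ref{prop:cond-by-Galois-elt}\,\eqref{item:cond-for-sym} a symmetric $[g]$ has $j=0$ or $j=f/2$, the latter forcing $f$ even. When $j=0$, i.e.\ $[g]=[\sigma^i]$, the congruence $j\equiv 0$ holds in the $a_k$-even branch and fails in the $a_k$-odd branch, so $\vmod{A}{[g]}\ne 0$ exactly when $a_k$ is even — the same criterion as for $\vmod{A^{\ast}}{[g]}$ — and the isomorphism holds, consistently with the statement. When $j=f/2$ I would compute $f/2$ modulo $e(\frakA/\calO_E)=f_0$ from $f=f_0\gcd(m,f)$, $m=m_0\gcd(m,f)$ and $\gcd(m_0,f_0)=1$: one finds $f/2\equiv 0\pmod{f_0}$ if $\gcd(m,f)$ is even and $f/2\equiv\tfrac12 f_0\pmod{f_0}$ if $\gcd(m,f)$ is odd; moreover, since $f$ is even, $\gcd(m,f)$ odd forces $f_0$ even, hence $m_0$ odd, hence $m$ odd, and conversely $m$ odd forces $\gcd(m,f)$ odd. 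Thus if $m$ is even then $f/2\equiv 0$, and again $\vmod{A}{[g]}\ne 0$ exactly when $a_k$ is even, matching $\vmod{A^{\ast}}{[g]}$, so the isomorphism holds. If $m$ is odd then $f/2\equiv\tfrac12 f_0\not\equiv 0$, and combining with the two branches, $\vmod{A}{[g]}\ne 0$ exactly when $a_k$ is odd and $e(\frakA_{k+1}/\calO_E)$ is even, while $\vmod{A^{\ast}}{[g]}\ne 0$ exactly when $a_k$ is even; these are mutually exclusive, so at most one of the two modules is nonzero.

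It remains, in the $m$-odd case, to check that the isomorphism really fails, i.e.\ that not both modules vanish. Two points are needed. First, $e(\frakA_{k+1}/\calO_E)$ is automatically even: $g=\sigma^i\phi^{f/2}$ fixing $E_{k+1}$ forces $f(E_{k+1}/F)\mid f/2$, so $f(E/E_{k+1})=f/f(E_{k+1}/F)$ is even, and then $e(\frakA_{k+1}/\calO_E)=f(E/E_{k+1})/\gcd(m,f(E/E_{k+1}))$ is even because $m$ is odd; so whatever the parity of $a_k$, exactly one of $\vmod{A}{[g]}$, $\vmod{A^{\ast}}{[g]}$ is $\frakU_{[g]}$. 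Second — and this is the step I expect to be the main obstacle — one must show that such a symmetric coset of ``$\phi^{f/2}$-type'' with $m$ odd cannot lie in $\Gamma_{E_0}$, for otherwise both modules would vanish and the isomorphism would spuriously hold. Here I would pass to the twisted Levi $\bfG^0=\Res_{E_0/F}\underline{A_0}^{\times}$, in which $E$ is a maximal subfield of the central simple $E_0$-algebra $A_0=Z_A(E_0)\simeq M_{m(A_0)}(D_0)$ with $m(A_0)=\gcd(m,[E:E_0])$, so that $g\in\Gamma_{E_0}$ would make $[g]$ a nontrivial symmetric coset for the pair $(E/E_0,A_0)$ of the analogous ``$\phi_0^{\,f(E/E_0)/2}$-type''; running the same analysis over $E_0$ and using the constraints that the regularity of the tame elliptic regular pair places on the Howe-factorization fields, one should be forced to conclude $m$ even, a contradiction. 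That interaction between the combinatorics of the fields $E_i$ and the algebra invariants $m(A_i)$ is the genuinely delicate part; everything else is bookkeeping of parities and congruences.
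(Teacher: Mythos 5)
Your analysis of the case $g\notin\Gamma_{E_0}$ is correct and is, modulo a slight reorganisation, the paper's own argument: you first characterise when each of $\vmod{A}{[g]}$ and $\vmod{A^{\ast}}{[g]}$ is non-zero (the latter, via Remark~\ref{rem:split-comp-decomp}, being exactly when $a_k$ is even), then compare; and your reduction of the congruence in \eqref{eq:vmodA[g]} to $j\equiv 0$ or $j\equiv\tfrac12 e(\frakA/\calO_E)$ according to the parity of $a_k$, the computation of $f/2$ modulo $f_0$, and the observation that $e(\frakA_{k+1}/\calO_E)$ is forced to be even when $j=f/2$ and $m$ is odd, are precisely the content of the three assertions the paper establishes through \eqref{eq:cong-j-hjk}. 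So the combinatorial core coincides.

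Where you go beyond the paper is in explicitly treating $g\in\Gamma_{E_0}$ (equivalently $k=-1$). The paper's proof passes straight to the unique $k$ with $g\in\Gamma_{E_{k+1}}-\Gamma_{E_k}$ and applies the displayed formula of \eqref{eq:vmodA[g]} together with Remark~\ref{rem:split-comp-decomp}, but that formula is the definition only for $g\notin\Gamma_{E_0}$; for $g\in\Gamma_{E_0}$ both $\vmod{A}{[g]}$ and $\vmod{A^{\ast}}{[g]}$ are declared zero outright and hence are always isomorphic, regardless of any congruence. For $j=0$ this causes no harm since $\sigma^i$ with $0<i\leq e-1$ moves a uniformizer of $E_0$, so a non-trivial coset $[\sigma^i]$ never lies in $\Gamma_{E_0}$; but for $j=f/2$ a symmetric coset $[\sigma^i\phi^{f/2}]$ can lie in $\Gamma_{E_0}$ whenever $f(E_0/F)$ divides $f/2$, and the ``only if'' direction of the proposition then requires $m$ to be even. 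You are right to single this out as the essential obstruction. Your proposed resolution, however, is only a sketch: you suggest passing to $\bfG^0$ and invoking the regularity of the tame elliptic regular pair, but the argument is not carried out, and I do not see what constraint regularity would place on the chain $E_t\subsetneq\cdots\subsetneq E_0\subset E$ (which records the jumps of the Howe factorization of $\xi$, not the invariants of $A$) that would relate $f(E/E_0)$ to the parity of $m$. As written your proof, like the paper's, leaves the $k=-1$, $j=f/2$, $m$-odd subcase unresolved; to close it one either needs a concrete lemma ruling it out, or a direct check that the consequence of the proposition actually used in Section~\ref{sec:pf-main-thm} (triviality of $\zeta_{\Tam,\alpha}|_{E^{\times}}$ and of $\eps{\bfG^{\ast}}{\alpha}\cdot\eps{\bfG}{\alpha}$) still holds there.
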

\begin{proof}
We prove the proposition by studying the condition for \(\vmod{A}{[g]} \simeq \vmod{A^{\ast}}{[g]}\).

We may assume \(g=\sigma^i \phi^j\) for some \(0\leq i\leq e-1, 0\leq j\leq f-1\).
Let \(k\) be the unique integer satisfying \(g=\sigma^i\phi^j \in \Gamma_{E_{k+1}}-\Gamma_{E_{k}}\).
Then by the definition \eqref{eq:vmodA[g]} of \(\vmod{A}{[g]}\) and Remark \ref{rem:split-comp-decomp}
it follows that we have \(\vmod{A}{[g]} \simeq \vmod{A^{\ast}}{[g]}\) if and only if one of the following cases occurs:
\begin{itemize}
\item \(a_k\) and \(e(\frakA_{k+1}/\calO_E)\) are odd,
\item \(a_k\) is odd, \(e(\frakA_{k+1}/\calO_E)\) is even and 
\[
j\not\equiv hj_k \mod e(\frakA/\calO_E),
\]
\item \(a_k\) is even and
\[
j\equiv hj_k \mod e(\frakA/\calO_E).
\]
\end{itemize}
Note that in the first two cases we have \(\vmod{A}{[g]}=0=\vmod{A^{\ast}}{[g]}\)
while in the last case we have \(\vmod{A}{[g]} \simeq \frakU_{[g]} \simeq \vmod{A^{\ast}}{[g]}\).

Recall that since \(g=\sigma^i\phi^j\) is assumed to be symmetric, we have either \(j=0\) or \(j=\frac{f}{2}\) (and in particular \(f\) is even in the latter case) by Proposition \ref{prop:cond-by-Galois-elt} \eqref{item:cond-for-sym}.
We are to prove that we have \(\vmod{A}{[g]} \simeq \vmod{A^{\ast}}{[g]}\) if and only if \(j=0\) or \(m\) is even.
By the above rephrasing of the condition \(\vmod{A}{[g]} \simeq \vmod{A^{\ast}}{[g]}\) in terms of the three cases, we are reduced to proving the following three assertions.
\begin{enumerate}
\item \label{item:isom1} Suppose that \(a_k\) and \(e(\frakA_{k+1}/\calO_E)\) are odd. Then \(j=0\) or \(m\) is even.
\item \label{item:isom2} Suppose that \(a_k\) is odd and \(e(\frakA_{k+1}/\calO_E)\) is even. Then \(j\not\equiv hj_k \pmod{e(\frakA/\calO_E)}\) if and only if \(j=0\) or \(m\) is even.
\item \label{item:isom3} Suppose that \(a_k\) is even. Then \(j\equiv hj_k \pmod{e(\frakA/\calO_E)}\) if and only if \(j=0\) or \(m\) is even.
\end{enumerate}
To study the congruence condition between \(j\) and \(hj_k\), we use
\begin{equation} \label{eq:cong-j-hjk}
\frac{j-hj_k}{e(\frakA/\calO_E)}=
\begin{cases}
\frac{-ha_k}{2} &\text{if \(j=0\)} \\
\frac{s-ha_k}{2} &\text{if \(j=\frac{f}{2}\).}
\end{cases}
\end{equation}
Here we simply write \(s=s(\frakA)=\gcd(f, m)\) 
(cf.\ \eqref{eq:r(frakA)}, \eqref{eq:e(frakA/calO_E)}).

We first treat the assertion \eqref{item:isom1}.
Suppose that \(a_k\) and \(e(\frakA_{k+1}/\calO_E)\) are odd and moreover \(j=\frac{f}{2}\) (so that \(f\) is even).
Then the condition \(\sigma^i\phi^{\frac{f}{2}} \in \Gamma_{E_{k+1}}\) shows that
\(f(E_{k+1}/F)\) divides \(\frac{f}{2}\), or equivalently, 
\[
\frac{f}{f(E_{k+1}/F)}=f(E/E_{k+1})
\]
is even.
Now because by \eqref{eq:e(frakA_k/calO_E)} we have
\[
e(\frakA_{k+1}/\calO_E)=\frac{f(E/E_{k+1})}{\gcd(f(E/E_{k+1}), m)},
\]
which is odd by assumption, \(m\) is even as required.

Next we show the assertion \eqref{item:isom2}.
Suppose that \(a_k\) is odd and \(e(\frakA_{k+1}/\calO_E)\) is even.
As \(e(\frakA_{k+1}/\calO_E)\) is even, so is \(e(\frakA/\calO_E)\) and hence \(h\) is odd because it is prime to \(e(\frakA/\calO_E)\).
Together with the assumption that \(a_k\) is odd, 
we see that \(ha_k\) is odd.
By \eqref{eq:cong-j-hjk}, this implies that we have \(j\not\equiv hj_k \pmod{e(\frakA/\calO_E)}\)
if and only if either
\begin{itemize}
\item \(j=0\), or 
\item \(j=\frac{f}{2}\) and \(s\) is even.
\end{itemize}
The latter in turn is equivalent to the condition that \(j=\frac{f}{2}\) and \(m\) is even
because \(s=\gcd(f, m)\).
Thus, the assertion \eqref{item:isom2} is settled.

Finally, the assertion \eqref{item:isom3} is proved similarly.
Assuming that \(a_k\) is even, we see that \(ha_k\) is even.
Again by \eqref{eq:cong-j-hjk}, we have \(j\equiv hj_k \pmod{e(\frakA/\calO_E)}\) if and only if either
\begin{itemize}
\item \(j=0\), or 
\item \(j=\frac{f}{2}\) and \(s\) is even,
\end{itemize}
the latter of which is equivalent to the condition that \(j=\frac{f}{2}\) and \(m\) is even, as before.
\end{proof}

The following proposition gives a relation between \(\vmod{A}{[g]}\) and the objects in Section \ref{sec:epsilon-character}.
\begin{prop} \label{prop:vmod-ord}
Let \(g\in \Gamma_F-\Gamma_E\) and put \(\alpha :=\begin{bmatrix}1 \\g\end{bmatrix} \in \Phi(\bfS, \bfG)\).
Let \(-1\leq k\leq t-1\) be the unique integer such that \(\alpha \in \Phi(\bfS, \bfG^{k+1})-\Phi(\bfS, \bfG^k)\) (or equivalently, \([g] \in \Gamma_E \backslash (\Gamma_{E_{k+1}}-\Gamma_{E_k}) /\Gamma_E\)).
Let \(r_k \in \bbR_{\geq 0}\) and \(\ordx{\bfG}{\alpha} \subset \bbR\)
be as in Section \ref{sec:epsilon-character}.

When \(k=-1\), we have \(\vmod{A}{[g]}=0\).
When \(k\neq -1\), we have \(\vmod{A}{[g]} \neq 0\) if and only if \(\frac{r_i}{2} \in \ordx{\bfG}{\alpha}\).

The same holds with \(A\) and \(\bfG\) replaced by \(A^{\ast}\) and \(\bfG^{\ast}\) respectively.
\end{prop}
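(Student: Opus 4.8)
The plan is to rewrite both conditions in the statement in terms of the Moy--Prasad filtration at $x$ of the twisted Levi subgroup $\bfG^{k+1}=\Res_{E_{k+1}/F}\underline{A_{k+1}}^{\times}$, and then match them. The case $k=-1$ is immediate: there $g\in\Gamma_{E_0}$, so $\vmod{A}{[g]}=0$ (and likewise $\vmod{A^{\ast}}{[g]}=0$) straight from the definition \eqref{eq:vmodA[g]}. So assume $k\neq-1$. The crucial first step is the dictionary between Moy--Prasad lattices and powers of Jacobson radicals: since $x$ is the point of $\calB_{\red}(\bfG^{k+1},F)$ attached to the principal order $\frakA_{k+1}=A_{k+1}\cap\frakA$, one has $\frakg^{k+1}(F)_{x,s}=\frakP_{\frakA_{k+1}}^{\lceil s\,e(\frakA_{k+1}/\calO_F)\rceil}$ for every $s\in\bbR$, and similarly for $\bfG^{k}$ and $\frakA_{k}$; together with $a_k=r_k\,e(E/F)$ and $e(\frakA_{k+1}/\calO_F)=e(E/F)\,e(\frakA_{k+1}/\calO_E)$ this identifies the module $\vmod{A}{k}$ of Section \ref{sec:vmod} with the Moy--Prasad graded piece
\[
\vmod{A}{k}\;\simeq\;\frakg^{k+1}(F)_{x,r_k/2}\big/\big(\frakg^{k}(F)_{x,r_k/2}+\frakg^{k+1}(F)_{x,(r_k/2)+}\big).
\]

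Next I would decompose this graded piece along $\Gamma_F$-orbits of roots. The $\Gamma_F$-stable complement of $\frakg^{k}$ in $\frakg^{k+1}$ is $\bigoplus_{\beta\in\Phi(\bfS,\bfG^{k+1})-\Phi(\bfS,\bfG^{k})}\frakg_\beta$, and taking $\Gamma_F$-fixed points one orbit at a time---using that $\Gamma_F$ acts transitively on each orbit $\Gamma_F\cdot\beta$ with stabilizer $\Gamma_\beta=\Gal(F^{\sep}/F_\beta)$, so that the fixed points of the orbit sum form $\frakg_\beta(F_\beta)$---one finds that the contribution of the orbit $\Gamma_F\cdot\alpha$ to the graded piece above is
\[
\frakg_\alpha(F_\alpha)_{x,r_k/2}\big/\frakg_\alpha(F_\alpha)_{x,(r_k/2)+}.
\]
This is nonzero exactly when $\frac{r_k}{2}\in\ordx{\bfG}{\alpha}$; and when it is nonzero it is, via the identification $F_\alpha=E\cdot g(E)$ with $\bfS(F)=E^{\times}$ acting by $\overline{\alpha(\cdot)}$ (the remark identifying $E\cdot g(E)$ with $F_\alpha$ in Section \ref{sec:hered-ord}), isomorphic to $\frakU_{[g]}$, and this isomorphism respects the $k_E$-structure coming from $E\hookrightarrow E\cdot g(E)$.

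Finally I would compare with the decomposition of Proposition \ref{prop:comp-decomp}, which writes $\vmod{A}{k}\simeq\bigoplus_{[g']}\vmod{A}{[g']}$ over $[g']\in\Gamma_E\backslash(\Gamma_{E_{k+1}}-\Gamma_{E_k})/\Gamma_E$, where each $\vmod{A}{[g']}$ is $\frakU_{[g']}$ or $0$ according to \eqref{eq:vmodA[g]}. Since that decomposition is induced by the root-space decomposition $A_{k+1}\simeq\bigoplus_{[g']}E\cdot g'(E)$ and the modules $\frakU_{[g']}$ are pairwise $\Hom$-orthogonal simple $k_E[\Psi_{E/F}]$-modules (the second part of Remark \ref{rem:frakU_{[g]}}), the two decompositions of $\vmod{A}{k}$ agree summand by summand under the bijection \eqref{eq:Galois-orb-as-coset}; hence the $\Gamma_F\cdot\alpha$-summand computed above is exactly $\vmod{A}{[g]}$, and therefore $\vmod{A}{[g]}\neq0\iff\frac{r_k}{2}\in\ordx{\bfG}{\alpha}$. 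The assertion for $A^{\ast}$, $\bfG^{\ast}$ follows by the identical argument with the orders $\frakA^{\ast}_\bullet$, for which $e(\frakA^{\ast}_\bullet/\calO_E)=1$.

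The step I expect to be the main obstacle is the first one: making the Moy--Prasad/hereditary-order dictionary precise and checking that Proposition \ref{prop:comp-decomp} really yields the root-space splitting of $\vmod{A}{k}$. Concretely one must match the filtration exponents on the nose---the factors $e(\frakA_l/\calO_E)$, the ceilings, and the way the Hasse invariant of $A$ enters through \eqref{eq:e(frakA/calO_E)}, \eqref{eq:e(frakA_k/calO_E)}---and verify that the $\frakU_{[g']}$-summands furnished by Proposition \ref{prop:comp-decomp} carry the correct $[g']$-labels, so that the term-by-term comparison in the last step is legitimate. This is of the same nature as the computations in Appendix \ref{sec:app} and in \cite{MR3505131}; granting it, the remainder is bookkeeping.
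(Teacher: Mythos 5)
Your argument unpacks exactly the content of the references that the paper delegates to: the paper's proof simply cites \cite[Proposition 5.12]{MR4206603}, which in turn rests on the Moy--Prasad/hereditary-order dictionary (\cite[Proposition 4.21]{MR3849622}, here also \cite[Proposition 10.2]{2112.12367}) and the root-space decomposition of Moy--Prasad graded pieces over tame tori (\cite[Proposition 3.8]{MR2543925}); your steps (identify $\vmod{A}{k}$ with the graded piece, decompose along $\Gamma_F$-orbits to get $\frakg_\alpha(F_\alpha)_{x,r_k/2}/\frakg_\alpha(F_\alpha)_{x,(r_k/2)+}$, match summands with Proposition \ref{prop:comp-decomp}) are precisely these two ingredients plus a label-matching step. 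So this is essentially the same proof, just written out rather than cited. The one place your version adds substance is the final matching of labels via $k_E[\Psi_{E/F}]$-simplicity from Remark \ref{rem:frakU_{[g]}}(2), which is a clean way to handle the uniqueness of the decomposition; for that to be airtight you should confirm that the isomorphism in Proposition \ref{prop:comp-decomp} (ultimately \cite[Proposition 4.2]{MR3505131}) is $k_E$-linear, not merely $k_F[\Psi_{E/F}]$-linear --- it is, because the splitting \eqref{eq:rel-root-decomp} is left-$E$-linear, but this deserves a sentence. You have correctly flagged the Moy--Prasad/order dictionary as the load-bearing step, and correctly read $r_i$ in the statement as $r_k$.
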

\begin{proof}
For \(A^{\ast}\) and \(\bfG^{\ast}\), this is \cite[Proposition 5.12]{MR4206603}, which is based on \cite[Proposition 3.8]{MR2543925} and \cite[Proposition 4.21]{MR3849622}.
The same proof works for \(A\) and \(\bfG\).
\end{proof}

\subsection{t-factors}
As reviewed in Section \ref{sec:zeta-data} below,
Tam's \(\zeta\)-data \(\zeta_{\Tam}\) (see Section \ref{sec:Tam-thm}) is defined in terms of certain invariants \(t_{\bGamma}^0(V)\), \(t_{\bGamma}^1(V)\) and \(t_{\bGamma}(V)\) introduced by Bushnell--Fr\"{o}hlich \cite[Chapter 8]{MR701540} and Bushnell--Henniart \cite[\S 4]{MR2148193}, \cite[\S 3]{MR2679700}
and their variants by Tam \cite[page 376]{MR3505131}.
They are called t-factors in \cite{MR3505131} and \cite{MR3509939}.
Here we only recall some formal properties.

Let \(\bGamma\) be a cyclic group of order prime to \(p\).
Bushnell--Henniart \cite{MR2679700}, building on the work \cite{MR701540} of Bushnell--Fr\"{o}hlich, defined two invariants of a finite symplectic \(\bbF_p[\bGamma]\)-module \((V, h)\)
(i.e., \(V\) is a finite-dimensional representation of \(\bGamma\) over \(\bbF_p\) and \(h\colon V \times V \to \bbF_p\) is a \(\bGamma\)-invariant non-degenerate alternating \(\bbF_p\)-bilinear form):
\begin{itemize}
\item a sign \(t_{\bGamma}^0(V) \in \{\pm 1\}\) and 
\item a quadratic character \(t_{\bGamma}^1(V) \colon \bGamma \to \{\pm 1\}\) of \(\bGamma\).
\end{itemize}
It is proved in \cite[Proposition 8.2.3 (c)]{MR701540} that the isometry class of a finite symplectic \(\bbF_p[\bGamma]\)-module is determined by the isomorphism class of the underlying \(\bbF_p[\bGamma]\)-module.
It is for this reason that, in the above and in the following, we do not specify the \(\bGamma\)-invariant non-degenerate alternating \(\bbF_p\)-bilinear form when we discuss these invariants.
We record that they are additive in \(V\): for finite symplectic \(\bbF_p[\bGamma]\)-modules \(V_1, V_2\),
\[
t_{\bGamma}^0(V_1 \oplus V_2)=t_{\bGamma}^0(V_1) \cdot t_{\bGamma}^0(V_2), \quad
t_{\bGamma}^1(V_1 \oplus V_2)=t_{\bGamma}^1(V_1) \cdot t_{\bGamma}^1(V_2)
\]
and are trivial for a trivial finite symplectic \(\bbF_p[\bGamma]\)-module \(V=0\):
\[
t_{\bGamma}^0(V)=1, \quad
t_{\bGamma}^1(V)=\mathbbm{1}_{\bGamma}.
\]
Moreover, the sign \(t_{\bGamma}(V) \in \{\pm 1\}\) is defined as
\[
t_{\bGamma}(V):=t_{\bGamma}^0(V) \cdot t_{\bGamma}^1(V)(\gamma)
\]
for a generator \(\gamma \in \bGamma\).
It is clear that \(t_{\bGamma}(V)\) is independent of the choice of \(\gamma\).
In the following, \(t_{\bGamma}^0(V), t_{\bGamma}^1(V), t_{\bGamma}(V)\) are collectively referred to as t-factors.

Let us return to the situation of Section \ref{sec:vmod}.
We consider the following two cyclic subgroups of \(\Psi_{E/F}=E^{\times}/F^{\times}U_E^1\):
\begin{itemize}
\item \(\bmu:=\mu_E/\mu_F\) and
\item \(\lan \varpi \ran:=\)(the cyclic subgroup generated by the image of \(\varpi_E\)).
\end{itemize}
There exists a \(\Psi_{E/F}\)-invariant non-degenerate alternating \(\bbF_p\)-bilinear form on \(\bvmod{A}{[g]}\) 
(see Remark \ref{rem:alt-form} below) 
and thus we can view \(k_F[\Psi_{E/F}]\)-modules \(\bvmod{A}{[g]}\) as finite symplectic \(\bbF_p[\bGamma]\)-modules for \(\bGamma=\bmu, \lan \varpi \ran\)
so that
t-factors such as 
\(t_{\bmu}^1(\bvmod{A}{[g]})\), \(t_{\lan \varpi \ran}(\bvmod{A}{[g]})\) make sense.
\begin{rem} \label{rem:alt-form}
One can show that \(\bvmod{A}{[g]}\) admits a \(\Psi_{E/F}\)-invariant non-degenerate alternating \(\bbF_p\)-bilinear form in at least two ways.
\begin{enumerate}
\item By mimicking the construction of finite symplectic \(\bbF_p[\bGamma]\)-modules in \cite[Proposition (8.2.3)]{MR701540} (see also \cite[Lemma 4.7]{MR2148193}) in the setting of \(k_F[\Psi_{E/F}]\)-modules
with some additional argument in the symmetric case, 
we can directly define a 
\(\Psi_{E/F}\)-invariant non-degenerate alternating \(k_F\)-bilinear form as follows and in particular get a similar alternating \(\bbF_p\)-bilinear form by post-composing \(\tr_{k_F/\bbF_p}\).

Suppose first that \([g]\) is asymmetric.
Then we have \(\bvmod{A}{[g]}=\vmod{A}{[g]}\oplus \vmod{A}{[g^{-1}]}\) by definition.
Under the natural identification \(\vmod{A}{[g^{-1}]} \simeq \vmod{A}{[g]}^{\vee}\),
the standard \(\Psi_{E/F}\)-invariant non-degenerate alternating \(k_F\)-bilinear form
\begin{align*}
(\vmod{A}{[g]}\oplus \vmod{A}{[g]}^{\vee}) \times (\vmod{A}{[g]}\oplus \vmod{A}{[g]}^{\vee}) &\to k_F \\
\big((v_1, w_1), (v_2, w_2)\big) &\mapsto \lan v_1, w_2\ran -\lan v_2, w_1\ran
\end{align*}
on \(\vmod{A}{[g]}\oplus \vmod{A}{[g]}^{\vee}\) induces a similar alternating form on \(\bvmod{A}{[g]}\).

Suppose now that \([g]\) is symmetric, so that we have \(\bvmod{A}{[g]}=\vmod{A}{[g]}\) by definition.
We may also assume \(\vmod{A}{[g]}\) is non-trivial.
We claim that then it is symmetric unramified.
Indeed, if it were symmetric ramified, then by Proposition \ref{prop:cond-by-Galois-elt} \eqref{item:cond-for-symram}, \(e\) would be even and the orbit of \(\alpha=\begin{bmatrix}1 \\ g\end{bmatrix}\) would be represented by \(\begin{bmatrix}1 \\ \sigma ^{\frac{e}{2}}\end{bmatrix}\).
Then we would have \(\vmod{A}{[g]}\simeq \vmod{A^{\ast}}{[g]} =0\)
by Proposition \ref{prop:symp-mods} and \cite[Proposition 5.3 (ii)]{MR3509939}, which is a contradiction.

Put \(\alpha =\begin{bmatrix}1 \\ g\end{bmatrix}\).
As \(\vmod{A}{[g]}\) is non-trivial, we have \(\vmod{A}{[g]}\simeq k_{E\cdot g(E)}=k_{F_{\alpha}}\).
Since \(\alpha\) is symmetric, there exists an element \(x\in \Gamma_F\) such that
\begin{equation} \label{eq:cons-of-sym}
x\cdot \alpha
=-\alpha.
\end{equation}
Note that \(x\) lies in \(\Gamma_{F_{\pm \alpha}}\) and the image generates the Galois group \(\Gamma_{F_{\alpha}/F_{\pm \alpha}}\),
for which we use the same letter.
Moreover, since \(\alpha\) is symmetric unramified, we have a non-trivial extension \(k_{F_{\alpha}}/k_{F_{\pm \alpha}}\) and 
the Galois group \(\Gamma_{k_{F_{\alpha}}/k_{F_{\pm \alpha}}}\) is generated by the image of \(x \in \Gamma_{F_{\alpha}/F_{\pm \alpha}}\), which we again continue to write as \(x\).
We can take an element \(\gamma \in k_{F_{\alpha}}^{\times}\) such that \(x(\gamma)=-\gamma\).
Under the identification \(k_{F_{\alpha}} \simeq \vmod{A}{[g]}\), the non-degenerate alternating \(k_F\)-bilinear form
\[
k_{F_{\alpha}} \times k_{F_{\alpha}} \to k_F; \quad
(a, b) \mapsto \tr_{k_{F_{\alpha}}/k_F}(a\cdot \gamma \cdot x(b))
\]
on \(k_{F_{\alpha}}\) induces a similar alternating form on \(\vmod{A}{[g]}\), which is \(\Psi_{E/F}\)-invariant by \eqref{eq:cons-of-sym}.

\item In fact, \(\vmod{A}{}\) carries a \(\Psi_{E/F}\)-invariant non-degenerate alternating \(\bbF_p\)-bilinear form, denoted by \(h_{\theta}\) in \cite{MR3505131}, which naturally arises in the the construction of \({}_{\bfG}\pi_{(\bfS, \xi)}\).
One can show that the canonical decomposition 
\[
\vmod{A}{}=\bigoplus_{[g]\in (\Gamma_E\backslash \Gamma_F/\Gamma_E)_{\sym}\amalg (\Gamma_E\backslash \Gamma_F/\Gamma_E)_{\asym}/{\pm}} \bvmod{A}{[g]}
\]
in Remark \ref{rem:vmod{A}{}into-vmod{A}{[g]}} is orthogonal with respect to \(h_{\theta}\) (cf.\ \cite[Proposition 4.4]{MR3505131}).
In particular, the restriction of \(h_{\theta}\) to each submodule
\(\bvmod{A}{[g]}\subset \vmod{A}{}\) provides a \(\Psi_{E/F}\)-invariant non-degenerate alternating \(\bbF_p\)-bilinear form.
\end{enumerate}
\end{rem}

Let \(g=\sigma^i \phi^j \in \Gamma_F\) be an element as in Section \ref{sec:sigma-phi} and put
\[
\bmu_g:=\mu_{E\cdot g(E)}/\mu_F.
\]
In \cite[page 376]{MR3505131}, Tam defined 
\begin{itemize}
\item a sign \(t_{\bmu_g}^0(\bfrakU_{[g]}) \in \{\pm 1\}\) and
\item a quadratic character \(t_{\bmu_g}^1(\bfrakU_{[g]}) \colon \bmu_g \to \{\pm 1\}\) of \(\bmu_g\).
\end{itemize}
The following definition is also implicitly used in \cite{MR3505131}:
\[
t_{\bmu_g}^1(\bvmod{A}{[g]}):=
\begin{cases}
t_{\bmu_g}^1(\bfrakU_{[g]}) &\text{if \(\bvmod{A}{[g]}\simeq \bfrakU_{[g]}\)} \\
\mathbbm{1}_{\bmu_g} &\text{if \(\bvmod{A}{[g]}\) is trivial.}
\end{cases}
\]
We only record the following proposition.
\begin{prop} \label{prop:res-of-t^1}
Let \(g=\sigma^i \phi^j \in \Gamma_F\) be as above.
Then we have
\[
t_{\bmu_g}^1(\bvmod{A}{[g]})|_{\bmu}=t_{\bmu}^1(\bvmod{A}{[g]}).
\]
\end{prop}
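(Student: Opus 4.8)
The plan is to deduce the identity from the behaviour of Tam's $t$-factors under restriction along the inclusion $\bmu\subseteq\bmu_g$ (note $\mu_E\subseteq\mu_{E\cdot g(E)}$, so $\bmu$ is a subgroup of $\bmu_g$, and both sides of the asserted equality are quadratic characters of $\bmu$). First I would dispose of the trivial case: if $\bvmod{A}{[g]}=0$, then by Tam's definition $t^1_{\bmu_g}(\bvmod{A}{[g]})=\mathbbm{1}_{\bmu_g}$, whose restriction is $\mathbbm{1}_{\bmu}$, while $t^1_{\bmu}(0)=\mathbbm{1}_{\bmu}$ by additivity. Otherwise (as recalled in Section~\ref{sec:vmod}, $\bvmod{A}{[g]}$ is either trivial or isomorphic to $\bfrakU_{[g]}$) we have $\bvmod{A}{[g]}\simeq\bfrakU_{[g]}$ as $k_F[\Psi_{E/F}]$-modules, hence also as symplectic $\bbF_p[\bmu]$-modules by \cite[Proposition 8.2.3 (c)]{MR701540}; so $t^1_{\bmu}(\bvmod{A}{[g]})=t^1_{\bmu}(\bfrakU_{[g]})$ and we are reduced to
\[
t^1_{\bmu_g}(\bfrakU_{[g]})\big|_{\bmu}=t^1_{\bmu}(\bfrakU_{[g]}),
\]
where on the left $\bfrakU_{[g]}$ carries the $\bmu_g$-module structure used by Tam and on the right the $\bbF_p[\bmu]$-structure induced from $\Psi_{E/F}$. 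A preliminary point is to check that, up to an automorphism of $\bmu$ that is irrelevant for quadratic characters, these two $\bmu$-actions on $\bfrakU_{[g]}$ agree; this is immediate from the explicit descriptions of the $\bmu_g$- and $\Psi_{E/F}$-actions recorded in Section~\ref{sec:hered-ord}.

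Next I would treat the asymmetric and symmetric cases of $[g]$ separately, using additivity of $t^1$ and the structure of $\bfrakU_{[g]}$. When $[g]$ is asymmetric, $\bfrakU_{[g]}=\frakU_{[g]}\oplus\frakU_{[g^{-1}]}$ is a hyperbolic symplectic module (the two summands are mutually dual), and I would invoke the Bushnell--Fr\"{o}hlich description of the $t$-factors of a hyperbolic module (\cite[Chapter 8]{MR701540}, \cite[\S 4]{MR2148193}), which depends on the Lagrangian $\frakU_{[g]}$ only through an appropriate determinant character; since that determinant character of $\frakU_{[g]}$ over $\bmu$ is the restriction of the one over $\bmu_g$ and the relevant quadratic-residue symbol $\bigl(\tfrac{\cdot}{p}\bigr)$ is intrinsic, both sides agree. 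When $[g]$ is symmetric it is symmetric unramified (Remark~\ref{rem:alt-form}), $\bfrakU_{[g]}=k_{F_\alpha}$ is self-dual, and here $t^1$ is genuinely at issue; I would use Tam's explicit computation of $t^1_{\bmu_g}(\bfrakU_{[g]})$ on a generator (\cite[page 376]{MR3505131}, resting on \cite[\S 3]{MR2679700}) together with the analogous formula for $t^1_{\bmu}$, each coming out as a quadratic character of the respective group determined by the parity of $[k_{F_\alpha}:k_E]$ (equivalently of $[\bmu_g:\bmu]=(q_E^{[k_{F_\alpha}:k_E]}-1)/(q_E-1)$) and by whether certain residue-field norms are squares, and reduce the claimed equality to transitivity of $\Nr_{k_{F_\alpha}/k_F}$ through $k_E$ — with the subcase $[\bmu_g:\bmu]$ even being automatic, since then every quadratic character of $\bmu_g$ restricts trivially to $\bmu$.

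The hard part will be this symmetric-unramified case: because $t^1$ over $\bbF_p$ is assembled from Frobenius orbits of $\bar\bbF_p$-characters, passing from $\bmu_g$ to $\bmu$ alters the orbit/multiplicity pattern of $\bfrakU_{[g]}$ (one orbit of multiplicity one becomes one orbit of multiplicity $[k_{F_\alpha}:k_E]$), so the restriction compatibility is not purely formal and has to be extracted from the defining Gauss-sum formulas; in particular, pinning down Tam's normalization of $t^1_{\bmu_g}(\bfrakU_{[g]})$ and matching it cleanly with the intrinsic $t^1_{\bmu}$ is where the real bookkeeping lies. Everything else — the reductions of the first paragraph, the hyperbolic case, and the final elementary norm/parity identity — should be routine.
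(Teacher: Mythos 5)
The paper disposes of this by a one-line citation to Tam's Proposition~4.12 in \cite{MR3505131}, which is exactly the restriction compatibility being asserted; your proposal is an attempt to re-derive that proposition from scratch, and it does not close. The opening reductions are sound (the trivial case, the isometry step via \cite[Proposition~8.2.3~(c)]{MR701540}, and the remark that the subcase of even $[\bmu_g:\bmu]$ is automatic), as is the general strategy of splitting into asymmetric (hyperbolic) and symmetric-unramified cases. However, your ``preliminary point'' — that Tam's $\bmu_g$-module structure on $\bfrakU_{[g]}$ restricts along $\bmu\subset\bmu_g$ to the $\Psi_{E/F}$-action up to an automorphism of $\bmu$ — is asserted, not checked: Section~\ref{sec:hered-ord} records only the $\Psi_{E/F}$-action (in which $z\in\mu_E$ acts on $k_{E\cdot g(E)}$ by multiplication by $\overline{z^{1-q^j}}$), not Tam's $\bmu_g$-structure, and reconciling the two module structures on $\bfrakU_{[g]}$ is part of the content of the proposition rather than something immediate from that section. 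More importantly, you yourself flag the symmetric-unramified case as ``where the real bookkeeping lies'' and leave it at ``has to be extracted from the defining Gauss-sum formulas.'' That extraction is the heart of the matter and is precisely the nontrivial computation Tam performs; without it the argument is an outline rather than a proof. Given that the result is already established in \cite{MR3505131}, citing \cite[Proposition~4.12]{MR3505131} — as the paper does — is the appropriate move.
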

\begin{proof}
This follows from \cite[Proposition 4.12]{MR3505131}.
\end{proof}

\subsection{Tam's \(\zeta\)-data} \label{sec:zeta-data}
In order to review Tam's \(\zeta\)-data \(\zeta_{\Tam}=\{\zeta_{\Tam, \alpha}\}_{\alpha \in \Phi(\bfS, \bfG^{\ast})}\),
we fix some sets of representatives of relevant quotient sets of \(\Gamma_F\).

We fix a set \(\calD_{\asym /\pm} \subset \Gamma_F\) (resp.\ \(\calD_{\sym} \subset \Gamma_F\)) of representatives of \((\Gamma_E \backslash \Gamma_F/\Gamma_E)_{\asym /{\pm}}\)
(resp.\ \((\Gamma_E \backslash \Gamma_F/\Gamma_E)_{\sym}\)).
We arrange them so that we have
\[
\calD_{\asym /\pm}, \calD_{\sym} \subset \{ \sigma^i \phi^j \mid 0\leq i\leq e-1, 0\leq j\leq f-1\}.
\]
We also define the following subsets (cf.\ Remark \ref{rem:Tam-sym-ur/ram}):
\[
\calD_{\sym, \ur}:=\{ \sigma^i \phi^j \in \calD_{\sym} \mid j=f/2 \}, \quad
\calD_{\sym, \ram}:=\{\sigma^i \phi^j \in \calD_{\sym} \mid j=0\},
\]
so that we have \(\calD_{\sym, \ur} \cup \calD_{\sym, \ram}=\calD_{\sym}\) by Proposition \ref{prop:cond-by-Galois-elt} \eqref{item:cond-for-sym}.

In the following, we recall Tam's \(\zeta\)-data from \cite[Theorem 5.5]{MR3505131} and present them in a form useful for our purpose.
Note that while a set of \(\zeta\)-data consists of characters indexed by \(\alpha \in \Phi(\bfS, \bfG^{\ast})\),
we only specify the characters \(\zeta_{\Tam, \alpha}\) for \(\alpha=\begin{bmatrix}1 \\ g\end{bmatrix}\) with \(g\in \calD_{\asym /\pm} \cup \calD_{\sym}\)
as in \cite[Theorem 5.5]{MR3505131}.
By the axioms of \(\zeta\)-data, this uniquely determines a set of \(\zeta\)-data.

\subsubsection{Asymmetric roots} \label{sec:review-Tam-asymm}
Let \(g\in \calD_{\asym /\pm}\) and put \(\alpha :=\begin{bmatrix}1 \\ g\end{bmatrix}\) and \(\alpha' :=\begin{bmatrix}1 \\ g^{-1}\end{bmatrix}\).

By \cite[Theorem 5.5 (i)]{MR3505131}, 
the character \(\zeta_{\Tam, \alpha} \colon F_{\alpha}^{\times}=(E \cdot g(E))^{\times} \to \bbC^{\times}\) satisfies
\[
\begin{cases}
\text{\(\zeta_{\Tam, \alpha}\) is trivial on \(1+\frakp_{F_{\alpha}}\),} \\
\zeta_{\Tam, \alpha}|_{\mu_{F_{\alpha}}}
=t^1_{\bmu_g}(\bvmod{A}{[g]})\cdot t^1_{\bmu_g}(\bvmod{A^{\ast}}{[g]}).
\end{cases}
\]
Note that while the equality 
\[
\zeta_{\Tam, \alpha}(\varpi_E) \cdot \zeta_{\Tam, \alpha'} (\varpi_E)=t_{\lan \varpi \ran}(\bvmod{A}{[g]}) \cdot t_{\lan \varpi \ran}(\bvmod{A^{\ast}}{[g]})
\]
in \cite[Theorem 5.5 (i) (b)]{MR3505131} may appear to impose some condition, in fact \(\zeta_{\Tam, \alpha}(\varpi_E)\) can be any value.
Indeed, we have 
\[
\alpha'
=g^{-1} \cdot \begin{bmatrix}g \\ 1\end{bmatrix} 
=g^{-1} \cdot (-\alpha), 
\quad
\zeta_{\Tam, \alpha'}=\zeta_{\Tam, g^{-1} \cdot (-\alpha)} 
=\zeta_{\Tam, -\alpha} \circ g 
=\zeta_{\Tam, \alpha}^{-1} \circ g, 
\]
and thus 
\begin{equation} \label{eq:prod-of-zeta}
\zeta_{\Tam, \alpha}(z) \cdot \zeta_{\Tam, \alpha'}(z)=\zeta_{\Tam, \alpha} (z) \cdot \zeta_{\Tam, \alpha}(g(z))^{-1}=\zeta_{\Tam, \alpha} (\alpha (z))
\end{equation}
for any \(z\in E^{\times} (\subset F_{\alpha}^{\times} \cap F_{\alpha'}^{\times})\).
In particular, as \(\alpha (\varpi_E)\) lies in \(\mu_{F_{\alpha}}\), 
the value \(\zeta_{\Tam, \alpha}(\varpi_E) \cdot \zeta_{\Tam, \alpha'} (\varpi_E)\) depends only on the restriction \(\zeta_{\Tam, \alpha}|_{\mu_{F_{\alpha}}}\) and is indeed equal to 
\(t_{\lan \varpi \ran}(\bvmod{A}{[g]}) \cdot t_{\lan \varpi \ran}(\bvmod{A^{\ast}}{[g]})\) as computed in \cite[(5.3)]{MR3505131}.
We later use a more concrete expression
\[
\zeta_{\Tam, \alpha}(\varpi_E) \cdot \zeta_{\Tam, \alpha'} (\varpi_E)
=\sgn_{\alpha(\varpi_E)}(\vmod{A}{[g]}) \cdot \sgn_{\alpha(\varpi_E)}(\vmod{A^{\ast}}{[g]})
\]
in the same equation \cite[(5.3)]{MR3505131}.
Here, the notation for the right-hand side is as follows:
\begin{itemize}
\item for a finite module \(\frakV\) and its automorphism \(x\), we write \(\sgn_{x}(\frakV)\) for the signature of \(x\) as a permutation of the set \(\frakV\) and
\item if the module is isomorphic to \(\frakU_{[g]}\) (resp.\ is trivial), then we consider the multiplication action of \(\alpha (\varpi_E) \in \mu_{F_{\alpha}} \simeq k_{F_{\alpha}}^{\times}\) on \(\frakU_{[g]} \simeq k_{F_{\alpha}}\) (resp.\ the trivial action).
\end{itemize}

Similarly, the equality
\[
\zeta_{\Tam, \alpha}(z) \cdot \zeta_{\Tam, \alpha'} (z)=\sgn_{\alpha(z)}(\vmod{A}{[g]}) \cdot \sgn_{\alpha(z)}(\vmod{A^{\ast}}{[g]})
\]
for \(z\in \mu_E\) is in the equation preceding \cite[(5.3)]{MR3505131};
there, \(\sgn_{\mu_E}(\vmod{A}{[g]})\) (resp.\ \(\sgn_{\mu_E}(\vmod{A^{\ast}}{[g]})\)) denotes the character \(\mu_E \to \{ \pm 1\}\) sending
\(x \in \mu_E\) to \(\sgn_{x}(\vmod{A}{[g]}) \in \{ \pm 1\}\) (resp.\ \(\sgn_{x}(\vmod{A^{\ast}}{[g]}) \in \{ \pm 1\}\)).

Also, it is immediate from \eqref{eq:prod-of-zeta} that \(\zeta_{\Tam, \alpha}|_{E^{\times}} \cdot \zeta_{\Tam, \alpha'}|_{E^{\times}}\) is trivial on \(1+\frakp_E\).

In summary, the product \(\zeta_{\Tam, \alpha}|_{E^{\times}} \cdot \zeta_{\Tam, \alpha'}|_{E^{\times}}\) satisfies 
\begin{equation} \label{eq:zeta-asymm}
\begin{cases}
\text{\(\zeta_{\Tam, \alpha}|_{E^{\times}} \cdot \zeta_{\Tam, \alpha'}|_{E^{\times}}\) is trivial on \(1+\frakp_E\),} \\
(\zeta_{\Tam, \alpha}|_{E^{\times}} \cdot \zeta_{\Tam, \alpha'}|_{E^{\times}}) (z)=\sgn_{\alpha(z)}(\vmod{A}{[g]}) \cdot \sgn_{\alpha(z)}(\vmod{A^{\ast}}{[g]}) \text{ for \(z \in \mu_E \cup \{ \varpi_E\}\)}.
\end{cases}
\end{equation}

\subsubsection{Symmetric roots} \label{sec:review-Tam-symm}
Let \(g\in \calD_{\sym}\) and put \(\alpha :=\begin{bmatrix}1 \\ g\end{bmatrix}\).

By \cite[Theorem 5.5 (i)]{MR3505131}, we have
\[
\begin{cases}
\text{\(\zeta_{\Tam, \alpha}\) is trivial on \(1+\frakp_{F_{\alpha}}\),} \\
\text{\(\zeta_{\Tam, \alpha}|_{\mu_{F_{\alpha}}}
=t^1_{\bmu_g}(\vmod{A}{[g]})\cdot t^1_{\bmu_g}(\vmod{A^{\ast}}{[g]})\), and} \\
\text{\(\zeta_{\Tam, \alpha}(\varpi_E)
=\iota_g \cdot t_{\lan \varpi \ran}(\vmod{A}{[g]})\cdot t_{\lan \varpi \ran}(\vmod{A^{\ast}}{[g]})\), where}
\end{cases}
\]	
\[
\iota_g=
\begin{cases}
1 &\text{if \(g \in (\calD_{\sym, \ram}-\{\sigma ^{\frac{e}{2}}\}) \cup W_{F[\varpi_E]}\)} \\
(-1)^m &\text{if \(g\in (\calD_{\sym, \ur}-W_{F[\varpi_E]}) \cup \{ \sigma^{\frac{e}{2}}\}\).}
\end{cases} 
\]
Let us slightly rewrite \(\iota_g\) by dividing the cases in a different way.
By Remark \ref{rem:Tam-sym-ur/ram} and Proposition \ref{prop:cond-by-Galois-elt} \eqref{item:cond-for-symram},
we see that if \(\alpha\) is symmetric unramified (in the sense recalled in Section \ref{sec:chi-data-L-emb}), then
\begin{equation} \label{eq:iota_g-symur}
\iota_g=
\begin{cases}
1 &\text{if \(g=\sigma^i\) for some \(i\) or \(g(\varpi_E)=\varpi_E\)} \\
(-1)^m &\text{otherwise}
\end{cases} 
\end{equation}
and that if \(\alpha\) is symmetric ramified (in the sense recalled in Section \ref{sec:chi-data-L-emb}), then
\begin{equation} \label{eq:iota_g-symram}
\iota_g=(-1)^m.
\end{equation}

Later we only need the restriction \(\zeta_{\Tam, \alpha}|_{E^{\times}}\) to \(E^{\times}\).
By Proposition \ref{prop:res-of-t^1}, it satisfies
\begin{equation} \label{eq:zeta-sym}
\begin{cases}
\text{\(\zeta_{\Tam, \alpha}|_{E^{\times}}\) is trivial on \(1+\frakp_{E}\),} \\
\text{\(\zeta_{\Tam, \alpha}|_{\mu_E}
	=t^1_{\bmu}(\vmod{A}{[g]})\cdot t^1_{\bmu}(\vmod{A^{\ast}}{[g]})\), and} \\
\text{\(\zeta_{\Tam, \alpha}(\varpi_E)
	=\iota_g \cdot t_{\lan \varpi \ran}(\vmod{A}{[g]})\cdot t_{\lan \varpi \ran}(\vmod{A^{\ast}}{[g]})\).}
\end{cases}
\end{equation}

\section{Proof of the main theorem} \label{sec:pf-main-thm}
Let us finally prove Theorem \ref{thm:main}.

Let \(\pi\) be a regular supercuspidal representation of \(\bfG^{\ast}(F)=\GL_n(F)\) as in Theorem \ref{thm:main}.
There exists a tame elliptic regular pair \((\bfS, \xi)\) of \(\bfG^{\ast}\) such that \(\pi \simeq {}_{\bfG^{\ast}}\pi_{(\bfS, \xi)}\).
Let \(E/F\) be a tamely ramified extension of degree \(n\) such that \(\bfS \simeq \Res_{E/F} \Gm\), so that \((\bfS, \xi)\) is identified with an admissible pair \((E/F, \xi)\).
As reviewed in Section \ref{sec:Tam-thm}, we have \(\chi_{\Tam}\) associated to \((E/F, \xi)\) thanks to \cite{MR3505131}.
Similarly, Kaletha's theory reviewed in Section \ref{sec:Kal-LLC} (and Section \ref{sec:LLC^Kal-for-GL_n}) yields \(\eps{\bfG^{\ast}}{}\) associated to \((\bfS, \xi)\).
By Corollary \ref{cor:Tam} and Proposition \ref{prop:LLC^Kal-explicit},
we have
\[
\LJLC^{\cl}(\pi)={}_{\bfG}\pi_{(\bfS, \nu_{\zeta_{\Tam}}^{-1} \cdot \xi)},
\quad
\LJLC^{\Kal}(\pi)={}_{\bfG}\pi_{(\bfS, \xi \cdot \eps{\bfG^{\ast}}{}^{-1} \cdot \eps{\bfG}{})}.
\]
As we have \(\eps{\bfG^{\ast}}{}=\eps{\bfG^{\ast}}{f, \ram} \cdot \eps{\bfG^{\ast}}{}^{\ram}\) and a similar equality for \(\eps{\bfG}{}\) by definition (see \eqref{eq:def-of-epsilon} and Section \ref{sec:LLC^Kal-for-GL_n}), it suffices to show an equality of characters of \(E^{\times}\):
\[
\nu_{\zeta_{\Tam}}^{-1} =\eps{\bfG^{\ast}}{f, \ram}^{-1} \cdot \eps{\bfG}{f, \ram} \cdot (\eps{\bfG^{\ast}}{}^{\ram})^{-1} \cdot \eps{\bfG}{}^{\ram}.
\]
As all the characters in this equality are quadratic, we can and do omit the inverse signs throughout.
In view of Definition \ref{def:nu_zeta}, \eqref{eq:factorization-epsilon^ram}, \eqref{eq:factorization-epsilon_fram},
we can work with each orbit of roots separately and will show the following:
\begin{description}
\item[Asymmetric roots]
For \(\alpha =\begin{bmatrix}1 \\ g\end{bmatrix}\in \Phi(\bfS, \bfG^{\ast})^{\sym}\)  with \(g\in \calD_{\asym /\pm}\),
we have
\begin{equation}\label{eq:asym-comp}
\zeta_{\Tam, \alpha}|_{E^{\times}} \cdot \zeta_{\Tam, \alpha'}|_{E^{\times}}=\eps{\bfG^{\ast}}{\alpha} \cdot \eps{\bfG}{\alpha},
\end{equation}
where \(\alpha' :=\begin{bmatrix}1 \\ g^{-1}\end{bmatrix}\) as in Section \ref{sec:review-Tam-asymm}.

\item[Symmetric unramified roots]
For \(\alpha =\begin{bmatrix}1 \\ g\end{bmatrix}\in \Phi(\bfS, \bfG^{\ast})_{\sym, \ur}\)  with \(g\in \calD_{\sym}\),
we have
\begin{equation}\label{eq:symur-comp}
\zeta_{\Tam, \alpha} |_{E^{\times}}=\eps{\bfG^{\ast}}{\alpha} \cdot \eps{\bfG}{\alpha}.
\end{equation}

\item[Symmetric ramified roots]
For \(\alpha =\begin{bmatrix}1 \\ g\end{bmatrix}\in \Phi(\bfS, \bfG^{\ast})_{\sym, \ram}\)  with \(g\in \calD_{\sym}\),
we have
\begin{equation}\label{eq:symram-comp}
\zeta_{\Tam, \alpha} |_{E^{\times}}=\eps{\bfG^{\ast}}{\alpha} \cdot \eps{\bfG}{\alpha}.
\end{equation}
\end{description}

\subsection{Asymmetric roots} \label{sec:asymm}
Let \(\alpha =\begin{bmatrix}1 \\ g\end{bmatrix}\in \Phi(\bfS, \bfG^{\ast})^{\sym}\)  such that \(g\in \calD_{\asym /\pm}\)
and let us prove \eqref{eq:asym-comp}.
Put \(\alpha' :=\begin{bmatrix}1 \\ g^{-1}\end{bmatrix}\).
Let \(-1 \leq k\leq t-1\) be the unique integer such that \(\alpha \in \Phi(\bfS, \bfG^{\ast, k+1})-\Phi(\bfS, \bfG^{\ast, k})\).

The following argument is very similar to the one in \cite[\S 6.2]{MR4206603}.

Recall that by \eqref{eq:zeta-asymm} we have
\[
\begin{cases}
\text{\(\zeta_{\Tam, \alpha}|_{E^{\times}} \cdot \zeta_{\Tam, \alpha'}|_{E^{\times}}\) is trivial on \(1+\frakp_E\),} \\
(\zeta_{\Tam, \alpha}|_{E^{\times}} \cdot \zeta_{\Tam, \alpha'}|_{E^{\times}}) (z)=\sgn_{\alpha(z)}(\vmod{A}{[g]}) \cdot \sgn_{\alpha(z)}(\vmod{A^{\ast}}{[g]}) \text{ for \(z \in \mu_E \cup \{ \varpi_E\}\)}.
\end{cases}
\]
On the other hand, by \eqref{eq:epsilon-asym} we have
\[
\eps{\bfG}{\alpha}(z)=
\begin{cases}
\Jac{\overline{\alpha(z)}}{k_{F_{\alpha}}^{\times}} &\text{if \(k\neq -1\) and \(\frac{r_k}{2} \in \ordx{\bfG}{\alpha}\)} \\
1 & \text{otherwise}
\end{cases}
\]
for \(z \in \bfS(F)=E^{\times}\) and a similar equality for \(\eps{\bfG^{\ast}}{\alpha}\).

Since both \(\zeta_{\Tam, \alpha}|_{E^{\times}} \cdot \zeta_{\Tam, \alpha'}|_{E^{\times}}\) and \(\eps{\bfG^{\ast}}{\alpha} \cdot \eps{\bfG}{\alpha}\) are trivial on \(1+\frakp_{E}\), it suffices to show
\[
\sgn_{\alpha(z)}(\vmod{A}{[g]})=\eps{\bfG}{\alpha}(z), \quad \sgn_{\alpha(z)}(\vmod{A^{\ast}}{[g]})=\eps{\bfG^{\ast}}{\alpha}(z)
\]
for \(z\in \mu_E \cup \{\varpi_E\}\).
As the proof for the latter is essentially the same, we only show the former equality.

Assume that \(\vmod{A}{[g]}\) is trivial.
Then \(\sgn_{\alpha(z)}(\vmod{A}{[g]})=1\).
Also by Proposition \ref{prop:vmod-ord} we have \(k=-1\) or \(\frac{r_k}{2}\notin \ordx{\bfG}{\alpha}\) and in both cases \(\eps{\bfG}{\alpha}(z)=1\).

We now assume that \(\vmod{A}{[g]}\) is non-trivial and hence isomorphic to \(\frakU_{[g]}\).
Again by Proposition \ref{prop:vmod-ord} we have \(k\neq -1\) and \(\frac{r_k}{2}\in \ordx{\bfG}{\alpha}\).
Thus in this case, we are to show
\[
\sgn_{z}(\frakU_{[g]})=\Jac{z}{k_{F_{\alpha}}^{\times}}
\]
for \(z\in \mu_{F_{\alpha}}\).
This essentially follows from the last paragraph of \cite[\S 6.2]{MR4206603} showing 
\begin{equation} \label{eq:OT-Sec6.2}
\sgn_{z}(\vmod{A^{\ast}}{[g]})=\Jac{z}{k_{F_{\alpha}}^{\times}}
\end{equation}
for \(z\in \mu_{F_{\alpha}}\) in the case where \(\vmod{A^{\ast}}{[g]}\) is non-trivial
(note that \(\vmod{A^{\ast}}{[g]}\) in this paper is denoted simply by \(\frakV_{[g]}\) in \cite{MR4206603} because only \(\bfG^{\ast}\) is considered).
Even though \(\vmod{A^{\ast}}{[g]}\) may be trivial, the proof of \eqref{eq:OT-Sec6.2} clearly works for \(\frakU_{[g]}\) in place of \(\vmod{A^{\ast}}{[g]}\).

\subsection{Symmetric unramified roots}
Let \(\alpha =\begin{bmatrix}1 \\ g\end{bmatrix}\in \Phi(\bfS, \bfG^{\ast})_{\sym, \ur}\) such that \(g\in \calD_{\sym}\) and let us prove \eqref{eq:symur-comp}.
Let \(-1 \leq k\leq t-1\) be the unique integer such that \(\alpha \in \Phi(\bfS, \bfG^{\ast, k+1})-\Phi(\bfS, \bfG^{\ast, k})\).

Recall that by \eqref{eq:zeta-sym} and \eqref{eq:iota_g-symur} we have
\begin{itemize}
\item \(\zeta_{\Tam, \alpha}|_{E^{\times}}\) is trivial on \(1+\frakp_{E}\),
\item \(\zeta_{\Tam, \alpha}|_{\mu_E}
=t^1_{\bmu}(\vmod{A}{[g]})\cdot t^1_{\bmu}(\vmod{A^{\ast}}{[g]})\), and
\item \(\zeta_{\Tam, \alpha}(\varpi_E)
=\iota_g \cdot t_{\lan \varpi \ran}(\vmod{A}{[g]})\cdot t_{\lan \varpi \ran}(\vmod{A^{\ast}}{[g]})\), where
\end{itemize}
\[
\iota_g=
\begin{cases}
1 &\text{if \(g=\sigma^i\) for some \(i\) or \(g(\varpi_E)=\varpi_E\)} \\
(-1)^m &\text{otherwise.}
\end{cases}
\]
On the other hand, by \eqref{eq:epsilon-symur} we have
\[
\eps{\bfG}{\alpha}(z):=
\begin{cases}
\Jac{\overline{\alpha(z)}}{k_{F_{\alpha}}^1} &\text{if \(k\neq -1\) and \(\frac{r_k}{2} \in \ord_x(\alpha)\)} \\
1 & \text{otherwise}
\end{cases}
\]
for \(z \in \bfS(F)=E^{\times}\) and a similar equality for \(\eps{\bfG^{\ast}}{\alpha}\).

We first assume that \(\vmod{A}{[g]} \simeq \vmod{A^{\ast}}{[g]}\).
By Proposition \ref{prop:symp-mods} this means 
that \([g] =[\sigma^i]\) for some \(i\) or \(m\) is even.
In both cases we have \(\iota_g=1\).
As \(t^1_{\bmu}(\cdot)\) is quadratic and \(t_{\lan \varpi\ran}(\cdot ) \in \{ \pm 1\}\),
we see that \(\zeta_{\Tam, \alpha}|_{E^{\times}}\) is trivial in this case.
On the other hand, by Proposition \ref{prop:vmod-ord}
we have \(\eps{\bfG^{\ast}}{\alpha}=\eps{\bfG}{\alpha}\) 
and hence \(\eps{\bfG^{\ast}}{\alpha} \cdot \eps{\bfG}{\alpha}\) is trivial and 
\eqref{eq:symur-comp} follows in this case.

Next we assume that \(\vmod{A}{[g]} \not\simeq \vmod{A^{\ast}}{[g]}\)
and hence \(\vmod{A}{[g]} \oplus \vmod{A^{\ast}}{[g]} \simeq \frakU_{[g]}\).
Since both \(\zeta_{\Tam, \alpha}|_{E^{\times}}\) and \(\eps{\bfG^{\ast}}{\alpha} \cdot \eps{\bfG}{\alpha}\) are trivial on \(1+\frakp_{E}\), we need to show
\[
\zeta_{\Tam, \alpha}|_{\mu_E}=(\eps{\bfG^{\ast}}{\alpha} \cdot \eps{\bfG}{\alpha})|_{\mu_E},
\quad
\zeta_{\Tam, \alpha}(\varpi_E)=(\eps{\bfG^{\ast}}{\alpha} \cdot \eps{\bfG}{\alpha})(\varpi_E).
\]
Moreover, by the additivity of t-factors we have
\begin{gather*}
t^1_{\bmu}(\vmod{A}{[g]})\cdot t^1_{\bmu}(\vmod{A^{\ast}}{[g]})=t^1_{\bmu}(\frakU_{[g]}),
\quad
t_{\lan \varpi \ran}(\vmod{A}{[g]})\cdot t_{\lan \varpi \ran}(\vmod{A^{\ast}}{[g]})=t_{\lan \varpi \ran}(\frakU_{[g]}).
\end{gather*}
By Proposition \ref{prop:symp-mods} (and Proposition \ref{prop:cond-by-Galois-elt} \eqref{item:cond-for-sym})
we have \([g] =[\sigma^i\phi^{\frac{f}{2}}]\) for some \(i\) (and in particular \(f\) is even) and \(m\) is odd.
Thus the equation for \(\iota_g\) translates into
\begin{equation}
\label{eq:iota_g-symur2}
\iota_g=
\begin{cases*}
1 &\text{if \(g(\varpi_E)=\varpi_E\)} \\
-1&\text{otherwise.}
\end{cases*}
\end{equation}
On the other hand, by Proposition \ref{prop:vmod-ord}
we have
\[
(\eps{\bfG^{\ast}}{\alpha} \cdot \eps{\bfG}{\alpha})(\gamma)
=\Jac{\alpha(\gamma)}{k_{F_{\alpha}}^1}
\]
for \(\gamma \in E^{\times}\).
Therefore, we must show 
\begin{align}
\label{eq:symur-mu} t^1_{\bmu}(\frakU_{[g]})(\gamma)&=\Jac{\alpha(\gamma)}{k_{F_{\alpha}}^1} \text{ for } \gamma \in \mu_E, \\
\label{eq:symur-varpi} \iota_g \cdot t_{\lan \varpi \ran}(\frakU_{[g]})&=\Jac{\alpha(\varpi_E)}{k_{F_{\alpha}}^1}.
\end{align}
As we explain below, both \eqref{eq:symur-mu} and \eqref{eq:symur-varpi} follow from the proofs of related propositions in \cite[\S 6.2]{MR4206603}, which in turn are essentially carried out by combining Tam's computation of t-factors and elementary manipulations of quadratic characters (cf. a less elaborate but similar argument in Section \ref{sec:asymm}).

The equation \eqref{eq:symur-mu} follows from the proof of \cite[Proposition 6.3]{MR4206603}.
Indeed, the proposition asserts the equality \(\chi_{\Tam, \alpha}^{-1}|_{\mu_E}=\eps{\bfG^{\ast}}{\alpha}^{-1}|_{\mu_E} \cdot \chi_{\Kal, \alpha}|_{\mu_E}\) (as with \(\vmod{A^{\ast}}{[g]}\), the character \(\eps{\bfG^{\ast}}{\alpha}\) in this paper is denoted simply by \(\epsilon_{\alpha}\) in \cite{MR4206603}).
As in \cite[page 2050, (3)]{MR4206603}, this amounts to 
\[
\eps{\bfG^{\ast}}{\alpha}|_{\mu_E}=\chi_{\Tam, \alpha}|_{\mu_E}
\]
by the triviality of \(\chi_{\Kal, \alpha}|_{\mu_E}\).
If \(\vmod{A^{\ast}}{[g]}\) is non-trivial and thus isomorphic to \(\frakU_{[g]}\), 
we have
\[
\eps{\bfG^{\ast}}{\alpha}(z)=\Jac{\alpha(z)}{k_{F_{\alpha}}^1}
\text{ and }
\chi_{\Tam, \alpha}(z)=t^1_{\bmu}(\vmod{A^{\ast}}{[g]})(z)
\]
for \(z\in \mu_E\)
(the latter is part of \cite[Theorem 7.1 (i)]{MR3509939} 
and is in the displayed equation immediately after (3) in \cite[page 2050]{MR4206603}).
Thus, the proof of \cite[Proposition 6.3]{MR4206603} is mainly devoted to showing 
\begin{equation} \label{eq:OT-prop6.3}
\Jac{\alpha(z)}{k_{F_{\alpha}}^1}=t^1_{\bmu}(\vmod{A^{\ast}}{[g]})(z)
\end{equation}
for \(z\in \mu_E\) in the case where \(\vmod{A^{\ast}}{[g]}\) is non-trivial.
Even though \(\vmod{A^{\ast}}{[g]}\) may be trivial, the proof of \eqref{eq:OT-prop6.3} clearly works for \(\frakU_{[g]}\) in place of \(\vmod{A^{\ast}}{[g]}\) and \eqref{eq:symur-mu} follows.
(Note that the proof in the relevant case (i.e., \(g=\sigma^i \phi^{\frac{f}{2}}\)) is given in the last paragraph of the proof of \cite[Proposition 6.3]{MR4206603}.)

Similarly, the equation \eqref{eq:symur-varpi} follows from the proof of \cite[Proposition 6.4]{MR4206603}.
Indeed, the proposition asserts that \(\chi_{\Tam, \alpha}^{-1}(\varpi_E)=\eps{\bfG^{\ast}}{\alpha}^{-1}(\varpi_E) \cdot \chi_{\Kal, \alpha}(\varpi_E)\).
As in the first displayed equation in the proof of \cite[Proposition 6.4]{MR4206603}, this amounts to
\[
(\eps{\bfG^{\ast}}{\alpha} \cdot \chi_{\Tam, \alpha}^{-1})(\varpi_E)=-1
\]
because \(\chi_{\Kal, \alpha}(\varpi_E)=-1\).
If \(\vmod{A^{\ast}}{[g]}\) is non-trivial and thus isomorphic to \(\frakU_{[g]}\),
then, as shown in the second and third displayed equations in the proof of \cite[Proposition 6.4]{MR4206603}, we have
\begin{align*}
\eps{\bfG^{\ast}}{\alpha}(\varpi_E)&=\Jac{\alpha(\varpi_E)}{k_{F_{\alpha}}^1}, \\
\chi_{\Tam, \alpha}(\varpi_E)&=t_{\bmu}^0(\vmod{A^{\ast}}{[g]}^{\lan \varpi\ran}) \cdot t_{\lan \varpi \ran}^0(\vmod{A^{\ast}}{[g]}) \cdot t_{\lan \varpi \ran}^1(\vmod{A^{\ast}}{[g]}) \nonumber \\
&=t_{\bmu}^0(\vmod{A^{\ast}}{[g]}^{\lan \varpi\ran}) \cdot t_{\lan \varpi \ran}(\vmod{A^{\ast}}{[g]}),
\end{align*}
where \(\vmod{A^{\ast}}{[g]}^{\lan \varpi\ran}\) denotes the \(\lan \varpi \ran\)-fixed part of \(\vmod{A^{\ast}}{[g]}\) and we have \(t_{\lan \varpi \ran}(\vmod{A^{\ast}}{[g]})=t_{\lan \varpi \ran}^0(\vmod{A^{\ast}}{[g]}) \cdot t_{\lan \varpi \ran}^1(\vmod{A^{\ast}}{[g]})\) by the definition of \(t_{\lan \varpi \ran}(\cdot)\).
(Note that \(\lan \varpi \ran\) in this paper and \cite{MR3505131} is denoted by \(\varpi\) in \cite{MR4206603} and \cite{MR3509939}.)
Thus, the main part of the proof of \cite[Proposition 6.4]{MR4206603} is to establish
\begin{equation} \label{eq:OT-prop6.4}
\Jac{\alpha(\varpi_E)}{k_{F_{\alpha}}^1}=-t_{\bmu}^0(\vmod{A^{\ast}}{[g]}^{\lan \varpi\ran}) \cdot t_{\lan \varpi \ran}(\vmod{A^{\ast}}{[g]})
\end{equation}
in the case where \(\vmod{A^{\ast}}{[g]}\) is non-trivial.
Again the proof of \eqref{eq:OT-prop6.4} clearly works for \(\frakU_{[g]}\) in place of \(\vmod{A^{\ast}}{[g]}\)
and hence we obtain
\begin{equation}
\label{eq:OT-prop6.4'}
\Jac{\alpha(\varpi_E)}{k_{F_{\alpha}}^1}=-t_{\bmu}^0(\frakU_{[g]}^{\lan \varpi\ran}) \cdot t_{\lan \varpi \ran}(\frakU_{[g]}).
\end{equation}
Moreover, as in the second displayed equation of \cite[page 2052]{MR4206603}, we have
\[
t_{\bmu}^0(\vmod{A^{\ast}}{[g]}^{\lan \varpi\ran})
=\begin{cases}
-1 & \text{if \(\beta:=\alpha (\varpi_E)=1\)} \\
1 & \text{otherwise.}
\end{cases}
\]
However, yet again, the proof of this equality in \cite[Remark 7.3]{MR3509939} is valid for \(\frakU_{[g]}\) in place of \(\vmod{A^{\ast}}{[g]}\).
Comparing the resulting equation with \eqref{eq:iota_g-symur2}, we obtain
\[
t_{\bmu}^0(\frakU_{[g]}^{\lan \varpi\ran})=-\iota_g.
\]
Combining this with \eqref{eq:OT-prop6.4'}, we finally deduce \eqref{eq:symur-varpi}.

The proof of \(\eqref{eq:symur-comp}\) is complete.

\subsection{Symmetric ramified roots}
Let \(\alpha =\begin{bmatrix}1 \\ g\end{bmatrix}\in \Phi(\bfS, \bfG^{\ast})_{\sym, \ram}\) such that \(g\in \calD_{\sym}\)
and let us prove \eqref{eq:symram-comp}.
By Proposition \ref{prop:cond-by-Galois-elt} \eqref{item:cond-for-symram}, this case occurs if and only if \(e\) is even, in which case the orbit of \(\alpha\) is represented by \(\begin{bmatrix}1 \\ \sigma ^{\frac{e}{2}}\end{bmatrix}\).
Thus we assume that \(e\) is even and \(g=\sigma ^{\frac{e}{2}}\).

Recall that by \eqref{eq:zeta-sym} and \eqref{eq:iota_g-symram} we have
\begin{itemize}
\item \(\zeta_{\Tam, \alpha}|_{E^{\times}}\) is trivial on \(1+\frakp_{E}\),
\item \(\zeta_{\Tam, \alpha}|_{\mu_E}
=t^1_{\bmu}(\vmod{A}{[g]})\cdot t^1_{\bmu}(\vmod{A^{\ast}}{[g]})\), and
\item \(\zeta_{\Tam, \alpha}(\varpi_E)
=(-1)^m \cdot t_{\lan \varpi \ran}(\vmod{A}{[g]})\cdot t_{\lan \varpi \ran}(\vmod{A^{\ast}}{[g]})\).
\end{itemize}
On the other hand, by Proposition \ref{prop:toral-invariant} \eqref{item:sign-symram}, we have
\[
(\eps{\bfG^{\ast}}{\alpha} \cdot \eps{\bfG}{\alpha})(\gamma)=(-1)^{mv_E(\gamma)}
\]
for \(\gamma \in \bfS(F)=E^{\times}\).

By Proposition \ref{prop:symp-mods}, the two modules \(\vmod{A^{\ast}}{[g]}\) and \(\vmod{A}{[g]}\) are isomorphic.
This implies that \(\zeta_{\Tam, \alpha}|_{E^{\times}}\) is trivial on \(\calO_E^{\times}\) and \(\zeta_{\Tam, \alpha}(\varpi_E)=(-1)^m\).
Thus \eqref{eq:symram-comp} follows.

\appendix
\section{Comparison of the two constructions of supercuspidal representations of inner forms of \(\GL_n\)} \label{sec:app}
In this appendix, we sketch a proof of Proposition \ref{prop:BH-KY}.
As the case of \(\bfG^{\ast}=\GL_n\) is treated in \cite[Appendix A]{MR4206603}, we focus on \(\bfG=\underline{A}^{\times}\).
In fact, the proof for \(\bfG\) is mostly the same as that for \(\bfG^{\ast}\).
Thus, we largely follow \cite[Appendix A]{MR4206603}, while trying to highlight the points where some extra care is required.

Let \((\bfS, \xi)\) be a tame elliptic regular pair of \(\bfG\) and \((E/F, \xi)\) be an admissible pair of degree \(n\) such that the \(\bfG(F)\)-conjugacy class of \((\bfS, \xi)\) and the \(F\)-isomorphism class of \((E/F, \xi)\) correspond under the bijections in Section \ref{sec:ter-and-adm-pairs}.
Our goal is to prove that \({}_{\bfG}\pi_{(E/F, \xi)}^{\BH}\) is isomorphic to \({}_{\bfG}\pi_{(\bfS, \xi)}^{\KY}\).

First we review Bushnell--Henniart's construction of \({}_{\bfG}\pi_{(E/F, \xi)}^{\BH}\)
in \cite{MR2848585} in some details.
The parametrization 
\[
\{\text{admissible pairs of degree \(n\)}\}/{\text{\(F\)-isom.}} \xrightarrow{1:1} \calA_m^{\et}(D); \quad (E/F, \xi) \mapsto {}_{\bfG}\pi_{(E/F, \xi)}^{\BH}
\]
is a generalization of the parametrization of \(\calA_n^{\et}(F)\) in \cite{MR2138141}.
However, due to ``some novel technical difficulties" (\cite[page 472]{MR2848585}), the map constructed in \cite{MR2848585}
is in fact backward;
the bijection
\[
\calA_m^{\et}(D) \xrightarrow{1:1} \{\text{admissible pairs of degree \(n\)}\}/{\text{\(F\)-isom.}}
\]
is defined in \cite[4 and 6.1 Parametrization Theorem]{MR2848585}.
Here we essentially work out the inverse map, making explicit the various subgroups and the character \(\theta\) in the intermediate steps.

\begin{description}
\item[Factorization of \(\xi\)]
fix a uniformizer \(\varpi_F \in F\).
We take a factorization \(\xi=\xi_{\mathrm{t}} \cdot \xi_{\w}\) satisfying the following:
\begin{itemize}
\item The character \(\xi_{\mathrm{t}}\) is tamely ramified. In other words, \(\xi_{\w}|_{U_E^1}=\xi|_{U_E^1}\).
\item We have \(\varpi_F \in \Ker \xi_{\w}\). In particular, \(\xi_{\w}\) has finite order.
\item The character \(\xi_{\w}\) has \(p\)-power order. In particular, \(\mu_E \subset \Ker \xi_{\w}\).
\end{itemize}
It is easy to check that 
such a factorization uniquely exists.

In the following, using \(\xi_{\mathrm{t}}, \xi_{\w}\),
we are going to construct irreducible smooth representations \(\Lambda_{\mathrm{t}}, \Lambda_{\w}\) of an open subgroup \(\bfJ \subset \bfG(F)\).
For the definitions of \(\bfJ\) and related open subgroups as well as the definition of a character \(\theta\), we follow \cite[\S 3.4, \S 3.5]{MR3505131} which are more explicit than \cite{MR2848585}.

\item[Definition of open subgroups]
to proceed further, we need to fix an \(F\)-embedding \(E \hookrightarrow A\).
We take an \(F\)-embedding that induces the inclusion \(\bfS \simeq \Res_{E/F} \Gm \hookrightarrow \underline{A}^{\times}=\bfG\).
While the resulting representation \({}_{\bfG}\pi_{(E/F, \xi)}^{\BH} \) will be independent of the choice of an \(F\)-embedding (up to isomorphism), we make this choice for later convenience.

Recall from Section \ref{sec:vmod} various objects attached to
the situation. 
The admissible pair \((E/F, \xi)\) and the \(F\)-embedding \(E \hookrightarrow A\) define a sequence of subfields and integers:
\[
F=E_t \subsetneq E_{t-1} \subsetneq \cdots \subsetneq E_0 \subset E_{-1}=E, \quad 
a_t \geq a_{t-1} > \cdots > a_1 > a_0 > a_{-1}=0.
\]
Moreover, together with the \(F\)-embedding \(E \hookrightarrow A\), it further induces \(E_k\)-subalgebras \(A_k \subset A\) and \(\calO_E\)-orders \(\frakA_{k} \subset A_k\) for \(-1\leq k\leq t\) as well as compact open subgroups of \(A^{\times}\):
\begin{align*}
J^1&:=U_{\frakA_0}^1U_{\frakA_1}^{\frac{1}{2}a_0e(\frakA_1/\calO_E)} \cdots U_{\frakA_{t-1}}^{\frac{1}{2}a_{t-2}e(\frakA_{t-1}/\calO_E)}U_{\frakA}^{\frac{1}{2}a_{t-1}e(\frakA/\calO_E)}, \\
H^1&:=U_{\frakA_0}^1U_{\frakA_1}^{\frac{1}{2}a_0e(\frakA_1/\calO_E)+} \cdots U_{\frakA_{t-1}}^{\frac{1}{2}a_{t-2}e(\frakA_{t-1}/\calO_E)+}U_{\frakA}^{\frac{1}{2}a_{t-1}e(\frakA/\calO_E)+}.
\end{align*}
We furthermore need two more open subgroups:
\[
J^0:=U_{\frakA_0}J^1, \quad \bfJ:=E^{\times}J^0=E_0^{\times}J^0.
\]
Here, the last equality holds because
\(E/E_0\) is unramified by an axiom of an admissible pair.
To ease the exposition we also fix a concrete expression for \(A_0\) and \(\frakA_0\).
As \(r(\frakA_0)\) divides \(e(E/E_0)=1\) by \eqref{eq:r(frakA)},
we see that \(\frakA_0 \subset A_0\) is a maximal \(\calO_{E_0}\)-order.
Abbreviating \(m(A_0)\) in \eqref{eq:m_k} to \(l\),
we see that there exists an isomorphism
\[
A_0 \simeq M_l(D_0)
\]
of \(E_0\)-algebras inducing an isomorphism between \(\frakA_0\) and \(M_l(\calO_{D_0})\).
From now on, we fix such an isomorphism.
Then we have
\[
\frakK_{\frakA_0}=D_0^{\times}U_{\frakA_0}.
\]

Note that all these objects depend only on \(\xi|_{U_E^1}=\xi_{\w}|_{U_E^1}\) (and \(E \hookrightarrow A\)).

\item[Definition of an irreducible representation \(\Lambda_{\mathrm{t}}\) of \(\bfJ\)]
from \(\xi_{\mathrm{t}}\) we construct an irreducible smooth representation \(\Lambda_{\mathrm{t}}\) of \(\bfJ\) trivial on \(J^1\) as follows.
Note that we have
\[
\bfJ/J^1 \simeq (\bfJ \cap A_0^{\times})/(J^1 \cap A_0^{\times}), \quad
\bfJ \cap A_0^{\times}=E^{\times}U_{\frakA_0}=E_0^{\times}U_{\frakA_0}, \quad
J^1 \cap A_0^{\times}=U_{\frakA_0}
\]
and in particular \(E_0^{\times}\) is central in \(\bfJ \cap A_0^{\times}\).
As \(\xi_{\mathrm{t}}\) is tame, \(\xi_{\mathrm{t}}|_{U_E}\) is the inflation of a character \(\overline{\xi}_{\mathrm{t}}\) of \(k_E^{\times}\).
By the Green parametrization \cite{MR72878}, \(\overline{\xi}_{\mathrm{t}}\) yields an irreducible cuspidal representation \(\overline{\lambda}\) of \(\GL_l(k_{D_0}) \simeq U_{\frakA_0}/U_{\frakA_0}^1 \simeq J^0/J^1\).
Let \(\lambda\) be the inflation of \(\overline{\lambda}\) to \(J^0\).
We define an extension \(\Lambda_{\mathrm{t}}\) of \(\lambda\) to \(\bfJ=E_0^{\times}J^0\) by requiring that \(\Lambda|_{E_0^{\times}}\) is \(\xi_{\mathrm{t}}|_{E_0^{\times}}\)-isotypic.

For this part, see \cite[\S 4.3, \S 4.2, \S 2.4]{MR2848585}, especially the paragraph after \S 4.3 Lemma 2.
\begin{rem}
An axiom of an admissible pair implies that \(\overline{\xi}_{\mathrm{t}}\) is \(k_E/k_{E_0}\)-regular (in the sense recalled in \cite[\S 2.4 Remark 1]{MR2848585}).
While the Green parametrization also works more generally for a \(k_E/k_{D_0}\)-regular character and produces an irreducible cuspidal representation of \(\GL_l(k_{D_0})\),
the stronger property of \(k_E/k_{E_0}\)-regularity eventually ensures \(\delta({}_{\bfG}\pi_{(E/F, \xi)}^{\BH})=n\),
or equivalently the normalizer of \(\lambda\) in \(A_0^{\times}\) being \(\bfJ \cap A_0^{\times}=E_0^{\times}U_{\frakA_0}\) (rather than a larger subgroup in the normalizer \(\frakK_{\frakA_0}=D_0^{\times}U_{\frakA_0}\) of \(\frakA_0\) in \(A_0^{\times}\)).
See \cite[\S 2.4, \S 2.6]{MR2848585} for the definition of the parametric degree \(\delta\).
\end{rem}

\item[Definition of a character \(\theta\) of \(H^1\)]
from \(\xi\) we construct a character \(\theta\) of \(H^1\) as follows.
As will be evident, this construction only depends on \(\xi|_{U_E^1}=\xi_{\w}|_{U_E^1}\).

Recall from Section \ref{sec:classical-Howe-factorization} that we have a factorization of \(\xi\) in terms of characters \(\xi_k\) of \(E_k^{\times}\):
\[
\xi=\xi_{-1} \cdot (\xi_0 \circ \Nr_{E/E_0}) \cdots (\xi_t \circ \Nr_{E/E_t}).
\]
Among other properties, these characters satisfy the following:
\begin{itemize}
\item For \(k=-1\), the character \(\xi_{-1}\) is tamely ramified.
\item For \(0\leq k\leq t-1\), the \(E\)-level of \(\xi_k \circ \Nr_{E/E_k}\) is \(a_k\).
\end{itemize}
We use \(\xi_k\) for \(0\leq k\leq t\) to define \(\theta\).
We fix a character \(\psi_F\) of \(F\) which is trivial on \(\frakp_F\) but not on \(\calO_F\).
Let \(c_k \in \frakp_E^{-a_k} \cap E_k\) be an element such that
\[
\xi_k (1+x)=\psi_F(\Tr_{E_k/F}(c_kx))
\]
for \(x \in \frakp_E^{\frac{a_k}{2}+} \cap E_k\).
Note that this condition determines the coset \(c_k+(\frakp_E^{-\frac{a_k}{2}} \cap E_k) \) uniquely.
We define a character \(\psi_{c_k}\) of \(U_{\frakA_t}^{\frac{1}{2}a_k e(\frakA/\calO_E)+}\) by
\[
\psi_{c_k}(1+x):= \psi_F(\Trd_{A_t/F}(c_kx))
\]
for \(x\in \frakP_{\frakA_t}^{\frac{1}{2}a_k e(\frakA_t/\calO_E)+}\).
With \(\psi_{c_k}\), we define characters \(\theta_k\) of
\[
U_{\frakA_k}^{\frac{1}{2}a_{k-1}e(\frakA_k/\calO_E)+}U_{\frakA_{k+1}}^{\frac{1}{2}a_ke(\frakA_{k+1}/\calO_E)+} \cdots U_{\frakA_t}^{\frac{1}{2}a_{t-1}e(\frakA_t/\calO_E)+}
\]
for \(0\leq k\leq t\) inductively from \(k=t\) as follows.
We first define \(\theta_t\) by
\[
\theta_t:=\xi_t \circ \Nrd_{A/F} |_{U_{\frakA_t}^{\frac{1}{2}a_{t-1}e(\frakA_t/\calO_E)+}}.
\]
Then inductively, \(\theta_k\) is defined using \(\theta_{k+1}\) by
\[
\theta_k:=
\begin{cases}
\psi_{c_k} \theta_{k+1} \text{ on \(U_{\frakA_{k+1}}^{\frac{1}{2}a_ke(\frakA_{k+1}/\calO_E)+} \cdots U_{\frakA_t}^{\frac{1}{2}a_{t-1}e(\frakA_t/\calO_E)+}\)}, \\
(\xi_k \circ \Nrd_k) \cdot (\xi_{k+1} \circ \Nrd_{k+1}) \cdots (\xi_t \circ \Nrd_t) \text{ on \(U_{\frakA_k}^{\frac{1}{2}a_{k-1}e(\frakA_{k}/\calO_E)+}\)},
\end{cases}
\]
where
we write \(\Nrd_k \colon A_k^{\times} \to E_k^{\times}\) for the reduced norm map.
Finally, we put \(\theta:=\theta_0\).
We can easily check that \(\theta\) does not depend on the choice of \((\xi_k)_{k=-1}^t\) and only depends on \(\xi|_{U_E^1}=\xi_{\w}|_{U_E^1}\).

Note that we have
\begin{align*}
\theta|_{U_{\frakA_0}^1}
&=(\xi_0 \circ \Nrd_0) \cdot (\xi_1 \circ \Nrd_1) \cdots (\xi_t \circ \Nrd_t)|_{U_{\frakA_0}^1} \\
&=(\xi_0 \cdot (\xi_1 \circ \Nr_{E_0/E_1}) \cdots (\xi_t \circ \Nr_{E_0/E_t})) \circ \Nrd_0|_{U_{\frakA_0}^1}.
\end{align*}
Thus from \(\theta\) we can recover \(\xi_0 \cdot (\xi_1 \circ \Nr_{E_0/E_1}) \cdots (\xi_t \circ \Nr_{E_0/E_t})|_{U_{E_0}^1}\)
and hence also
\[
\xi_{\w}|_{U_E^1}=\xi|_{U_E^1}=(\xi_0 \cdot (\xi_1 \circ \Nr_{E_0/E_1}) \cdots (\xi_t \circ \Nr_{E_0/E_t})) \circ \Nr_{E/E_0}|_{U_E^1}.
\]

\begin{rem}
This part may look significantly different from the corresponding part in \cite{MR2848585}.
In the second paragraph and Lemma 1 of \cite[\S 4.3]{MR2848585},
one defines \(\xi_{\w}\) from \(\theta\) as follows:
\begin{itemize}
\item The restriction \(\theta |_{U_{\frakA_0}^1}\) factors through \(\Nrd_0\), namely, there exists a unique character \(\xi'\) of \(U_{E_0}^1\) such that
\[
\theta|_{U_{\frakA_0}^1}=\xi' \circ \Nrd_0.
\]
\item There exists a unique character \(\xi_{\w}\) of \(E^{\times}\) such that
\begin{itemize}
\item \(\xi_{\w}|_{U_E^1}=\xi' \circ \Nr_{E/E_0}\),
\item \(\varpi_F \in \Ker \xi_{\w}\), and 
\item \(\xi_{\w}\) has \(p\)-power order.
\end{itemize}
\end{itemize}
Note that the notation is not entirely consistent; for example, \(A_0\) in this paper is written as \(B\) in \cite{MR2848585}
and \(\xi'\) in the above is denoted by \(\xi_0\) in \cite{MR2848585}.

Here we follow \cite[\S 3.4]{MR3505131} and use the fact that \(\theta\) is a \emph{simple character} (namely \(\theta\) lies in \(\calC (\frakA, \beta, \psi_F)\); cf.\ \cite[\S 2.5]{MR2848585}, \cite{MR2081220}) to define the character \(\theta\) explicitly from \(\xi_{\w}\).
There are some minor typos in \cite[page 362]{MR3505131} and we corrected them in the above (see \cite[page 1731 (i)]{MR3509939} for the case \(\bfG^{\ast}=\GL_n\)).
\end{rem}

\item[Definition of an irreducible representation \(\eta\) of \(J^1\)]
there exists a unique irreducible smooth representation \(\eta\) of \(J^1\) containing \(\theta\). We only remark that the proof is based on the fact that \(J^1/\Ker \theta\)  is a Heisenberg \(p\)-group and \(H^1/\Ker \theta\) is its center.

For this part, see \cite[(2.5.1) (2)]{MR2848585}
and the paragraph after 4.3 Lemma 1.

\item[Definition of an irreducible representation \(\Lambda_{\w}\) of \(\bfJ\)]
recall that we fixed a uniformizer \(\varpi_F\) of \(F\).
We extend \(\eta\) to an irreducible smooth representation \(\Lambda_{\w}\) of \(\bfJ\) characterized by the following conditions:
\begin{description}
\item[(Ext1)] The restriction \(\Lambda_{\w}|_{J^1}\) is isomorphic to \(\eta\).
\item[(Ext2)] The restriction \(\Lambda_{\w}|_{J^0}\) is intertwined by \(A_0^{\times}\), that is, for any \(a \in A_0^{\times}\), we have
\[
\Hom_{J^0\cap J^{0, a}}(\Lambda_{\w}|_{J^0 \cap J^{0, a}}, \Lambda_{\w}^a|_{J^0 \cap J^{0, a}}) \neq 0,
\]
where \(J^{0, a}=a^{-1}J^0a\) is the conjugate and \(\Lambda_{\w}^a\) is the representation of \(J^{0, a}\) obtained as the conjugation of \(\Lambda_{\w}\).
\item[(Ext3)] We have \(\varpi_F \in \Ker \Lambda_{\w}\). In particular, \(\det \Lambda_{\w}\) has finite order.
\item[(Ext4)] The determinant character \(\det \Lambda_{\w}\) has \(p\)-order.
\end{description}

Note that \(\Lambda_{\w}\) depends on \(\xi|_{U_E^1}=\xi_{\w}|_{U_E^1}\) and the choice of \(\varpi_F\).
An extension of \(\eta\) to an irreducible smooth representation of \(\bfJ\) satisfying (Ext1) and (Ext2) is called a wide extension of \(\eta\) in \cite{MR2848585}.
The existence and the uniqueness of a wide extension satisfying (Ext3) and (Ext4) are proved in \cite[\S 4.3 Lemma 2]{MR2848585}.

\begin{rem} \label{rem:wide-extension}
In fact, (Ext2) is not necessary for the characterization of the representation \(\Lambda_{\w}\). This is proved in the same way as \cite[Remark A.1]{MR4206603}.
In the proof of Proposition \ref{prop:wild-part} below, we will only verify (Ext1), (Ext3) and (Ext4) to show that a certain representation appearing in the construction of \({}_{\bfG}\pi_{(\bfS, \xi)}^{\KY}\) is isomorphic to \(\Lambda_{\w}\).
\end{rem}

\item[Definition of \({}_{\bfG}\pi_{(E/F, \xi)}^{\BH}\)]
we put \(\Lambda_{\xi}:=\Lambda_{\mathrm{t}} \otimes \Lambda_{\w}\) and finally define \({}_{\bfG}\pi_{(E/F, \xi)}^{\BH}:=\cInd_{\bfJ}^{\bfG(F)}\Lambda_{\xi}\).
We can easily check that \(\Lambda_{\xi}\) is independent of the choice of \(\varpi_F \in F\) and that \({}_{\bfG}\pi_{(E/F, \xi)}^{\BH}\) is independent of the choice of an \(F\)-embedding \(E \hookrightarrow A\).
\end{description}

On the other hand, Kaletha's construction starts with
taking a regular Yu-datum corresponding to \((\bfS, \xi)\) by \cite[Proposition 3.7.8]{MR4013740} (recalled in Section \ref{sec:rsc-rep})
and then Yu's construction \cite{MR1824988} yields an open subgroup \(K\) of \(\bfG(F)\) and an irreducible smooth representation \(\kappa\) of \(K\) which give rise to a supercuspidal representation \({}_{\bfG}\pi_{(\bfS, \xi)}^{\KY}:=\cInd_K^{\bfG(F)} \kappa\).
As in the proof of the surjectivity of \cite[Proposition 3.7.8]{MR4013740}, we use a Howe factorization of \((\bfS, \xi)\) as in Section \ref{sec:Howe-factorization} to construct a regular Yu-datum as above.
We write \(\phi_k \colon \bfG^k(F) \to \bbC^{\times}\) \((k=-1, \dots, t)\) and \(\Psi=(\bfG^0 \subsetneq \bfG^1 \subsetneq \cdots \subsetneq \bfG^t, \pi_{-1}, (\phi_0, \dots, \phi_t))\) for the Howe factorization and the associated regular Yu-datum.
For convenience, we take the Howe factorization \((\phi_k)_{k=-1}^t\) 
which induces \((\xi_k)_{k=-1}^t\) appearing in the construction of \({}_{\bfG}\pi_{(E/F, \xi)}^{\BH}\) in the way explained in Section \ref{sec:classical-Howe-factorization}.
In particular, we have the following:
\begin{itemize}
\item \(\bfG^k\) is as in Section \ref{sec:Howe-factorization},
\item \(\phi_k=\xi_k \circ \Nrd_k\) for \(k=-1, \dots, t\) with \(\xi_k\) and \(\Nrd_k\) as in the definition of \(\theta\),
\item \(\xi=\prod_{k=-1}^t \phi_k|_{\bfS(F)}\), or equivalently \(\xi=\prod_{k=-1}^t(\xi_k \circ \Nr_{E/E_k})\), and
\item \(\pi_{-1}:=\pi_{(\bfS, \phi_{-1})}^{\bfG^0}\) for the irreducible depth-zero supercuspidal representation of \(\bfG^0(F)\) associated to \((\bfS, \phi_{-1})\) (denoted by \(\pi_{(\bfS, \phi_{-1})}\) in \cite[\S 3.4]{MR4013740}).
\end{itemize}
According to \cite[\S 3.4]{MR2431732}, we may express \(\kappa\) as a tensor product \(\bigotimes_{k=-1}^t \kappa_k\) of irreducible smooth representations, where each \(\kappa_k\) only depends on \(\phi_k\) for \(0\leq k\leq t\) and \(\kappa_{-1}\) only depends on \(\pi_{-1}\).
As this expression is more convenient for our argument, below we often cite \cite{MR2431732}.

In the following, we will see that \(K\supset \bfJ\) holds and moreover show that if we take a suitable regular Yu-datum \(\underline{\Psi}\), 
which is \(\bfG\)-equivalent to \(\Psi\), 
then 
\(\kappa_{-1}\) (resp.\ \(\bigotimes_{k=0}^{t}\kappa_k\)) is related to 
\(\Lambda_{\mathrm{t}}\) (resp.\ \(\Lambda_{\w}\)) in a simple way.
The desired isomorphism \({}_{\bfG}\pi_{(\bfS, \xi)}^{\KY} \simeq {}_{\bfG}\pi_{(E/F, \xi)}^{\BH}\) will follow immediately from this.

\begin{rem} \label{rem:reduced-extended}
Here, strictly speaking, a regular Yu-datum \(\Psi\) defines \((K, \kappa)\) and related objects only up to conjugacy (of course this does not affect the isomorphism class of the resulting supercuspidal representation \(\cInd_K^{\bfG(F)} \kappa\)).
This is because a choice of an expression 
\[
\pi_{-1} \simeq \cInd_{\bfG^0(F)_{[x]}}^{\bfG^0(F)} \rho
\]
is involved in the definition and the pair \((\bfG^0(F)_{[x]}, \rho)\) is determined from \(\pi_{-1}\) only up to \(\bfG^0(F)\)-conjugacy (cf.\ the first paragraph of page 151 of \cite{MR2431732}). 
However, we implicitly make a choice naturally attached to our situation; namely we choose \(x\) to be the point in the extended Bruhat--Tits building of \(\bfG^0\) over \(F\) associated to \(\bfS \subset \bfG^0\) (cf.\ Section \ref{sec:epsilon-character})
and \(\rho\) to be the irreducible smooth representation of \(\bfG^0(F)_{[x]}\) canonically obtained from the construction of \(\pi_{(\bfS, \phi_{-1})}^{\bfG^0}\) from \((\bfS, \phi_{-1})\) (see the proof of Proposition \ref{prop:tame-part} below).
\end{rem}

In \cite[page 591]{MR1824988} (and also \cite[page 52]{MR2431732}) the open subgroup \(K \subset \bfG(F)\) is defined in terms of the Moy--Prasad subgroups as
\[
K:=K^t:=\bfG^0(F)_{[x]}\bfG^1(F)_{x, s_0} \cdots \bfG^t(F)_{x, s_{t-1}},
\]
where
\begin{itemize}
\item \([x] \in \calB_{\red}(\bfG^0, F)\) is the point in the reduced Bruhat--Tits building associated to \(\bfS \subset \bfG\) (cf.\ Section \ref{sec:epsilon-character}),
\item \(x \in \calB(\bfG^0, F)\) is a point mapping to \([x] \in \calB_{\red}(\bfG^0, F)\), which is also identified with the image in \(\calB(\bfG^k, F)\) for \(0\leq k\leq t\), and
\item \(s_k:=\frac{r_k}{2}=\frac{\depth(\phi_k)}{2}\) for \(k=0, \dots, t-1\).
\end{itemize}

Recalling that we arranged the embedding \(E \hookrightarrow A\) to be compatible with the inclusion \(\bfS \hookrightarrow \bfG\),
we can compare various subgroups appearing in the construction of \({}_{\bfG}\pi_{(E/F, \xi)}^{\BH}\) and the Moy--Prasad subgroups.
This is done in \cite[\S 6]{2112.12367}, using \cite{MR1888474}.
In particular, we have the following equalities:
\begin{align*}
\bfG^0(F)_{[x]}&=\frakK_{\frakA_0}=D_0^{\times}U_{\frakA_0}, \\
\bfG^k(F)_{x, s}&=U_{\frakA_k}^{se(\frakA_k/\calO_F)}, \\
\bfG^k(F)_{x, s+}&=U_{\frakA_k}^{se(\frakA_k/\calO_F)+}
\end{align*}
for any \(0\leq k\leq t\) and \(s\in \bbR_{\geq 0}\).
Thus we have \(K \supset \bfJ\) (this also follows from \cite[Proposition 10.2 (2), (3)]{2112.12367}).
Similarly, we find that the compact open subgroup
\[
K_{+}^t:=\bfG^0(F)_{x, 0+}\bfG^1(F)_{x, s_0+} \cdots \bfG^t(F)_{x, s_{t-1}+}
\]
defined in \cite[page 591]{MR1824988} (and \cite[page 52]{MR2431732})
is nothing but \(H^1\) (this also appears as \cite[Proposition 10.2 (1)]{2112.12367}).

The conditions required of the factorization \(\xi=\xi_{\mathrm{t}}\xi_{\w}\) are not exactly consistent with those imposed on a Howe factorization in Definition \ref{def:Howe-facto} (\cite[Definition 3.6.2]{MR4013740}), so we slightly modify the latter factorization in the following proposition to eventually obtain a \(\bfG\)-equivalent regular Yu-datum.

\begin{prop}\label{prop:refactorization}
There exists a regular Yu-datum \(\underline{\Psi}=(\bfG^0 \subsetneq \bfG^1 \subsetneq \cdots \subsetneq \bfG^t, \underline{\pi}_{-1}, (\underline{\phi}_0, \dots, \underline{\phi}_t))\) which is \(\bfG\)-equivalent to \(\Psi\) and satisfies the following properties:
\begin{itemize}
\item[(i)] 
\(\underline{\pi}_{-1} \simeq \pi_{(\bfS, \xi_{\mathrm{t}})}^{\bfG^0}\),
\item[(ii)] 
\(\prod_{k=0}^{t} \underline{\phi}_k|_{\bfS(F)}=\xi_{\w}\),
\item[(iii)] 
\(\underline{\phi}_k|_{\bfG^k(F)_{x, 0+}}=\phi_k|_{\bfG^k(F)_{x, 0+}}\) for \(k =0, \dots, t\), and
\item[(iv)] 
\(\underline{\phi}_k\) has finite \(p\)-power order for \(k =0, \dots, t\).
\end{itemize}
\end{prop}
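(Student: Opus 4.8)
The plan is to take the given Howe factorization $(\phi_k)_{k=-1}^{t}$ of $(\bfS,\xi)$ and ``move'' tame pieces across the boundary so that the depth-zero part matches $\xi_{\mathrm t}$ and the remaining product matches $\xi_{\w}$, all the while staying within the $\bfG$-equivalence class. Concretely, recall that $\xi = \xi_{\mathrm t}\cdot\xi_{\w}$ is the canonical factorization with $\xi_{\mathrm t}$ tame, $\xi_{\w}|_{U_E^1}=\xi|_{U_E^1}$, $\varpi_F\in\Ker\xi_{\w}$, and $\xi_{\w}$ of $p$-power order; on the other hand $\xi = \prod_{k=-1}^{t}(\xi_k\circ\Nr_{E/E_k})$, where $\xi_{-1}=\phi_{-1}|_{\bfS(F)}$ is tame and $\xi_k\circ\Nr_{E/E_k}$ has $E$-level $a_k$ for $0\le k\le t-1$. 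The difference between $\prod_{k=0}^{t}\phi_k|_{\bfS(F)}$ and $\xi_{\w}$ is therefore a tamely ramified character of $\bfS(F)=E^\times$ (their restrictions to $U_E^1$ agree), call it $\eta$, and one has $\xi_{\mathrm t} = \xi_{-1}\cdot\eta^{-1}$. First I would twist: set $\underline\phi_{-1}$ to be a character of $\bfG^0(F)$ whose restriction to $\bfS(F)$ is $\xi_{\mathrm t}$ — this is possible because $\xi_{\mathrm t}$ is tame and $\bfG^0 = \Res_{E_0/F}\underline{A_0}^\times$, so one can take $\xi_{\mathrm t}'\circ\Nrd_0$ for a suitable tame character $\xi_{\mathrm t}'$ of $E_0^\times$ extending $\xi_{\mathrm t}|_{E_0^\times}$ (using that $E/E_0$ is unramified). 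Then $\underline\pi_{-1}:=\pi_{(\bfS,\underline\phi_{-1}|_{\bfS(F)})}^{\bfG^0}=\pi_{(\bfS,\xi_{\mathrm t})}^{\bfG^0}$, giving (i).

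Next I would fix the higher pieces. For $0\le k\le t$ I want $\underline\phi_k$ with the same depth and genericity properties as $\phi_k$, with $\underline\phi_k|_{\bfG^k(F)_{x,0+}}=\phi_k|_{\bfG^k(F)_{x,0+}}$, with $\prod_{k=0}^{t}\underline\phi_k|_{\bfS(F)}=\xi_{\w}$, and with each $\underline\phi_k$ of finite $p$-power order. The point is that modifying $\phi_k$ by a \emph{tame} character of $\bfG^k(F)$ that is trivial on $\bfG^k(F)_{x,0+}$ changes neither its depth (for $k\le t-1$), nor its $\bfG^{k+1}$-genericity, nor the subgroups/representations attached to it beyond a harmless twist; and such a modification of $\phi_{-1}$ and the $\phi_k$'s by characters whose product over $\bfS(F)$ is trivial produces a $\bfG$-equivalent Yu-datum (this is exactly the flexibility in the definition of $\bfG$-equivalence in \cite{MR2431732}, \cite[\S3.5]{MR4013740}, which is what underlies the surjectivity part of \cite[Proposition 3.7.8]{MR4013740}). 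So I would first absorb $\eta$ into $\underline\phi_{-1}$ (this is what was done above: $\underline\phi_{-1}|_{\bfS(F)}=\xi_{\mathrm t}=\xi_{-1}\eta^{-1}$, so $\prod_{k=0}^{t}\phi_k|_{\bfS(F)}\cdot\eta = \xi_{\w}$ becomes $\prod_{k=0}^{t}\phi_k|_{\bfS(F)}$ only after we further adjust), and then distribute the compensating tame twist and an additional tame twist making the orders $p$-power among the $\underline\phi_k$, $0\le k\le t$. Concretely: start from $\underline\phi_k:=\phi_k\cdot(\chi_k\circ\Nrd_k)$ where $\chi_k$ is a tame character of $E_k^\times$ trivial on $U_{E_k}^1$, chosen so that (a) $\sum$ of the $\chi_k\circ\Nr_{E/E_k}$-contributions on $\bfS(F)$ equals $\eta\cdot(\text{correction to }p\text{-power order})$, and (b) each resulting $\underline\phi_k$ has finite $p$-power order. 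Step (iii) is then automatic since $\chi_k\circ\Nrd_k$ is trivial on $\bfG^k(F)_{x,0+}=U_{\frakA_k}^{1}$-type subgroups (as $\chi_k$ is trivial on $U_{E_k}^1$ and $\Nrd_k$ maps $\bfG^k(F)_{x,0+}$ into $U_{E_k}^1$), and step (iv) holds by construction. For (ii): $\prod_{k=0}^{t}\underline\phi_k|_{\bfS(F)} = \prod_{k=0}^{t}\phi_k|_{\bfS(F)}\cdot\prod_k(\chi_k\circ\Nr_{E/E_k}) = \xi_{\w}$ once the $\chi_k$ are chosen to realize the needed product, which is possible because every tame character of $E^\times$ trivial on $U_E^1$ is a product of characters pulled back via $\Nr_{E/E_k}$ from tame characters of the $E_k^\times$ (indeed $\Nr_{E/E_0}$ alone is surjective on the tame quotients since $E/E_0$ is unramified, so it suffices to put all the correction into $\chi_0$, or spread it out as convenient).

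The main obstacle — and the only place genuine care is needed — is verifying that the modified datum $\underline\Psi$ is still a \emph{regular} Yu-datum and is $\bfG$-equivalent to $\Psi$, not merely that the formal bullet conditions (i)--(iv) hold. For regularity one must check that $\underline\pi_{-1}=\pi_{(\bfS,\xi_{\mathrm t})}^{\bfG^0}$ still satisfies the regularity condition on the depth-zero piece from \cite[Definition 3.7.3]{MR4013740}; this follows because $\xi_{\mathrm t}$ and $\phi_{-1}|_{\bfS(F)}$ have the same restriction to $\bfS(F)_{0^+}$ (both are tame) and regularity of the depth-zero part depends only on the ``depth-zero'' data, i.e.\ on $\xi_{\mathrm t}|_{\bfS(F)_0/\bfS(F)_{0^+}}$ — and here one uses the translation between the genericity/regularity hypotheses on $(\bfS,\xi)$ and the $k_E/k_{E_0}$-regularity of $\overline\xi_{\mathrm t}$ recalled in the excerpt. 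For $\bfG$-genericity of $\underline\phi_k$ ($0\le k\le t-1$) one invokes that $\bfG^{k+1}$-genericity of $\phi_k$ is a condition on the depth-$r_k$ piece of $\phi_k$ in the sense of \cite[Definition 3.9]{MR2431732}, which is unchanged under twisting by a character trivial on $\bfG^k(F)_{x,r_k^+}$ — and $\chi_k\circ\Nrd_k$, being tame, is trivial even on $\bfG^k(F)_{x,0^+}\supseteq\bfG^k(F)_{x,r_k^+}$. Finally, $\bfG$-equivalence of $\Psi$ and $\underline\Psi$ is precisely the statement that the two differ by a ``refactorization'' — replacing $(\phi_k)$ by $(\phi_k\cdot\lambda_k)$ where $\lambda_{-1}\cdots\lambda_t$ restricts trivially to $\bfS(F)$ and each $\lambda_k$ ($0\le k\le t$) is trivial on the relevant Moy--Prasad subgroup — which is exactly one of the moves generating $\bfG$-equivalence in \cite[\S3.5]{MR4013740} (cf.\ the proof of \cite[Proposition 3.7.8]{MR4013740}); here $\lambda_{-1}=\eta^{-1}\cdot(\text{$p$-order correction})$ and $\lambda_k=\chi_k\circ\Nrd_k$. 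I would spell out the choice of the $\chi_k$ and the one extra tame character needed to reach $p$-power order, then cite the $\bfG$-equivalence moves and the regularity/genericity stability, and omit the routine verifications.
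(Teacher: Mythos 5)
Your proposal takes essentially the same route as the paper's: modify the Howe factorization $(\xi_k)$ of $(E/F,\xi)$ by tame twists so that each piece becomes its canonical ``wild part,'' set $\underline{\pi}_{-1}=\pi^{\bfG^0}_{(\bfS,\xi_{\mathrm t})}$, and invoke refactorization for $\bfG$-equivalence. However, the paper's implementation is cleaner in a way you partly miss, and you omit one corner case it handles explicitly.

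The paper does not introduce a tame character $\eta$ to be ``distributed'' subject to the product constraint (ii) \emph{and} the $p$-power condition (iv) simultaneously. Instead it defines, \emph{for each $k$ separately}, a character $\underline{\xi}_k$ of $E_k^{\times}$ by the three conditions: $\underline{\xi}_k|_{U_{E_k}^1}=\xi_k|_{U_{E_k}^1}$, $\varpi_F\in\Ker(\underline{\xi}_k\circ\Nr_{E/E_k})$, and $\underline{\xi}_k$ of finite $p$-power order; then $\underline{\phi}_k:=\underline{\xi}_k\circ\Nrd_k$. With this choice, (iii) and (iv) are immediate, and — the key simplifying observation you don't make — (ii) is \emph{automatic}: the product $\prod_{k=0}^{t}(\underline{\xi}_k\circ\Nr_{E/E_k})$ agrees with $\xi$ on $U_E^1$, kills $\varpi_F$, and has $p$-power order, so it must equal $\xi_{\w}$ since those three properties determine a character of $E^{\times}$ uniquely (because $E^{\times}/\langle\varpi_F\rangle U_E^1$ has order prime to $p$ by tameness). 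Framing (ii) as an extra constraint to be solved for, as you do, hides this; and your remark that ``it suffices to put all the correction into $\chi_0$'' cannot be squared with (iv) when the $\xi_k$ for $k>0$ fail to have $p$-power order themselves.

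The concrete gap is the corner case where $\phi_t=\mathbbm{1}$ (i.e.\ $r_t=r_{t-1}$). If $p\mid n$, the three conditions above do \emph{not} force $\underline{\xi}_t=\mathbbm{1}$: condition two only gives $\underline{\xi}_t(\varpi_F)^n=1$, so a nontrivial $p$-power root of unity at $\varpi_F$ is permitted, producing a nontrivial tame $\underline{\phi}_t$ of depth zero, which violates the $\phi_t\in\{\mathbbm{1}\}\cup\{\text{depth }r_t\}$ dichotomy required of a Yu-datum. The paper adds the explicit stipulation ``$\underline{\xi}_t=\mathbbm{1}$ if $\phi_t=\mathbbm{1}$'' precisely for this. (A minor point: you briefly try to realize $\underline{\phi}_{-1}$ as a character of $\bfG^0(F)$ via $\Nrd_0$; in a Howe factorization $\phi_{-1}$ is a character of $\bfG^{-1}(F)=\bfS(F)$, and the paper simply sets $\underline{\phi}_{-1}:=\xi_{\mathrm t}$.)
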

\begin{proof}
The proof is similar to that of \cite[Proposition A.3]{MR4206603}.
Here we only sketch the construction of \(\underline{\Psi}\).
For this, we need to define \(\underline{\pi}_{-1}\) and \(\underline{\phi}_{k}\) for \(k =0, \dots, t\).
As for \(\underline{\pi}_{-1}\), we simply put
\[
\underline{\phi}_{-1}:=\xi_{\mathrm{t}}, \quad \underline{\pi}_{-1}:=\pi_{(\bfS, \underline{\phi}_{-1})}^{\bfG^0}.
\]
To define \(\underline{\phi}_{k}\) for \(k =0, \dots, t\), we take auxiliary characters \(\underline{\xi}_k\) of \(E_k^{\times}\) satisfying the following conditions:
\begin{itemize}
\item \(\underline{\xi}_k|_{U_{E_k}^1}=\xi_k|_{U_{E_k}^1}\),
\item \(\varpi_F \in \Ker (\underline{\xi}_k \circ \Nr_{E/E_k})\),
\item \(\underline{\xi}_k\) has finite \(p\)-power order, and
\item if \(\phi_t=\mathbbm{1}\), then \(\underline{\xi}_t=\mathbbm{1}\).
\end{itemize}
Then we define
\[
\underline{\phi}_k:=\underline{\xi}_k \circ \Nrd_k.
\]
Note that the above conditions do not specify the characters \(\underline{\xi}_k\) uniquely, but this ambiguity does not matter for the argument.
\end{proof}

Let \(\underline{\Psi}\) be a regular Yu-datum as in Proposition \ref{prop:refactorization} and 
\(\kappa_k\) be the irreducible smooth representation of \(K\) constructed from \(\underline{\Psi}\) for \(k=-1, \dots, t\).
Here, again as in Remark \ref{rem:reduced-extended}, we implicitly
take the same \(x\) as before and the irreducible smooth representation \(\underline{\rho}\) of \(\bfG^0(F)_{[x]}\) canonically obtained from the construction of 
\(\underline{\pi}_{-1}=\pi_{(\bfS, \xi_{\mathrm{t}})}^{\bfG^0}\).

\begin{prop} \label{prop:tame-part}
With the choice of a regular Yu-datum \(\underline{\Psi}=(\bfG^0 \subsetneq \bfG^1 \subsetneq \cdots \subsetneq \bfG^t, \underline{\pi}_{-1}, (\underline{\phi}_0, \dots, \underline{\phi}_t))\) as in Proposition \ref{prop:refactorization},
we have \(\kappa_{-1}\simeq \cInd_{\bfJ}^K \Lambda_{\mathrm{t}}\).
\end{prop}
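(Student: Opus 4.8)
The plan is to identify both $\kappa_{-1}$ and $\cInd_{\bfJ}^K \Lambda_{\mathrm{t}}$ with representations induced from the common subgroup $\bfG^0(F)_{[x]}\cdot J^1 H^1 \cdots$, by first analyzing the structure of $\kappa_{-1}$ as built in Yu's construction. Recall from \cite{MR2431732} that $\kappa_{-1}$ is obtained from $\underline{\pi}_{-1} = \pi_{(\bfS, \xi_{\mathrm{t}})}^{\bfG^0}$ by a two-step process: one first expresses $\underline{\pi}_{-1} \simeq \cInd_{\bfG^0(F)_{[x]}}^{\bfG^0(F)} \underline{\rho}$ with $\underline{\rho}$ the canonical depth-zero piece attached to $(\bfS, \xi_{\mathrm{t}})$ and the point $x$ (as fixed in Remark \ref{rem:reduced-extended}), and then inflates/extends $\underline{\rho}$ trivially across the higher Moy--Prasad layers $\bfG^k(F)_{x, s_{k-1}}$ to get a representation of $K$. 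So the first step is to unwind this and see that $\kappa_{-1} \simeq \cInd_{\bfG^0(F)_{[x]} K^{1}}^{K} \widetilde{\rho}$, where $K^1 := \bfG^1(F)_{x, s_0} \cdots \bfG^t(F)_{x, s_{t-1}} = J^1$ (by the comparison of Moy--Prasad subgroups with the Bushnell--Henniart groups recalled from \cite{2112.12367}), and $\widetilde{\rho}$ is the inflation of $\underline{\rho}$ to $\bfG^0(F)_{[x]} J^1$ that is trivial on $J^1$.

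The second step is to match the depth-zero piece $\underline{\rho}$ with $\Lambda_{\mathrm{t}}$. The representation $\pi_{(\bfS, \xi_{\mathrm{t}})}^{\bfG^0}$ is the depth-zero supercuspidal of $\bfG^0(F) = A_0^\times$ attached to the tame character $\xi_{\mathrm{t}}$ of $\bfS(F) = E^\times$. Since $\bfS$ sits inside $\bfG^0$ and, after the concrete identification $A_0 \simeq M_l(D_0)$ with $\frakA_0$ a maximal order, the depth-zero torus $\bfS(F)_0/\bfS(F)_{0+} = k_E^\times$ embeds in $\GL_l(k_{D_0}) = U_{\frakA_0}/U_{\frakA_0}^1$, Kaletha's recipe in \cite[\S 3.4]{MR4013740} produces exactly the Deligne--Lusztig / Green-parametrized cuspidal representation $\overline{\lambda}$ of $\GL_l(k_{D_0})$ attached to $\overline{\xi}_{\mathrm{t}}$ (the $k_E/k_{E_0}$-regularity being precisely the genericity condition needed here), inflated to $U_{\frakA_0} = \bfG^0(F)_{x,0}$ and extended to $\bfG^0(F)_{[x]} = \frakK_{\frakA_0} = D_0^\times U_{\frakA_0}$ by prescribing the central character on $E_0^\times \subset D_0^\times$ via $\xi_{\mathrm{t}}|_{E_0^\times}$. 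This is word for word the description of $\Lambda_{\mathrm{t}}|_{J^0}$ together with its extension to $\bfJ = E_0^\times J^0$; so $\underline{\rho}|_{\bfG^0(F)_{[x]}}$ restricted to $\bfJ \cap A_0^\times = E_0^\times U_{\frakA_0}$ agrees with $\lambda$ and $\Lambda_{\mathrm{t}}$ extends it the same way. I would then invoke Mackey/induction-in-stages: since $\bfG^0(F)_{[x]} J^1 = \bfJ \cdot J^1$ with $\bfJ \cap J^1 = J^1 \cap A_0^\times = U_{\frakA_0} \cap J^1$, and $\widetilde{\rho}$ restricted to $\bfJ$ is exactly $\Lambda_{\mathrm{t}}$ (inflated trivially on the $J^1$-part), we get $\kappa_{-1} \simeq \cInd_{\bfG^0(F)_{[x]}J^1}^{K}\widetilde{\rho} \simeq \cInd_{\bfJ}^{K}\Lambda_{\mathrm{t}}$.

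The main obstacle I anticipate is the second step: pinning down that Kaletha's abstract depth-zero construction from $(\bfS, \xi_{\mathrm{t}})$ — which proceeds via the Bruhat--Tits theory, the reductive quotient of the parahoric, and Deligne--Lusztig induction — really does reproduce the Green parametrization used by Bushnell--Henniart after the identification $A_0 \simeq M_l(D_0)$, including getting the extension to $\frakK_{\frakA_0}$ and the central character on $E_0^\times$ right. This is essentially the inner-form, depth-zero analogue of the comparison carried out for $\GL_n$ in \cite[Appendix A]{MR4206603}; the extra care is that $\bfG^0(F)_{[x]}/\bfG^0(F)_{x,0}$ is now $\langle \varpi_{D_0}\rangle$-by-finite rather than cyclic generated by a uniformizer of $E_0$, but the genericity/$k_E/k_{E_0}$-regularity guarantees the stabilizer of $\overline{\lambda}$ is exactly $E_0^\times U_{\frakA_0}$, so the extension and its uniqueness go through. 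Once this identification is in place, everything else is the formal Mackey-theoretic bookkeeping sketched above, and I would keep the write-up at that level of detail, referring to \cite{MR2431732}, \cite{2112.12367} and \cite[Appendix A]{MR4206603} for the routine parts.
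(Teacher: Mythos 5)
Your overall strategy matches the paper's: identify $\kappa_{-1}$ as the inflation of $\underline{\rho}$ from $K^0 = \bfG^0(F)_{[x]}$ to $K$, unwind $\underline{\rho}$ as the compact induction $\cInd_{E_0^\times U_{\frakA_0}}^{K^0}\kappa_{(\bfS,\xi_{\mathrm{t}})}^{\bfG^0}$, identify $\Lambda_{\mathrm{t}}$ as the inflation of $\kappa_{(\bfS,\xi_{\mathrm{t}})}^{\bfG^0}$ from $E_0^\times U_{\frakA_0}$ to $\bfJ$, and then conclude by a formal compatibility of inflation and induction. You also correctly flag the genuinely delicate point (that Kaletha's Deligne--Lusztig construction over $k_F$ agrees with the Green parametrization of $\GL_l(k_{D_0})$ used by Bushnell--Henniart); the paper resolves that by invoking \cite[Corollaire 3.2]{MR1709085} to pass from the Deligne--Lusztig induction associated to $\Res_{k_E/k_F}\Gm \subset \Res_{k_{D_0}/k_F}\GL_{l,k_{D_0}}$ to the one associated to $\Res_{k_E/k_{D_0}}\Gm \subset \GL_{l,k_{D_0}}$.

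However, the write-up of your final ``Mackey'' step contains several incorrect intermediate claims, and these would need to be repaired. Specifically: $\bfG^0(F)_{[x]}J^1 = D_0^\times J^0 = K$, whereas $\bfJ\cdot J^1 = E_0^\times J^0 = \bfJ$, so these two groups are genuinely different (you assert their equality); $\bfJ\cap J^1 = J^1$ (since $J^1 \subset J^0 \subset \bfJ$), not $J^1\cap A_0^\times = U_{\frakA_0}^1$; and $\widetilde{\rho}|_{\bfJ}$ is emphatically \emph{not} $\Lambda_{\mathrm{t}}$ --- since $\underline{\rho}$ is an honest induction from $E_0^\times U_{\frakA_0}$ to $D_0^\times U_{\frakA_0}$, its restriction back to the smaller group is a direct sum of conjugates of $\kappa_{(\bfS,\xi_{\mathrm{t}})}^{\bfG^0}$, not just one copy. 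The correct bookkeeping proceeds by commuting inflation past induction in the Cartesian square
\[
\begin{tikzcd}
E_0^{\times}U_{\frakA_0} \arrow[r, hookrightarrow] \arrow[d, hookrightarrow]& K^0 \arrow[d, hookrightarrow] \\
\bfJ \arrow[r, hookrightarrow] & K,
\end{tikzcd}
\]
where $\bfJ\cap K^0 = E_0^\times U_{\frakA_0}$, $\bfJ\cdot K^0 = K$, and the quotients by $J^1$ and $U_{\frakA_0}^1$ respectively are identified; one then has
\[
\cInd_{\bfJ}^{K}\inf\bigl(\kappa_{(\bfS,\xi_{\mathrm{t}})}^{\bfG^0}\bigr)
\;\simeq\;
\inf\bigl(\cInd_{E_0^\times U_{\frakA_0}}^{K^0}\kappa_{(\bfS,\xi_{\mathrm{t}})}^{\bfG^0}\bigr)
\;=\;
\inf(\underline{\rho})
\;=\;
\kappa_{-1}.
\]
With this replacement the argument is sound, and it is the argument the paper uses (in compressed form).
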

\begin{proof}
The proof is similar to that of \cite[Proposition A.4]{MR4206603}
even though in loc.\ cit.\ we have \(K=\bfJ\) and thus \(\cInd_{\bfJ}^K \Lambda_{\mathrm{t}}\) is simply \(\Lambda_{\mathrm{t}}\).
It amounts to directly comparing the construction of \(\Lambda_{\mathrm{t}}\) from \(\xi_{\mathrm{t}}\) and that of \(\kappa_{-1}\) from \(\underline{\pi}_{-1}=\pi_{(\bfS, \xi_{\mathrm{t}})}^{\bfG^0}\) (for details of the latter, see \cite[\S 3.4]{MR4013740} and \cite[page 67]{MR2431732}).

Let us briefly summarize the latter construction
and sketch the comparison.
For this, it may be helpful to keep in mind the following diagram:
\[
\begin{tikzcd}
\bfS(F)\bfG^0(F)_{x, 0}=E_0^{\times}U_{\frakA_0} \arrow[r, hookrightarrow] \arrow[d, hookrightarrow]& K^0=\bfG^0(F)_{[x]}=D_0^{\times}U_{\frakA_0} \arrow[d, hookrightarrow] \\
\bfJ=E_0^{\times}J^0 \arrow[r, hookrightarrow] & K=D_0^{\times}J^0.
\end{tikzcd}
\]
As described in \cite[page 50, page 67]{MR2431732}, one takes an expression
\[
\underline{\pi}_{-1} \simeq \cInd_{\bfG^0(F)_{[x]}}^{\bfG^0(F)} \underline{\rho}
\]
as in \cite[page 590, D4]{MR1824988}
and performs the inflation operation \(\kappa_{-1} =\inf_{K^0}^K(\underline{\rho})\) from \(K^0=\bfG^0(F)_{[x]} \subset \bfG^0(F)\) to \(K \subset \bfG(F)\).
In our situation the construction of \(\underline{\pi}_{-1}=\pi_{(\bfS, \xi_{\mathrm{t}})}^{\bfG^0}\) provides a natural choice for \(\underline{\rho}\) as follows.
We recall from \cite[page 1095]{MR4013740} that \(\pi_{(\bfS, \xi_{\mathrm{t}})}^{\bfG^0}\) is defined as
\[
\pi_{(\bfS, \xi_{\mathrm{t}})}^{\bfG^0}:=\cInd_{\bfS(F)\bfG^0(F)_{x, 0}}
^{\bfG^0(F)}\kappa_{(\bfS, \xi_{\mathrm{t}})}^{\bfG^0}
\]
for a certain representation \(\kappa_{(\bfS, \xi_{\mathrm{t}})}^{\bfG^0}\) associated to \((\bfS, \xi_{\mathrm{t}})\).
Thus, we set
\[
\underline{\rho}:=\cInd_{\bfS(F)\bfG^0(F)_{x, 0}}^{\bfG^0(F)_{[x]}}\kappa_{(\bfS, \xi_{\mathrm{t}})}^{\bfG^0}.
\]
Now, examining the construction of \(\kappa_{(\bfS, \xi_{\mathrm{t}})}^{\bfG^0}\) in \cite[page 1095]{MR4013740}
shows that \(\Lambda_{\mathrm{t}}\) is isomorphic to the representation inflated from \(\kappa_{(\bfS, \xi_{\mathrm{t}})}^{\bfG^0}\) by using the isomorphism
\(\bfS(F)\bfG^0(F)_{x, 0}/\bfG^0(F)_{x, 0+}=E_0^{\times}U_{\frakA_0}/U_{\frakA_0}^1 \simeq E_0^{\times}J^0/J^1\)
and the fact that \(\kappa_{(\bfS, \xi_{\mathrm{t}})}^{\bfG^0}\) is trivial on \(\bfG^0(F)_{x, 0+}=U_{\frakA_0}^1\).
Then we easily conclude that \(\kappa_{-1}\simeq \cInd_{\bfJ}^K \Lambda_{\mathrm{t}}\), as desired.

Note that in the definition of \(\kappa_{(\bfS, \xi_{\mathrm{t}})}^{\bfG^0}\),
one associates to a character of \(k_E^{\times}\) an irreducible cuspidal representation of \(\GL_l(k_{D_0})\) by using the Deligne--Lusztig induction \cite{MR0393266} associated to \(\Res_{k_E/k_F}\Gm \subset \Res_{k_{D_0}/k_F} \GL_{l, k_{D_0}}\).
By \cite[Corollaire 3.2]{MR1709085}, it is naturally identified with the Deligne--Lusztig induction associated to \(\Res_{k_E/k_{D_0}}\Gm \subset \GL_{l, k_{D_0}}\), which yields the Green parametrization of \(\GL_l(k_{D_0})\), as is well-known.
\end{proof}

\begin{prop} \label{prop:wild-part}
With the choice of a regular Yu-datum \(\underline{\Psi}=(\bfG^0 \subsetneq \bfG^1 \subsetneq \cdots \subsetneq \bfG^t, \underline{\pi}_{-1}, (\underline{\phi}_0, \dots, \underline{\phi}_t))\) as in Proposition \ref{prop:refactorization}, we have \(\bigotimes_{k=0}^{t} \kappa_k |_{\bfJ}\simeq \Lambda_{\w}\).
\end{prop}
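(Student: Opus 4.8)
The plan is to prove the isomorphism $\bigotimes_{k=0}^t \kappa_k|_{\bfJ} \simeq \Lambda_{\w}$ by checking that the left-hand side satisfies the four characterizing conditions (Ext1), (Ext3), (Ext4) of $\Lambda_{\w}$ recorded above, together with the fact that $\Lambda_{\w}$ is determined by them (Remark \ref{rem:wide-extension}; note (Ext2) may be omitted). First I would recall, following \cite[\S 3.4]{MR2431732} and \cite[\S 4]{MR1824988}, that each $\kappa_k$ for $0 \le k \le t$ factors through an explicit sum of the representation $\phi_k \circ (\text{a map to } \bfG^k(F))$ twisted by a Heisenberg--Weil construction attached to the symplectic space $\bfG^{k+1}(F)_{x,s_k}/\bfG^{k+1}(F)_{x,s_k^+}$ relative to its subgroup coming from $\bfG^k$. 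Using the dictionary between Moy--Prasad subgroups and the orders $\frakA_k$ recalled in the excerpt (from \cite{2112.12367}, \cite{MR1888474}), one identifies $K \cap$ (the relevant subgroups) with the $U_{\frakA_k}^{\bullet}$ appearing in the definition of $J^1$, $H^1$, and hence identifies the Heisenberg group $J^1/\Ker\theta$ with the one underlying Yu's construction.

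The key steps, in order, are: (1) show that $\bigotimes_{k=0}^t \kappa_k$ restricted to $J^1$ contains the character $\theta$ — this is where one must match $\prod_{k=0}^t \underline{\phi}_k|_{H^1}$ with $\theta$, using condition (iii) of Proposition \ref{prop:refactorization} (the restrictions to $\bfG^k(F)_{x,0+}$ agree), the explicit formula $\phi_k = \xi_k \circ \Nrd_k$, and the inductive definition of $\theta = \theta_0$ via the characters $\psi_{c_k}$; the factors $\psi_{c_k}$ are exactly the ``mod-$\frakp$'' linearizations that appear in Yu's refactorization, so one checks that the additive characters built from the $c_k \in \frakp_E^{-a_k} \cap E_k$ coincide with those implicit in $\kappa_k$. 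Since $\eta$ is the unique irreducible representation of $J^1$ containing $\theta$, this forces $\bigl(\bigotimes_k \kappa_k\bigr)|_{J^1} \simeq \eta$, giving (Ext1). (2) Verify (Ext3): $\varpi_F \in \Ker\bigl(\bigotimes_k \kappa_k|_{\bfJ}\bigr)$. For this I would use condition (ii) $\prod_{k=0}^t \underline{\phi}_k|_{\bfS(F)} = \xi_{\w}$ together with $\varpi_F \in \Ker\xi_{\w}$, plus the fact that on the Weil-representation factors $\varpi_F$ acts by a value controlled by the Howe factorization normalization $\underline{\xi}_k$ (the condition $\varpi_F \in \Ker(\underline{\xi}_k \circ \Nr_{E/E_k})$ in the proof of Proposition \ref{prop:refactorization}); one must trace how the central character of each Heisenberg--Weil piece depends on $\varpi_F$ and see these contributions are trivial. (3) Verify (Ext4): the determinant character of $\bigotimes_k \kappa_k|_{\bfJ}$ has $p$-power order — this follows from condition (iv), that each $\underline{\phi}_k$ has finite $p$-power order, since $\det$ of a Weil representation of a Heisenberg $p$-group is a sign or trivial character and $p \ne 2$ makes such signs irrelevant, while the linear-character contributions are $p$-power order by (iv).

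The main obstacle will be step (1), the precise matching of the character $\theta$ with the data underlying $\bigotimes_{k=0}^t \kappa_k$ on $J^1$. The difficulty is bookkeeping: Yu's construction and Bushnell--Henniart's construction linearize the ``wild part'' at slightly different levels — Yu works with the quotients $\bfG^{k+1}(F)_{x,s_k}/\bfG^{k+1}(F)_{x,s_k^+}$ viewed through the $*$-action and Weil representations, whereas \cite{MR3505131} packages everything into the single simple character $\theta \in \calC(\frakA,\beta,\psi_F)$. Bridging these requires knowing that the ``$c_k$'' in the simple-character recipe are compatible with Yu's generic elements defining the $\phi_k$, which is essentially the content of the comparison already carried out for $\bfG^{\ast}=\GL_n$ in \cite[Appendix A]{MR4206603} and in \cite{2112.12367}; the extra care for $\bfG = \underline{A}^\times$ is that $\Nrd_k$ replaces $\det$ and one must ensure the reduced-norm normalizations of $\theta_t = \xi_t \circ \Nrd_{A/F}|_{\cdots}$ match Yu's normalization of the top character $\phi_t$ on $\bfG^t(F)_{x,s_{t-1}+}$. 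Once (Ext1) is secured, (Ext3) and (Ext4) are comparatively routine, and the uniqueness statement (Remark \ref{rem:wide-extension}, following \cite[\S 4.3 Lemma 2]{MR2848585}) then yields $\bigotimes_{k=0}^t \kappa_k|_{\bfJ} \simeq \Lambda_{\w}$. Combining Propositions \ref{prop:tame-part} and \ref{prop:wild-part} by Mackey/Frobenius reciprocity (as in \cite[Appendix A]{MR4206603}) then gives ${}_{\bfG}\pi_{(\bfS,\xi)}^{\KY} = \cInd_K^{\bfG(F)}\bigl(\kappa_{-1}\otimes\bigotimes_{k\ge 0}\kappa_k\bigr) \simeq \cInd_{\bfJ}^{\bfG(F)}(\Lambda_{\mathrm{t}}\otimes\Lambda_{\w}) = {}_{\bfG}\pi_{(E/F,\xi)}^{\BH}$, completing the proof of Proposition \ref{prop:BH-KY}.
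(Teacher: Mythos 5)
Your overall strategy coincides with the paper's: reduce via Remark \ref{rem:wide-extension} to verifying (Ext1), (Ext3), (Ext4), then check each condition using the explicit form of the Yu-datum $\underline{\Psi}$ from Proposition \ref{prop:refactorization} and the Moy--Prasad/principal-order dictionary from \cite{2112.12367}. However, there is a genuine gap in your treatment of (Ext1). You argue that since $\bigotimes_{k=0}^t \kappa_k|_{J^1}$ contains the character $\theta$ of $H^1$ and since $\eta$ is the unique irreducible representation of $J^1$ containing $\theta$, ``this forces'' $\bigotimes_{k=0}^t \kappa_k|_{J^1}\simeq\eta$. That inference is not valid: uniqueness only guarantees that $\eta$ occurs as an irreducible constituent; $\bigotimes_{k=0}^t \kappa_k|_{J^1}$ could a priori be a larger representation (e.g., several copies of $\eta$, or $\eta$ plus pieces not containing $\theta$). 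To close the argument you must also compare dimensions: the paper explicitly reduces to showing $\dim\bigotimes_{k=0}^t\kappa_k=\dim\eta$, which is established in the proof of \cite[Proposition 10.5]{2112.12367}. Your sketch omits this step entirely. (A related but lesser slip: you speak of ``matching $\prod_{k=0}^t\underline{\phi}_k|_{H^1}$ with $\theta$,'' but $H^1=K^t_+$ is not contained in $\bfG^k(F)$ for $k<t$; what actually has to be matched with $\theta$ is the product of the Yu extensions $\underline{\hat{\phi}}_k$ of the $\underline{\phi}_k$ to $K^t_+$, whose explicit description on the filtration subgroups is what makes the comparison with the $\psi_{c_k}$ go through.)

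Your handling of (Ext4) is also shakier than it needs to be. You assert that $\det$ of the Weil-representation factor ``is a sign or trivial character and $p\ne 2$ makes such signs irrelevant.'' If that determinant were a genuine sign of order two, it would \emph{not} be of $p$-power order for odd $p$, so ``irrelevant'' is the wrong word; what one must actually show is that $\det\omega_k$ itself has $p$-power order, which requires examining the Heisenberg--Weil construction of $\omega_k$ in \cite[\S 3.4]{MR2431732} rather than invoking $p\ne 2$ as a blanket excuse. Your (Ext3) verification is in the right direction (reduce to the central character and use condition (ii) of Proposition \ref{prop:refactorization}), though the phrasing about ``tracing how the central character of each Heisenberg--Weil piece depends on $\varpi_F$'' is vaguer than the paper's direct observation that $\bigotimes_{k=0}^t\kappa_k|_{Z(\bfG)(F)}$ is $\prod_{k=0}^t\underline{\phi}_k|_{Z(\bfG)(F)}$-isotypic.
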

\begin{proof}
The proof is similar to that of \cite[Proposition A.5]{MR4206603}
even though in loc.\ cit.\ we have \(\bfJ=K\) and thus \(\bigotimes_{k=0}^{t} \kappa_k |_{\bfJ}\) is simply \(\bigotimes_{k=0}^{t} \kappa_k\) .
Here we only outline the argument.

We need to check that the representation \(\bigotimes_{k=0}^{t} \kappa_k|_{\bfJ}\) of \(\bfJ\) satisfies the characterizing conditions of \(\Lambda_{\w}\).
By Remark \ref{rem:wide-extension}, it suffices to do so for the conditions (Ext1), (Ext3) and (Ext4).

Let us show that \(\bigotimes_{k=0}^t \kappa_k|_{\bfJ}\) satisfies (Ext1), that is,
\[
\bigotimes_{k=0}^t \kappa_k|_{J^1} \simeq \eta.
\]
As the first step in constructing \(\kappa_k\), a character \(\underline{\hat{\phi}}_k\) of \(K_{+}^t\) is constructed from \(\underline{\phi}_k\) in \cite[page 591]{MR1824988}.
The proof of \cite[Proposition 10.4]{2112.12367}, together with \cite{MR1888474}, gives the following expression of \(\underline{\hat{\phi}}_k\):
\[
\underline{\hat{\phi}}_k=
\begin{cases}
\psi_{\underline{c}_k} &\text{ on \(U_{\frakA_{k+1}}^{\frac{1}{2}a_ke(\frakA_{k+1}/\calO_E)+}U_{\frakA_{k+2}}^{\frac{1}{2}a_{k+1}e(\frakA_{k+2}/\calO_E)+} \cdots U_{\frakA_t}^{\frac{1}{2}a_{t-1}e(\frakA_t/\calO_E)+}\)} \\
\underline{\xi}_k \circ \Nrd_k &\text{ on \(U_{\frakA_0}^{\frac{1}{2}a_{-1}e(\frakA_0/\calO_E)+}U_{\frakA_1}^{\frac{1}{2}a_0e(\frakA_1/\calO_E)+} \cdots U_{\frakA_k}^{\frac{1}{2}a_{k-1}e(\frakA_{k}/\calO_E)+}\)},
\end{cases}
\]
where
\begin{itemize}
\item \(\underline{\xi}_k\) is as in the proof of Proposition \ref{prop:refactorization},
\item \(\underline{c}_k\) is defined in the same way as \(c_k\) with \(\underline{\xi}_k\) in place of \(\xi_k\), namely an element in \(\frakp_E^{-a_k} \cap E_k\) such that
\[
\underline{\xi}_k(1+x)=\psi_F(\Tr_{E_k/F} (\underline{c}_kx))
\]
for \(x\in \frakp_E^{\frac{a_k}{2}+} \cap E_k\), and
\item \(\psi_{\underline{c}_k}\) is defined in the same way as \(\psi_{c_k}\) in the definition of \(\theta\) with \(\underline{c}_k\) in place of \(c_k\).
\end{itemize}
By (iii) of Proposition \ref{prop:refactorization}, we have \(\underline{\xi}_k|_{U_{E_k}^1}=\xi_k|_{U_{E_k}^1}\) and thus
\(\psi_{\underline{c}_k}=\psi_{c_k}\).
Therefore, we obtain the equality
\[
\prod_{k=0}^t \underline{\hat{\phi}}_k=\theta
\]
of characters of \(K_{+}^t=H^1\).
As recalled earlier, \(\eta\) is defined to be the unique irreducible smooth representation of \(J^1\) containing the character \(\theta\) of \(H^1\).
Since \(\bigotimes_{k=0}^t \kappa_k|_{K_{+}^t}\) does contain \(\prod_{k=0}^t \underline{\hat{\phi}}_k\) by \cite[Lemma 3.27]{MR2431732}, it is enough to show \(\dim \bigotimes_{k=0}^t \kappa_k=\dim \eta\).
This is done in the proof of \cite[Proposition 10.5]{2112.12367}.
Thus, \(\bigotimes_{k=0}^t \kappa_k|_{\bfJ}\) indeed satisfies (Ext1).

The condition (Ext3) is easy to verify.
Writing \(Z(\bfG)\) for the center of \(\bfG\), we use the fact that \(\bigotimes_{k=0}^t \kappa_k|_{Z(\bfG)(F)}\) is \(\prod_{k=0}^t \underline{\phi}_k|_{Z(\bfG)(F)}\)-isotypic, which is easy to check, and the equality \(\prod_{k=0}^t \underline{\phi}_k|_{Z(\bfG)(F)}=\xi_{\w}|_{Z(\bfG)(F)}\), which is immediate from (ii) of Proposition \ref{prop:refactorization}.

To verify that \(\bigotimes_{k=0}^t \kappa_k|_{\bfJ}\) satisfies (Ext4),
we prove a slightly stronger assertion:
the character \(\det \kappa_k\) of \(K\) has \(p\)-power order for \(0\leq k\leq t\).
For this, we examine the definition of \(\kappa_k\) in \cite[\S 3.4]{MR2431732} and
use (iv) of Proposition \ref{prop:refactorization}.
The key step of the proof is to show that \(\det \omega_k\) has \(p\)-power order, where \(\omega_k\) is a certain representation used in the definition of \(\kappa_k\) and defined by means of the Heisenberg--Weil construction.
 \end{proof}
Now we prove that \({}_{\bfG}\pi_{(\bfS, \xi)}^{\KY} \simeq {}_{\bfG}\pi_{(E/F, \xi)}^{\BH}\).
By Proposition \ref{prop:tame-part}, ``the projection formula" and Proposition \ref{prop:wild-part}, we see
\begin{align*}
\kappa &= \bigotimes_{k=-1}^t \kappa_k
\simeq (\cInd_{\bfJ}^K \Lambda_{\mathrm{t}}) \otimes \bigotimes_{k=0}^t \kappa_k
\simeq \cInd_{\bfJ}^K \Big(\Lambda_{\mathrm{t}} \otimes \bigotimes_{k=0}^t \kappa_k|_{\bfJ}\Big) \\
& \simeq \cInd_{\bfJ}^K (\Lambda_{\mathrm{t}} \otimes \Lambda_{\w})
= \cInd_{\bfJ}^K \Lambda_{\xi}.
\end{align*}
Thus, we conclude:
\[
{}_{\bfG}\pi_{(\bfS, \xi)}^{\KY}=\pi_{\Psi}
\simeq \pi_{\underline{\Psi}} =\cInd_K^{\bfG(F)} \kappa
\simeq \cInd_{\bfJ}^{\bfG(F)} \Lambda_{\xi}={}_{\bfG}\pi_{(E/F, \xi)}^{\BH}.
\]

\bibliographystyle{my_amsalpha}
\bibliography{myrefs}

\end{document}